\def\ker{\operatorname{ker}}
\def\id{\operatorname{id}}
\def\Ad{\operatorname{Ad}}
\def\id{\operatorname{id}}
\def\N{\mathbb{N}}
\newcommand{\IC}[0]{\mathbb{C}}
 \newcommand{\IN}[0]{\mathbb{N}}
 \newcommand{\IT}[0]{\mathbb{T}}
 \newcommand{\IZ}[0]{\mathbb{Z}}
 \newcommand{\CB}[0]{\mathcal{B}}
 \renewcommand{\CD}[0]{\mathcal{D}}
 \newcommand{\CF}[0]{\mathcal{F}}
 \newcommand{\CH}[0]{\mathcal{H}}
\newcommand{\CK}[0]{\mathcal{K}} 
\newcommand{\CO}[0]{\mathcal{O}} \newcommand{\CP}[0]{\mathcal{P}}
\newcommand{\CQ}[0]{\mathcal{Q}} 
\newcommand{\CU}[0]{\mathcal{U}}
\newtheorem{thm}{Theorem}[section]
\newtheorem{corollary}[thm]{Corollary}
\newtheorem{lemma}[thm]{Lemma}
\newtheorem{proposition}[thm]{Proposition}
\theoremstyle{definition}
\newtheorem{definition}[thm]{Definition}
\theoremstyle{remark}
\newtheorem{remark}[thm]{Remark}
\newtheorem{example}[thm]{Example}
\numberwithin{equation}{section}
\begin{document}

\title{Permutative Representations of the $2$-adic ring $C^*$-algebra}

\author[V.~Aiello]{Valeriano Aiello}
\address{Department of Mathematics\\ Vanderbilt University\\ 
USA}
\email{valerianoaiello@gmail.com}

\author[R.~Conti]{Roberto Conti}
\address{Dipartimento di Scienze di Base e Applicate per l'Ingegneria \\ Sapienza Universit\`{a} di Roma \\ Italy}
\email{roberto.conti@sbai.uniroma1.it}

\author[S.~Rossi]{Stefano Rossi}
\address{Dipartimento di Matematica \\ Universit\`{a} di Roma Tor Vergata \\ Italy}
\email{rossis@mat.uniroma2.it}

\begin{abstract}
The notion of permutative representation is generalized to the $2$-adic ring $C^*$-algebra $\CQ_{2}$.  Permutative representations of $\CQ_2$ are then investigated
with a particular focus on the inclusion of the Cuntz algebra $\CO_2\subset\CQ_2$. Notably, every permutative representation of $\CO_2$ is shown to extend automatically to a permutative representation of $\CQ_2$ provided that an extension whatever 
exists. Moreover, all permutative extensions of a given representation of $\CO_2$ are proved to be unitarily equivalent to one another.  
Irreducible permutative representations of $\CQ_2$ are  classified in terms of irreducible permutative representations of the Cuntz algebra. Apart from 
the canonical representation of $\CQ_2$, every irreducible representation of $\CQ_2$ is the unique extension of an irreducible permutative representation of $\CO_2$. 
Furthermore, a permutative representation of $\CQ_2$ will decompose into a direct sum of irreducible permutative subrepresentations if and only if it restricts to $\CO_2$ as a \emph{regular} representation
in the sense of Bratteli-Jorgensen. 
As a result, a vast class of pure states of $\CO_2$ is shown to enjoy the unique pure extension property with respect to the inclusion $\CO_2\subset\CQ_2$.


\end{abstract}

\date{\today}
\maketitle
\tableofcontents

\section{Introduction}
It is common knowledge that  the Cuntz algebras display such a rich representation theory that any attempt at completely classifying even only their irreducible or factorial representations is bound to fail.
Even so, the study of their representations  has found striking applications in a wide range of seemingly distant fields such as
fractals, multiresolutions, self-similarity, symbolic dynamics, and wavelet theory to name but few \cite{J99,    DJ11, DJ, DP,  BJ97, Nek04, KW, KW06, J06, MSW07, AP09, MP11,  DJ14, DDHJ, Thom}. 
In addition, suitable classes of representations carefully selected to be studied thoroughly but still general enough to be of any interest at all do exist. 
Permutative representations are certainly a case in point. These are representations in which the generating isometries act in a rather simple way  mapping the vectors of a given orthonormal basis of a separable Hilbert space
to vectors of the same basis. In other terms, the action of the isometries is implemented by certain injective maps of $\IN$ into itself.  It is no wonder, therefore, that an in-depth analysis of such representations can
and in fact do give access to interesting connections with the theory of discrete dynamical systems as well as yielding examples where abstract aspects and notions from representation theory take concrete shape. Moreover, the general theory of permutative representations is very well understood, so much so
its description may safely be regarded as a more or less fully accomplished goal. Indeed, in their remarkable monograph \cite{BJ} Bratteli and Jorgensen provided an insightful analysis of all permutative representations, which is quite conclusive
when the so-called regular representations are dealt with. Notably, in the emerging picture the inclusion of the canonical UHF
subalgebra $\CF_2$ in $\CO_2$ is looked at as a kind of Gelfand pair.
A totally different approach which only slightly overlaps with part of the content of their analysis is taken in \cite{DavPitts}, where it is mostly von Neumann algebras 
to be focused on instead.  
The present work aims to extend this study to the $2$-adic ring $C^*$-algebra $\CQ_2$, which the present authors have already addressed in a couple of recent papers, \cite{ACR, ACR2}. The underlying idea is to exploit the far-reaching knowledge
accumulated over the years on the representation theory for Cuntz algebras to shed light on broader classes of
akin $C^*$-algebras, on whose representations very little is known so far. This is certainly the case with the $2$-adic ring $C^*$-algebra, that is  the universal $C^*$-algebra generated by a unitary $U$ and an isometry $S_2$ satisfying the two relations
$S_2U=U^2S_2$ and $S_2S_2^*+US_2S_2^*U^*=1$. Notably, it contains a copy of the Cuntz algebra $\CO_2$ in which the generating isometries are given
by $US_2$ and $S_2$. The rather explicit character of the inclusion $\CO_2\subset\CQ_2$ raises  a good many questions, some of which have been completely answered.
For instance, one may wonder whether an endomorphism of $\CO_2$ extends to an endomorphism of $\CQ_2$. To the best of our knowledge, it turns out that this is hardly ever the case, apart from the obvious examples, e.g.
inner automorphisms, gauge automorphisms, the flip-flop automorphism, and the canonical endomorphism, see \cite{ACR}. On the other hand, representations of $\CO_2$ are far more likely to extend to representations of $\CQ_2$
on the same Hilbert space. Indeed, it is a result by Larsen and Li, \cite{LarsenLi}, that a representation $\pi:\CO_2\rightarrow \CB(\CH)$ extends to a representation of $\CQ_2$
if and only if  the Wold unitaries of $\pi(S_1)$ and $\pi(S_2)$ are unitarily equivalent. In particular, all representations in which $\pi(S_1)$ and
$\pi(S_2)$ are both pure automatically extend. It is then natural to ask oneself how this  result specializes to permutative representations of $\CO_2$. In this regard, we prove without making use of the above theorem that a permutative representation
$\pi$ of $\CO_2$ extends to a permutative representation $\tilde\pi$ of $\CQ_2$, namely a representation in which the unitary $\tilde\pi(U)$ acts permuting the basis vectors, if and only if the injective maps $\sigma_1, \sigma_2:\IN\rightarrow \IN$ implementing the
isometries $\pi(S_1)$ and $\pi(S_2)$  have the same orbit structure when restricted to the invariant subsets $\bigcap_{n=1}^\infty \sigma_1^n(\IN)$ and $\bigcap_{n=1}^\infty\sigma_2^n(\IN)$ respectively. 
Still this does not answer the question of whether permutative extensions always exist as long as a possibly non-permutative extension exists. So we go on to show that not only is this the situation that actually occurs, but also that all permutative extensions are unitarily equivalent
to one another.  Furthermore, we discuss how many permutative extensions a permutative representation of $\CO_2$ may give rise to. We then shift our attention to irreducible permutative representations showing that these can be
described in quite a satisfactory fashion. Indeed, apart from the canonical representation of $\CQ_2$, every permutative representation of $\CQ_2$ restricts to $\CO_2$ as an irreducible representation of which it is the unique extension.
Conversely, every irreducible permutative representation of $\CO_2$ extends to a permutative representation of $\CQ_2$, apart from $\pi_+$ and $\pi_-$, which are the irreducible components of the restriction to $\CO_2$ of the canonical
representation of $\CQ_2$. Phrased differently, the irreducible permutative representations of the $2$-adic ring $C^*$-algebras are virtually
the same as those of the Cuntz algebra, all of which are well known, cf. \cite{Kaw06}. In addition, a permutative representation of $\CQ_2$ will decompose into the direct sum of irreducible permutative subrepresentations if and only its restriction to $\CO_2$ does, which is the same as asking that this restriction be regular in the sense of Bratteli-Jorgensen. In other words, the theory of permutative representations of $\CQ_2$ is thus entirely reconducted to and described in terms of that of $\CO_2$.
These findings further substantiate the idea that the inclusion $\CO_2\subset\CQ_2$ should be very rigid in more than one respect; for example, we had already  proved in \cite{ACR} that every endomorphism of
$\CQ_2$ that fixes $\CO_2$ pointwise must be trivial itself. 
In a similar spirit, we prove here as an application of our analysis that a great many pure states of the Cuntz algebra will  admit exactly one pure extension to the $2$-adic ring $C^*$-algebra.
Notably, all vector states associated with an irreducible permutative representation of $\CO_2$ can be extended to $\CQ_2$ in only one way.
\medskip 

In an effort to keep the text to a reasonable length we have preferred
to stick to the inclusion  $\CO_2 \subset \CQ_2$, although we do not see any
major obstacles to extending our analysis to the inclusions $\CO_n \subset
\CQ_n$, for any $n\in\IN$, as in the aforementioned references, cf. \cite{ACRS}. In particular, in \cite{ACRS} it is pointed out  that the canonical representation of $\CQ_n$ is still irreducible for every $n$, while its restriction to $\CO_n$ is not since it actually continues to decompose into
the direct sum of two irreducible components. More importantly, a theorem \`{a} la Larsen and Li is in fact still available for the inclusion $\CO_n\subset\CQ_n$ as well, which makes it possible to carry on a similar analysis. This, however, would entail
much more work to do, which is why we have resolved to go back to this generalization elsewhere. 
\medskip

The paper is organized in the following way.  
In Section \ref{prel}, after recalling the notation and collecting some basic results needed throughout the text,  we introduce  the notion of permutative representation of $\CQ_2$.
Section \ref{extending} is mainly focused on the problem of extending a permutative representation of $\CO_2$ to a permutative representation of $\CQ_2$. Notably, Theorem \ref{orbits} provides 
a necessary and sufficient condition for a permutative representation of $\CO_2$ to extend to a permutative representation of $\CQ_2$; Theorem \ref{uniext} proves that the extension is unique up to unitary equivalence, and 
Theorem \ref{multiplicity} shows that permutative extensions always exist as long as the representation extends.
Section \ref{general} highlights some general properties of permutative representations. In addition, Proposition \ref{intrcan} provides a characterization of the so-called canonical representation among
all permutative representations.
In Section \ref{decomposition} the analysis of permutative representation of $\CO_2$ by Bratteli-Jorgensen is reread in terms of $\CQ_2$.
Section \ref{converse} spells out some properties of the permutative representations of $\CO_2$ not to be easily found elsewhere in the literature.
In Section \ref{irreducible}, Theorem \ref{IrrRep} gives the list of all irreducible permutative representations of $\CQ_2$: apart from the canonical representation, these can all be obtained as the unique extension to $\CQ_2$ of an irreducible
representation of the Cuntz algebra; conversely, every such irreducible representation automatically
extends to $\CQ_2$.
General (i.e. possibly not irreducible) permutative representations
are dealt with in Proposition \ref{q2o2}, which shows the equivalence between complete reducibility in terms of permutative irreducibles and regularity of the restriction to $\CO_2$ in the sense of Bratteli-Jorgensen.
Section \ref{purestates} discusses the unique pure extension property for certain pure states of $\CO_2$ understood as a subalgebra of $\CQ_2$.
Finally, Section \ref{extendible} analyzes a class of permutative representations of $\CQ_2$ arising from permutative endomorphisms of $\CO_2$.

\section{Preliminaries and notation}\label{prel}
We start by recalling the definition of the $2$-adic ring $C^*$-algebra.

\begin{definition}
The $2$-adic ring $C^*$-algebra $\CQ_2$ is the \emph{universal} $C^*$-algebra generated by a unitary $U$
and a (proper) isometry $S_2$ such that $S_2U=U^2S_2$ and $S_2S_2^*+US_2S_2^*U^*=1$.
\end{definition}

 For a comprehensive account of its main properties the interested readers
are referred to \cite{ACR} and the references therein. Here we limit ourselves to reminding them that $\CQ_2$ is a simple, purely infinite $C^*$-algebra. In particular, its representations are  all faithful.
The so-called \emph{canonical representation}, which in this paper will be consistently denoted by $\rho_c$, plays a privileged role. This is the irreducible representation acting on the Hilbert space
$\ell^2(\IZ)$, with canonical orthonormal basis $\{e_k:k\in\IZ\}$, given by $\rho_c(U)e_k\doteq e_{k+1}$   and $\rho_c(S_2)e_k\doteq e_{2k}$, $k\in\IZ$. More details are again given in \cite{ACR}.   
Note, however, that $\rho_c(U)$ is a multiplicity-free unitary, that is $W^*(\rho_c(U))\subset\CB(\ell^2(\IZ))$ is a maximal abelian subalgebra. 
As already observed, the Cuntz algebra $\CO_2$, namely the universal $C^*$-algebra generated by two isometries $X_1, X_2$ such that
$X_1X_1^*+X_2X_2^*=1$, embeds into $\CQ_2$ through the injective $^*$-homomorphism that sends $X_1$ to $US_2$ and $X_2$ to $S_2$. The restriction of the canonical representation $\rho_c$ to $\CO_2$, which will henceforth be
denoted by $\pi_c$, is no longer irreducible. In fact, it decomposes into the direct sum of two inequivalent irreducible subrepresentations $\pi_+$ and $\pi_-$ given by the restriction
of $\pi_c$ to the invariant subspaces $\CH_+\doteq\overline{\textrm{span}}\{e_k:k\geq 0\}$ and $\CH_-\doteq\overline{\textrm{span}}\{e_k:k< 0\}$ respectively, see \cite{ACR}. 
Both $\pi_+$ and $\pi_-$ as well as $\pi_c$ are simple examples of permutative representations of $\CO_2$. In general, a representation $\pi:\CO_2\rightarrow \CB(\CH)$ is said to be
\emph{permutative} if there exists an orthonormal basis $\{e_n:n\in\IN\}$ such that $\pi(S_1)e_n=e_{\sigma_1(n)}$ and $\pi(S_2)e_n=e_{\sigma_2(n)}$, for every $n\in\IN=\{1,2,3,\ldots\}$, where
$\sigma_1, \sigma_2$ are two injections of $\IN$ into itself such that $\sigma_1(\IN)\cap\sigma_2(\IN)=\emptyset$ and $\IN=\sigma_1(\IN)\cup\sigma_2(\IN)$. A pair $(\sigma_1, \sigma_2)$ of injections of $\IN$ into
itself satisfying the above properties is sometimes referred to as a \emph{branching function system} of order $2$, cf. \cite{Kaw06}. Clearly,  the two properties correspond to the fact that the ranges of $\pi(S_1)$ and $\pi(S_2)$
decompose $\CH$ into their direct sum. It is also as clear that a similar definition can be given for representations of $\CQ_2$, which is what we do next.
\begin{definition}
A representation $\rho: \CQ_2\rightarrow \CB(\CH)$ on a separable Hilbert space is said to be permutative if there exists an orthonormal basis $\{e_n: n\in\IN\}$ of $\CH$ such that
 $\rho(S_2)e_n=e_{\sigma_2(n)}$ and $\rho(U)e_n=e_{\tau(n)}$, where $\sigma_2$ is an injection of $\N$ into itself and $\tau$ is a bijection of $\N$.
\end{definition}
It is plain that $\rho$ will be a representation if and only if $\sigma$ and $\tau$ satisfy $\sigma\circ\tau=\tau^2\circ\sigma$ and the pair $(\sigma, \tau\circ\sigma)$ is a branching function system of order $2$.
The canonical representation $\rho_c$ is obviously a permutative representation of $\CQ_2$.   It is quite obvious that a permutative representation of $\CQ_2$ restricts to a permutative representation of $\CO_2$  whose
branching function system is the pair $(\tau \circ \sigma_2,\sigma_2)$. It is not quite so obvious, though, that an extendible permutative representation of $\CO_2$ must also have permutative extensions. The
next section is entirely devoted to tackling this and other related problems. Here, instead, we collect some more or less known facts, whose proofs are nonetheless included for the sake of completeness.\\
  
For any injection $f:\IN\rightarrow\IN$ we denote by $S_f$ the isometry acting on $\ell^2(\IN)$ as $S_fe_k:=e_{f(k)}$, $k\in\IN$. Clearly, $S_f$ is a proper isometry if and only if $f$ is not surjective. 
More detailed information is given by the following results.

\begin{lemma}\label{isopower}
If $f$ is an injection of $\IN$ into itself, then the following set equality
$$\bigcap_{n=1}^\infty {\rm Ran}(S_f^n)=\overline{{\rm span}}\{e_j: j\in\bigcap_{n=1}^\infty f^n(\IN)\}$$
holds.
\end{lemma}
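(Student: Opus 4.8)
The plan is to reduce everything to a statement about coordinate subspaces of $\ell^2(\IN)$. First I would observe that, since $f$ is injective, the vectors $\{e_{f^n(k)}:k\in\IN\}$ form an orthonormal system for each fixed $n$, so $S_f^n$ is an isometry and hence has closed range, with
$$\mathrm{Ran}(S_f^n)=\overline{\mathrm{span}}\{e_{f^n(k)}:k\in\IN\}=\overline{\mathrm{span}}\{e_j:j\in f^n(\IN)\},$$
the last equality holding because $f^n$ is injective so that $\{f^n(k):k\in\IN\}=f^n(\IN)$. Thus $\mathrm{Ran}(S_f^n)$ is the coordinate subspace $\CH_{A_n}:=\overline{\mathrm{span}}\{e_j:j\in A_n\}$ attached to the index set $A_n:=f^n(\IN)$, and the assertion to be proved becomes $\bigcap_{n}\CH_{A_n}=\CH_{A_\infty}$ with $A_\infty:=\bigcap_n A_n$.

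Next I would record that the sets $A_n$ form a decreasing chain, since $A_{n+1}=f^n(f(\IN))\subseteq f^n(\IN)=A_n$. The inclusion $\CH_{A_\infty}\subseteq\bigcap_n\CH_{A_n}$ is then immediate: if $j\in A_\infty$ then $e_j\in\CH_{A_n}$ for every $n$, so $e_j$ lies in the intersection; and since an intersection of closed subspaces is again a closed subspace, it contains $\overline{\mathrm{span}}\{e_j:j\in A_\infty\}=\CH_{A_\infty}$.

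For the reverse inclusion I would argue with Fourier coefficients. If $\xi\in\bigcap_n\CH_{A_n}$, then for each $n$ the orthogonal projection onto $\CH_{A_n}$ fixes $\xi$; comparing the expansion $\xi=\sum_{j\in\IN}\langle\xi,e_j\rangle e_j$ with $\xi=\sum_{j\in A_n}\langle\xi,e_j\rangle e_j$ forces $\langle\xi,e_j\rangle=0$ for every $j\notin A_n$. Letting $n$ range over $\IN$, we get $\langle\xi,e_j\rangle=0$ whenever $j\notin A_\infty$, whence $\xi=\sum_{j\in A_\infty}\langle\xi,e_j\rangle e_j\in\CH_{A_\infty}$. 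This closes the argument.

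I do not expect any genuine obstacle here; the only point deserving a line of care is the identification $\mathrm{Ran}(S_f^n)=\overline{\mathrm{span}}\{e_j:j\in f^n(\IN)\}$, namely that an isometry carrying an orthonormal basis onto an orthonormal system has range equal to the closed linear span of that system (together with the harmless bookkeeping $f^{n+1}(\IN)=f^n(f(\IN))\subseteq f^n(\IN)$ that produces the decreasing chain $f(\IN)\supseteq f^2(\IN)\supseteq\cdots$).
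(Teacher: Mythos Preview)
Your proof is correct and follows essentially the same route as the paper: identify $\mathrm{Ran}(S_f^n)$ with the coordinate subspace $\overline{\mathrm{span}}\{e_j:j\in f^n(\IN)\}$, then intersect these by passing to Fourier coefficients. Your write-up is in fact more careful than the paper's, which compresses the coefficient argument into a single line.
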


\begin{proof}

To begin with, we write down the following chain of equalities  
$${\rm Ran}(S_f^n)={\rm Ran}(S_{f^n})=\overline{{\rm span}}\{e_{f^n(i)}:i\in\IN\}=\{x\in\ell^2(\IN): (x, e_j)=0\,\,{\rm if}\, j\notin f^n(\IN) \}$$
which are all checked at once. Therefore, we also have the equalities 
$$\bigcap_{n=1}^\infty{\rm Ran}(S_f^n)=\{x\in\ell^2(\IN): (x, e_j)=0\,\,{\rm if}\, j\notin\cup_n f^n(\IN)\}=\bigcap_{n=1}^\infty\overline{{\rm span}}\{e_i: i\in\cap_n f^n(\IN)\}$$ 
\end{proof}
We recall that an isometry $S$ is said to be pure if $\bigcap_{n=1}^\infty \textrm{Ran}(S^n)=0$.
As a straightforward consequence of the previous lemma, we can also state the following result, which is singled out for convenience.
\begin{corollary} \label{cor-pureness}
The isometry $S_f$ is pure if and only if $\bigcap_{n=1}^\infty f^n(\IN)=\emptyset$.
\end{corollary}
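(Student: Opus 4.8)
The plan is to read the statement straight off Lemma \ref{isopower}, since the corollary is no more than a convenient repackaging of it. Recall that, by the definition just stated, the isometry $S_f$ is pure precisely when $\bigcap_{n=1}^\infty \operatorname{Ran}(S_f^n)=0$. Lemma \ref{isopower} identifies this intersection with the closed linear span of the family $\{e_j : j\in\bigcap_{n=1}^\infty f^n(\IN)\}$, so the only thing left to check is that such a closed span is trivial if and only if the index set $\bigcap_{n=1}^\infty f^n(\IN)$ is empty.

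For that last equivalence I would argue both directions at the obvious level of generality. If $\bigcap_{n=1}^\infty f^n(\IN)=\emptyset$, then the span in question is the span of the empty family of vectors, namely $\{0\}$, and hence so is its closure; thus $\bigcap_{n=1}^\infty \operatorname{Ran}(S_f^n)=0$ and $S_f$ is pure. Conversely, if some $j$ lies in $\bigcap_{n=1}^\infty f^n(\IN)$, then the basis vector $e_j$ is a nonzero element of $\overline{\operatorname{span}}\{e_i : i\in\bigcap_{n=1}^\infty f^n(\IN)\}=\bigcap_{n=1}^\infty \operatorname{Ran}(S_f^n)$, so $S_f$ fails to be pure. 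Chaining these two implications with the two equivalences recalled above yields the claim.

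There is no genuine obstacle here: all the work has already been done in Lemma \ref{isopower}, and the corollary is isolated merely for ease of later reference. The single point one might want made explicit — and the closest thing to a "step" in the argument — is the elementary observation that the closed span of a subfamily of an orthonormal basis is $\{0\}$ exactly when that subfamily is empty, which is immediate from the fact that distinct basis vectors are nonzero and mutually orthogonal.
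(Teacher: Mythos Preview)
Your proposal is correct and matches the paper's approach exactly: the paper offers no proof beyond calling the corollary ``a straightforward consequence of the previous lemma,'' and you have simply spelled out that consequence. The only additional content you supply is the (trivial) observation that the closed span of a subset of an orthonormal basis is $\{0\}$ iff the subset is empty, which is precisely the missing sentence needed to pass from Lemma~\ref{isopower} to the statement.
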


Given an isometry $S$, the subspace $\bigcap_{n=1}^\infty \textrm{Ran}(S^n)$ may of course be non-trivial. However,  it will  always be invariant under the action of $S$, which in fact restricts to it 
as a unitary operator $W(S)$, known as the Wold unitary of $S$. Lemma \ref{isopower} also says that $W(S_f)$ is in fact a permutative unitary, since it operates as the restriction of $S_f$ to a closed subspace that is still
generated by a selected subset of the orthonormal basis one starts with.\\

There follows a handful of results all to do with the point spectrum $\sigma_p(S_f)$, by which we mean the (possibly empty) set of all eigenvalues of $S_f$.

\begin{proposition}
If $f: \IN\rightarrow \IN$ is an injection without periodic points, i.e. such that $f^{h}(k)\neq k$ for all $k, h\in\IN$, then $\sigma_p(S_f)=\emptyset$.
\end{proposition}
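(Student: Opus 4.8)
The plan is to argue by contradiction: suppose $Sv = \lambda v$ for some unit vector $v \in \ell^2(\IN)$ and some $\lambda \in \C$, and show this forces $f$ to have a periodic point. Since $S_f$ is an isometry, any eigenvalue must satisfy $|\lambda| = 1$; in particular $\lambda \neq 0$. Writing $v = \sum_{k} v_k e_k$, the eigenvalue equation $S_f v = \lambda v$ becomes $\sum_k v_k e_{f(k)} = \lambda \sum_k v_k e_k$. Comparing coefficients, for each $j \in \IN$ we get $\lambda v_j = \sum_{k : f(k) = j} v_k$, and because $f$ is injective the right-hand side is either $0$ (if $j \notin f(\IN)$) or a single term $v_{f^{-1}(j)}$ (if $j \in f(\IN)$). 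So $v_j = 0$ whenever $j \notin f(\IN)$, and $v_{f(k)} = \lambda^{-1} v_k$ for all $k$.

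First I would pick an index $k_0$ with $v_{k_0} \neq 0$ (possible since $v \neq 0$). Iterating the relation $v_{f(k)} = \lambda^{-1} v_k$ along the forward orbit gives $v_{f^n(k_0)} = \lambda^{-n} v_{k_0}$ for every $n \geq 1$. Since $|\lambda| = 1$, this yields $|v_{f^n(k_0)}| = |v_{k_0}| > 0$ for all $n$. Now the key observation: if $f$ has no periodic points then the points $k_0, f(k_0), f^2(k_0), \ldots$ are pairwise distinct — indeed $f^m(k_0) = f^n(k_0)$ with $m < n$ would give, by injectivity of $f$ (hence of $f^m$), that $k_0 = f^{n-m}(k_0)$, a periodic point, contrary to hypothesis. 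Thus $v$ has infinitely many coordinates all of the same nonzero modulus $|v_{k_0}|$, contradicting $v \in \ell^2(\IN)$, i.e. $\sum_j |v_j|^2 < \infty$. Hence no eigenvector exists and $\sigma_p(S_f) = \emptyset$.

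I do not anticipate a serious obstacle here; the argument is short and self-contained. The only point requiring a little care is the bookkeeping with coefficients: one must use injectivity of $f$ twice — once to conclude that the preimage $\{k : f(k) = j\}$ has at most one element (so that the coefficient recursion reads cleanly as $v_{f(k)} = \lambda^{-1} v_k$ rather than involving a genuine sum), and once more to conclude that absence of periodic points of $f$ forces the forward orbit of $k_0$ to be infinite and injective. One should also explicitly note at the outset that $|\lambda| = 1$ for any eigenvalue of an isometry, since this is what converts the relation $v_{f^n(k_0)} = \lambda^{-n} v_{k_0}$ into a statement about moduli that is incompatible with square-summability. No appeal to Lemma~\ref{isopower} or Corollary~\ref{cor-pureness} is needed, though one could alternatively phrase the $\ell^2$-contradiction via the fact that such a $v$ would lie in $\bigcap_n {\rm Ran}(S_f^n)$ and be supported on an infinite orbit.
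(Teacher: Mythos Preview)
Your proof is correct and follows essentially the same approach as the paper: both derive from the eigenvalue equation that the coefficients along an orbit have constant modulus, and then use the absence of periodic points to conclude that this orbit is infinite, contradicting square-summability. The only minor difference is that you iterate \emph{forward} along the orbit of a nonzero coefficient $k_0$, whereas the paper iterates \emph{backward} via $f^{-1}$ and must therefore split into two cases according to whether $j\in\bigcap_k f^k(\IN)$ or not; your forward iteration avoids this case distinction and is arguably cleaner.
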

\begin{proof}
If we set $x\doteq\sum_j c_je_j$, the eigenvalue equation $S_fx=\lambda x$ reads
\begin{align*}
S_f (\sum_{j\in \IN} a_j e_j) & =\sum_{j\in \IN} a_j e_{f(j)}= \sum_{j\in \IN} \lambda a_j e_j
\end{align*}
for some $\lambda \in\IT$. 
Obviously, the equation says that $|c_{j}| = |c_{f^{-1}(j)}|$ and by iterating it also says 
$$
|c_{j}| = |c_{f^{-k}(j)}| \qquad \forall k\in \IN 
$$
as long as $j\in\cap_{k\in \IN}f^k(\IN)$, otherwise we only have finite number equal coefficients.
This in turn implies $c_j$ must be zero: in the first case because the vector is in $\ell^2(\IN)$; in the second because $c_j=0$ if $j$ is not in  $f(\IN)$.
\end{proof}

\begin{proposition}
Let $f: \IN\rightarrow \IN$ be an injection with a unique fixed point and without other periodic points, then $\sigma_p(S_f)=\{1\}$.
\end{proposition}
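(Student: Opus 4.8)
The plan is to follow the same template as the preceding proposition, extracting the modulus relations among the coefficients of an eigenvector and then using square-summability together with the injectivity of $f$ and the hypothesis on periodic points. First I would record the elementary observation that, $S_f$ being an isometry, any eigenvalue $\lambda$ satisfies $|\lambda|=1$, so that $\sigma_p(S_f)\subseteq\IT$. Next, denoting by $p$ the unique fixed point of $f$, i.e. $f(p)=p$, one immediately checks $S_fe_p=e_{f(p)}=e_p$, so $e_p$ is an eigenvector with eigenvalue $1$ and hence $1\in\sigma_p(S_f)$.

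For the reverse inclusion I would take an arbitrary eigenvector $x=\sum_j c_je_j\neq 0$ with $S_fx=\lambda x$, $\lambda\in\IT$. Comparing the coefficient of $e_m$ on the two sides of $\sum_j c_je_{f(j)}=\lambda\sum_j c_je_j$, and using injectivity of $f$, one gets $c_{f^{-1}(m)}=\lambda c_m$ when $m\in f(\IN)$ and $\lambda c_m=0$ when $m\notin f(\IN)$. Since $|\lambda|=1$, this in particular yields $|c_m|=|c_{f(m)}|$ for every $m\in\IN$, and by iteration $|c_m|=|c_{f^k(m)}|$ for all $k\geq 0$.

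The core of the argument is then this: if $c_m\neq 0$ for some $m$, then $|c_{f^k(m)}|=|c_m|>0$ for all $k\geq 0$, so by square-summability of $(c_j)_j$ the points $f^k(m)$ cannot all be distinct; choosing $i<j$ with $f^i(m)=f^j(m)$ and cancelling the injective map $f^i$, one obtains $m=f^{j-i}(m)$, i.e. $m$ is a periodic point of $f$. By hypothesis the only periodic point is $p$, so $m=p$. Hence $c_m=0$ for all $m\neq p$, i.e. $x=c_pe_p$ with $c_p\neq 0$, and the eigenvalue equation forces $c_pe_p=S_fx=\lambda c_pe_p$, whence $\lambda=1$. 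This gives $\sigma_p(S_f)=\{1\}$.

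I expect the only genuinely delicate point to be the passage from finiteness of the forward orbit of $m$ to periodicity of $m$ itself, which is exactly where the injectivity of $f$ is used; the rest is bookkeeping essentially identical to the previous proof, so the write-up should remain short.
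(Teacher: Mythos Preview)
Your proof is correct, and it takes a genuinely different route from the paper's. The paper iterates \emph{backward}, deducing $|c_j|=|c_{f^{-k}(j)}|$ for $j\neq\bar j$ and then splitting into two cases according to whether $j$ lies in $\bigcap_k f^k(\IN)$ or not: in the first case the infinite backward orbit forces $c_j=0$ by square-summability, in the second the chain terminates at some index outside $f(\IN)$ where the coefficient vanishes. You instead iterate \emph{forward}, which has the advantage that the forward orbit always exists and the argument becomes uniform: $|c_m|=|c_{f^k(m)}|$ for all $k\geq 0$, square-summability forces a repetition in the forward orbit, and injectivity of $f$ then pins $m$ down as a periodic point, hence $m=p$. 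Your approach avoids the case distinction entirely and makes the role of the periodic-point hypothesis more transparent; it also explicitly verifies $1\in\sigma_p(S_f)$ at the outset, which the paper leaves implicit.
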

\begin{proof}
Denote by $\bar{j}$ the fixed point.
If we set $x\doteq\sum_j c_je_j$, the eigenvalue equation $S_fx=\lambda x$ reads
\begin{align*}
S_f (\sum_{j\in \IN} c_j e_j) & =\sum_{j\in \IN} c_j e_{f(j)}= \sum_{j\in \IN} \lambda c_j e_j
\end{align*}
for some $\lambda \in\IT$. 
Pick $j\neq \bar j$.
Obviously, the equation says that $|c_{j}| = |c_{f^{-1}(j)}|$ and by iterating it also says 
$$
|c_{j}| = |c_{f^{-k}(j)}| \qquad \forall k\in \IN
$$
as long as $j\in\cap_{k\in \IN}f^k(\IN)$, otherwise we only have finitely many equal coefficients. As in the previous lemma, this leads to $c_j=0$.
Therefore, we see that $v=c_{\bar j}e_{\bar j}$ and $\lambda=1$.
\end{proof}

In the following, by an orbit $O$ of any injective function $f:\IN\rightarrow\IN$ we will always mean a set of the form $O_{n_0}=\{f^n(n_0): n\in\IN\}\subset\IN$, where $n_0$ is a fixed natural number.
When $f$ is in addition surjective, as is the case with both $\tau$ and the restriction of $f$ to $\bigcap_{n=1}^\infty f^n(\IN)$  as long as this set is not empty,  then all orbits will always be assumed 
to be of the form $\{f^k(n_0):k\in\IZ\}$ unless otherwise stated.  
At any rate, it is clear that $\IN$ decomposes into the disjoint unions of orbits.
Finite orbits yield eigenvalues, as shown below. 

\begin{proposition}
Let $f: \IN\rightarrow \IN$ be an injection with only one 
finite orbit  $F\subset\IN$, 
then $\sigma_p(S_f)=\sigma_p(S_f\upharpoonright_{V_F})$ where $V_F\doteq {\rm span}\{e_k\; | \; k\in F\}$. In particular, $\sigma_p(S_f)=\{z: z^n=1\}$, with $n=|F|$.
\end{proposition}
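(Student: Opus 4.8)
The plan is to show that every eigenvector of $S_f$ must be supported on the finite orbit $F$, after which the statement reduces to computing the point spectrum of an $n$-cycle. First I would write out the eigenvalue equation: suppose $0\neq x=\sum_j c_je_j\in\ell^2(\IN)$ satisfies $S_fx=\lambda x$, so that $\lambda\in\IT$ since $S_f$ is an isometry. Comparing the coefficient of $e_k$ on the two sides of $\sum_j c_je_{f(j)}=\sum_j\lambda c_je_j$ gives $\lambda c_k=c_{f^{-1}(k)}$ when $k\in f(\IN)$ and $c_k=0$ when $k\notin f(\IN)$. In particular $|c_k|=|c_{f^{-1}(k)}|$ whenever $k\in f(\IN)$, and upon iterating, $|c_k|=|c_{f^{-m}(k)}|$ for every $m$ for which $k\in f^m(\IN)$ (equivalently, for which the backward iterate is defined), exactly as in the preceding propositions.

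Next I would run a dichotomy on a fixed $k\notin F$. Since $\IN$ is the disjoint union of the orbits of $f$ and, by hypothesis, $F$ is the only finite one, the point $k$ lies on an infinite orbit; its backward trajectory $k,f^{-1}(k),f^{-2}(k),\dots$ then consists of pairwise distinct points (a repetition would force a periodic point on the orbit of $k$, hence $k\in F$), and it either (a) is infinite, or (b) terminates at some $s\notin f(\IN)$. In case (b) we have $c_s=0$ from the coefficient equation and hence $c_k=0$ via the modulus identity above; in case (a) the identity gives $\sum_{m\geq 0}|c_{f^{-m}(k)}|^2=\sum_{m\geq 0}|c_k|^2$, whose left-hand side is finite because $x\in\ell^2(\IN)$, again forcing $c_k=0$. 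Hence $x\in V_F$, so $\sigma_p(S_f)\subseteq\sigma_p(S_f\upharpoonright_{V_F})$. Conversely, $V_F$ is plainly $S_f$-invariant since $f(F)=F$, so every eigenvector of $S_f\upharpoonright_{V_F}$ is an eigenvector of $S_f$, and therefore $\sigma_p(S_f)=\sigma_p(S_f\upharpoonright_{V_F})$.

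Finally I would identify $S_f\upharpoonright_{V_F}$ explicitly: writing $F=\{n_0,f(n_0),\dots,f^{n-1}(n_0)\}$ with $f^n(n_0)=n_0$ and $n=|F|$, the operator $S_f\upharpoonright_{V_F}$ is, in the orthonormal basis $\{e_k:k\in F\}$, the cyclic shift of order $n$, whose characteristic polynomial is $z^n-1$; hence $\sigma_p(S_f\upharpoonright_{V_F})=\{z:z^n=1\}$. I do not anticipate a real obstacle: the only delicate point is the $\ell^2$-summability argument ruling out nonzero coefficients along bi-infinite orbits, and this is exactly where the hypothesis that $F$ is the \emph{unique} finite orbit is used — without it one could spread mass over several finite orbits and the point spectrum would be larger.
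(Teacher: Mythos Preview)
Your proof is correct and follows essentially the same approach as the paper's: both set up the eigenvalue equation, use the modulus identity $|c_k|=|c_{f^{-m}(k)}|$ together with the $\ell^2$ constraint (or termination of the backward trajectory) to force $c_k=0$ for $k\notin F$, and conclude that every eigenvector lies in $V_F$. Your write-up is in fact more complete than the paper's, since you explicitly argue the reverse inclusion via invariance of $V_F$ and identify the restriction as a cyclic shift to obtain the $n$-th roots of unity.
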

\begin{proof}
Let $ j_1, \ldots , j_n$ be the elements of $F$. 
If we set $x\doteq\sum_j c_je_j$, the eigenvalue equation $S_fx=\lambda x$ reads
\begin{align*}
S_f (\sum_{j\in \IN} a_j e_j) & =\sum_{j\in \IN} a_j e_{f(j)}= \sum_{j\in \IN} \lambda a_j e_j
\end{align*}
for some $\lambda \in\IT$. 
Pick $j\neq \bar j$.
Obviously, the equation says that $|c_{j}| = |c_{f^{-1}(j)}|$ and by iterating it also says 
$$
|c_{j}| = |c_{f^{-k}(j)}| \qquad \forall k\in \IN
$$
as long as $j\in\cap_{k\in \IN}f^k(\IN)$, otherwise we only have finitely many equal coefficients. As in the two previous lemmas, this leads to $c_j=0$.
Therefore, we see that $x=\sum_{h=1}^n c_{j_h}e_{j_h}$, that is $x\in V_F$.
\end{proof}

\begin{proposition}
Let $f: \IN\rightarrow \IN$ be an injective function with only 
 finitely many finite orbits, say %
 $F_1, \ldots , F_k$. 
Then $\sigma_p(S_f)=\bigcup_{h=1}^k \sigma_p(S_f\upharpoonright_{F_h})=\bigcup_{h=1}^k\{z : z^{|F_h|}=1\}$.
\end{proposition}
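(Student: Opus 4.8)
The plan is to reduce the statement to the two preceding propositions by splitting the index set $\IN$ along the finite orbits. First I would set $F\doteq\bigcup_{h=1}^{k}F_{h}$ and note that, since $f$ is injective and each $F_{h}$ is a finite orbit, $f$ restricts to a bijection of $F$ onto itself and hence to an injection of $\IN\setminus F$ into itself; therefore both $V_{F}\doteq{\rm span}\{e_{j}:j\in F\}$ and its orthogonal complement $V_{F}^{\perp}=\overline{{\rm span}}\{e_{j}:j\notin F\}$ are invariant under $S_{f}$, which yields the orthogonal decomposition $S_{f}=(S_{f}\upharpoonright_{V_{F}})\oplus(S_{f}\upharpoonright_{V_{F}^{\perp}})$. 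I would then use the elementary fact that the point spectrum of an orthogonal direct sum of isometries is the union of the point spectra of the summands (an eigenvector splits into components, each of which is either $0$ or an eigenvector for the same eigenvalue), so that it suffices to treat the two pieces separately.

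For the piece on $V_{F}^{\perp}$: the operator $S_{f}\upharpoonright_{V_{F}^{\perp}}$ is unitarily equivalent, via the relabelling bijection $\IN\setminus F\to\IN$, to $S_{g}$ for the injection $g$ thereby obtained from $f\upharpoonright_{\IN\setminus F}$; by hypothesis $f$ has no finite orbit outside $F_{1},\dots,F_{k}$, so $g$ has no periodic point at all, and the proposition above on injections without periodic points gives $\sigma_{p}(S_{g})=\emptyset$. (Alternatively one may just repeat verbatim the coefficient chasing of the previous proofs: for $j\notin F$ one gets $|c_{j}|=|c_{f^{-m}(j)}|$ for every $m$ for which $f^{-m}(j)$ is defined, and either the backward orbit of $j$ is infinite, forcing $c_{j}=0$ by square-summability, or $j\notin f(\IN)$, forcing $c_{j}=0$ outright.)

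For the piece on $V_{F}$: the subspaces $V_{F_{h}}\doteq{\rm span}\{e_{j}:j\in F_{h}\}$ are pairwise orthogonal, $S_{f}$-invariant, and $V_{F}=\bigoplus_{h=1}^{k}V_{F_{h}}$, so $\sigma_{p}(S_{f}\upharpoonright_{V_{F}})=\bigcup_{h=1}^{k}\sigma_{p}(S_{f}\upharpoonright_{V_{F_{h}}})$. On each $V_{F_{h}}$, ordering $F_{h}$ cyclically along $f$, the operator $S_{f}\upharpoonright_{V_{F_{h}}}$ is the cyclic shift on $|F_{h}|$ basis vectors, whose eigenvalues are exactly the $|F_{h}|$-th roots of unity; this is precisely the content of the previous proposition, so $\sigma_{p}(S_{f}\upharpoonright_{V_{F_{h}}})=\{z:z^{|F_{h}|}=1\}$. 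Assembling the two computations then yields $\sigma_{p}(S_{f})=\bigcup_{h=1}^{k}\{z:z^{|F_{h}|}=1\}$, as desired.

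I do not expect any genuine obstacle here: the only mildly delicate point is the remark that the point spectrum passes to unions under orthogonal direct sums — trivial for the finite sum over the $F_{h}$, and equally immediate for the splitting $V_{F}\oplus V_{F}^{\perp}$ — while everything else is a direct appeal to the two propositions already established.
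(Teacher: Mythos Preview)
Your proposal is correct and essentially matches the paper's own proof, which simply reads ``Just apply the same arguments as above.'' You have organized the reduction a bit more explicitly---splitting $\ell^2(\IN)$ as $V_F\oplus V_F^{\perp}$ and invoking the previous two propositions on the summands---whereas the paper leaves it to the reader to rerun the coefficient-chasing directly; but the underlying argument is the same.
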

\begin{proof}
Just apply the same arguments as above.
\end{proof}

It may be worth noting that any injection $\sigma:\IN\rightarrow\IN$ such that $\bigcap_n \sigma^n(\IN)$ is empty cannot have finite orbits.
However, the converse is by no means true, as shown by the next example.


\begin{example} The map
$f: \IN\to\IN$ given by
$$
f(n)\doteq  \left\{ \begin{array}{ll}
1 & \textrm{ if $n=2$  }\\
n+2 & \textrm{ if $n$ is odd }\\
n-2 & \textrm{ if $n$ is even, $n\neq 2$} \\
\end{array} \right.
$$
is a bijection so $\bigcap_{n=1}^\infty f^n(\IN)=\IN$ and yet $f$ has no invariant subsets.
\end{example}
\medskip

\section{Extending permutative representations from $\CO_2$ to $\CQ_2$}\label{extending}

We can now move on to the problem of deciding when a given permutative representation $\pi:\CO_2\rightarrow\CB(\CH)$ admits permutative extensions to $\CQ_2$.
We start by dealing with the case in which $\pi(S_1)$ and $\pi(S_2)$ are pure. In this situation, $\pi$ certainly extends and moreover its extension is unique. Even so, it is not obvious that
this extension is still permutative. This result, though, can be easily achieved by an application of the following lemma.

\begin{lemma}
Let $(\sigma_1,\sigma_2)$ be a branching function system of order $2$ such that the itersection $\bigcap_n \sigma_1^n(\IN)$
is empty. Then every $n\in\sigma_1(\IN)$ can be written as $n=\sigma_1^k(\sigma_2(m))$, where $k,m\in\IN$ are uniquely determined by $n$. 
\end{lemma}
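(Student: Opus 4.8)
The plan is to run an inductive ``peeling'' procedure that strips factors of $\sigma_1$ off $n$ one at a time, using the emptiness of $\bigcap_n \sigma_1^n(\IN)$ as the mechanism that forces the procedure to halt, and using the disjointness $\sigma_1(\IN)\cap\sigma_2(\IN)=\emptyset$ together with $\IN=\sigma_1(\IN)\cup\sigma_2(\IN)$ both to identify the stopping point and to get uniqueness.

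First I would fix $n\in\sigma_1(\IN)$. Since $\sigma_1$ is injective there is a unique $n_1$ with $\sigma_1(n_1)=n$; if $n_1\notin\sigma_1(\IN)$ we stop, and otherwise $n_1=\sigma_1(n_2)$ for a unique $n_2$, so that $n=\sigma_1^2(n_2)$, and we repeat. The crucial point is that this cannot go on forever: if it did, then $n=\sigma_1^k(n_k)$ for every $k\geq 1$, hence $n\in\bigcap_{k\geq 1}\sigma_1^k(\IN)=\emptyset$, a contradiction. So there is a well-defined smallest $k\geq 1$ with $n_k\notin\sigma_1(\IN)$. Because $\IN=\sigma_1(\IN)\cup\sigma_2(\IN)$, such an $n_k$ must lie in $\sigma_2(\IN)$, so $n_k=\sigma_2(m)$ for a unique $m$ by injectivity of $\sigma_2$, and we obtain $n=\sigma_1^k(\sigma_2(m))$ as claimed.

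For uniqueness of the pair $(k,m)$, suppose $n=\sigma_1^k(\sigma_2(m))=\sigma_1^{k'}(\sigma_2(m'))$ with, say, $k\leq k'$. Cancelling $k$ copies of the injective map $\sigma_1$ gives $\sigma_2(m)=\sigma_1^{k'-k}(\sigma_2(m'))$; if $k'>k$ the right-hand side lies in $\sigma_1(\IN)$, contradicting $\sigma_1(\IN)\cap\sigma_2(\IN)=\emptyset$, so $k=k'$, and then $\sigma_2(m)=\sigma_2(m')$ forces $m=m'$ by injectivity of $\sigma_2$.

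I do not anticipate a genuine obstacle; the only points requiring care are that the stopping argument be phrased so that the hypothesis $\bigcap_n\sigma_1^n(\IN)=\emptyset$ is actually invoked (for bijective $\sigma_1$ one has $\bigcap_n\sigma_1^n(\IN)=\IN$ and the statement fails), and that ``uniquely determined by $n$'' be read as uniqueness of the pair, which comes out of the range-disjointness exactly as above.
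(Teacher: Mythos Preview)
Your proof is correct and follows essentially the same approach as the paper: iteratively peel off factors of $\sigma_1$, invoke the emptiness of $\bigcap_n\sigma_1^n(\IN)$ to guarantee termination, and use the disjointness of ranges together with injectivity for uniqueness. If anything, your write-up is slightly more careful in its bookkeeping than the paper's.
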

\begin{proof}
If $n\in\sigma_1(\N)$, then $n=\sigma_1(i)$ for some $i\in\IN$. If now $i$ belongs to $\sigma_2(\IN)$, say $i=\sigma_2(j)$ for some $j\in\IN$, then $n=\sigma_1(\sigma_2(j))$, and we
are done. If it does not, then $i=\sigma_1(j)$ for some $j\in\IN$, and so $n=\sigma_1^2(j)$. We can now go on this way asking whether $j$ belongs to $\sigma_2(\IN)$ or not.
As the intersection $\bigcap \sigma_1^n(\IN)$ is empty, this procedure must come to an end in a finite number of steps, which means $n=\sigma_1^k(\sigma_2(m))$, with
$k\doteq{\rm min}\{l: n\notin \cap_{i=1}^l\sigma_1^i(\N)\}$. As for the unicity, suppose $\sigma_1^{k_1}(\sigma_2(m_1))=\sigma_1^{k_2}(\sigma_2(m_2))$. Without loss of generality, assume
$k_1>k_2$. By injectivity, we get $\sigma_1^{k_1-k_2}(\sigma_2(m_1))=\sigma_2(m_2)$. As the ranges of $\sigma_1$ and $\sigma_2$ are disjoint, we see that $k_1=k_2$.
Finally the injectivity of $\sigma_2$ gives $m_1=m_2$.  
\end{proof}
Now if we further assume that the itersection $\bigcap_{n=1}^\infty \sigma_2^n(\IN)$ is also empty, we uniquely recover a bijection $\tau:\IN\rightarrow\IN$ that satisfies the needed commutation rules with
$\sigma_1$ and $\sigma_2$. 
\begin{corollary}\label{pure}
Let $(\sigma_1,\sigma_2)$ be a  branching function system of order $2$ as above. If the intersection $\bigcap_{n=1}^\infty \sigma_2^n(\IN)$ is empty as well, then there exists a unique bijection 
$\tau$ of $\IN$ such that  $\tau\circ\sigma_2=\sigma_1$ and  $\tau\circ\sigma_1=\sigma_2\circ\tau$.
\end{corollary}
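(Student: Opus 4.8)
The plan is to construct $\tau$ explicitly on $\sigma_1(\IN)$ and on $\sigma_2(\IN)$ separately, using the previous lemma on each side, and then check the two formulas agree and satisfy the required relations. Since $(\sigma_1,\sigma_2)$ is a branching function system, $\IN=\sigma_1(\IN)\sqcup\sigma_2(\IN)$, so it suffices to define $\tau$ on each piece. We want $\tau\circ\sigma_2=\sigma_1$, which forces $\tau(\sigma_2(m))\doteq\sigma_1(m)$ for every $m\in\IN$; this is well defined and injective on $\sigma_2(\IN)$ because $\sigma_2$ is injective, and it maps $\sigma_2(\IN)$ bijectively onto $\sigma_1(\IN)$. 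For the other piece, we need $\tau\circ\sigma_1=\sigma_2\circ\tau$; combined with the first relation, on an element $\sigma_1(\sigma_2(m))$ we should get $\tau(\sigma_1(\sigma_2(m)))=\sigma_2(\tau(\sigma_2(m)))=\sigma_2(\sigma_1(m))$, and more generally, iterating, $\tau(\sigma_1^k(\sigma_2(m)))\doteq\sigma_2(\sigma_1^{k-1}(\sigma_2(m)))$ for $k\geq 1$. The previous lemma (applied with the roles of $\sigma_1$ forcing the decomposition) says every element of $\sigma_1(\IN)$ has a unique such expression $\sigma_1^k(\sigma_2(m))$ with $k\geq 1$, $m\in\IN$, so this formula unambiguously defines $\tau$ on $\sigma_1(\IN)$.

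Next I would verify that $\tau$ so defined is a bijection of $\IN$. Injectivity on each piece follows from the uniqueness statement in the lemma together with injectivity of $\sigma_1,\sigma_2$; that the images of the two pieces are disjoint follows because on $\sigma_2(\IN)$ the image lands in $\sigma_1(\IN)$ (indeed in $\sigma_1(\IN)\setminus\sigma_1^2(\IN)$-type sets once one tracks the $k$), while on $\sigma_1(\IN)$ the image is $\sigma_2(\sigma_1^{k-1}(\sigma_2(m)))\in\sigma_2(\IN)$; hence $\tau$ is injective on all of $\IN$. For surjectivity, given $n\in\IN$: if $n\in\sigma_1(\IN)$, say $n=\sigma_1(m)$, then $n=\tau(\sigma_2(m))$; if $n\in\sigma_2(\IN)$, then $n=\sigma_2(p)$ for a unique $p$, and here we use the emptiness of $\bigcap_n\sigma_2^n(\IN)$ to write $p=\sigma_2^j(\sigma_1(q))$ by the symmetric version of the previous lemma, whence $n=\sigma_2^{j+1}(\sigma_1(q))=\tau(\sigma_1(\sigma_2^j(\sigma_1(q)))\cdot)$ — more carefully, one matches $n$ with the formula $\tau(\sigma_1^k(\sigma_2(m)))=\sigma_2(\sigma_1^{k-1}(\sigma_2(m)))$ by recognizing $n$ in the form $\sigma_2(\sigma_1^{k-1}(\sigma_2(m)))$, which the symmetric lemma guarantees is always possible. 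So every $n$ is in the range of $\tau$.

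Finally I would check the two commutation relations hold on all of $\IN$. The relation $\tau\circ\sigma_2=\sigma_1$ holds by the very definition of $\tau$ on $\sigma_2(\IN)$. For $\tau\circ\sigma_1=\sigma_2\circ\tau$, take $n\in\IN$ and split cases: if $n=\sigma_2(m)$, then $\sigma_1(n)=\sigma_1(\sigma_2(m))=\sigma_1^1(\sigma_2(m))$, so by definition $\tau(\sigma_1(n))=\sigma_2(\sigma_1^{0}(\sigma_2(m)))=\sigma_2(\sigma_2(m))=\sigma_2(\sigma_1\cdot)$... wait, one must use $\tau(\sigma_2(m))=\sigma_1(m)$ on the right side: $\sigma_2(\tau(n))=\sigma_2(\tau(\sigma_2(m)))=\sigma_2(\sigma_1(m))$, and indeed $\tau(\sigma_1(\sigma_2(m)))=\sigma_2(\sigma_1(m))$ by the formula with $k=1$ (reading $\sigma_1^{k-1}=\sigma_1^0=\mathrm{id}$, giving $\sigma_2(\sigma_2(m))$)—so I must be careful about the indexing convention and set $\tau(\sigma_1^k(\sigma_2(m)))\doteq\sigma_2(\tau(\sigma_1^{k-1}(\sigma_2(m))))$ recursively, with base $\tau(\sigma_2(m))=\sigma_1(m)$; then both relations hold by construction and a short induction on $k$. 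If instead $n=\sigma_1^k(\sigma_2(m))\in\sigma_1(\IN)$, then $\sigma_1(n)=\sigma_1^{k+1}(\sigma_2(m))$ and the recursive definition gives $\tau(\sigma_1(n))=\sigma_2(\tau(n))$ immediately.

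The main obstacle, I expect, is the surjectivity of $\tau$: it is the only place where the hypothesis $\bigcap_n\sigma_2^n(\IN)=\emptyset$ is genuinely needed, and it requires invoking the previous lemma in its ``symmetric'' form with the roles of $\sigma_1$ and $\sigma_2$ interchanged (which is legitimate since the lemma's statement is symmetric in the two maps of a branching function system, given the relevant intersection is empty). Everything else is bookkeeping with the unique decompositions supplied by that lemma; the commutation relations are essentially built into the definition, and uniqueness of $\tau$ is forced step by step by the relations themselves, as indicated above.
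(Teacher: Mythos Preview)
Your approach is essentially the paper's: define $\tau$ by $\tau(\sigma_2(m))=\sigma_1(m)$ and extend to $\sigma_1(\IN)$ via the unique decomposition from the previous lemma, then verify the second relation. The paper writes the extension in closed form as $\tau(\sigma_1^k(\sigma_2(m)))=\sigma_2^k(\sigma_1(m))$, which is what your recursive definition unwinds to; your first attempt $\tau(\sigma_1^k(\sigma_2(m)))=\sigma_2(\sigma_1^{k-1}(\sigma_2(m)))$ is \emph{not} the same map (for $k=1$ it gives $\sigma_2(\sigma_2(m))$ rather than $\sigma_2(\sigma_1(m))$), and you were right to abandon it. With the closed form in hand, surjectivity is immediate from the symmetric lemma: any $n\in\sigma_2(\IN)$ is $\sigma_2^k(\sigma_1(m))=\tau(\sigma_1^k(\sigma_2(m)))$, so there is no need for the somewhat tangled argument you sketch. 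Uniqueness, which the paper leaves implicit, is exactly your observation that the two relations force $\tau$ on $\sigma_2(\IN)$ and then recursively on each $\sigma_1^k(\sigma_2(\IN))$.
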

\begin{proof}
The $\tau$ in the statement can be described explicitly by setting $\tau(\sigma_2(n))\doteq \sigma_1(n)$ for every $n\in\IN$ and $\tau(\sigma_1^k(\sigma_2(m)))\doteq \sigma_2^k(\sigma_1(m))$, for every $k>0$ and $m\in\IN$. The proposition above just guarantees that $\tau$ is defined everywhere.  Let us check that the condition $\tau\circ\sigma_1=\sigma_2\circ\tau$ holds. 
We observe that by the previous proposition we have that
\begin{align*}
\tau\circ \sigma_1(n) &  =\tau\circ \sigma_1^k \circ \sigma_2(m) = \sigma_2^k \circ \sigma_1(m)
\end{align*}
for some $k, m, n \in\IN$. Now there are two case we have to deal with, one for $k=1$ and one for $k\geq 2$. 
In the first case, the relation $\sigma_1(n)=\sigma_1\circ\sigma_2(m)$ implies that $n=\sigma_2(m)$. The claim then follows by noticing that 
$$
\sigma_2\circ\tau(n) =\sigma_2\circ\tau \circ \sigma_2(m) = \sigma_2 \circ \sigma_1(m)\; .
$$
In the second case, we see that by the very definition of $\tau$ we have that 
\begin{align*}
\sigma_2\circ \tau(n) & =\sigma_2\circ \tau\circ \sigma_1^{k-1} \circ \sigma_2(m) = \sigma_2\circ \sigma_2^{k-1} \circ \sigma_1(m) =  \sigma_2^k \circ \sigma_1(m)
\end{align*}
and we are done. 
\end{proof}

The corollary above can now  be reinterpreted in terms of permutative representations.
\begin{proposition}\label{pureperm}
Let $\pi$ be a permutative representation of $\CO_2$ such that $\pi(S_1)$ and $\pi(S_2)$ are both pure. Then the necessarily unique extension $\tilde\pi$ of $\pi$ to $\CQ_2$ is a permutative representation of $\CQ_2$.
\end{proposition}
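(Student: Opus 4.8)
The plan is to translate the statement about representations back into the combinatorial language of branching function systems and then invoke Corollary~\ref{pure}. Since $\pi$ is permutative, fix an orthonormal basis $\{e_n:n\in\IN\}$ of $\CH$ and injections $\sigma_1,\sigma_2:\IN\to\IN$ with $\pi(S_i)e_n=e_{\sigma_i(n)}$, so that $(\sigma_1,\sigma_2)$ is a branching function system of order $2$. The hypothesis that $\pi(S_1)$ and $\pi(S_2)$ are both pure means, by Corollary~\ref{cor-pureness}, exactly that $\bigcap_n\sigma_1^n(\IN)=\emptyset$ and $\bigcap_n\sigma_2^n(\IN)=\emptyset$. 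These are precisely the hypotheses of Corollary~\ref{pure}, so we obtain a (unique) bijection $\tau:\IN\to\IN$ with $\tau\circ\sigma_2=\sigma_1$ and $\tau\circ\sigma_1=\sigma_2\circ\tau$.

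Next I would define $\tilde\pi$ on the generators of $\CQ_2$ by $\tilde\pi(S_2)e_n:=e_{\sigma_2(n)}$ and $\tilde\pi(U)e_n:=e_{\tau(n)}$, and check that this indeed yields a $^*$-representation of $\CQ_2$. According to the remark following the definition of permutative representation of $\CQ_2$, it suffices to verify that $\sigma_2\circ\tau=\tau^2\circ\sigma_2$ and that $(\sigma_2,\tau\circ\sigma_2)$ is a branching function system of order $2$. The second point is immediate: $\tau\circ\sigma_2=\sigma_1$, and $(\sigma_1,\sigma_2)=(\tau\circ\sigma_2,\sigma_2)$ is a branching function system by assumption. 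For the first point, applying $\tau$ to $\tau\circ\sigma_2=\sigma_1$ gives $\tau^2\circ\sigma_2=\tau\circ\sigma_1=\sigma_2\circ\tau$, which is exactly the required relation. Hence $\tilde\pi$ is a well-defined representation of $\CQ_2$, and it is permutative by construction since $\tilde\pi(S_2)$ and $\tilde\pi(U)$ both act on the basis $\{e_n\}$.

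It then remains to observe that $\tilde\pi$ extends $\pi$ and that it is the unique extension. For the extension property, recall that $\CO_2\subset\CQ_2$ via $X_1\mapsto US_2$, $X_2\mapsto S_2$; thus $\tilde\pi(X_1)e_n=\tilde\pi(U)\tilde\pi(S_2)e_n=e_{\tau(\sigma_2(n))}=e_{\sigma_1(n)}=\pi(S_1)e_n$ and $\tilde\pi(X_2)e_n=\tilde\pi(S_2)e_n=e_{\sigma_2(n)}=\pi(S_2)e_n$, so $\tilde\pi\upharpoonright_{\CO_2}=\pi$. Uniqueness of the extension is a known fact for representations in which $\pi(S_1)$ and $\pi(S_2)$ are pure (the Wold unitaries are trivial, so the Larsen--Li criterion applies and the extension is forced); alternatively, it follows already at the combinatorial level from the uniqueness clause in Corollary~\ref{pure}, since any permutative extension determines a bijection $\tau$ satisfying those same relations.

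I do not anticipate a serious obstacle here: the proposition is essentially a repackaging of Corollary~\ref{pure} plus the bookkeeping identification of ``pure isometry'' with ``empty intersection of iterated ranges'' via Corollary~\ref{cor-pureness}. The only mildly delicate point is making sure that the relation $\sigma_2\circ\tau=\tau^2\circ\sigma_2$ used to certify that $\tilde\pi$ is a genuine representation of $\CQ_2$ is the one actually produced by Corollary~\ref{pure} (it is, after one rewrites $\tau\circ\sigma_1=\sigma_2\circ\tau$ using $\sigma_1=\tau\circ\sigma_2$), and that the word ``necessarily unique'' in the statement is justified either by Larsen--Li or by the uniqueness of $\tau$.
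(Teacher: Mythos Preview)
Your proposal is correct and follows essentially the same approach as the paper: the paper's proof is a one-line invocation of Corollary~\ref{pure} (the bijection $\tau$ produced there implements $\tilde\pi(U)$), and you have simply spelled out in detail the verifications that the paper leaves implicit. Your handling of uniqueness is also in line with the paper, which treats it as already known (via Larsen--Li) in the sentence preceding the proposition.
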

\begin{proof}
Indeed, $\tilde\pi(U)$ is implemented by the bijection $\tau:\IN\rightarrow\IN$ produced in Corollary \ref{pureperm}.
\end{proof}

When the isometries are not pure, a much more intriguing picture comes out. The resulting situation is fully covered by the next theorem.

\begin{thm}\label{orbits}
Let $\pi:\CO_2\rightarrow\CB(\CH)$ be a permutative representation induced by 
the  branching function system $(\sigma_1,\sigma_2)$. Then the following conditions are equivalent:
\begin{enumerate}
\item the permutative representation $\pi$ of $\CO_2$ extends to a permutative representation $\widetilde \pi$ of $\CQ_2$;
\item if $Orb_1=\{F^1_i: i\in I\}$ and $Orb_2=\{F^2_j: j\in J\}$ are the sets of all orbits of $\sigma_1\upharpoonright_{\cap_{n\geq 1}\sigma_1^n(\IN)}$ and $\sigma_2\upharpoonright_{\cap_{n\geq 1}\sigma_2^n(\IN)}$  respectively, there exists a bijection $\Psi: I\rightarrow J$ such that  $|F^1_i|=|F^2_{\Psi(i)}|$ for every $i\in I$.
\end{enumerate}
In either case $\widetilde\pi(U)$ always satisfies  $\widetilde\pi(U) e_{\sigma_2(i)}=e_{\sigma_1(i)}$ $i\in\IN$ and
$\widetilde\pi(U) e_{\sigma_1^k\sigma_2(i)}=e_{\sigma_2^k\sigma_1(i)}$ $i\in\IN$, $k\in\IN$. Assuming for simplicity $I=J$ and $\Psi =\id$, $F_j^1=\{\sigma_1^k(n_j)\; |\; k\in\IZ\}$, $F_j^2=
\{\sigma_2^k(m_j)\; |\; k\in\IZ\}$,
 all permutative extensions are obtained by the formulas 
$\widetilde\pi(U) e_{\sigma_1^k(n_j)}=e_{\sigma_2^{k+l}(m_{j})}$ for $l\in\IN$.
\end{thm}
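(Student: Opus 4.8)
The plan is to prove the two implications $(1)\Leftrightarrow(2)$ and then verify the explicit formulas for $\widetilde\pi(U)$. The backbone of the argument is the observation that, since $\rho(S_2)$ and $\rho(U)$ generate $\CQ_2$ and $\rho(S_1)=\rho(U)\rho(S_2)$, a permutative extension amounts to finding a bijection $\tau$ of $\IN$ with $\tau\circ\sigma_2=\sigma_1$ on the whole of $\IN$ and with $\sigma_2\circ\tau=\tau^2\circ\sigma_2$ (equivalently $\tau\circ\sigma_1=\sigma_2\circ\tau$). The first relation already forces $\tau$ on $\sigma_2(\IN)$, and hence, using the decomposition $\IN=\sigma_1(\IN)\sqcup\sigma_2(\IN)$ and iterating, it forces $\tau$ on every element of $\sigma_1^k(\sigma_2(\IN))$ to be $\sigma_2^k\circ\sigma_1$ on the corresponding preimage; this is exactly where the Lemma (``$n=\sigma_1^k(\sigma_2(m))$ with $k,m$ uniquely determined'') enters, applied not on all of $\IN$ but iteratively peeling off $\sigma_2(\IN)$-layers. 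So $\tau$ is completely determined on the complement of $W_1\doteq\bigcap_n\sigma_1^n(\IN)$, and — reading the relation the other way — $\tau^{-1}$ is completely determined on the complement of $W_2\doteq\bigcap_n\sigma_2^n(\IN)$. Thus the only freedom, and the only possible obstruction, is how $\tau$ matches $W_1$ onto $W_2$: one checks that the relations force $\tau(W_1)=W_2$, that $\tau\upharpoonright_{W_1}$ must intertwine $\sigma_1\upharpoonright_{W_1}$ with $\sigma_2\upharpoonright_{W_2}$ (both of which are \emph{bijections} of those sets by the definition of $W_i$), and conversely that \emph{any} such intertwining bijection, glued to the forced part, yields a genuine permutative representation of $\CQ_2$. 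This reduces $(1)$ to: there exists a bijection $W_1\to W_2$ conjugating $\sigma_1\upharpoonright_{W_1}$ into $\sigma_2\upharpoonright_{W_2}$.

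Next I would translate this conjugacy condition into the orbit-counting statement $(2)$. Since $\sigma_i\upharpoonright_{W_i}$ is a bijection of $W_i$, both $W_1$ and $W_2$ split into disjoint $\IZ$-orbits; two bijections of (countable) sets are conjugate precisely when, for each $\nu\in\{1,2,3,\dots\}\cup\{\infty\}$, they have the same (possibly infinite) number of orbits of cardinality $\nu$. For infinite orbits this is automatic once one knows the cardinalities of $W_1,W_2$ agree in the relevant sense; for us the cleaner route, matching the statement of the theorem, is simply: a conjugating bijection exists iff there is a bijection $\Psi:I\to J$ between the index sets of orbits with $|F^1_i|=|F^2_{\Psi(i)}|$. (One should remark that if $W_1$ or $W_2$ is empty then so must be the other, in which case $\pi$ is already covered by Proposition~\ref{pureperm}; and that when the $W_i$ are nonempty, $\IN\setminus\sigma_i(\IN)$ being nonempty forces each $W_i$ to be infinite, which keeps the bookkeeping honest.) Putting $\tau$ together from its forced part and a choice of $\Psi$ together with a choice of orbit-base-point alignment gives $(2)\Rightarrow(1)$; running the determinacy argument of the first paragraph backwards gives $(1)\Rightarrow(2)$.

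Finally, for the displayed formulas: the identities $\widetilde\pi(U)e_{\sigma_2(i)}=e_{\sigma_1(i)}$ and $\widetilde\pi(U)e_{\sigma_1^k\sigma_2(i)}=e_{\sigma_2^k\sigma_1(i)}$ are just the ``forced part'' of $\tau$ derived above, valid in every permutative extension. For the remaining freedom, fix $I=J$, $\Psi=\id$, and write $F^1_j=\{\sigma_1^k(n_j):k\in\IZ\}$, $F^2_j=\{\sigma_2^k(m_j):k\in\IZ\}$; any conjugating bijection on $W_1$ must send the orbit of $n_j$ to the orbit of $m_j$ respecting the dynamics, hence is of the form $\sigma_1^k(n_j)\mapsto\sigma_2^{k+l_j}(m_j)$ for some shift $l_j\in\IZ$ (one per orbit), and each such choice is admissible; this is the asserted family $\widetilde\pi(U)e_{\sigma_1^k(n_j)}=e_{\sigma_2^{k+l}(m_j)}$.

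The main obstacle I anticipate is the bookkeeping in the first paragraph: proving cleanly that the commutation relations (i) force $\tau$ on $\IN\setminus W_1$ exactly as the Lemma dictates, (ii) force $\tau(W_1)=W_2$ with $\tau\upharpoonright_{W_1}$ an intertwiner, and (iii) that gluing the forced part to an arbitrary intertwiner on the $W_i$ really does satisfy \emph{both} relations $\sigma_2\circ\tau=\tau^2\circ\sigma_2$ (equivalently $\tau\circ\sigma_1=\sigma_2\circ\tau$) and $\tau\circ\sigma_2=\sigma_1$ globally — the case split ``$k=1$ vs.\ $k\ge 2$'' from the proof of Corollary~\ref{pure} will reappear here and has to be checked with the extra orbit-matching data present, and one must be careful that the layers $\sigma_1^k(\sigma_2(\IN))$ together with $W_1$ really do partition $\IN$ (which is where ``$\bigcap_n\sigma_1^n(\IN)=W_1$'' and Lemma~\ref{isopower} get used).
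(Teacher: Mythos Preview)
Your proposal is correct and follows essentially the same route as the paper: both arguments observe that the relations $\tau\circ\sigma_2=\sigma_1$ and $\tau\circ\sigma_1=\sigma_2\circ\tau$ force $\tau$ on $\bigcup_{k\ge 0}\sigma_1^k(\sigma_2(\IN))=\IN\setminus W_1$ via $\tau(\sigma_1^k\sigma_2(i))=\sigma_2^k\sigma_1(i)$, leaving precisely the freedom of an orbit-preserving bijection $W_1\to W_2$, and both verify that any such choice glues to a valid $\tau$. One small correction to a parenthetical remark: your claim that a nonempty $W_i$ must be infinite is false (for instance in the representation $P(2)$ one has $W_2$ a single fixed point), but you never actually use this, so the argument is unaffected.
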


\begin{proof}
The implication $1\Rightarrow 2$ is easily proved, for a bijective correspondence between even all orbits of $\sigma_1$ and $\sigma_2$  is provided by
$$\Phi(\{\sigma_1^k(n_0)\}_k)\doteq \{\sigma_2^k(\tau^{-1}(n_0))\}_k$$
if $\tau$ is a bijection of $\IN$ such that $\sigma_1=\tau\circ\sigma_2$ and $\sigma_2\circ\tau=\tau\circ\sigma_1$. That $\Phi$ is actually a bijection is  immediately seen, as is
the equality $|\Phi(\{\sigma_1^k(n_0)
\}_k)|=|\{\sigma_2^k(\tau^{-1}(n_0))
\}_k|$.\\

The implication $2\Rightarrow 1$ requires slightly harder work. We need to determine a permutative unitary $\widetilde\pi(U)$ that satisfies
the two defining relations of $\CQ_2$, which rewrite as  $\tau\circ\sigma_2=\sigma_1$ and $\tau\circ\sigma_1^k\circ\sigma_2=\sigma_2^k\circ\sigma_1$, $k\in\IN$, if
$\tau$ is a bijection of $\N$ implementing $\widetilde\pi(U)$, i.e. $\widetilde\pi(U)e_k=e_{\tau(k)}$ for every $k\in\IN$. In other terms, the two relations uniquely 
determine what $\tau$ must be on $\bigcup_{n=0}^\infty\sigma_1^n\circ\sigma_2(\IN)$. This is where our hypothesis starts playing its role, for $\IN\setminus \bigcup_{n=0}^\infty\sigma_1^n\circ\sigma_2(\IN)$ is just $\bigcap_{n=1}^\infty \sigma_1^n(\IN)$. But $\bigcap_{n=1}^\infty \sigma_1^n(\IN)$ decomposes into the disjoint union of $\sigma_1$-orbits by means of a standard application of Zorn's lemma. Now we can define $\tau$ on any of such orbits $\{\sigma_1^k(n_0):k\in\IN\}$ by setting
$\tau(\sigma_1^k(n_0))\doteq \sigma_2^{k+l}(m_0)$, where $l$ is any integer number, and $\{\sigma_2^k(m_0): k\in\IZ\}$ is the corresponding $\sigma_2$-orbit.
By doing so, we obviously get a now everywhere defined map $\tau$, that is bijective by definition. We are thus left with the task of checking that $\sigma_2\circ\tau=\tau\circ\sigma_1$ continues 
to hold on the whole $\IN$. This in turn is shown by the following computation 
$$
\sigma_2(\tau(\sigma_1^k(n_0)))=\sigma_2(\sigma_2^{k+l}(m_0))=\sigma_2^{1+k+l}(m_0)=\sigma_2^{k+1+l}(m_0)=\tau(\sigma_1^{k+1}(n_0))=\tau(\sigma_1(\sigma_1^k(n_0)))
$$
\end{proof}

Interestingly, the theorem also gives full information to reckon how many permutative extensions there can be. Firstly, if the restriction of $\sigma_i$ to $\bigcap_{n=1}^\infty\sigma_i^n(\IN)$ consists only
of finitely many finite orbits, then the extensions will be finitely many as well.
Secondly, if there are finitely many infinite orbits, the extentions will be countably many. Lastly, if there are infinitely many infinite orbits,  the extensions will be uncountably many.
Irrespective of how many permutative extensions exist, they turn out to be all unitarily equivalent to one another, as we prove next.

\begin{thm} \label{uniext}
If $\pi: \CO_2\to \CB(H)$ is an extendible permutative representation, then all its permutative extensions (w.r.t. the same orthonormal basis in which $\pi$ is permutative)  are unitarily equivalent.
\end{thm}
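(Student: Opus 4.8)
The plan is to fix the common orthonormal basis $\{e_n : n\in\IN\}$ in which $\pi$ is permutative, say with branching function system $(\sigma_1,\sigma_2)$, and to show that any two permutative extensions $\widetilde\pi$, $\widetilde\pi'$ of $\pi$ are intertwined by a \emph{permutative} unitary, i.e.\ a unitary $W$ of the form $We_n = e_{\varphi(n)}$ for some bijection $\varphi$ of $\IN$. Since $\widetilde\pi$ and $\widetilde\pi'$ already agree on $\CO_2$, we want $W$ to commute with $\pi(\CO_2)$; then $W$ is automatically an intertwiner once we arrange $W\widetilde\pi(U) = \widetilde\pi'(U)W$. By Theorem \ref{orbits}, both $\widetilde\pi(U)$ and $\widetilde\pi'(U)$ are already forced to agree on $\bigcup_{n\geq 0}\sigma_1^n\sigma_2(\IN)$; they can only differ on the $\sigma_1$-orbits inside $\CR_1 := \bigcap_{n\geq 1}\sigma_1^n(\IN)$, where each extension is determined by a choice of ``phase shift'' $l\in\IZ$ matching a $\sigma_1$-orbit $F_j^1 = \{\sigma_1^k(n_j):k\in\IZ\}$ to the corresponding $\sigma_2$-orbit $F_j^2 = \{\sigma_2^k(m_j):k\in\IZ\}$ (under the bijection $\Psi$ of orbits, which we may take to be the identity, as both extensions must use the \emph{same} $\Psi$ up to relabelling).

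The key step is to build $\varphi$ orbit by orbit on $\CR_1$. Suppose on a given $\sigma_1$-orbit $F_j^1$ the extension $\widetilde\pi(U)$ is defined by $\widetilde\pi(U)e_{\sigma_1^k(n_j)} = e_{\sigma_2^{k+l}(m_j)}$ and $\widetilde\pi'(U)$ by $\widetilde\pi'(U)e_{\sigma_1^k(n_j)} = e_{\sigma_2^{k+l'}(m_j)}$, with $l,l'\in\IZ$ (the finite-orbit case is the same but with indices read mod $|F_j^1|$). I would define $\varphi$ on $F_j^1$ by $\varphi(\sigma_1^k(n_j)) = \sigma_1^{k}(n_j)$ for all $k$ — wait, that won't shift the phase — so instead define $W$ on the \emph{whole} of $\CH$ by requiring $W\pi(S_\mu)\xi = \pi(S_\mu)W\xi$ for all multi-indices $\mu$, where $\xi$ ranges over a set of ``generating'' basis vectors. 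Concretely: every basis vector $e_n$ can be written uniquely as $\pi(S_\mu)e_p$ with $p\in\CR_1\cup\CR_2$ (using the Wold-type decomposition, Lemma \ref{isopower}, applied to both $\sigma_1$ and $\sigma_2$, together with the preceding Lemma that decomposes $\sigma_1(\IN)$ as $\sigma_1^k\sigma_2(\IN)$); hence it suffices to define $W$ on the ``cores'' $\CR_1$ and $\CR_2$ compatibly and extend by $W\pi(S_\mu)e_p := \pi(S_\mu)We_p$. On the $\sigma_1$-orbit $F_j^1$ in $\CR_1$ set $We_{\sigma_1^k(n_j)} := e_{\sigma_1^{k + (l - l')}(n_j)}$; this is a well-defined permutative unitary on that orbit because $\sigma_1$ restricted to $\CR_1$ is a bijection of the orbit, and it commutes with $\widetilde\pi(S_1) = \pi(S_1)$ there by construction. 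One then checks $W\widetilde\pi(U) = \widetilde\pi'(U)W$ on $F_j^1$: the left side sends $e_{\sigma_1^k(n_j)}$ to $We_{\sigma_2^{k+l}(m_j)}$, and I need this to equal $\widetilde\pi'(U)We_{\sigma_1^k(n_j)} = \widetilde\pi'(U)e_{\sigma_1^{k+l-l'}(n_j)} = e_{\sigma_2^{k+l-l'+l'}(m_j)} = e_{\sigma_2^{k+l}(m_j)}$, so $W$ must act as the identity on the $\sigma_2$-orbit $F_j^2$ and the phase shift is carried entirely on the $\sigma_1$ side — meaning $W$ fixes $\CR_2$ pointwise. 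So the clean recipe is: let $W$ be the identity on $\CR_2$, on each $\sigma_1$-orbit $F_j^1\subset\CR_1$ let $W$ be the power of (the restriction of) $\pi(S_1)\cdot\pi(S_1)^{-1}$-shift corresponding to $l_j - l'_j$, and extend over all of $\IN$ by $W\pi(S_\mu)e_p = \pi(S_\mu)We_p$.

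Then the verification splits into three routine checks: (i) $W$ is a well-defined unitary — because the decomposition $e_n = \pi(S_\mu)e_p$ is unique, and $W$ permutes the basis, being a bijection on each orbit and hence on $\IN$; (ii) $W$ commutes with $\pi(S_1)$ and $\pi(S_2)$ — immediate from the defining relation $W\pi(S_\mu)e_p = \pi(S_\mu)We_p$ once one knows it is consistent, i.e.\ that $W$ maps the core into itself, which it does since the phase shift stays within the same $\sigma_1$-orbit; (iii) $W\widetilde\pi(U) = \widetilde\pi'(U)W$ — on $\bigcup_n\sigma_1^n\sigma_2(\IN)$ both $U$-images are forced equal and $W$ commutes with the $\pi(S_i)$'s that generate that part, while on each $\sigma_1$-orbit of $\CR_1$ it is the one-line computation above.

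The main obstacle I anticipate is bookkeeping the orbit correspondence correctly: a permutative extension is not literally indexed by a single integer but by a choice, for each $i\in I$, of an ``origin alignment'' between $F_i^1$ and $F^2_{\Psi(i)}$, and different extensions may a priori use different bijections $\Psi$ of the orbit index sets. One must observe that since $|F^1_i| = |F^2_{\Psi(i)}|$ and the orbit spaces are the same for both extensions (they depend only on $\pi$), any two choices of $\Psi$ differ by a permutation of $J$ respecting orbit lengths, and this discrepancy too is absorbed by a permutative unitary commuting with $\pi(\CO_2)$ — indeed permuting whole ``blocks'' of the form $\overline{\operatorname{span}}\{\pi(S_\mu)e_p : p \in F, \mu\}$ for orbits $F$ of equal length is exactly such a unitary. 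Handling the infinite-orbit case requires care that the bijections $k\mapsto k + (l-l')$ of $\IZ$ (resp.\ of $\IZ/|F|\IZ$ in the finite case) are genuine bijections, which they are; no convergence issue arises since $W$ is defined purely combinatorially on basis vectors. Modulo this indexing care, the argument is essentially the uniqueness half of the usual Bratteli–Jorgensen analysis of permutative representations transported to $\CQ_2$.
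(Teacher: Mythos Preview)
Your approach is correct in outline and genuinely different from the paper's. The paper builds its intertwiner $V$ as the identity on $M_2^\perp$ and a permutation of $\sigma_2$-orbits inside $M_2=\overline{\operatorname{span}}\{e_p:p\in\CR_2\}$; it then checks the intertwining relation for $S_2$ and for $U^*$ separately on the pieces $M_2$, $S_2(H)$, $S_1(H)$. Your $W$ instead shifts along $\sigma_1$-orbits in $\CR_1$, fixes $\CR_2$, and is propagated to the rest of the basis by forcing commutation with all $\pi(S_\mu)$. The payoff of your route is that $W\in\pi(\CO_2)'$ by construction, so only the single relation $W\widetilde\pi(U)=\widetilde\pi'(U)W$ needs to be checked, and that check is the one-line orbit computation you gave. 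The paper's $V$, by contrast, does \emph{not} lie in $\pi(\CO_2)'$, which is why its verification has to be done piecewise.

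There is one real slip to fix. The assertion that ``every basis vector $e_n$ can be written uniquely as $\pi(S_\mu)e_p$ with $p\in\CR_1\cup\CR_2$'' is false on both counts. Existence fails on any part of $\pi$ where both isometries are pure (there $\CR_1=\CR_2=\emptyset$ in the relevant block); on that part you should simply set $W=\id$, which is consistent since the extension is already forced there. Uniqueness \emph{never} holds: for $p\in\CR_1$ one has $p=\sigma_1(q)$ with $q\in\CR_1$, giving a second decomposition $e_p=\pi(S_1)e_q$. What you actually need---and what is true---is that the rule $\varphi(\sigma_\mu(p)):=\sigma_\mu(\varphi(p))$ is \emph{consistent} across all such decompositions. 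This holds because any two decompositions of the same $n$ with $p,q\in\CR_1\cup\CR_2$ are forced (by the coding map) to have $p,q$ in the same $\CR_i$ and to differ only by a power of $\sigma_i$, with which your $\varphi\upharpoonright_{\CR_i}$ commutes by design. A similar consistency remark disposes of the $\Psi$-ambiguity you flagged at the end: permuting $\sigma_2$-orbits of equal length in $\CR_2$ (or $\sigma_1$-orbits in $\CR_1$) and extending by commutation with $\pi(S_\mu)$ again gives a well-defined element of $\pi(\CO_2)'$. With these clarifications your three checks (i)--(iii) go through exactly as you sketched.
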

\begin{proof}
Let $\tilde\pi _1: \CQ_2\to \CB(H)$ and $\tilde\pi _2: \CQ_2\to \CB(H)$ be two permutative representations extending $\pi$. We will show that there exists a unitary $V: H\to H$ such that $\Ad(V)\circ \tilde\pi_1 = \tilde\pi_2$. As usual we denote by $\{e_k\}_{k\in\IN}$ the orthonormal basis and by $\{\sigma_1, \sigma_2\}$ the branching function system of order $2$. Consider the following subspace of $H$
\begin{align*}
& M_2\doteq\bigcap_{n=1}^\infty{\rm Ran}(\pi(S_2)^n)
\end{align*} 
Suppose that $\widetilde\pi_1(U) e_{\sigma_1^k(n_j)}=e_{\sigma_2^{k}(m_{j'})}$ and $\widetilde\pi_2(U) e_{\sigma_1^k(n_j)}=e_{\sigma_2^{k}(m_{j''})}$ (we're using the notation of Theorem \ref{orbits}).
We define  $V\upharpoonright_{M_2}: M_2\to M_2$ as $V e_{\sigma_2^k(m_{j'})} \doteq e_{\sigma_2^k(m_{j''})}$ and we set 
$V\upharpoonright_{M_2^\bot} = \id\upharpoonright_{M_2^\bot}$.

Clearly $\Ad(V)\circ \tilde\pi_1 (S_2)\upharpoonright_{M_2^\bot}= \tilde\pi_2(S_2)\upharpoonright_{M_2^\bot}$ because $\pi (S_2)(M_2^\bot)\subset M_2^\bot$ .
The following computation shows that $\Ad(V)\circ \tilde\pi_1 (S_2)\upharpoonright_{M_2}= \tilde\pi_2(S_2)\upharpoonright_{M_2}$
\begin{align*}
V\tilde\pi_1(S_2)V^* e_{\sigma_2^k(m_{j''})} & = V\tilde\pi_1(S_2)e_{\sigma_2^k(m_{j'})}= V e_{\sigma_2^{k+1}(m_{j'})} = e_{\sigma_2^{k+1}(m_{j''})} = \tilde\pi_2(S_2) e_{\sigma_2^k(m_{j''})} 
\end{align*}
Now it is enough to check that $\Ad(V)\circ \tilde\pi_1(U^*) = \tilde\pi_2(U^*)$. The following computation shows that $\Ad(V)\circ \tilde\pi_1(U^*) \upharpoonright_{M_2}= \tilde\pi_2(U^*)\upharpoonright_{M_2}$
\begin{align*}
V\tilde\pi_1(U^*)V^* e_{\sigma_2^k(m_{j''})} & = V\tilde\pi_1(U^*)e_{\sigma_2^k(m_{j'})}= V e_{\sigma_1^{k}(n_{j})} = e_{\sigma_1^{k}(n_{j})} = \tilde\pi_2(U^*) e_{\sigma_2^k(m_{j''})} 
\end{align*}
We observe that $\Ad(V)\circ \tilde\pi_1(U^*) e_{\sigma_2^k(\sigma_1(i))}=\tilde\pi_2(U^*) e_{\sigma_2^k(\sigma_1(i))}$ for $i, k\in\IN$ and thus $\Ad(V)\circ \tilde\pi_1(U^*) \upharpoonright_{S_2^kS_1(H)}= \tilde\pi_2(U^*)\upharpoonright_{S_2^kS_1(H)}$, which in turn implies 
$$
\Ad(V)\circ \tilde\pi_1(U^*) \upharpoonright_{S_2(H)}= \tilde\pi_2(U^*)\upharpoonright_{S_2(H)}\; .
$$
We observe that $\Ad(V)\circ \tilde\pi_1(U^*) e_{\sigma_1^k(\sigma_2(i))}=\tilde\pi_2(U^*) e_{\sigma_1^k(\sigma_2(i))}$ for $i, k\in\IN$ and thus $\Ad(V)\circ \tilde\pi_1(U^*) \upharpoonright_{S_1^kS_2(H)}= \tilde\pi_2(U^*)\upharpoonright_{S_1^kS_2(H)}$. We also have that 
\begin{align*}
\Ad(V)\circ \tilde\pi_1(U^*) e_{\sigma_1(n_j)} & =V\tilde\pi_1(U^*) V^*e_{\sigma_1(n_j)}=V\tilde\pi_1(U^*) e_{\sigma_1(n_j)} = V  e_{\sigma_2(n_j)}\\
& =  e_{\sigma_2(n_j)} = \tilde\pi_1(U^*) e_{\sigma_1(n_j)}
\end{align*}
where $n_j\in \cap_k \sigma_1^k(\IN)$. This shows that $\Ad(V)\circ \tilde\pi_1(U^*) \upharpoonright_{S_1(H)}= \tilde\pi_2(U^*)\upharpoonright_{S_1(H)}$ and we are done.
\end{proof}

However conclusive, Theorem \ref{orbits} still leaves something to be desired, insofar as it does not exclude the possibility that an extendible permutative representation of $\CO_2$ may have no
permutative extensions. This gap is bridged by  the theorem below, which says such a situation in fact never occurs.

\begin{thm}\label{multiplicity}
If a permutative representation $\pi$ of $\CO_2$ 
extends to a representation of $\CQ_2$, then it also admits a permutative extension.
\end{thm}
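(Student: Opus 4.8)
The plan is to reduce the statement to the orbit-counting criterion of Theorem~\ref{orbits} by showing that any (possibly non-permutative) extension already forces condition~(2) of that theorem. So suppose $\pi$ is a permutative representation of $\CO_2$ with branching function system $(\sigma_1,\sigma_2)$ and $\tilde\pi:\CQ_2\to\CB(\CH)$ is an extension. The unitary $\tilde\pi(U)$ intertwines $\pi(S_1)$ and $\pi(S_2)$ in the sense that $\tilde\pi(U)\pi(S_2)=\pi(S_1)$, hence it maps $\mathrm{Ran}(\pi(S_2))$ onto $\mathrm{Ran}(\pi(S_1))$, and more generally carries the Wold subspace $M_2:=\bigcap_n\mathrm{Ran}(\pi(S_2)^n)$ onto $M_1:=\bigcap_n\mathrm{Ran}(\pi(S_1)^n)$, intertwining the two Wold unitaries $W(\pi(S_2))$ and $W(\pi(S_1))$. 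By Lemma~\ref{isopower}, $M_i=\overline{\mathrm{span}}\{e_j:j\in\bigcap_n\sigma_i^n(\IN)\}$ and $W(\pi(S_i))$ is the permutative unitary on $M_i$ implemented by $\sigma_i\!\upharpoonright_{\cap_n\sigma_i^n(\IN)}$. Thus $W(\pi(S_1))$ and $W(\pi(S_2))$ are unitarily equivalent permutative unitaries.

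The key step is then a classification of permutative unitaries up to unitary equivalence: a bijection of an at-most-countable set decomposes into orbits, and the associated permutation unitary on $\ell^2$ is determined up to unitary equivalence precisely by the multiset of orbit cardinalities (finite orbits of size $m$ contributing an $m$-cycle, i.e.\ a copy of the cyclic shift with spectrum the $m$-th roots of unity; infinite orbits contributing a bilateral shift with Lebesgue spectrum on $\IT$). Concretely, two permutative unitaries are unitarily equivalent iff there is a bijection between their orbit sets preserving cardinalities — this is exactly the content of condition~(2) of Theorem~\ref{orbits}, applied to $Orb_1$ and $Orb_2$. So from the existence of a unitary equivalence $V:M_2\to M_1$ with $V\,W(\pi(S_2))\,V^*=W(\pi(S_1))$ one reads off the desired bijection $\Psi:I\to J$ with $|F^1_i|=|F^2_{\Psi(i)}|$. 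By Theorem~\ref{orbits}, $\pi$ then admits a permutative extension.

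The main obstacle is proving the orbit-multiplicity invariant for permutative unitaries cleanly, i.e.\ that unitary equivalence of the permutative unitaries forces a cardinality-preserving bijection of orbit sets (the converse direction — that matching orbit data yields equivalence — is the easy ``build the obvious intertwiner orbit by orbit'' argument). For finite orbits this is immediate from multiplicities of roots-of-unity eigenvalues: the dimension of the $\zeta$-eigenspace of the permutative unitary counts the finite orbits whose length is divisible by the order of $\zeta$, and knowing all these counts recovers the multiset of finite-orbit lengths. For infinite orbits one must argue that each contributes a bilateral-shift summand and invoke multiplicity theory for the absolutely continuous part (all infinite orbits give the \emph{same} spectral type, namely Lebesgue measure on $\IT$ with multiplicity one per orbit), so their number is again a unitary invariant — if infinite, it is just ``infinitely many'', which suffices since condition~(2) only asks for a bijection. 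A mild point to be careful about is that ``orbit'' in Theorem~\ref{orbits} refers to orbits of the \emph{surjective} restriction $\sigma_i\!\upharpoonright_{\cap_n\sigma_i^n(\IN)}$, so all orbits there are genuine $\IZ$-orbits or finite cycles, which is precisely the setting where the spectral bookkeeping above applies without the complication of ``ray'' orbits of a non-surjective injection.
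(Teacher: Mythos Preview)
Your proposal is correct and follows essentially the same route as the paper: reduce to Theorem~\ref{orbits} by showing the Wold unitaries $W(\pi(S_1))$ and $W(\pi(S_2))$ are unitarily equivalent, then invoke multiplicity theory (finite orbits via roots-of-unity eigenvalue multiplicities, infinite orbits via the bilateral shift/Lebesgue spectrum) to conclude that unitarily equivalent permutative unitaries have matching orbit data. The paper simply quotes the unitary equivalence of the Wold parts as the Larsen--Li criterion, whereas you extract it directly from the extension $\tilde\pi(U)$; both are fine.

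One small slip worth fixing: from $\tilde\pi(U)\pi(S_2)=\pi(S_1)$ alone you get $\tilde\pi(U)\,\mathrm{Ran}(\pi(S_2))=\mathrm{Ran}(\pi(S_1))$, but \emph{not} the analogous statement for higher powers (indeed $\tilde\pi(U)\pi(S_2)^n=\pi(S_1)\pi(S_2)^{n-1}$). The relation you actually need is the genuine intertwining $\tilde\pi(U)\pi(S_1)=\pi(S_2)\tilde\pi(U)$, which follows from $S_2U=U^2S_2$; this gives $\tilde\pi(U)\,\mathrm{Ran}(\pi(S_1)^n)=\mathrm{Ran}(\pi(S_2)^n)$ for all $n$, hence $\tilde\pi(U)M_1=M_2$ (direction reversed from what you wrote) and $\tilde\pi(U)W(\pi(S_1))\tilde\pi(U)^*=W(\pi(S_2))$. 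This does not affect the conclusion, of course.
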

\begin{proof}
Let us set $M_i\doteq\bigcap_{n=1}^\infty{\rm Ran}(\pi(S_i)^n)$ and $W(S_i)\doteq \pi(S_i)\upharpoonright_{\cap_n\pi(S_i)^n}$,
$i=1,2$. In view of Theorem \ref{orbits}, all we have to make sure is that  $W(S_1)$ and $W(S_2)$  are induced by  permutations having the same orbit structure. What we already know is that $W(S_1)$ and $W(S_2)$ are unitarily equivalent. 
Therefore, it all boils down to proving that if two permutative unitaries are unitarily equivalent, then they are induced by orbit-equivalent bijections of $\IN$. This should be a known fact from multiplicity theory. Even so, we do include an argument
for want of an exhaustive reference. Let $\sigma$ be a bijection of $\N$ and let $U_\sigma$ be the corresponding permutative unitary. The decomposition of $\IN$ into the disjoint union of $\sigma$-orbits, say $\IN=\bigsqcup_{i\in I} O_i$, yields
a decomposition of $\ell^2(\N)$ into a direct sum of $U_\sigma$-cyclic subspaces. More explicitly, we have $\ell^2(\IN)=\bigoplus_{i\in I} H_{O_i}$, where $H_{O_i}\doteq\overline{{\rm span}} \{e_l: l\in O_i\}$.
Now if an orbit $O_i$ is finite with $|O_i|=k$, then $U_\sigma\upharpoonright_{H_{O_i}}$ is just (up to unitary equivalence) the $k$ by $k$ permutative matrix corresponding to the cycle of length $k$, which we denote by $V_k$. 
If it is infinite, then 
$U_\sigma\upharpoonright_{H_{O_i}}$ is (up to unitary equivalence) the operator $V_\infty$ acting on $L^2(\IT,\mu_{\rm Leb} )$ by multiplying by $z$, that is $(V_\infty f)(z)\doteq zf(z)$ $\mu_{\rm Leb}$ a.e.
Our unitary $U_\sigma$ is thus seen to decompose into the direct sum of multiplicity-free components as $U_\sigma=\bigoplus_{i=1}^\infty n_i V_i\oplus n_\infty V_\infty$, where $n_i$ is an integer (possibly zero or infinite). 
As is well known, this decomposition is unique and depends only on the unitary equivalence class. In other words, if $U_1=\oplus_i n_iV_i\oplus n_\infty V_\infty$ and $U_2=\oplus_i m_iV_i\oplus m_\infty V_\infty$, then 
$U_1$ and $U_2$ are unitarily equivalent if and only if $n_i=m_i$, for every $i\in\IN$ and $n_\infty=m_\infty$. 
Indeed, the finite dimensional components are easily dealt with, that is $U_1\cong U_2$ immediately leads to $n_i=m_i$ for every $i\in \IN$, since any
unitary equivalence between $U_1$ and $U_2$ must preserve the finite dimensional blocks. This in turn
implies $n_\infty V_\infty\cong m_\infty V_\infty$, which is possible only if $n_\infty=m_\infty$, see e.g. 
\cite{Davidson}. Phrased differently, it is clear that the unitary that intertwines $U_1$ and $U_2$ can now be assumed  to be a permutative unitary. 
\end{proof}

\begin{remark}\label{charpol}
When either $\bigcap_{n=1}^\infty \pi(S_1)^n$ or $\bigcap_{n=1}^\infty \pi(S_2)^n$ is finite dimensional we need hardly bother such an advanced instrument as multiplicity theory, for we have the characteristic polynomial at our disposal.
Indeed, if $U\in M_n(\IC)$ is a permutation unitary, i.e.  $Ue_i=e_{\sigma(i)}$, $i=1,2,\ldots, n$,  with $\{e_1,e_2,\ldots, e_n\}\subset \IC^n$ being a suitable orthonormal basis and $\sigma$ a permutation of $\{1,2,\ldots n\}$, then its 
characteristic polynomial $p_U$, which is obviously a unitary invariant, uniquely factorizes as 
$$
p_U(\lambda)=(\lambda^{n_1}-1)(\lambda^{n_2}-1)\ldots(\lambda^{n_k}-1)
$$ 
with  $n_1+n_2+\ldots+n_k=n$, where $k$ is the number of orbits of $\sigma$ and $n_i$ is the cardinality of the $i$-th orbit.
\end{remark}

\section{General properties of permutative representations of $\CQ_2$} \label{general}
We ended the previous section by remarking that in a permutative representation $\rho:\CQ_2\rightarrow\CB(\CH)$ induced by the pair $(\sigma_2,\tau)$ the bijection 
$\tau:\IN\rightarrow\IN$ can only have (at most countably many) infinite orbits. In order to prove this, we
need to make use of the following very simple result, which is nevertheless given a statement to itself for convenience. 
Before doing that, we take the opportunity to recall some very standard notation, of which we will have to make intensive use as of now to ease the computations.  We donote by
$W_2^k$ the set of multi-indices of length $k$ in the alphabet $\{1,2\}$ and by $W_2$  the set of all multi-indices of any finite length, i.e.
$W_2\doteq \bigcup_{k=1}^\infty W_2^k$. For any given $\alpha=(\alpha_1,\alpha_2,\ldots,\alpha_n)\in W_2$, we denote the monomial $S_{\alpha_1}S_{\alpha_2}\ldots S_{\alpha_n}\in \CO_2$ by $S_\alpha$. 
The diagonal projection $P_\alpha$ is by definition $S_\alpha S_\alpha^*$. Finally, the $C^*$-subalgebra generated by all projections $P_\alpha$ is denoted by $\CD_2$ and is often referred to as the diagonal
subalgebra of $\CO_2$. It is well known that $\CD_2$ is a maximal abelian subalgebra of the Cuntz algebra $\CO_2$.

\begin{lemma} \label{lemma-orth-proj}
The projections  $\Ad(U^h)(P_\alpha)$ and  $P_\alpha$ are orthogonal for any $h\in\IN$ and $\alpha\in W_2$ such that $0<h<2^{|\alpha |}$.

\end{lemma}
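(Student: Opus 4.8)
The plan is to reduce every diagonal projection $P_\alpha$ of a fixed length $k=|\alpha|$ to a single ``canonical'' projection $Q_k\doteq S_2^k S_2^{*k}$ conjugated by a suitable power of $U$, and then to read off the orthogonality statement from the arithmetic of exponents modulo $2^k$.

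First I would show that for $\alpha=(\alpha_1,\dots,\alpha_k)\in W_2^k$ one has $S_\alpha=U^{m(\alpha)}S_2^k$, where $m(\alpha)\doteq\sum_{i:\,\alpha_i=1}2^{\,i-1}$. This is obtained by writing each $S_{\alpha_i}$ as $U^{\epsilon_i}S_2$ with $\epsilon_i\in\{0,1\}$ (recall $S_1=US_2$, $S_2=S_2$) and then pushing all powers of $U$ to the left using $S_2U^m=U^{2m}S_2$, which itself follows by iterating $S_2U=U^2S_2$. Since $\epsilon_i\in\{0,1\}$, the map $\alpha\mapsto m(\alpha)$ is a bijection of $W_2^k$ onto $\{0,1,\dots,2^k-1\}$, and consequently $P_\alpha=S_\alpha S_\alpha^*=\Ad(U^{m(\alpha)})(Q_k)$.

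Next I would prove the periodicity relation $\Ad(U^{2^k})(Q_k)=Q_k$. The heart of it is the identity $U^{2^k}S_2^k=S_2^kU$, established by induction on $k$: rewriting $S_2U=U^2S_2$ as $U^2S_2=S_2U$ gives $(U^2)^mS_2=S_2U^m$ for all $m$, whence $U^{2^k}S_2^k=(U^2)^{2^{k-1}}S_2\,S_2^{k-1}=S_2\,U^{2^{k-1}}S_2^{k-1}=S_2\,S_2^{k-1}U=S_2^kU$ using the inductive hypothesis. Then $\Ad(U^{2^k})(Q_k)=(U^{2^k}S_2^k)(U^{2^k}S_2^k)^{*}=S_2^kUU^{*}S_2^{*k}=Q_k$. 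Combining with the previous step, for any $h$ we get $\Ad(U^h)(P_\alpha)=\Ad(U^{h+m(\alpha)})(Q_k)=\Ad(U^{r})(Q_k)=P_\beta$, where $r\in\{0,\dots,2^k-1\}$ is $h+m(\alpha)$ reduced modulo $2^k$ and $\beta\in W_2^k$ is the unique multi-index with $m(\beta)=r$. (Alternatively one may simply quote from \cite{ACR} that $\sum_{j=0}^{2^k-1}\Ad(U^j)(Q_k)=1$ with mutually orthogonal summands and that these summands are precisely the $P_\alpha$, $|\alpha|=k$.)

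Finally, to conclude: if $0<h<2^k$ then $h\not\equiv 0 \pmod{2^k}$, so $r\neq m(\alpha)$ and hence $\beta\neq\alpha$; since $P_\alpha$ and $P_\beta$ are diagonal projections of the same length $k$ attached to distinct multi-indices, the Cuntz relations $S_i^{*}S_j=\delta_{ij}1$ give $S_\alpha^{*}S_\beta=0$, whence $P_\alpha P_\beta=S_\alpha S_\alpha^{*}S_\beta S_\beta^{*}=0$. This is exactly the asserted orthogonality of $P_\alpha$ and $\Ad(U^h)(P_\alpha)$. The only mildly delicate points are the exponent bookkeeping in the two algebraic identities $S_\alpha=U^{m(\alpha)}S_2^k$ and $U^{2^k}S_2^k=S_2^kU$; everything else is immediate.
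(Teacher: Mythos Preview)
Your proof is correct and purely algebraic: you reduce every $P_\alpha$ with $|\alpha|=k$ to $\Ad(U^{m(\alpha)})(Q_k)$ via the identity $S_\alpha=U^{m(\alpha)}S_2^k$, establish the $2^k$-periodicity $\Ad(U^{2^k})(Q_k)=Q_k$ from $U^{2^k}S_2^k=S_2^kU$, and then conclude by the Cuntz orthogonality of distinct length-$k$ diagonal projections. The paper's proof takes a different, representation-theoretic route: it passes to the (faithful) canonical representation on $\ell^2(\IZ)$, where the range of $P_\alpha$ is explicitly $\overline{\textrm{span}}\{e_{2^{k}n+l}:n\in\IZ\}$ for a certain $l$, and then the orthogonality for $0<h<2^k$ is read off immediately from the disjointness of the corresponding arithmetic progressions. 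Your approach has the virtue of being intrinsic to the relations and not invoking any concrete model; the paper's approach is shorter once the range description is available (which it quotes from \cite{ACR}). The parenthetical alternative you mention---quoting the decomposition $\sum_{j=0}^{2^k-1}\Ad(U^j)(Q_k)=1$ into the mutually orthogonal $P_\alpha$'s from \cite{ACR}---is in fact closer in spirit to what the paper does.
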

\begin{proof}
In the canonical representation the range of $P_\alpha$ is easily seen to be the subspace of $\ell^2(\mathbb{Z})$ generated by $\{ e_{2^{|\alpha |}k+l} \: | \: k\in \mathbb{Z} \}$ for a certain $l\in \mathbb{N}$, see e.g. the discussion before Lemma 6.22 in \cite{ACR}. The claim now follows at once. 
\end{proof}

\begin{thm}\label{noperiodicpoints}
The bijection $\tau:\IN\rightarrow\IN$ implementing $\rho(U)$ in a permutative representation $\rho: \CQ_2\to \CB(\CH)$ has no periodic points, i.e. for any given $n_0\in\IN$ one has $\tau^n(n_0)\neq n_0$, $n\in\IN$. 
\end{thm}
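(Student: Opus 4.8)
The plan is to derive a contradiction from the existence of a periodic point of $\tau$ by showing that such a point would force the permutative unitary $\rho(U)$ to have an eigenvalue, which in turn would produce a finite-dimensional $\rho(U)$-invariant subspace on which the commutation relation $S_2 U = U^2 S_2$ becomes untenable because of Lemma \ref{lemma-orth-proj}. First I would suppose, for contradiction, that $\tau^n(n_0) = n_0$ for some $n_0 \in \IN$ and some minimal $n \geq 1$; then the orbit $F \doteq \{n_0, \tau(n_0), \ldots, \tau^{n-1}(n_0)\}$ is a finite $\tau$-invariant set, and $V_F \doteq \operatorname{span}\{e_k : k \in F\}$ is a finite-dimensional subspace on which $\rho(U)$ restricts to a cyclic permutation of length $n$. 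In particular $\rho(U)$ has $1$ (indeed every $n$-th root of unity) as an eigenvalue; pick a unit eigenvector $\xi \in V_F$ with $\rho(U)\xi = \xi$.

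Next I would exploit the relation $\rho(S_2)\rho(U) = \rho(U)^2\rho(S_2)$, equivalently $\rho(U)^h \rho(S_2) = \rho(S_2)\rho(U)^{h'}$ whenever $h' \cdot 2 \equiv h$ appropriately, to locate a whole family of eigenvectors. Concretely, iterating the relation gives $\rho(S_2^k)\rho(U) = \rho(U)^{2^k}\rho(S_2^k)$, so if $\rho(U)\xi = \xi$ then $\rho(U)^{2^k}\bigl(\rho(S_2^k)\xi\bigr) = \rho(S_2^k)\xi$; more usefully, since $\rho(U)$ is a \emph{permutation} of the basis, the eigenspace $\ker(\rho(U) - 1)$ is spanned by basis-indexed vectors, and from $\xi \in V_F$ one sees that the sum vector $v \doteq \sum_{k \in F} e_k$ satisfies $\rho(U) v = v$. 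The key point is then to compare $\langle v, \Ad(\rho(U)^h)(\rho(P_\alpha)) v\rangle$ with $\langle v, \rho(P_\alpha) v\rangle$ for a suitable long multi-index $\alpha$: since $\rho(U)v = v$ we have $\Ad(\rho(U)^h)(\rho(P_\alpha))v = \rho(U)^h \rho(P_\alpha)\rho(U)^{-h} v = \rho(U)^h \rho(P_\alpha) v$, which makes these inner products coincide, while Lemma \ref{lemma-orth-proj} forces the two projections $\Ad(\rho(U)^h)(\rho(P_\alpha))$ and $\rho(P_\alpha)$ to be mutually orthogonal for $0 < h < 2^{|\alpha|}$. Choosing $|\alpha|$ large enough that $2^{|\alpha|}$ exceeds the relevant index and choosing $\alpha$ so that $\rho(P_\alpha) v \neq 0$ (possible because $\{P_\alpha : |\alpha| = m\}$ is a partition of the identity in $\CO_2$, hence $\sum_{|\alpha|=m}\rho(P_\alpha) v = v \neq 0$), one gets $0 \neq \|\rho(P_\alpha) v\|^2 = \langle v, \rho(P_\alpha) v\rangle = \langle v, \Ad(\rho(U)^h)(\rho(P_\alpha)) v\rangle = \langle \rho(P_\alpha) v, \Ad(\rho(U)^h)(\rho(P_\alpha)) v\rangle = 0$ for an appropriate $h$, a contradiction.

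The main obstacle I anticipate is bookkeeping the relationship between periodicity of $\tau$ and the exponent arithmetic in $S_2 U = U^2 S_2$: one must make sure that the period $n$ of the orbit and the power $h$ of $\rho(U)$ in Lemma \ref{lemma-orth-proj} can be chosen compatibly, i.e. that $h$ can be taken in the range $(0, 2^{|\alpha|})$ while still acting nontrivially on $v$. This should work out because if $\rho(U)v = v$ then $\rho(U)^h v = v$ for \emph{every} $h$, so the only real constraint is to pick $\alpha$ with $\rho(P_\alpha) v \neq 0$ and then any $h$ with $0 < h < 2^{|\alpha|}$ does the job; the partition-of-unity argument guarantees such an $\alpha$ exists. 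A secondary subtlety is verifying $\rho(U)v = v$ when $F$ is a cycle of length $n > 1$ — here one uses that $\rho(U)$ permutes $\{e_k : k \in F\}$ cyclically, so it fixes their sum. Once these points are pinned down the contradiction is immediate, establishing that $\tau$ has no periodic points.
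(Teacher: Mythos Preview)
Your strategy---produce a $\rho(U)$-fixed vector and play it off against Lemma~\ref{lemma-orth-proj}---is the right one, and it is essentially the paper's strategy too. But the final chain of equalities contains a false step. With $Q\doteq\Ad(\rho(U)^h)(\rho(P_\alpha))$ you write
\[
\langle v,\,Qv\rangle \;=\; \langle \rho(P_\alpha)v,\,Qv\rangle,
\]
and this is not true. Since $P_\alpha\perp Q$ we have $Q\le 1-P_\alpha$, hence $Qv=Q(1-P_\alpha)v$ and
\[
\langle v,\,Qv\rangle-\langle \rho(P_\alpha)v,\,Qv\rangle
=\langle(1-\rho(P_\alpha))v,\,Qv\rangle
=\|Qv\|^2,
\]
which by your own (correct) computation equals $\|\rho(P_\alpha)v\|^2\neq 0$. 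So the two sides differ by exactly the nonzero quantity you are trying to show vanishes. With only two orthogonal projections and the sum vector $v=\sum_{k\in F}e_k$ you get merely $2\|\rho(P_\alpha)v\|^2\le\|v\|^2=n$, which is no contradiction for $n\ge 2$.

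The paper avoids this by working with a single \emph{basis} vector rather than the sum. Starting from $\rho(U)^n e_{1}=e_{1}$ it writes $e_1=\rho(S_j)e_i$ for some $j\in\{1,2\}$ and uses $S_2U=U^2S_2$ to split on the parity of $n$: if $n$ is odd one lands simultaneously in the ranges of $\rho(S_1)$ and $\rho(S_2)$, a contradiction; if $n=2k$ one obtains $\rho(U)^k e_i=e_i$. Now choose $\alpha$ with $|\alpha|\ge k+1$ and $\rho(P_\alpha)e_i=e_i$ (possible because $\rho(P_\alpha)$ is diagonal). Then both $\rho(P_\alpha)$ and $\Ad(\rho(U)^k)(\rho(P_\alpha))$ \emph{fix} $e_i$, so their sum sends the unit vector $e_i$ to a vector of norm $\sqrt{2}$, contradicting Lemma~\ref{lemma-orth-proj}. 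The point is that for a basis vector one has $\rho(P_\alpha)e_i=e_i$ rather than merely $\rho(P_\alpha)v\neq 0$, and that is exactly what makes the two-projection argument go through.

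Your approach with $v$ can be rescued, but not in one line: use that the $2^{|\alpha|}$ conjugates $\Ad(\rho(U)^h)(\rho(P_\alpha))$, $0\le h<2^{|\alpha|}$, are pairwise orthogonal and sum to $1$, so $\|v\|^2=\sum_h\|\Ad(\rho(U)^h)(\rho(P_\alpha))v\|^2=2^{|\alpha|}\|\rho(P_\alpha)v\|^2$; since $\|\rho(P_\alpha)v\|^2=|F\cap A_\alpha|$ is an integer and $\|v\|^2=|F|=n$, taking $2^{|\alpha|}>n$ forces every $|F\cap A_\alpha|=0$, hence $v=0$.
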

\begin{proof}
Without loss of generality we may suppose that $\rho(U)^ne_1=e_1$. We have to consider two cases separately: $e_1=\rho(S_1)e_i$ and $e_1=\rho(S_2)e_i$.
We start with the case $e_1=\rho(S_1)e_i$ and suppose that $n=2k$.  We have that
$$
\rho(S_1)e_i=e_1 = \rho(U^n)e_1= \rho(U)^n \rho(S_1)e_i=\rho(U)^{2k} \rho(S_1)e_i= \rho(S_1)\rho(U)^{k} e_i
$$ 
which implies that $e_i=\rho(U)^ke_i$. Now choose a projection $\rho(S_\alpha S_\alpha^*)$ such that $\rho(S_\alpha S_\alpha^*)e_i=e_i$, $|\alpha |\geq k+1$. By Lemma \ref{lemma-orth-proj} we have that  $\rho(S_\alpha S_\alpha^*)+\rho(U^k)^*\rho(S_\alpha S_\alpha^*)\rho(U)^k\leq 1$. By using the orthogonality of the projections we see that
\begin{align*}
& \| \rho(S_\alpha S_\alpha^*)e_i+\rho(U^k)^*\rho(S_\alpha S_\alpha^*)\rho(U)^k e_i\|^2=\\
& = \| \rho(S_\alpha S_\alpha^*)e_i+\rho(U^k)^*\rho(S_\alpha S_\alpha^*)e_i\|^2=\\
& = \| \rho(S_\alpha S_\alpha^*)e_i\|^2+\|\rho(U^k)^*\rho(S_\alpha S_\alpha^*)e_i\|^2=2
\end{align*}
which is absurd.\\
Now we deal with the case when $n=2k+1$ (and still $e_1=\rho(S_1)e_i$). We have that
$$
e_1 = \rho(U^n)e_1= \rho(U^n) \rho(S_1)e_i=\rho(U)\rho(U)^{2k} \rho(S_1)e_i= \rho(U)\rho(S_1)\rho(U)^{k} e_i = \rho(S_2)\rho(U)^{k+1} e_i
$$ 
So we see that $e_1=\rho(S_1)e_i=\rho(S_2)\rho(U)^{k+1} e_i$, which is absurd because the two generating isometries have orthogonal ranges.

Now we deal with the case $e_1=\rho(S_2)e_i$. Suppose that $n=2k$.  We have that
$$
\rho(S_2)e_i=e_1 = \rho(U)^ne_1= \rho(U)^n \rho(S_2)e_i=\rho(U)^{2k} \rho(S_2)e_i= \rho(S_2)\rho(U)^{k} e_i
$$ 
which implies that $e_i=\rho(U)^ke_i$. Now choose a projection $\rho(S_\alpha S_\alpha^*)$ such that $\rho(S_\alpha S_\alpha^*)e_i=e_i$, $|\alpha |\geq k+1$. Clearly, we have that  $\rho(S_\alpha S_\alpha^*)+\rho(U^k)^*\rho(S_\alpha S_\alpha^*)\rho(U)^k\leq 1$. By using the orthogonality of the projections we see that
\begin{align*}
& \| \rho(S_\alpha S_\alpha^*)e_i+\rho(U^k)^*\rho(S_\alpha S_\alpha^*)\rho(U)^k e_i\|^2=\\
& = \| \rho(S_\alpha S_\alpha^*)e_i+\rho(U^k)^*\rho(S_\alpha S_\alpha^*)e_i\|^2=\\
& = \| \rho(S_\alpha S_\alpha^*)e_i\|^2+\|\rho(U^k)^*\rho(S_\alpha S_\alpha^*)e_i\|^2=2
\end{align*}
which is absurd.\\
Now we deal with the case when $n=2k+1$ (and still $e_1=\rho(S_2)e_i$). We have that
$$
e_1 = \rho(U^n)e_1= \rho(U^n) \rho(S_2)e_i=\rho(U)\rho(U)^{2k} \rho(S_2)e_i= \rho(U)\rho(S_2)\rho(U)^{k} e_i = \rho(S_1)\rho(U)^{k} e_i
$$ 
So we see that $e_1=\rho(S_2)e_i=\rho(S_1)\rho(U)^{k} e_i$, which is absurd because the two generating isometries have orthogonal ranges.
\end{proof}

A standard application of the above theorem also yields the following result. 
\begin{corollary}\label{intrper}
Let $\rho: \CQ_2\to \CB(\CH)$ be a permutative representation. Then $\rho(U)$ has no eigenvectors. 
\end{corollary}
\begin{proof}
Suppose on the contrary there does exist a non-zero $x\in\ell^2(\IN)$ such that $\rho(U)x=\lambda x$, for some $\lambda\in\IT$. With
$x=\sum_k c_k e_k$ the eigenvalue equation reads as $\sum_k c_ke_{\tau(k)}=\sum_k \lambda c_ke_k$, whence  $|c_{k_0}|=|c_{\tau^{-1}(k_0)}|=|c_{\tau^{-2}(k_0)}|=\ldots=
|c_{\tau^{-n}(k_0)}|$ for every $n$. However, if $c_{k_0}$ is any non-zero coefficient of $x$ the equalities found above say $x$ is not in $\ell^2(\IN)$, as the foregoing proposition
says it has infinitely many coefficients with the same non-zero absolute value.
\end{proof}

Given a representation $\rho$ of $\CQ_2$ one might wonder whether there is a general method for ruling out the possibility that $\rho$ may ever act as a permutative representation with respect to a certain orthonormal basis.  
In fact, answering such a question might prove to be a demanding task. Even so, Corollary \ref{intrper} does offer a simple, albeit limited, answer to the problem. Indeed, any representation
$\rho:\CQ_2\rightarrow\CB(\CH)$ in which the point spectrum of $\rho(U)$ is not empty cannot be permutative with respect to any orthonormal basis.  Notably, the so-called \emph{interval picture} of $\CQ_2$, see
\cite{ACRS}, is an example of such a representation.  In fact, the interval picture is not even permutative at the level of the Cuntz algebra. 

\begin{corollary}
The interval picture $\pi: \CO_2 \to\CB(L^2([0,1]))$  is not a  permutative representation with respect to any orthonormal basis.
\end{corollary}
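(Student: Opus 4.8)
The plan is to deduce the statement by contradiction from two facts already established in this section — Proposition \ref{pureperm} (a permutative representation of $\CO_2$ with pure isometries has a permutative extension to $\CQ_2$) and Corollary \ref{intrper} (in a permutative representation of $\CQ_2$ the unitary $\rho(U)$ has no eigenvectors) — together with the fact recalled above that in the interval picture of $\CQ_2$ the unitary $\rho(U)$ has nonempty point spectrum. The one preliminary observation needed is that in the interval picture \emph{both} generating isometries of $\CO_2$ are pure.

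Concretely, I would first check purity: in the interval picture each $\pi(S_i)$ compresses $L^2([0,1])$ onto the functions supported on a measurable set of measure $1/2$, so that $\pi(S_i)^n$ has range the functions supported on a set of measure $2^{-n}$; these sets being nested, $\bigcap_n\operatorname{Ran}(\pi(S_i)^n)$ consists of functions supported on a Lebesgue-null set and is therefore $\{0\}$. Suppose now, for contradiction, that the interval picture $\pi\colon\CO_2\to\CB(L^2([0,1]))$ were permutative with respect to some orthonormal basis. By Proposition \ref{pureperm} its (necessarily unique) extension to $\CQ_2$ would then be a permutative representation of $\CQ_2$; but since the interval picture of $\CQ_2$ restricts to $\pi$ on $\CO_2$ and extensions are unique in the pure case, that extension is exactly the interval picture of $\CQ_2$. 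Hence the interval picture of $\CQ_2$ would be permutative, so by Corollary \ref{intrper} its $\rho(U)$ would have empty point spectrum, contradicting the fact recalled above. This contradiction proves the corollary; note that the same argument simultaneously re-proves that the interval picture of $\CQ_2$ is not permutative.

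The only point demanding care — and the step I would want to pin down against the conventions of \cite{ACRS} — is exactly what ``the interval picture'' is; everything else is routine. If one prefers an argument internal to $\CO_2$, the conclusion also follows from an ``atomicity'' obstruction at the level of the diagonal: a permutative representation of $\CO_2$ diagonalises $\CD_2$, since $\pi(S_\alpha)$ maps basis vectors to basis vectors and hence $\pi(P_\alpha)e_n=\pi(S_\alpha)\pi(S_\alpha)^\ast e_n\in\{e_n,0\}$, so each $e_n$ is a joint eigenvector for $\pi(\CD_2)$; whereas in the interval picture $\pi(\CD_2)$ acts by multiplication operators attached to Lebesgue measure, and using that $\{\pi(P_\alpha):|\alpha|=n\}$ partitions the identity one squeezes the essential support of any would-be joint eigenvector into a single point of the Cantor set, i.e. into a null set, so $\pi(\CD_2)$ admits no nonzero joint eigenvector at all — again a contradiction.
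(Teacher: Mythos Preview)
Your main argument is essentially identical to the paper's proof: purity of the two isometries forces the unique extension to $\CQ_2$ to be permutative whenever $\pi$ is (Proposition~\ref{pureperm}), and then Corollary~\ref{intrper} clashes with the eigenvector of $U$ in the interval picture. Your alternative $\CD_2$-based obstruction is a pleasant self-contained aside that the paper does not pursue.
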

\begin{proof}
As is known, see e.g. \cite{ACRS}, $\pi(S_1)$ and $\pi(S_2)$ are pure, hence there is only one extension $\tilde\pi$ to $\CQ_2$, which is permutative if and only if
$\pi$ is.  Now the operator $\tilde\pi(U)$ does have an eigenvector (the constant function $1$), which means $\pi$ is not permutative.  
\end{proof}
\begin{remark}
It is worth noting that the interval picture of $\CO_2$ can also be realized as the GNS representation of the Cuntz state associated with the vector $(1/\sqrt{2}, 1/\sqrt{2})\in\IC\oplus \IC$.
\end{remark}
The number of the (infinite) orbits of $\tau$ is certainly an invariant for a given permutative representation $\rho$, which coincides with the multiplicity of $\rho(U)$.
However, it is quite a weak invariant, which can by no means give a complete classification of all representations of $\CQ_2$. Indeed, there are uncountably many irreducible permutative representations, as we shall show afterwords,
whereas the values assumed by our invariant range in a countable set.  Even so, the invariant  does become unexpectedly fine when $\tau$ has only one orbit, in which case the corresponding representation must be
the canonical representation.  

\begin{proposition}\label{intrcan}
Let $\rho$ be a permutative representation of $\CQ_2$ in which the bijection $\tau$ implementing $\rho(U)$ has only one orbit, that is
$$\{\tau^k(0) \ : \ k \in {\IZ}\} = \IN \ . $$  Then $\rho$ is unitarily equivalent to
the canonical representation $\rho_c$. In particular, $\rho$ is also irreducible.
\end{proposition}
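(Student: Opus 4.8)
The plan is to prove the stronger assertion directly, by writing down an explicit unitary intertwiner between $\rho$ and $\rho_c$: the hypothesis that $\tau$ consists of a single bi-infinite orbit lets us identify the permutation basis of $\CH$ with the canonical basis $\{e_k:k\in\IZ\}$ of $\ell^2(\IZ)$, and then the commutation relation pins down $\rho(S_2)$ so tightly that it must look like $\rho_c(S_2)$.

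First I would fix a base point $n_0\in\IN$. Since the orbit $\{\tau^k(n_0):k\in\IZ\}$ is all of $\IN$ and $\IN$ is infinite (equivalently, $\tau$ has no periodic points, which is also Theorem \ref{noperiodicpoints}), the map $k\mapsto\tau^k(n_0)$ is a bijection of $\IZ$ onto $\IN$. Re-indexing the orthonormal basis as $f_k\doteq e_{\tau^k(n_0)}$, $k\in\IZ$, one immediately gets $\rho(U)f_k=f_{k+1}$, which is precisely the form of $\rho_c(U)$ on $\ell^2(\IZ)$.

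Next I would pin down $\rho(S_2)$ in this indexing. As $\rho$ is permutative, $\rho(S_2)$ sends each $f_k$ to some $f_{s(k)}$, which defines an injection $s\colon\IZ\to\IZ$. Substituting $f_k$ into the defining relation $\rho(S_2)\rho(U)=\rho(U)^2\rho(S_2)$ (i.e.\ $\sigma_2\circ\tau=\tau^2\circ\sigma_2$) gives $s(k+1)=s(k)+2$ for every $k$, hence $s(k)=2k+c$ with $c\doteq s(0)\in\IZ$. (Equivalently: in the single-orbit case $\sigma_2$ has a \emph{unique} fixed point, namely $\tau^{-c}(n_0)$, and choosing that element as base point is exactly what makes everything line up.) I would then define $W\colon\CH\to\ell^2(\IZ)$ by $Wf_k\doteq e_{k+c}$; it is unitary since it carries one orthonormal basis bijectively onto another. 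A one-line computation gives $W\rho(U)W^{*}=\rho_c(U)$ and $W\rho(S_2)W^{*}=\rho_c(S_2)$, and since $U$ and $S_2$ generate $\CQ_2$ this yields $\Ad(W)\circ\rho=\rho_c$; irreducibility of $\rho$ is then inherited from that of $\rho_c$, recalled in Section \ref{prel}.

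I do not expect a genuinely hard step. The one place that needs care is the additive shift by $c$ in the definition of $W$: with a carelessly chosen base point one still intertwines $\rho(U)$ with $\rho_c(U)$, but the $S_2$-components then differ by the power $\rho_c(U)^{c}$, so the map fails to be an intertwiner for $S_2$; phrased invariantly, the correct base point of the cycle is forced to be the fixed point of $\sigma_2$. One should also note briefly that the $\IZ$-parametrisation of the orbit is legitimate, which is Theorem \ref{noperiodicpoints} (or simply that an infinite set cannot be a finite $\tau$-orbit).
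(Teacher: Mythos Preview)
Your proof is correct and follows essentially the same route as the paper's: both transport the basis along the single $\tau$-orbit to identify $\rho(U)$ with the bilateral shift, then use $S_2U=U^2S_2$ to force $s(k)=2k+c$ and finally correct by the shift $c$. The only cosmetic difference is that the paper first conjugates by the ``naive'' unitary $V$ (your $W$ with $c=0$) and afterwards repairs the mismatch via $\Ad(\rho_c(U^{-l}))$, whereas you fold the shift directly into the definition of $W$; your aside about choosing the base point to be the fixed point of $\sigma_2$ is a nice way of saying the same thing.
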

\begin{proof}
By hypothesis we can write $\rho(U)e_n=e_{\tau(n)}$, $n\in\IN$. Furthermore, the set equality $\IN=\{\tau^k(0): k\in\IZ\}$ allows us to give an explicit bijection
$\Psi:\IN\rightarrow\IZ$ by setting $\Psi(n)\doteq k$ if $\tau^k(0)=n$. We now claim that $\Psi(\tau(\Psi^{-1}(k)))=k+1$, for every $k\in\IZ$. Indeed, 
by definition, $\Psi^{-1}(k) = \tau^k(0)$.
Therefore, we also have $\tau(\Psi^{-1}(k))=\tau^{k+1}(0)$. The last equality  finally reads as $\Psi(\tau(\Psi^{-1}(k)))=k+1$. 
Denoting by $V$ the unitary from $\ell^2(\IN)$ onto  $\ell^2(\IZ)$ that sends $e_n$ to $e_{\Psi(n)}$, the equality 
$V\rho(U)V^* e_k=e_{k+1}=\rho_c(U)e_k$ is immediately seen to hold for every $k\in\IZ$. Now $V\rho(S_2)V^*e_k=e_{\sigma(k)}$, for a suitable injection $\sigma$ of $\IZ$ into itself.
In terms of the maps $\tau$ and $\sigma$ the commutation relation $S_2U=U^2S_2$ may be rewritten as $\sigma(k+1)=\sigma(k)+2$, $k\in\IZ$ (just apply $\Ad(V) \circ \rho$ to the defining relation).
All the solutions of this equation are easily checked to be of the form $\sigma(k)=l+2k$, $k\in\IZ$, where $l=\sigma(0)$ is any integer.
In particular, the equality $\Ad(\rho_c(U^{-l}))(V\rho(S_2)V^*)=\rho_c(S_2)$ is got to at once. This concludes the proof, 
since  $\Ad(\rho_c(U^{-l}))(V\rho(U)V^*)= \Ad(\rho_c(U^{-l}))(\rho_c(U)) = \rho_c(U)$.
\end{proof}

Although $C^*(U)$ is a maximal abelian subalgebra of $\CQ_2$, \cite{ACR}, it is not true that the von Neumann algebra generated by $\rho(U)$ is maximal abelian in $\CB(\CH)$ for
any irreducible representation $\rho:\CQ_2\rightarrow\CB(\CH)$. Quite the opposite, permutative representations provide the easiest counterexamples we can think of.
Indeed,  thanks to the above result, if $\rho$ is a permutative representation inequivalent to the canonical representation, then the multiplicity of $\rho(U)$ will be bigger than one, which is the same as saying that 
$W^*(\rho(U))$ is not maximal.

\section{Decomposition of permutative representations of $\CQ_2$}\label{decomposition}

As is known,  any representation of a given  $C^*$-algebra decomposes into a direct sum of cyclic representations. In particular, this applies to permutative
representations of both $\CO_2$ and $\CQ_2$.  However, for these representations it is far more natural to seek decompositions into direct sums of cyclic
representations that are still of the same type. By \emph{permutative invariant subspace} of  a permutative representation  $\rho:\CQ_2\rightarrow\CB(\ell^2(\IN))$, therefore,  we shall
always mean a closed invariant subspace $M\subset\ell^2(\IN)$ given by $M=\overline{\textrm{span}}\{e_i: i\in I\subset\IN\}$, where $I$ is a suitable subset of $\IN$. It is obvious
that the restriction of $\rho$ to such an invariant subspace is still a permutative representation. In this case, we will also say that $\rho$ restricts to $M$ as a \emph{permutative subrepresentation},
borrowing the terminology from \cite{BJ}, where $\CO_2$ is dealt with. A permutative representation is  understood as \emph{cyclic} if it not only has a cyclic vector in the usual sense, but this can in fact be picked up  among 
those of the chosen orthonormal basis. It then turns out that every basis vector is cyclic. This is true of permutative representations of $\CQ_2$ as well as $\CO_2$. It takes a moment's
reflection to realize the proof is exactly the same in the two cases, which means we might safely rely on the results already proved in \cite{BJ}. Yet many of those basic results we need
are not really given a clear-cut statement there, so we have preferred to provide explicit proofs all the same.
To begin with, permutative representations enjoy a type of symmetry property that is worthy of a statement to itself. 
\begin{proposition}\label{symmetry}
Let $\pi:\CQ_2\rightarrow\CB(\ell^2(\IN))$ be a permutative representation. If the equality $\pi(S_\alpha S_\beta^*)e_k=e_h$ holds for $h,k\in\IN$, then the equality
$\pi(S_\beta S_\alpha^*)e_h=e_k$ holds as well. 
\end{proposition}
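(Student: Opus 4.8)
The plan is to trace, basis vector by basis vector, where the partial isometries $\pi(S_\gamma S_\delta^*)$ send things, and then let the Cuntz relation $S_\alpha^*S_\alpha=1$ finish the job. First I would set up the elementary bookkeeping. Since $\pi$ restricts to a permutative representation of $\CO_2$ with branching function system $(\tau\circ\sigma_2,\sigma_2)$, each monomial $S_\alpha$, $\alpha\in W_2$, acts as $\pi(S_\alpha)e_n=e_{\sigma_\alpha(n)}$ for the injection $\sigma_\alpha\doteq\sigma_{\alpha_1}\circ\cdots\circ\sigma_{\alpha_{|\alpha|}}$ of $\IN$ into itself (with the convention $\sigma_1=\tau\circ\sigma_2$). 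Consequently $\pi(S_\alpha)^*e_m=e_{\sigma_\alpha^{-1}(m)}$ whenever $m\in\sigma_\alpha(\IN)$ and $\pi(S_\alpha)^*e_m=0$ otherwise, and the diagonal projection $\pi(P_\alpha)=\pi(S_\alpha S_\alpha^*)$ is the orthogonal projection of $\ell^2(\IN)$ onto $\clsp\{e_j:j\in\sigma_\alpha(\IN)\}$; in particular $\pi(P_\alpha)e_k\in\{e_k,0\}$ for every $k\in\IN$.

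Next, assuming $\pi(S_\alpha S_\beta^*)e_k=e_h$, I would compute, using $S_\alpha^*S_\alpha=1$,
$$\pi(S_\beta S_\alpha^*)e_h=\pi(S_\beta S_\alpha^*)\pi(S_\alpha S_\beta^*)e_k=\pi(S_\beta S_\alpha^*S_\alpha S_\beta^*)e_k=\pi(S_\beta S_\beta^*)e_k=\pi(P_\beta)e_k.$$
By the first step it then only remains to rule out $\pi(P_\beta)e_k=0$. But $S_\beta^*S_\beta=1$ gives $S_\alpha S_\beta^*=S_\alpha S_\beta^*S_\beta S_\beta^*=S_\alpha S_\beta^*P_\beta$, hence $e_h=\pi(S_\alpha S_\beta^*)\bigl(\pi(P_\beta)e_k\bigr)$; since the left-hand side is nonzero, so is $\pi(P_\beta)e_k$, whence $\pi(P_\beta)e_k=e_k$ and therefore $\pi(S_\beta S_\alpha^*)e_h=e_k$.

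There is essentially no serious obstacle here: the only point requiring any care is the observation in the first paragraph that $\pi(S_\alpha)^*$ and $\pi(P_\alpha)$ respect the distinguished orthonormal basis, which is exactly where permutativity is used; everything else is the Cuntz relations. For a reader who prefers a purely combinatorial statement, I would also record the equivalent reformulation: $\pi(S_\alpha S_\beta^*)e_k=e_h$ holds precisely when $k=\sigma_\beta(m)$ and $h=\sigma_\alpha(m)$ for a (necessarily unique) $m\in\IN$, and the proposition is then immediate from the manifest symmetry of this condition under interchanging $(\alpha,\sigma_\alpha)$ with $(\beta,\sigma_\beta)$.
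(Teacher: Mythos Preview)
Your proof is correct and follows essentially the same route as the paper: both use the Cuntz relation $S_\alpha^*S_\alpha=1$ to reduce $\pi(S_\beta S_\alpha^*)e_h$ to $\pi(P_\beta)e_k$, observe that $\pi(P_\beta)$ is diagonal in the permutative basis so $\pi(P_\beta)e_k\in\{e_k,0\}$, and then rule out $0$ using that $e_h=\pi(S_\alpha S_\beta^*)e_k$ is nonzero. Your presentation is a bit more explicit in setting up the action of $\pi(S_\alpha)^*$ and $\pi(P_\alpha)$ on basis vectors, and the closing combinatorial reformulation is a pleasant bonus, but the argument is the same.
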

\begin{proof}
Since $\pi(S_\alpha)\pi(S_\beta^*)e_k=e_h\neq 0$, $\pi(S_\beta^*)e_k=\pi(S_\alpha^*)e_h$ are different from zero. In particular,
$\pi(P_\beta )e_k=S_\beta S_\beta^* e_k$ is not zero either, meaning $\pi(P_\beta)e_k=e_k$, in that $\pi(P_\beta)$ is a diagonal
projection with respect to the canonical basis $\{e_n:n\in\IN\}$. But then we have $e_k=\pi(P_\beta)e_k=\pi(S_\beta S_\beta^*)e_k=\pi(S_\beta)\pi(S_\alpha^*)e_h=\pi(S_\beta S_\alpha^*)e_h$, as claimed.
\end{proof}
Among other things, the above property allows for a straighforward proof that in a permutative cyclic representation all basis vectors are cyclic. 
\begin{proposition}
If $\pi$ is a cyclic permutative representation of $\CO_2$ or $\CQ_2$, then all basis vectors are cyclic.
\end{proposition}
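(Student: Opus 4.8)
The plan is to promote the cyclicity of a single basis vector to that of every basis vector, using the multiplicative monoid of operators that a permutative representation naturally carries. Write $\mathcal{A}$ for whichever of $\CO_2$, $\CQ_2$ is under consideration, let $\pi$ denote the representation, acting on $\ell^2(\IN)$ with the distinguished orthonormal basis $\{e_n\}$, and let $\mathcal{G}$ be the generating set $\{S_1,S_1^*,S_2,S_2^*\}$ in the Cuntz case and $\{S_2,S_2^*,U,U^*\}$ in the $\CQ_2$ case; note that $\mathcal{G}$ is closed under taking adjoints. Let $\mathcal{M}$ be the set of operators of the form $\pi(T_k)\cdots\pi(T_1)$ with $T_1,\dots,T_k\in\mathcal{G}$, together with the identity. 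The first observation I would record is that each generator acts on the basis as a partial bijection: $\pi(S_1),\pi(S_2)$ send basis vectors to basis vectors, $\pi(U),\pi(U^*)$ permute the basis, and $\pi(S_i^*)$ sends $e_n$ to $e_m$ when $n=\sigma_i(m)$ and to $0$ otherwise (because $\pi(S_iS_i^*)$ is the diagonal projection onto $\clsp\{e_{\sigma_i(m)}:m\in\IN\}$ and $S_i^*=S_i^*S_iS_i^*$). Hence every operator in $\mathcal{M}$ sends each $e_n$ to a basis vector or to $0$, and $\mathcal{M}$ is closed under adjoints.

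The next, and most substantial, step is a version of the symmetry Proposition~\ref{symmetry} valid for the whole monoid: if $m\in\mathcal{M}$ and $m\,e_j=e_l\neq 0$, then $m^*\,e_l=e_j$. Indeed $m^*\,e_l=m^*m\,e_j$, and since $m$ is a product of contractions one has $\langle m^*m\,e_j,e_j\rangle=\|m\,e_j\|^2=1$; on the other hand $m^*\in\mathcal{M}$, so $m^*\,e_l$ is a basis vector or $0$, and since its inner product with $e_j$ equals $1$ it must be $e_j$. (In the $\CO_2$ case one may alternatively rewrite $m$ as an operator $\pi(S_\alpha S_\beta^*)$ and invoke Proposition~\ref{symmetry} directly.)

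Now assume $e_{k_0}$ is cyclic. Because the linear span of $\mathcal{G}$-monomials is dense in $\mathcal{A}$ — in the Cuntz case the operators $S_\alpha S_\beta^*$ (with $\alpha,\beta$ multi-indices, the empty one included) already span a dense subspace, and in the $\CQ_2$ case the $*$-algebra generated by $S_2$ and $U$ is dense — the set $\{m\,e_{k_0}:m\in\mathcal{M}\}$ has dense linear span in $\ell^2(\IN)$. As every vector of that set is a basis vector or $0$, the subset $R\doteq\{n\in\IN:\ m\,e_{k_0}=e_n\ \text{for some}\ m\in\mathcal{M}\}$ satisfies $\clsp\{e_n:n\in R\}=\ell^2(\IN)$; since the $e_n$ form an orthonormal basis this forces $R=\IN$. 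Therefore, for every $n$ there exists $m_n\in\mathcal{M}$ with $m_n\,e_{k_0}=e_n$, and the symmetry step yields $m_n^*\,e_n=e_{k_0}$.

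Finally, fix an arbitrary $k\in\IN$, and pick $m_k\in\mathcal{M}$ with $m_k^*\,e_k=e_{k_0}$. For every $n$ we then get $(m_n m_k^*)\,e_k=m_n\,e_{k_0}=e_n$, with $m_n m_k^*\in\mathcal{M}\subset\pi(\mathcal{A})$, so $\overline{\pi(\mathcal{A})\,e_k}$ contains every $e_n$ and thus equals $\ell^2(\IN)$; that is, $e_k$ is cyclic. I do not anticipate a real obstacle: the only things to be careful about are the identification of $\mathcal{G}$-monomials with the familiar dense families of operators used to prove density and the elementary check that each generator is a partial bijection of the basis; granting those, the remainder is a two-line orthonormality argument that works verbatim for $\CO_2$ and $\CQ_2$ alike.
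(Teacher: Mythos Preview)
Your proof is correct and follows essentially the same route as the paper: use density of the monomials to show the given cyclic basis vector reaches every other basis vector, then invoke the symmetry property to reverse the arrow and conclude. Your inner-product argument for symmetry (the observation that $\langle m^*m\,e_j,e_j\rangle=1$ together with $m^*e_l$ being a basis vector or $0$ forces $m^*e_l=e_j$) is a clean alternative to the explicit computation in Proposition~\ref{symmetry}, and working with the full monoid of generator-products lets you treat $\CQ_2$ uniformly where the paper only writes out the $\CO_2$ case.
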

\begin{proof}
We limit oursevels to treat $\CO_2$ only. Let $e_{i_0}$ be a cyclic vector. We first show that for every $k\in \IN$ there exists  at least one monomial
$S_\alpha S_\beta^*\in\CO_2$ such that $\pi(S_\alpha S_\beta^*)e_{i_0}=e_k$. Indeed, if this were not the case,
the linear subspace $\pi(\CO_2^{\textrm{alg}})e_{i_0}\subset\CH$, with $\CO_2^{\textrm{alg}}$ being the dense subalgebra generated by monomials $S_\mu S_\nu^*$ for $\mu,\nu\in W^2$, would fail to contain $e_k$. More precisely, every vector
$x\in\pi(\CO_2^{\textrm{alg}})$ would satisfy $(x, e_k)=0$. By density, we would finally find that every $x\in\overline{\pi(\CO_2)e_{i_0}}$ is such that $(x, e_k)=0$, which contradicts the cyclicity of $e_{i_0}$. 
It is now clear how to get to the conclusion. Indeed, thanks to Proposition \ref{symmetry} we know
$\pi(S_\beta S_\alpha^*)e_k=e_{i_0}$ , which immediately makes it plain $e_k$ is also cyclic.
\end{proof}
The above result might beguile the reader into thinking that cyclic permutative representations are automatically irreducible. The situation is in fact  a shade more involved than that, and cyclic permutative representations
will in general fail to be irreducible. Even so, the natural condition that prevents this from happening  has been spotted by Bratteli and Jorgensen in \cite{BJ}, where they introduce a suitable notion of multiplicity-free
permutative representation of $\CO_2$, which we next recall in some detail. To this aim, we first need to  recall the definition of the so-called \emph{coding map} as it is introduced in \cite{BJ}.
If $(\sigma_1, \sigma_2)$ is a branching function system of order $2$, we can define a map $\sigma:\IN\rightarrow\{1,2\}^\IN$ in the following way.
For any given $n\in\IN$, there is only one sequence $(i_1,i_2,\ldots, i_k,\ldots)\in\{1,2\}^\IN$ such that $n$
lies in the range of $\sigma_{i_1}\circ\sigma_{i_2}\circ\ldots\circ\sigma_{i_k}$  for any $k\in\IN$: by definition, the value $\sigma(n)$ of the coding map on the integer $n$ is just this sequence. 
In terms of the Cuntz isometries, if $\pi$ is the representation associated with $(\sigma_1,\sigma_2)$, the sequence $\sigma(n)=(i_1,i_2,\ldots, i_k,\ldots)$ is completely determined by the condition
$\pi(S_{i_k}^*\ldots S_{i_2}^* S_{i_1}^*)e_n\neq 0$ for every $k\in\IN$. Now a permutative representation  is \emph{multiplicity-free} in the sense of Bratteli-Jorgensen if the corresponding coding map is injective.  
More generally, our representation is said \emph{regular} if its coding map is only \emph{partially injective}, namely if $\sigma(n) = \sigma(\sigma_{i_i} \circ \ldots \circ \sigma_{i_k}(n))$ then
$n = \sigma_{i_i} \circ \ldots \circ \sigma_{i_k}(n)$, for any $i_1, i_2, \ldots i_k\in\{1,2\}$ and $k\in\IN$. Obviously, both definitions continue to make sense for a representation $\rho$ of $\CQ_2$ also, since it is quite natural to say $\rho$ is multiplicity-free or regular
if its restriction to the Cuntz algebra $\CO_2$ is. At this point, it is already fairly clear that a permutative representation that decomposes into the direct sum of multiplicity-free permutative subrepresentations is regular.
That said, we can move on to discuss the announced result. First, we single out in the following lemma a  separation property enjoyed by multiplicity-free representations. 

\begin{lemma}\label{separation}
Let $\pi:\CO_2\rightarrow\CB(\CH)$ be a multiplicity-free permutative representation. Then forn any finite set of basis vector $\{e_{n_0}, e_{n_1,},\ldots, e_{n_k} \}$ there
is a multi-index $\alpha\in W_2$ such that $\pi(S_\alpha S_\alpha^*)e_{n_0}=e_{n_0}$ and $\pi(S_\alpha S_\alpha^*)e_{n_i}=0$ for every $i=1,2, \ldots, k$.
\end{lemma}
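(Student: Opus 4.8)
The plan is to read off the required multi-index directly from the \emph{coding map}, using that $\pi$ being multiplicity-free means precisely that this map is injective. Since $\{e_{n_0}, e_{n_1}, \ldots, e_{n_k}\}$ is a set of basis vectors, the integers $n_0, n_1, \ldots, n_k$ are pairwise distinct, so injectivity of the coding map gives $\sigma(n_0)\neq\sigma(n_i)$ for every $i=1,\ldots,k$. For each such $i$ let $\ell_i\in\IN$ be the first position at which the sequences $\sigma(n_0)$ and $\sigma(n_i)$ disagree, put $L\doteq\max\{\ell_1,\ldots,\ell_k\}$, and let $\alpha=(i_1,i_2,\ldots,i_L)\in W_2^L$ be the length-$L$ truncation of $\sigma(n_0)$. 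This $\alpha$ is the candidate.

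The first thing to verify is that $\pi(S_\alpha S_\alpha^*)e_{n_0}=e_{n_0}$. By the defining property of the coding map, $\alpha$ being a truncation of $\sigma(n_0)$ means $\pi(S_\alpha^*)e_{n_0}=\pi(S_{i_L}^*\cdots S_{i_1}^*)e_{n_0}\neq 0$, hence $\pi(S_\alpha S_\alpha^*)e_{n_0}\neq 0$; since $\pi(S_\alpha S_\alpha^*)=\pi(P_\alpha)$ is a diagonal projection with respect to $\{e_n:n\in\IN\}$ (cf. the proof of Proposition \ref{symmetry}), it must fix $e_{n_0}$.

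The heart of the matter is showing $\pi(S_\alpha S_\alpha^*)e_{n_i}=0$ for $i=1,\ldots,k$, and this rests on a prefix-consistency property of the coding map: for a basis vector $e_m$ and a word $w=(j_1,\ldots,j_r)\in W_2^r$ one has $\pi(S_w^*)e_m\neq 0$ if and only if $w$ is the length-$r$ truncation of $\sigma(m)$. Indeed, if $\pi(S_w^*)e_m\neq 0$ then $\pi(S_{j_s}^*\cdots S_{j_1}^*)e_m\neq 0$ for every $s\leq r$, since otherwise applying $\pi(S_{j_r}^*\cdots S_{j_{s+1}}^*)$ would annihilate it; at each stage the two projections $\pi(S_1 S_1^*)$ and $\pi(S_2 S_2^*)$ sum to $1$, so only one of the two continuations can be nonzero, whence $w$ is forced coordinate by coordinate and, by definition, coincides with the truncation of $\sigma(m)$. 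Now if $\pi(S_\alpha S_\alpha^*)e_{n_i}\neq 0$, then $\pi(S_\alpha^*)e_{n_i}\neq 0$, so the length-$L$ truncation of $\sigma(n_i)$ equals $\alpha$, i.e. equals the length-$L$ truncation of $\sigma(n_0)$; but $\ell_i\leq L$ and the two sequences differ at position $\ell_i$, a contradiction. Hence $\pi(S_\alpha S_\alpha^*)e_{n_i}=0$ for every $i$, which completes the argument.

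I expect the only slightly delicate point to be the prefix-consistency property; everything else is a straightforward unwinding of the definitions of the coding map and of a permutative representation, and no machinery beyond the injectivity built into the multiplicity-free hypothesis is needed.
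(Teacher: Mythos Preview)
Your proof is correct and follows essentially the same approach as the paper: both take a sufficiently long truncation $\alpha$ of $\sigma(n_0)$ so that it cannot also be an initial segment of any $\sigma(n_i)$, using injectivity of the coding map. The only difference is cosmetic---the paper phrases the choice of length via a short induction on $k$, while you compute $L=\max_i\ell_i$ directly and spell out the prefix-consistency property that the paper leaves implicit.
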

\begin{proof}
Let $\alpha_l=(i_1,i_2,\ldots, i_l)$ be the multi-index of length $l$ obtained out of $\sigma(n_0)=(i_1,i_2,\ldots)$ by taking its first $l$ values.
We have $\pi(S_{\alpha_l}S_{\alpha_l}^*)e_{n_0}=e_{n_0}$ for every $l\in\IN$ by construction. We argue by induction on $k$. If $k=1$, there must exist 
an $l\in\IN$ such that $\pi(S_{\alpha_l}S_{\alpha_l}^*)e_{n_1}=0$, for otherwise $\sigma(n_1)$ would be the same as $\sigma(n_0)$, which is not possible by hypothesis. Finally, the inductive
step can be taken in much the same way. 
\end{proof}

\begin{proposition}\label{cycirr}
Any multiplicity-free cyclic permutative representation $\pi$ of $\CO_2$ or $\CQ_2$ is irreducible.
\end{proposition}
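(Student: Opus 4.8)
The plan is to prove irreducibility by showing that the commutant of the representation is trivial, and to dispatch $\CO_2$ and $\CQ_2$ in one stroke. The point is that the separation property isolated in Lemma~\ref{separation} only concerns the diagonal projections $P_\alpha = S_\alpha S_\alpha^* \in \CD_2$, which lie in $\CO_2 \subseteq \CQ_2$, whereas cyclicity will be used exactly once, at the very end, over the ambient algebra. So I would write the argument for a multiplicity-free cyclic permutative representation $\rho$ of $\CQ_2$ on $\ell^2(\IN)$, the case of $\CO_2$ being obtained by replacing $\rho$ with $\pi$ throughout. Recall that, by definition, ``multiplicity-free'' for $\rho$ means that $\rho\upharpoonright_{\CO_2}$ is a multiplicity-free permutative representation of $\CO_2$, so Lemma~\ref{separation} is available for it.

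First I would fix a cyclic basis vector $e_{n_0}$ — one exists by the very definition of a cyclic permutative representation (and, by the previous proposition, every basis vector is then cyclic) — and take an arbitrary $T$ in the commutant $\rho(\CQ_2)'$. Expanding $Te_{n_0} = \sum_k c_k e_k \in \ell^2(\IN)$, the goal of the next step is to show $c_k = 0$ whenever $k \neq n_0$.

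For such a $k$, I would invoke Lemma~\ref{separation} (only its two-vector case is needed) applied to $\rho\upharpoonright_{\CO_2}$ and the pair $\{e_{n_0}, e_k\}$, obtaining $\alpha \in W_2$ with $\rho(P_\alpha)e_{n_0} = e_{n_0}$ and $\rho(P_\alpha)e_k = 0$. Since $P_\alpha$ is a self-adjoint projection in $\CO_2 \subseteq \CQ_2$, it commutes with $T$, whence
$$c_k = (Te_{n_0}, e_k) = (T\rho(P_\alpha)e_{n_0}, e_k) = (\rho(P_\alpha)Te_{n_0}, e_k) = (Te_{n_0}, \rho(P_\alpha)e_k) = 0 .$$
Hence $Te_{n_0} = c_{n_0} e_{n_0}$. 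Finally, for every $a \in \CQ_2$ one gets $T\rho(a)e_{n_0} = \rho(a)Te_{n_0} = c_{n_0}\,\rho(a)e_{n_0}$, and since $e_{n_0}$ is cyclic this forces $T = c_{n_0}\cdot 1$; thus $\rho(\CQ_2)' = \IC\cdot 1$ and $\rho$ is irreducible.

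I do not expect a genuine obstacle here: the substance is already packaged in Lemma~\ref{separation}, and the rest is the standard commutant argument. The one subtlety worth flagging is that a multiplicity-free cyclic permutative representation of $\CQ_2$ need not restrict to a \emph{cyclic} representation of $\CO_2$, so the $\CQ_2$ statement does not follow formally from the $\CO_2$ one; this is harmless precisely because the separation lemma uses only $\CO_2$-projections while cyclicity is invoked a single time over $\CQ_2$.
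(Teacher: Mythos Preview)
Your proof is correct and uses the same key ingredient as the paper, namely the separation Lemma~\ref{separation}, but you take a genuinely different route to irreducibility. The paper argues that every non-zero vector is cyclic: given a finite linear combination $x = \lambda_1 e_1 + \cdots + \lambda_n e_n$ with, say, $\lambda_1 \neq 0$, Lemma~\ref{separation} provides a projection $P_\alpha$ with $\pi(P_\alpha)e_1 = e_1$ and $\pi(P_\alpha)e_j = 0$ for $j \geq 2$, so that $\pi(\lambda_1^{-1} P_\alpha)x = e_1$ is already cyclic; density of such $x$ finishes the argument. You instead show the commutant is trivial by proving that any $T \in \rho(\CQ_2)'$ sends the cyclic basis vector $e_{n_0}$ to a scalar multiple of itself, killing each off-diagonal coefficient $c_k$ one at a time via the two-vector case of the separation lemma.

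Both arguments are short and essentially equivalent in strength; the paper's uses the full finite-set version of Lemma~\ref{separation} while yours needs only the two-point case. Your commutant approach has the small conceptual bonus that it makes explicit why the $\CQ_2$ case does not formally reduce to the $\CO_2$ case (the restriction need not be cyclic), a point the paper glosses over with ``we limit ourselves to dealing with $\CO_2$ only.''
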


\begin{proof}
Again, we  limit ourselves to dealing with $\CO_2$ only. We need to prove that every non-zero vector is cyclic. To this end, it is enough to produce a dense subset of cyclic vectors, and  this is given by finite linear combinations of the form
$x=\lambda_1e_1+\lambda_2e_2+\ldots\lambda_ne_n$, $n\in\IN$. If $x$ is not zero, we may suppose $\lambda_1\neq 0$ without loss of generality. 
By applying Lemma \ref{separation} we see there exists a certain $\alpha\in W_2$ such that $\pi(S_\alpha S_\alpha^*)e_1=e_1$  and
$\pi(S_\alpha S_\alpha^*) e_j=0$ for every $j=2,3,\ldots, n$. But then $\pi(\frac{1}{\lambda_1}S_\alpha S_\alpha^*)x=e_1$, and so
$\overline{\pi(\CO_2)x}$ exhausts $\CH$, since it contains the cyclic vector $e_1$.
\end{proof}
Now by means of a standard application of Zorn's lemma one sees at once that any permutative representation decomposes into the direct sum of
cyclic permutative representations. 
\begin{proposition}
Every permutative representation of $\CO_2$ or $\CQ_2$ decomposes into the direct sum of cyclic permutative representations. 
\end{proposition}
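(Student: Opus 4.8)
The plan is to mimic exactly the familiar construction of a maximal orthogonal family of cyclic subspaces, but carried out entirely within the class of \emph{permutative} invariant subspaces spanned by basis vectors, so that the pieces inherit the permutative structure automatically. Let $\rho:\CQ_2\rightarrow\CB(\ell^2(\IN))$ be a permutative representation with respect to the orthonormal basis $\{e_n:n\in\IN\}$; the argument for $\CO_2$ is word-for-word the same, since only monomials $S_\alpha S_\beta^*$ are used. For a single basis vector $e_n$, consider the permutative cyclic subspace it generates, namely $M_n\doteq\overline{\textrm{span}}\{e_m : e_m = \rho(S_\alpha S_\beta^*)e_n \text{ for some }\alpha,\beta\in W_2\}$ (when dealing with $\CQ_2$ we also allow the monomials to involve $U$ and $U^*$, which is harmless since $\rho(U)$ permutes basis vectors). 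By Proposition \ref{symmetry}, $m\mapsto n$ is realized by a reversed monomial whenever $n\mapsto m$ is, so this relation is symmetric, and it is visibly reflexive and transitive; hence the collection of such $M_n$'s gives a partition of the index set $\IN$ into disjoint invariant subsets, and $\ell^2(\IN)$ splits as the orthogonal direct sum $\bigoplus_n M_n$ over a set of representatives. Each summand is a permutative invariant subspace spanned by basis vectors, so $\rho$ restricts to it as a cyclic permutative representation by construction, and this already finishes the proof.

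First I would state precisely the equivalence relation on $\IN$: declare $n\sim m$ iff $e_m$ lies in the closed span $\overline{\rho(\CQ_2)e_n}$, and observe using Proposition \ref{symmetry} (together with the density of the monomials) that this is equivalent to requiring $e_m=\rho(x)e_n$ for some monomial $x$. Then I would check the three properties of an equivalence relation, the only nontrivial one being symmetry, which is exactly what Proposition \ref{symmetry} supplies; transitivity follows by composing monomials, reflexivity from $x=1$. Next I would note that each equivalence class $C$ yields the permutative invariant subspace $H_C\doteq\overline{\textrm{span}}\{e_m:m\in C\}$, invariant because applying any $\rho(S_i)$ or $\rho(U)$ to a basis vector $e_m$ with $m\in C$ produces another basis vector still in $C$, and invariant under the adjoints $\rho(S_i^*)$, $\rho(U^*)$ for the same reason (these send $e_m$ either to $0$ or to a basis vector in $C$). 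Finally, since the classes are disjoint and cover $\IN$, the $H_C$ are pairwise orthogonal with $\bigoplus_C H_C=\ell^2(\IN)$, and the restriction of $\rho$ to each $H_C$ is by the very definition of the class a cyclic permutative representation, any of whose basis vectors is cyclic.

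There is essentially no obstacle here: the work has all been front-loaded into Proposition \ref{symmetry} (which guarantees the relation is symmetric) and into the earlier observation that a closed span of basis vectors that is invariant carries a permutative subrepresentation. The only point requiring a moment's care is the bookkeeping when $\CQ_2$ rather than $\CO_2$ is in play, namely that one must allow monomials containing $U^{\pm 1}$; but since $\rho(U)$ and $\rho(U^*)$ permute the basis, including them changes nothing in the argument, and in fact one may equivalently work with the restriction to $\CO_2$ and observe afterwards that each $\CO_2$-invariant permutative subspace is automatically $\rho(U)$-invariant because $\rho(U)=\rho(U S_2 S_2^*+US_2S_2^*)$... more simply, the index set of a class is closed under $\tau$ and $\tau^{-1}$ since $\tau$ commutes appropriately with $\sigma_1,\sigma_2$, which was recorded right after the definition of permutative representation of $\CQ_2$. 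So the proof amounts to invoking Proposition \ref{symmetry} to get an equivalence relation and then reading off the orthogonal decomposition indexed by its classes.
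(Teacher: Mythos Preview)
Your argument is correct and follows essentially the same idea as the paper: take a basis vector, observe that the cyclic subspace it generates is spanned by basis vectors (since monomials send basis vectors to basis vectors or zero), and use Proposition \ref{symmetry} to see that these subspaces partition the Hilbert space. The only stylistic difference is that the paper produces \emph{one} such cyclic permutative subspace and then invokes Zorn's lemma to obtain a maximal orthogonal family, whereas you cut straight to the equivalence relation on $\IN$ and read off the partition directly; your route is marginally more explicit and sidesteps Zorn, but the substance is identical. (One small cleanup: the expression $\rho(U)=\rho(U S_2 S_2^*+US_2S_2^*)$ in your final paragraph is garbled and should simply be dropped, since as you note yourself the $\CQ_2$ case needs no extra argument once $\rho(U)$ is known to permute the basis.)
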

\begin{proof}
For instance, let $\pi$ be a permutative representazion of the Cuntz algebra. We only need to show that $\pi$ has a cyclic permutative subrepresentation. Now a subrepresentation of this type can be produced at once by considering the closed
subspace $M\doteq\overline{ \pi(\CO_2)e_0}$. Indeed, $M$ is cyclic and $\pi$ invariant by construction. The conclusion is then reached  by realizing that there exists an orthonormal basis of $M$ made up of basis vectors, that is $M=\overline{\textrm{span}}\{e_i: i\in I\subset\IN\}$, and this is easily seen by noting that $M$ can also be described as $\overline{\textrm{span}}\{\pi(S_\alpha S_\beta^*)e_0: \alpha,\beta\in W_2\}$.
\end{proof}   
Furthermore, if the representation $\pi$ is also multiplicity-free, in the above decomposition all cyclic representations are actually irreducible thanks to Proposition \ref{cycirr}. However, in  \cite[Theorem 2.7]{BJ}
more is proved. Indeed, any
multiplicity-free permutative representation is shown to decompose into the direct sum of irreducible permutative subrepresentions that are in addition pairwise inequivalent. Notably, the representation is multiplicity-free in the usual sense as well, i.e.
the commutant $\pi(\CO_2)'$ is abelian.
If the representation is only assumed to be regular, then it is still completely reducible, but permutative irreducible subrepresentations may appear with multiplicity greater than one. 
Curiously enough, it is nowhere explicitly said in their monograph that the converse, too, holds true, although the authors were perhaps fully aware of this fact.
Be that as it may, this further confirms that the two definitions could not possibly have been any better. At any rate, it is not quite a matter of being nuanced about how good the definitions are; it is more that
we do need the converse, see Therem \ref{BJconverse}, to prove our own results. Because the proof is not entirely obvious, it is postponed to the next section. 
 
\section{A converse to a theorem of Bratteli-Jorgensen}\label{converse}
Cyclic permutative representations of $\CO_2$ are completely known.  More precisely, they can all be classified in terms of a (possibly infinite) multi-index $I$ in the alphabet $\{1,2\}$.
Associated with any such index $I$ there is a unique cyclic permutative representation $\pi_I$ of $\CO_2$  that is completely determined by the following properties. 
When the multi-index $I$ is finite, say $I=(i_1,i_2,\ldots, i_k)$, then there is a unique basis vector $\Omega$ such that $\pi_I(S_I)\Omega=\Omega$ and the set of vectors
$\{\Omega, \pi_I(S_{I_1})\Omega,\pi_I(S_{I_2})\Omega,\ldots, \pi_I(S_{I_{k-1}})\Omega \}$ is an orthonormal system, where $I_j$ is the multi-index obtained out of $I$ by considering its first $j$ entries only. 
When the multi-index is an infinite sequence $I=(i_1, i_2,\ldots, i_n,\ldots)$ instead, $\Omega$ is the only basis vector such that
$S_{i_n}^*\ldots S_{i_2}^*S_{i_1}^*\Omega\neq 0$ for every $n\in\IN$ and the set $\{\Omega, \pi_I(I_k)\Omega: k\in\IN\}$ is an orthonormal system. In \cite{Kaw06} $\pi_I$
is referred to as the representation of type $P(I)$, and so is it in the present work.  More importantly, a representation of type $P(I)$ is irreducible if and only if either
the multi-index $I$ is finite and cannot be written as $JJ\ldots J$, where $J$ is another finite multi-index such that $|J|$ divides $|I|$, or it is infinite and is not eventually periodic, see e.g.
\cite[Theorem 3.4]{DavPitts}. Interestingly, a representation rising from  a multi-index whose length is an odd prime number is necessarily irreducible. Finally, note that
$P(1)$ and $P(2)$ are  respectively nothing but $\pi_-$ and $\pi_+$.
That said, we can move on to the announced result. In order to prove it, the first step to take is to ensure that irreducible representations are multiplicity-free
in the sense of Bratteli-Jorgensen. To do that, we first need to give a definition. We say that a monomial $S_\mu S_\nu^*$ is \emph{reduced} if it cannot be written as $S_{\mu'} P_\beta S_{\nu'}^*$ with $\mu=\mu' \beta$, 
$\nu = \nu'\beta$ and $P_\beta=S_\beta S_\beta^*$ a non-trivial standard diagonal projection. Note that if in a permutative representation $S_{\alpha} P_\beta S_{\gamma}^* e_n \neq 0$ then $S_{\alpha} P_\beta S_{\gamma}^* e_n = S_{\alpha} S_{\gamma}^* e_n$.
A couple of observations are now in order to better understand what the definition actually rules out. First, if either $\mu$ or $\nu$ is empty, the corresponding monomial is certainly reduced; in particular, the identity $I$ is reduced. 
Second, none of the non-trivial standard diagonal projections are reduced. 
\begin{thm} \label{irrmf}
Any irreducible permutative representation of $\CO_2$ is multiplicity-free in the sense of Bratteli-Jorgensen. 
\end{thm}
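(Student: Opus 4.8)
The plan is to reduce an arbitrary irreducible permutative representation to one of Kawamura's model representations $\pi_I$ of type $P(I)$, and then verify injectivity of the coding map directly on those models. Since $\pi$ is irreducible, every nonzero vector is cyclic, so in particular each basis vector $e_n$ is cyclic; hence $\pi$ is a cyclic permutative representation, and by the classification of cyclic permutative representations of $\CO_2$ recalled at the beginning of this section, $\pi$ is unitarily equivalent to $\pi_I$ for some finite or infinite multi-index $I$ in the alphabet $\{1,2\}$. As $\pi\cong\pi_I$ is irreducible, the characterisation of irreducibility of the representations of type $P(I)$ (cf. \cite[Theorem 3.4]{DavPitts}, already quoted above) forces $I$ to be either finite and aperiodic, i.e.\ primitive (not of the form $JJ\cdots J$ with $|J|$ a proper divisor of $|I|$), or infinite and not eventually periodic. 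It therefore suffices to prove that the coding map of $\pi_I$ is injective for every such $I$.

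The second step is to make the orthonormal basis of $\pi_I$ and the values of its coding map completely explicit. For $I=(i_1,\ldots,i_p)$ finite and primitive, the distinguished basis vector $\Omega$ satisfies $\pi_I(S_I)\Omega=\Omega$, so $\pi_I(S_{i_1}^*)\Omega=\pi_I(S_{i_2\cdots i_p})\Omega$, and an easy induction (using that $\pi_I(S_i^*)\pi_I(S_w)\Omega$ again has this shape and that $\Omega$ is cyclic) shows that the whole basis is $\{\pi_I(S_w)\Omega:\ w\in W_2\cup\{\emptyset\}\}$, the only coincidences being $\pi_I(S_{wI})\Omega=\pi_I(S_w)\Omega$; since peeling $\Omega$ off reproduces $\Omega$ along $I$, the coding sequence of $\pi_I(S_w)\Omega$ is $w I^\infty:=w\,i_1i_2\cdots i_p\,i_1i_2\cdots$. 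For $I=(i_1,i_2,\ldots)$ infinite and not eventually periodic one proceeds in the same spirit, the only change being that the basis now also contains the vectors $\Omega^{(k)}:=\pi_I(S_{i_1\cdots i_k}^*)\Omega$ (peeling $\Omega$ along $I$), with $\pi_I(S_{w i_k})\Omega^{(k)}=\pi_I(S_w)\Omega^{(k-1)}$, and the coding of $\pi_I(S_w)\Omega^{(k)}$ equals $w\cdot(i_{k+1},i_{k+2},\ldots)$. (Alternatively, in the infinite case one can avoid the explicit basis altogether: every coding sequence occurring in $\pi_I$ is then not eventually periodic, which forces the orbit of every basis vector under the peeling map $e_j\mapsto\pi_I(S_{i_1}^*)e_j$, $(i_1,\ldots)=\sigma(j)$, to be injective, so any two basis vectors with the same coding define the same vector state on $\CO_2$, and a GNS argument together with $\pi_I(\CO_2)'=\IC 1$ finishes the case.)

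Granting the explicit description, injectivity of the coding map is then a short combinatorial fact. In the finite case, if $\pi_I(S_w)\Omega$ and $\pi_I(S_{w'})\Omega$ share the same coding then $wI^\infty=w'I^\infty$; assuming $|w|\ge|w'|$ this gives $w=w'\alpha$ with $|\alpha|=|w|-|w'|$ and, cancelling $w'$ on the left, $\alpha I^\infty=I^\infty$. Since $I$ is primitive this forces $\alpha$ to be a power of $I$, say $\alpha=I^r$, whence $\pi_I(S_w)\Omega=\pi_I(S_{w'I^r})\Omega=\pi_I(S_{w'})\Omega$, so the two basis vectors coincide. In the infinite case, $w\cdot(i_{k+1},\ldots)=w'\cdot(i_{l+1},\ldots)$ with $|w|\ge|w'|$ gives $w=w'\alpha$ and the equality of the two tails $(i_{k+1},\ldots)$ and $(i_{l+|\alpha|+1},\ldots)$ of $I$; as $I$ is not eventually periodic, two of its tails are equal only when they begin at the same index, so $k=l+|\alpha|$ and $\alpha=(i_{l+1},\ldots,i_k)$, whence $\pi_I(S_w)\Omega^{(k)}=\pi_I(S_{w'})\Omega^{(l)}$. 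Either way the coding map of $\pi_I$ is injective, i.e.\ $\pi_I$, and hence $\pi$, is multiplicity-free in the sense of Bratteli--Jorgensen.

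The main obstacle is the bookkeeping in the second step: pinning down the orthonormal basis of $\pi_I$ together with the complete list of coincidences among the vectors $\pi_I(S_w)\Omega$ (and, for infinite $I$, the extra family $\Omega^{(k)}$). Once this is in place the remaining work is the elementary lemma on periodic and on non-eventually-periodic words used above. A minor point to be careful about is the reduction itself — that irreducibility of $\pi$ really does force the multi-index $I$ of the associated model to be primitive (finite case) or not eventually periodic (infinite case) — which is precisely where the cited irreducibility criterion for the representations $P(I)$ enters.
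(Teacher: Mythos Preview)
Your proof is correct, and in the finite case it coincides with the paper's argument almost verbatim. The overall strategy, however, differs from the paper's. Rather than reducing to the model $P(I)$ via the classification and the irreducibility criterion up front, the paper argues by an intrinsic dichotomy on whether reduced monomials $S_\mu S_\nu^*$ act injectively on each basis vector. When they do, the paper assumes the coding map is not injective, say $\sigma(n)=\sigma(m)$ with $n\neq m$, and builds a nontrivial intertwiner $T$ directly by $T(S_\mu S_\nu^*e_n)\doteq S_\mu S_\nu^*e_m$, contradicting irreducibility; this case needs no external classification results at all. When injectivity of reduced monomials fails, the paper observes the representation must be of finite type $P(I)$ and performs essentially the same word-combinatorial check you give. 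Your approach is more uniform (one reduction, then word combinatorics in both cases) but leans on the cited classification and the Davidson--Pitts irreducibility criterion as black boxes; the paper's first case is more self-contained and makes the link between failure of multiplicity-freeness and reducibility explicit. Your parenthetical GNS alternative for the infinite case is in fact close in spirit to the paper's intertwiner construction. One small point worth making explicit in your reduction: the unitary equivalence $\pi\cong\pi_I$ furnished by the classification has to be permutative (basis to basis) for injectivity of the coding map to transfer; this is implicit in how the classification is stated, but deserves a word.
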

\begin{proof}
There are two cases to deal with according to whether for any given $k\in\IN$ the equality $S_\mu S_\nu^*e_k=S_{\mu'}S_{\nu'}^*e_k \neq 0$ 
with reduced $S_\mu S_\nu^*$, $S_{\mu'}S_{\nu'}^*$
implies $\mu=\mu'$ and $\nu=\nu'$.
If this is the case, the proof to the theorem is reached by contradiction. Assuming that the coding map fails to be injective, say $\sigma(n)=\sigma(m)$ with $n\neq m$,
a bounded linear operator $T$ is well defined on the Hilbert space $\ell^2(\N)$ by $TS_\mu S_\nu^*e_n\doteq S_\mu S_\nu^*e_m$, for any pair of (reduced) multi-indices $\mu, \nu$. 
Indeed, if $S_\mu S_\nu^*e_n$ is zero then $S_\mu S_\nu^*e_m$ is also zero, since $\sigma(n)=\sigma(m)$ by assumption. By irreducibility the linear spans of
$\{S_\mu S_\nu^*e_n: \mu,\nu\}$ and $\{S_\mu S_\nu^*e_m: \mu,\nu\}$ are both dense, which means $T$ is densely defined with dense range. Because it is clearly isometric as well, it 
extends to a unitary operator of $\ell^2(\IN)$. Furthermore, sending $e_n$  to $e_m$, $T$ is  not a multiple of the identity.  
The contradiction is finally arrived at if we show that $T$ lies in the commutant of the representation. This is quickly verified. As $T$ is unitary, we only need to make sure that
$S_iT=TS_i$, $i=1,2$. These equalities are immediately seen to hold true at the level of $\textrm{span}\{S_\mu S_\nu^*e_n: \mu,\nu\}$, for
$S_iT S_\mu S_\nu^*e_n= S_iS_\mu S_\nu^*e_m=TS_iS_\mu S_\nu^*e_n$, $i=1,2$.\\
In the second case, it is not difficult to realize that the representation is of type $P(I)$ in the sense of Kawamura, see \cite{Kaw06}, so that there exists 
$\Omega\in\ell^2(\IN)$ such that $S_I\Omega=\Omega$ for some finite multi-index $I=(i_1,i_2,\ldots, i_k)\in W_2$. 
Moreover, the vector $\Omega$ is a suitable basis vector, say $\Omega=e_h$. In particular, the value of the coding map at $h$ is given by
$\sigma(h)=\overline{i_1i_2\ldots i_k}=\overline{I}$, where we have adopted the convention 
that $\overline{\alpha}$ is the periodic sequence in which $(\alpha_1,\alpha_2,\ldots \alpha_{|\alpha|})$ is repeated infinitely many times.
We now want to prove that the coding map $\sigma$ is injective. To this aim, it is useful to observe that $\CH$ can be obtained as the closed span of the set
$\{S_\alpha\Omega: \alpha\in W_2\}$. In other words, for any $n\in\IN$ there exists at least one multi-index $\alpha\in W_2$ such that
$S_\alpha\Omega= e_n$. As a result, we see that $\sigma(n)$ is nothing but the sequence $\alpha\bar{I}$. From this, the coding map is easily seen to be injective.
Indeed, if for $n,m\in\IN$ we have $\sigma(n)=\sigma(m)$, then $\alpha\bar{I}=\beta\bar{I}$, where $\alpha,\beta\in W_2$ are two multi-indeces
such that $e_n=S_\alpha\Omega$ and $e_m=S_\beta\Omega$. Apart from the trivial case when $\alpha=\beta$, the equality $\alpha\bar{I}=\beta\bar{I}$ is
still possible if $\alpha$ and $\beta$ differs by a multiple of $I$, i.e. $\beta=\alpha (kI)$, for some $k\in\IN$, since $I$ is an irreducible block.
But in this case $e_m=S_\beta\Omega= S_\alpha S_{kI}\Omega=S_\alpha\Omega= e_n$, hence $n=m$.
\end{proof}
As an easy consequence, we now have the following result too. 
\begin{corollary}\label{regular}
A permutative representation $\pi:\CO_2\rightarrow\CB(\CH)$ that decomposes into the direct sum of irreducible permutative subrepresentations is regular in the
sense Bratteli-Jorgensen.
\end{corollary}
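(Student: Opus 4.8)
The plan is to read the corollary off from Theorem \ref{irrmf} together with the elementary observation, already recorded above, that a permutative representation which is a direct sum of multiplicity-free permutative subrepresentations is regular; so the only thing that really needs a few lines is that latter observation, and everything else is bookkeeping. Concretely, the first step is to invoke the hypothesis to write $\CH=\bigoplus_{i\in A}\CH_i$, where each $\CH_i=\overline{\textrm{span}}\{e_j:j\in I_i\}$ is a permutative invariant subspace on which $\pi$ restricts to an irreducible permutative subrepresentation $\pi_i$, and $\{I_i:i\in A\}$ is the corresponding partition of $\IN$. By Theorem \ref{irrmf} each $\pi_i$ is multiplicity-free in the sense of Bratteli--Jorgensen, i.e. its coding map is injective.

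The next step is to observe that the coding map $\sigma$ of $\pi$ restricted to $I_i$ is exactly the coding map of $\pi_i$. This is because the value of the coding map at an integer $n$ is characterized by $\pi(S_{j_k}^*\cdots S_{j_1}^*)e_n\neq 0$ for all $k$, and since $\CH_i$ is invariant under $\pi(\CO_2)$ the vector $\pi(S_{j_k}^*\cdots S_{j_1}^*)e_n$ is either $0$ or again a basis vector inside $\CH_i$; hence whether it vanishes is intrinsic to the block containing $e_n$, so the two coding maps assign the same sequence to $n$. In particular $\sigma$ is injective on each $I_i$. Now I would verify partial injectivity directly: take $n\in\IN$ and a multi-index $\alpha=(j_1,\ldots,j_k)$ with $\sigma(n)=\sigma((\sigma_{j_1}\circ\cdots\circ\sigma_{j_k})(n))$, and let $i$ be the unique index with $n\in I_i$. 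Since $e_{(\sigma_{j_1}\circ\cdots\circ\sigma_{j_k})(n)}=\pi(S_{j_1}\cdots S_{j_k})e_n\in\CH_i$, the integer $(\sigma_{j_1}\circ\cdots\circ\sigma_{j_k})(n)$ also lies in $I_i$; injectivity of $\sigma$ on $I_i$ then forces $n=(\sigma_{j_1}\circ\cdots\circ\sigma_{j_k})(n)$, which is precisely the defining property of a regular representation.

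I do not expect a genuine obstacle here: the entire substance of the corollary is carried by Theorem \ref{irrmf}, and what remains is purely organisational. The one place that deserves a moment's care is exactly the identification of $\sigma\upharpoonright_{I_i}$ with the coding map of $\pi_i$, because regularity is phrased in terms of the single coding map of the whole representation, and one must be sure that restricting attention to a block does not alter which sequence the coding map attaches to a given integer; once that is in place, the computation above finishes the argument.
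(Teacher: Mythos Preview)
Your argument is correct and follows essentially the same route as the paper: decompose $\IN$ according to the irreducible permutative blocks, invoke Theorem \ref{irrmf} to get injectivity of the coding map on each block, and then observe that $\sigma_{j_1}\circ\cdots\circ\sigma_{j_k}(n)$ stays in the same block as $n$, so partial injectivity follows. Your version is in fact slightly more careful than the paper's in spelling out why the global coding map restricted to $I_i$ coincides with the coding map of $\pi_i$, a point the paper leaves implicit.
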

\begin{proof}
We need to check that the coding map $\sigma$ associated with $\pi$ is partially injective, namely 
if $n\in\IN$ is such that $\sigma(n)=\sigma(\sigma_{i_1}\circ \sigma_{i_2} \circ \ldots \circ \sigma_{i_k}(n))$ then $n=\sigma_{i_1}\circ \sigma_{i_2} \circ \ldots \circ \sigma_{i_k}(n)$.
Under our hypotheses, $\IN$ may be written as (at most countable)  disjoint union of infinite set $A_i$ such that $\CH_i\doteq \overline{\textrm{span}}\{e_n: n\in A_j\}\subset\CH$
is an irreducible $\pi$-invariant subspace.  Now, given any $n\in\IN$ there is a unique $i_0$ such that $n\in A_{i_0}$, that is $e_n\in\CH_{i_0}$.
With a slight abuse of notation we write $\sigma(e_n)$ instead of $\sigma(n)$. If we do so, we see that $\sigma(\sigma_{i_1}\circ \sigma_{i_2} \circ \ldots \circ \sigma_{i_k}(n))$ is nothing
but $\sigma(S_{i_1}S_{i_2}\ldots S_{i_k}e_n)$, and the conclusion is immediately got to since $S_{i_1}S_{i_2}\ldots S_{i_k}e_n$ is still in $\CH_{i_0}$ and the restriction
of $\sigma$ to $A_{i_0}$ is injective.
\end{proof}

Furthermore, if each irreducible component shows up at most once, i.e. our representation is multiplicity-free in the general sense, then the coding
map can be shown to be injective. This finding is nothing but the sought converse to the theorem of Bratteli and Jorgensen that a multiplicity-free
permutative representation decomposes into the direct sum of inequivalent irreducible permutative subrepresentations, cf. \cite[Theorem 2.7]{BJ}.

\begin{thm}\label{BJconverse}
Let $\pi$ be a permutative representation of $\CO_2$ such that  $\pi$ decomposes as a direct sum of inequivalent irreducible permutative subrepresentations.
Then the coding map is injective, that is $\pi$ is multiplicity-free in the sense of Bratteli-Jorgensen.
\end{thm}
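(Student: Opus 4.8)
The plan is to prove the contrapositive-style statement directly: assuming $\pi = \bigoplus_{i} \pi_i$ is a direct sum of pairwise inequivalent irreducible permutative subrepresentations (on the subspaces $\CH_i = \overline{\operatorname{span}}\{e_n : n \in A_i\}$ for a partition $\IN = \bigsqcup_i A_i$), I want to show the coding map $\sigma$ is injective. By Corollary \ref{regular} we already know $\sigma$ is partially injective (regular), so the only way injectivity can fail is if $\sigma(n) = \sigma(m)$ with $n \ne m$ and $n, m$ lying in \emph{different} blocks, say $n \in A_i$, $m \in A_j$ with $i \ne j$; indeed, by Theorem \ref{irrmf} each restriction $\sigma\upharpoonright_{A_i}$ is injective, so there is no collision within a single block. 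So the whole content is to rule out cross-block collisions.

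First I would suppose toward a contradiction that $\sigma(n) = \sigma(m)$ with $e_n \in \CH_i$, $e_m \in \CH_j$, $i \ne j$. The key is to build an intertwiner between $\pi_i$ and $\pi_j$ out of this coincidence, exactly mimicking the operator $T$ constructed in the proof of Theorem \ref{irrmf}: define $T S_\mu S_\nu^* e_n \doteq S_\mu S_\nu^* e_m$ for reduced monomials $S_\mu S_\nu^*$. Because $\sigma(n) = \sigma(m)$, the vector $S_\mu S_\nu^* e_n$ vanishes precisely when $S_\mu S_\nu^* e_m$ does (the coding map controls exactly which $S_\nu^* e_n$ are nonzero, hence which reduced words act nontrivially), so $T$ is well defined and isometric on $\operatorname{span}\{S_\mu S_\nu^* e_n : \mu, \nu\}$. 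By irreducibility of $\pi_i$ this span is dense in $\CH_i$, and by irreducibility of $\pi_j$ the image span $\{S_\mu S_\nu^* e_m : \mu,\nu\}$ is dense in $\CH_j$; thus $T$ extends to a unitary $U: \CH_i \to \CH_j$. The intertwining relation $S_k T = T S_k$ ($k = 1,2$) is then checked verbatim on the dense span, just as in Theorem \ref{irrmf}: $S_k T S_\mu S_\nu^* e_n = S_k S_\mu S_\nu^* e_m = T S_k S_\mu S_\nu^* e_n$. This produces a nonzero intertwiner $\pi_i \to \pi_j$, contradicting the assumed inequivalence of the summands. Hence no cross-block collision exists, and $\sigma$ is injective.

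The one point deserving care — and the step I expect to be the main obstacle — is justifying that $T$ is genuinely well defined as a map on the \emph{whole} dense subspace $\operatorname{span}\{S_\mu S_\nu^* e_n\}$, i.e.\ that whenever a finite linear combination $\sum c_\ell S_{\mu_\ell} S_{\nu_\ell}^* e_n$ equals zero, so does $\sum c_\ell S_{\mu_\ell} S_{\nu_\ell}^* e_m$, and that $T$ preserves norms. In a permutative representation each $S_{\mu} S_{\nu}^* e_n$ is either $0$ or a single basis vector $e_p$, and by Proposition \ref{symmetry} the correspondence is symmetric; the reduced-word normalization (and the observation that $S_\alpha P_\beta S_\gamma^* e_n = S_\alpha S_\gamma^* e_n$ whenever nonzero) ensures that distinct reduced monomials sending $e_n$ to nonzero vectors send it to \emph{distinct} basis vectors — and the same combinatorial pattern is forced for $e_m$ because $\sigma(n) = \sigma(m)$ makes the two "action trees'' isomorphic. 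Spelling this out rigorously is essentially a bookkeeping argument identical in spirit to the first case of the proof of Theorem \ref{irrmf}, and I would present it by citing that argument and indicating the small modification: there one compared two reduced expressions for the same vector $e_k$; here one transports reduced expressions from $e_n$ to $e_m$ along the common coding sequence. Once that well-definedness is in hand, everything else (isometry, density of range, intertwining, contradiction with inequivalence) is routine.
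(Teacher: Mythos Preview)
Your overall strategy is sound and essentially correct, but it differs from the paper's route and the point you flag as ``deserving care'' is genuinely more delicate than you indicate.

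The paper does \emph{not} build the intertwiner $T$ directly. Instead, it argues via vector states: for each basis vector $e_k$ one has the state $\omega_k(x)\doteq(\pi(x)e_k,e_k)$, and the paper shows that the coding sequence $\sigma(k)$ alone determines $\omega_k(S_\mu S_\nu^*)$ for every monomial (by a short case analysis on whether $\mu$, $\nu$ are initial words of $\sigma(k)$, using the injectivity of $\sigma$ on each block from Theorem~\ref{irrmf}). Hence $\sigma(n)=\sigma(m)$ forces $\omega_n=\omega_m$, and then GNS uniqueness gives a unitary equivalence $\pi_i\cong\pi_j$, contradicting inequivalence. This is cleaner precisely because it sidesteps the well-definedness check you would otherwise owe.

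On your side, the sentence ``distinct reduced monomials sending $e_n$ to nonzero vectors send it to distinct basis vectors'' is \emph{false} when the block containing $e_n$ is of finite type $P(I)$: there $S_I e_n=e_n$ and the identity both act, yet both are reduced. So the well-definedness of $T$ is not ``bookkeeping identical to the first case of Theorem~\ref{irrmf}''; that first case lives under an explicit hypothesis that fails here. What you actually need is the implication
\[
S_\mu S_\nu^* e_n = S_{\mu'} S_{\nu'}^* e_n \ \Longrightarrow\ S_\mu S_\nu^* e_m = S_{\mu'} S_{\nu'}^* e_m,
\]
and proving this (in the finite-type case) requires knowing that $\sigma$ is injective on each $A_j$ and then arguing, e.g., that $\sigma(m)=\overline{I}$ forces $S_I e_m=e_m$ via $\sigma(S_I^*e_m)=\sigma(m)$. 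That argument works, but once you write it out you are essentially reproving that $\sigma(k)$ determines all the inner products $(S_\mu S_\nu^* e_k,e_k)$ --- which is exactly the paper's vector-state computation. So your approach is valid, but the missing step is not mere bookkeeping; the paper's route is the natural way to close it.
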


\begin{proof} 
By our assumptions, $\IN$ decomposes into the disjoint union of (at most countably infinitely many) subsets $A_i$, $i\in I$, such that
$H_i\doteq\overline{\textrm{span}}\{e_j: j\in A_i\}\subset\ell^2(\IN)$ are inequivalent irreducible subspaces of the given representation $\pi$.
Given two different integer numbers $n,m\in\IN$, there are two cases to deal with according to whether $n$ and $m$ both belong to an $A_i$, for some $i\in I$.
If they do, there is not much to prove in view of the result above, since the restriction of $\sigma$ to each of the $A_i$'s has already been shown to be injective.
Suppose this is not the case. Then $n\in A_i$ and $m\in A_j$, with $i\neq j$. We now claim that $\sigma(n)=\sigma(m)$ implies $\omega_n=\omega_m$, where
$\omega_k$ is the vector state associated with the basis vector $e_k$, i.e. $\omega_k(x)\doteq (\pi(x)e_k, e_k)$, $x\in\CO_2$.
 But this leads to an absurd, because $\pi\upharpoonright_{H_n}$ and
$\pi\upharpoonright_{H_m}$ are then unitarily equivalent, since they are two GNS representations arising from the same state.  
In order to prove the claim, we actually show a bit more. In fact, the sequence $\sigma(k)$ contains full information about the state
$\omega_k$. More precisely, knowing exactly what $\sigma(k)$ is allows us to evaluate $\omega_k$ at every monomial
$S_\mu S_\nu^*$. Indeed, by definition $\omega_k(S_\mu S_\nu^*)=(S_\mu S_\nu^*e_k, e_k)=(S_\nu^*e_k, S_\mu^*e_k)$. There are now a few cases 
to treat. If one of $\mu$ or $\nu$ is not an initial word of $\sigma(k)$, then $\omega_k(S_\mu S_\nu^*)$ is zero. If
both $\mu$ and $\nu$ are initial words of $\sigma(k)$, then either $|\mu|=|\nu|$ or $|\mu|\neq |\nu|$. In the first case, we necessarily
have $\mu=\nu$, which means $\omega_k(S_\mu S_\nu^*)=1$. In the second case, we can suppose $|\mu| >|\nu|$, which means
$\mu=\nu\tilde{\mu}$, and so $\omega_k(S_\mu S_\nu^*)$ is given by $(S_\nu^*e_k, S_{\tilde{\mu}}^* S_\nu^*e_k)$. 
To conclude, we need to note that $e_l\doteq S_\nu^*e_k$ and $e_{l'}\doteq S_{\tilde{\mu}}^*S_\nu^* e_k$ lie in the same irreducible subspace, namely $H_{i_0}$ with $k\in A_{i_0}$. 
Now $\omega_k(S_\mu S_\nu^*)=1$ if and only if $l=l'$ and $0$ otherwise. But the injectivity of $\sigma$ restricted to $A_{i_0}$ says that
$\omega_k(S_\mu S_\nu^*)$ is $1$ if $\sigma(l)=\sigma(l')$ and $0$ if $\sigma(l)\neq \sigma(l')$. But both $\sigma(l)$ and $\sigma(l')$ are infinite subsequences of
$\sigma(k)$ by construction, obtained by removing the first $|\nu|$ and $|\mu|$ digits, respectively, which ends the proof.
\end{proof}

In particular, the restriction of the canonical representation of $\CQ_2$ to $\CO_2$ is multiplicity-free in the sense of Bratteli-Jorgensen, since it decomposes into the direct sum
of two inequivalent representations $\pi_+$ and $\pi_-$, cf. \cite[Sect. 2.2]{ACR}. While $\pi_+$ and $\pi_-$ are inequivalent, they can still be obtained out of each other by composing with 
the flip-flop automorphism $\lambda_f$, which is given by $\lambda_f(S_1)=S_2$ and $\lambda_f(S_2)=S_1$. Indeed, we have the following result.
\begin{proposition}
If $V$ is the unitary between $\CH_-$ and $\CH_+$ given by $Ve_k\doteq e_{-k-1}$, then $V\pi_-=(\pi_+\circ\lambda_f)V$.
\end{proposition}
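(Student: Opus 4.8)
The plan is to verify the intertwining identity $V\pi_-(x)=(\pi_+\circ\lambda_f)(x)V$ on the two generators $S_1,S_2$ of $\CO_2$. This suffices: the set of $x\in\CO_2$ for which the identity holds is plainly a linear subspace, it is closed under products (if it holds for $x$ and $y$ then $V\pi_-(xy)=V\pi_-(x)\pi_-(y)=(\pi_+\circ\lambda_f)(x)V\pi_-(y)=(\pi_+\circ\lambda_f)(xy)V$), and it is closed under adjoints since $V$ is unitary (take adjoints and conjugate back by $V$); being also norm-closed, it is a $C^*$-subalgebra, and since it contains $S_1,S_2$ it must be all of $\CO_2$.

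Before the computation I would record the explicit data. As $\CO_2$ sits in $\CQ_2$ via $S_1\mapsto US_2$ and $S_2\mapsto S_2$, the canonical representation gives $\pi_c(S_1)e_k=\rho_c(U)\rho_c(S_2)e_k=e_{2k+1}$ and $\pi_c(S_2)e_k=e_{2k}$ for $k\in\IZ$, and $\pi_\pm$ is the restriction of $\pi_c$ to the invariant subspace $\CH_\pm$. I would also note that $V$ is indeed a unitary from $\CH_-$ onto $\CH_+$: $k\mapsto -k-1$ is an involution of $\IZ$ carrying $\{k<0\}$ bijectively onto $\{k\ge 0\}$, so $e_k\mapsto e_{-k-1}$ extends to a unitary between the corresponding subspaces. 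The heart of the proof is then two one-line computations on a basis vector $e_k$ with $k<0$, bearing in mind that $\lambda_f$ interchanges $S_1$ and $S_2$. For $S_1$: since $2k+1<0$ and $-k-1\ge 0$, one has $V\pi_-(S_1)e_k=Ve_{2k+1}=e_{-2k-2}$ and $(\pi_+\circ\lambda_f)(S_1)Ve_k=\pi_+(S_2)e_{-k-1}=e_{-2k-2}$. For $S_2$: $V\pi_-(S_2)e_k=Ve_{2k}=e_{-2k-1}$ and $(\pi_+\circ\lambda_f)(S_2)Ve_k=\pi_+(S_1)e_{-k-1}=e_{-2k-1}$. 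The two sides agree on every basis vector of $\CH_-$, giving $V\pi_-(S_i)=(\pi_+\circ\lambda_f)(S_i)V$ for $i=1,2$, and hence $V\pi_-=(\pi_+\circ\lambda_f)V$.

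I do not anticipate a real obstacle: this is essentially bookkeeping. The only points needing care are tracking parities so as to remain in the correct invariant subspace at each step (e.g. that $k<0$ forces $2k+1<0$ while $-k-1\ge 0$), and keeping straight that on the right-hand side $\lambda_f$ has swapped the roles of the two generators --- which is precisely what makes the computation balance. Conceptually, conjugation by $V$ turns the branching maps $k\mapsto 2k+1$ and $k\mapsto 2k$ on the negative integers into $k\mapsto 2k$ and $k\mapsto 2k+1$ on the non-negative integers, and this exchange of the two branches is exactly the action of $\lambda_f$.
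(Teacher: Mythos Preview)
Your proof is correct and follows exactly the approach of the paper, which simply states that the equalities $V\pi_-(S_1)=\pi_+(S_2)V$ and $V\pi_-(S_2)=\pi_+(S_1)V$ are ``easily verified by a direct computation''. You have supplied that computation in full, together with the routine justification that checking on generators suffices.
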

\begin{proof}
All we need to do is to make sure that the equalities $V\pi_-(S_1)=\pi_+(S_2)V$ and $V\pi_-(S_2)=\pi_+(S_1)V$ hold. But these are easily verified by a direct computation.
\end{proof}
It is also worth pointing out that the operator $V$ is  the unique permutative unitary that intertwines $\pi_-$ and $\pi_+\circ\lambda_f$.\\

At this point, it is useful to point out that representations of type $P(I)$ may well fail to be regular when they are not irreducible, i.e. when the corresponding multi-index $I$ is made up of a finite number
of finite blocks or it is infinite but eventually periodic. In this situation, the theorem of Bratteli and Jorgensen does not apply as the coding map is no longer partially injective.  Even so, a decomposition into irreducible
components still exists, although some of these will not be permutative subrepresentations. It is then necessary to consider more general representations, where a phase can appear as well as a permutation
of the basis vectors.  These are the so-called representations of type $P(J,z^k)$, with $|J|=k$ and $z \in \IT$, cf. \cite{Kaw06} and
the references therein, which can be simply described as the composition P$(J) \circ \alpha_{z}$, where
$\{\alpha_z:z\in\IT\}$ are the gauge automorphisms of $\CO_2$, namely $\alpha_z(S_i)=zS_i$, $i=1,2$.
Obviously, a representation of type   $P(J,z^k)$ is irreducible if and only if $P(J)$ is. Moreover, it extends to a representation
of $\CQ_2$ if and only if $P(J)$ does, for $\alpha_z$ certainly extends, see \cite{ACR}.
For instance, it can be seen that $P(1212)$ decomposes as $P(12) \oplus P(12,-1)$. 
For what follows, we also need to remark that the representations  $P(1_k)$ and $P(2_k)$ are completely reducible for every $k\in\IN$, where $1_k$ and $2_k$ are the multi-indices of
length $k$ whose entries are all $1$'s or $2$'s, respectively. Indeed, it is proved in \cite{Kaw06} that 
$P(1_k)=P(1)\oplus P(1,\zeta)\oplus\ldots\oplus P(1, \zeta^{k-1})=P(1)\oplus (P(1)\circ\alpha_\zeta)\oplus\ldots\oplus (P(1)\circ\alpha_{\zeta^{k-1}})$ and
$P(2_k)=P(2)\oplus P(2,\zeta)\oplus\ldots\oplus P(2, \zeta^{k-1})=P(2)\oplus (P(2)\circ\alpha_\zeta)\oplus\ldots\oplus (P(2)\circ\alpha_{\zeta^{k-1}})$, with
$\zeta=e^{\frac{2\pi i}{k}}$. Now neither $P(1_k)$ nor $P(2_k)$ extends to $\CQ_2$ because in $P(1_k)$  the point spectrum of $S_1$ is the set of all $k$-th roots of unity, with each eigenvalue being
simple, whereas the point spectrum of $S_2$ is empty, and the other way round in $P(2_k)$.
However, their direct sum $P(1_k)\oplus P(2_k)$ does extend by virtue of the equality
$$P(1_k)\oplus P(2_k)=\pi_c\oplus (\pi_c\circ\alpha_\zeta)\oplus\ldots\oplus(\pi_c\circ\alpha_{\zeta^{k-1}})$$
which holds up to equivalence. This also shows that there is at least one non-permutative extension of $P(1_k)\oplus P(2_k)$ to $\CQ_2$ that is not irreducible, which is simply  given by 
$$\rho_c\oplus (\rho_c\circ\widetilde\alpha_\zeta)\oplus\ldots\oplus(\rho_c\circ\widetilde \alpha_{\zeta^{k-1}})$$
Yet this does not still say that the permutative extensions of $P(1_k)\oplus P(2_k)$ are reducible themselves. 
In fact, this is just the case. The proof, though, requires a more painstaking analysis, which is carried out in the next section.

\section{Irreducible representations of $\CQ_2$} \label{irreducible}

We can now return to the discussion of permutative representations of $\CQ_2$. In particular, we would like to focus our attention on  irreducible representations.
To this aim, we need to improve our knowledge of the irreducible representations of type
$P(I)$ at the level of the Cuntz algebra. More precisely, we first need to answer the question of whether they extend to $\CQ_2$ or not.   It is somewhat surprising that they all do apart from
$P(1)$ and $P(2)$.  Not only do they extend, but their extension is also unique.

\begin{proposition}\label{irrext}
In all  irreducible representations of type $P(I)$ with $| I |\geq 2$ the generating isometries are pure. 
In particular, apart from $P(1)$ and $P(2)$,  all of these irreducible representations uniquely extend to $\CQ_2$. 
\end{proposition}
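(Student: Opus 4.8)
The plan is to show that in every representation of type $P(I)$ with $|I|\geq 2$ both generating isometries $S_1$ and $S_2$ are pure, i.e. $\bigcap_{n=1}^\infty \operatorname{Ran}(\pi_I(S_i)^n)=0$ for $i=1,2$; once this is done, the extension claim follows immediately from Proposition \ref{pureperm} (purity of both $\pi(S_1),\pi(S_2)$ forces a unique, automatically permutative extension to $\CQ_2$), while $P(1)$ and $P(2)$ are the obvious exceptions since they are $\pi_-$ and $\pi_+$, whose associated isometry $S_1$ (respectively $S_2$) has the fixed basis vector $\Omega$ in its range at every level, hence is manifestly not pure.

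First I would recall the explicit structure of $\pi_I$ from Section \ref{converse}. In the finite case $I=(i_1,\dots,i_k)$ there is a distinguished basis vector $\Omega$ with $\pi_I(S_I)\Omega=\Omega$, and $\CH$ is the closed span of $\{\pi_I(S_\alpha)\Omega:\alpha\in W_2\}$; in the infinite case $\Omega$ is the unique basis vector with $S_{i_n}^*\cdots S_{i_1}^*\Omega\neq 0$ for all $n$, and again $\CH=\overline{\operatorname{span}}\{\pi_I(S_\alpha)\Omega\}$. Since the representation is permutative, $\bigcap_n\operatorname{Ran}(\pi_I(S_i)^n)$ is, by Lemma \ref{isopower}, the closed span of the basis vectors $e_j$ with $j\in\bigcap_n \sigma_i^n(\IN)$, so by Corollary \ref{cor-pureness} it suffices to show $\bigcap_n \sigma_i^n(\IN)=\emptyset$, equivalently that no basis vector lies in $\operatorname{Ran}(\pi_I(S_i)^n)$ for all $n$.

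The key step is the following observation about the coding map $\sigma:\IN\to\{1,2\}^\IN$. A basis vector $e_j$ lies in $\operatorname{Ran}(\pi_I(S_i)^n)$ precisely when the first $n$ entries of $\sigma(j)$ are all equal to $i$; hence $e_j\in\bigcap_n\operatorname{Ran}(\pi_I(S_i)^n)$ iff $\sigma(j)=\overline{i}$, the constant sequence $(i,i,i,\dots)$. So I must show that $\overline{1}$ and $\overline{2}$ are never in the range of the coding map of $\pi_I$ when $|I|\geq 2$. For the finite case, every $j$ satisfies $e_j=\pi_I(S_\alpha)\Omega$ for some $\alpha\in W_2$ and $\sigma(\Omega)=\overline{I}$, so $\sigma(j)=\alpha\,\overline{I}$; since $|I|\geq 2$, the periodic tail $\overline{I}$ cannot be a constant sequence (if it were, $I$ would be $(i,i,\dots,i)$, but then $I$ is reducible as $i_1$ repeated $|I|$ times, contradicting that $P(I)$ is a representation of type $P(I)$ with the irreducibility hypothesis in force — more directly, $1_k$ and $2_k$ give the non-irreducible $P(1_k),P(2_k)$ excluded by ``irreducible''), and prepending a finite word $\alpha$ cannot turn an eventually-constant sequence with non-constant tail into a constant one. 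For the infinite case $\sigma(\Omega)=I$ itself, and more generally $\sigma(j)=\alpha I$ for a suffix $\alpha$; if this equalled $\overline{i}$ then $I$ would be eventually constant equal to $i$, and one checks directly that $P(I)$ for such an $I$ fails to be irreducible (it is eventually periodic with period one), again excluded. The main obstacle is being careful about which $I$ are admitted under the phrase ``irreducible representations of type $P(I)$'': I would state explicitly at the outset that $I$ ranges over finite aperiodic or infinite non-eventually-periodic words, invoke $\sigma(j)=\alpha\,\overline I$ (finite case) or $\sigma(j)=\alpha I$ (infinite case) from the description in Section \ref{converse}, and then the purity of both isometries reduces to the elementary fact that such an $I$ has no tail equal to $\overline 1$ or $\overline 2$. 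Finally, for $P(1)$ and $P(2)$ the single fixed vector $\Omega$ has $\sigma(\Omega)=\overline 1$ or $\overline 2$, so one isometry is non-pure and the Larsen–Li criterion no longer guarantees extension; this accounts for the two exceptions.
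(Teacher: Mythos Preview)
Your argument is correct in spirit and takes a genuinely different route from the paper. The paper imports Kawamura's explicit branching function systems for $P(I)$ and argues, in the finite case, that a bounded $\sigma_i$-orbit would yield a basis vector fixed by $S_i^{n_0}$, forcing the representation to be of type $P(i_{n_0})$, which is irreducible only for $n_0=1$; in the infinite case it simply inspects the concrete formulas for $f_1,f_2$ on $\IZ\times\IN$ and notes they have no finite orbits. Your approach instead stays within the coding-map framework of Sections~\ref{decomposition}--\ref{converse}: you observe that $e_j\in\bigcap_n\operatorname{Ran}(\pi_I(S_i)^n)$ exactly when $\sigma(j)=\overline{i}$, and then rule out $\overline{1},\overline{2}$ from the range of $\sigma$ by analyzing the shape of the codes. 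This is cleaner and entirely self-contained, avoiding any appeal to the explicit realizations in \cite{Kaw06}.

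One point needs repair. In the infinite case your claim $\CH=\overline{\operatorname{span}}\{\pi_I(S_\alpha)\Omega:\alpha\in W_2\}$ is false: unlike the finite case there is no periodicity to absorb the $S_{I_n}^*$, so the basis vectors are the nonzero $\pi_I(S_\mu S_{I_n}^*)\Omega$ with $\mu\in W_2$ and $n\geq 0$, and their codes are $\mu\cdot(i_{n+1},i_{n+2},\ldots)$, a finite word followed by a \emph{shift} of $I$ rather than $I$ itself. Fortunately this does not damage your conclusion: if such a code equalled $\overline{i}$ then some tail of $I$, hence $I$ itself, would be eventually constant and therefore eventually periodic, contradicting irreducibility. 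With this correction your argument goes through.
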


\begin{proof}
We first handle the case of a finite multi-index $J=(j_1, j_2,\ldots, j_k)$. Following Kawamura, cf. \cite[Lemma 2.2]{Kaw06}, the isometries $S_1$ and $S_2$ can be realized concretely
by the branching function system $(\sigma_1, \sigma_2)$ given by
$$\sigma_1(1) = \begin{cases} k+1 & j_1=2 \\ k & j_1 = 1 \end{cases}, \quad
\sigma_1(l) = \begin{cases} k+l & j_l=2 \\ l-1 & j_l = 1 \end{cases}, 2 \leq l \leq k, \quad f_1(l) = 2l-1, l \geq k+1$$
$$\sigma_2(1) = \begin{cases} k & j_1=2 \\ k+1 & j_1 = 1 \end{cases}, \quad
\sigma_2(l) = \begin{cases} l-1 & j_l=2 \\ k+l & j_l = 1 \end{cases}, 2 \leq l \leq k, \quad f_2(l) = 2l, l \geq k+1$$
To begin with, it is worth noting that $S_J e_k=e_k$, which follows from the equalities $\sigma_{j_k}(k)=k-1$ and $\sigma_{j_1}(1)=k$. 
Proving that $S_1$ and $S_2$ are pure amounts to making sure that  for any $l\in\IN$ $\sigma_1^n(l)$ and $\sigma_2^n(l)$ diverge as $n\rightarrow\infty$.
We only need to worry about the first sequence, as the second can be worked out with in much the same way.  If for some $l_0\in\IN$
the sequence $\{\sigma_1^n(l_0): n\in\IN\}$ were bounded, then there would exist an integer $n_0$ such that $\sigma_1^{n_0}(l_0)=l_0$, whence
$S_1^{n_0}e_{l_0}=e_{l_0}$, meaning our representation would be a representation of type $P(11\ldots1)$, which obviously is not the case, since
the representation $P(11\ldots 1)$ is irreducible only when $k=1$. \\
The case of an infinite multi-index is still easier to deal with. Again, following Kawamura, cf. \cite[Lemma 2.3]{Kaw06}, the isometries
$S_1$ and $S_2$ are now implemented by the two injective functions $f_1, f_2: \IZ\times\IN\rightarrow \IZ\times\IN$ given by
$$f_1(n,1)=(n-1, p_n(1))\quad f_1(n,m)=(n-1, 2m-1)\,\,\textrm{for}\,\,m\geq 2$$
$$f_2(n,1)=(n-1, p_n(2))\quad f_2(n,m)=(n-1,2m)\,\,\textrm{for}\,\, m\geq 2$$
where, for every $n\in\IZ$,  $p_n$ is  the permutation on $\{1, 2\}$ such that $p_n=\id$ if $n\leq 0$ and $p_n(1)=j_n$ otherwise. 
The pureness is then proved as soon as one realizes that neither $f_1$ nor $f_2$ has finite orbits.   
\end{proof}

From now on we will denote the unique extension of $P(I)$ to $\CQ_2$ by $\widetilde P(I)$. 
Although $\widetilde P(I)$ is completely determined by its restriction to the Cuntz algebra, in general it is not an entirely trivial task to see explicitly how $U$ acts in this representation.  
In addition, saying what its multiplicity is for any given multi-index $I$ is far from obvious, not least because the formulas one obtains may be rather unwieldy to compute with. 
One can already get a better grasp of the problem  by  analysing the following elementary example, where as simple a case as $\widetilde P(12)$ is looked at more closely. 

\begin{example}
We discuss two different realizations of $P(12)$ at the level of $\CO_2$, which of course yield two different yet unitarily equivalent realizations
of $\widetilde P(12)$.  As usual, we consider the Hilbert space $\ell^2(\IN)$ endowed with the canonical basis $\{e_n: n\in\IN\}$. 
In both representations $S_2$ simply acts as $S_2e_k\doteq e_{2k}$, $k\in\IN$. In the first representation, $S_1$ acts as the usual isometry
composed with a switch on the first two basis vectors, i.e. $S_1e_1\doteq e_3$, $S_1e_2\doteq e_1$, and
$S_1e_k\doteq e_{2k-1}$, for every $k\geq 3$. In the second, the switch is performed pairwise on all vectors instead. More explicitly, $S_1$ now takes the form
$S_1e_k\doteq e_{2k+1}$ if $k$ is odd, and $S_1 e_k\doteq e_{2k-3}$ if $k$ is even.\\
In the first, one easily sees that $Ue_2=e_3$, $Ue_4 = e_1$ and $U e_{2k} = e_{2k-1}$ for all $k \geq 3$. For odd numbers one finds out $Ue_1 = e_6$, $U e_{11} = e_{10}$,
and $U e_{2k-1} = e_{2k-2}$ for all $k \geq 8$, 
 with the exception of the two sequences 
 $U e_{k_0} = e_{h_0}$, $Ue_{2^n k_0 - \sum_{i=0}^{n-1} 2^i} = e_{2^n h_0}$ for all $n=1,2,3,\ldots$
with $(k_0,h_0) $ equal to $(3,12)$ and $(7,2)$, respectively.\\
In the second, after making some computations one finds out that the action of $U$ can be described rather easily on even numbers, and yet much less so on odd numbers. 
At any rate, the formulas arrived at are the following.
\begin{align*}
U e_{2n} & = 
\begin{cases} 
e_{2n+1} & n \ \textrm{odd} \\
e_{2n-3} & n \ \textrm{even}
\end{cases} \\
Ue_{2^k n - (2^{k-1}+1)} & =  
\begin{cases} 
e_{2^k n + (2^k - 2^{k-1})} & n \ \textrm{odd} \\
e_{2^k n - (2^k + 2^{k-1})} & n \ \textrm{even}
\end{cases} 
\end{align*}
where $k$ is any integer bigger than or equal  to $2$. The above formula actually defines $U$ on all odd vectors too, since every odd number can always be written 
as $2^kn-(2^{k-1}-1)$ for  suitable $k\geq 2$ and $n\in\IN$.
\end{example}

There would be no a priori reason to expect the representations $\widetilde P(I)$ to exhaust all irreducible representations of $\CQ_2$.  In principle, there might
be many irreducible permutative representations of $\CQ_2$ that restrict to $\CO_2$ as reducible representations. The canonical representation $\rho_c$ is just such an example. 
However, that is the one example, for it turns out that any permutative irreducible representation $\rho$ of $\CQ_2$ restricts to $\CO_2$ as an irreducible representation as long as $\rho$ is not
unitarily equivalent to $\rho_c$. In order to prove this, however, we are yet to fully analyze the permutative extensions of representation of $\CO_2$ of the form $P(1_k)\oplus P(2_k)$, which is done in the next couple of propositions.
In particular, what we aim to do is show every such representation is not irreducible in that it contains a copy of the canonical representation.

\begin{proposition}
Let $\rho:\CQ_2\rightarrow\CB(\CH)$ be a permutative representation such that there exists a permutative $\CO_2$-invariant subspace
$\CH(1_k)\subset \CH$ on which $\rho\upharpoonright_{\CO_2}$ acts as a representation
of type $P(1_k)$. Then there also exists a permutative $\CO_2$-invariant subspace $\CH(2_k)\subset \CH$ on which 
$\rho\upharpoonright_{\CO_2}$ acts as a representation of type $P(2_k)$. Moreover, $\CH(1_k)$ and $\CH(2_k)$ are orthogonal
and their direct sum $\CK\doteq\CH(1_k)\oplus\CH(2_k)$ is a $\CQ_2$-invariant subspace, i.e. $\CK$ is a permutative subspace that reduces $\rho$.
\end{proposition}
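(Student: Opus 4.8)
The plan is as follows. By the classification of cyclic permutative representations recalled in Section~\ref{converse}, the hypothesis that $\rho\upharpoonright_{\CO_2}$ acts on $\CH(1_k)$ as $P(1_k)$ means there is a basis vector, say $e_{n_0}\in\CH(1_k)$, which is cyclic for this subrepresentation and satisfies $\rho(S_1)^k e_{n_0}=e_{n_0}$ with $\{e_{n_0},\rho(S_1)e_{n_0},\dots,\rho(S_1)^{k-1}e_{n_0}\}$ an orthonormal system. First I would record the commutation identities in $\CQ_2$: since $S_1=US_2$ one has $\rho(U)\rho(S_2)=\rho(S_1)$ and $\rho(U)\rho(S_1)=\rho(S_2)\rho(U)$, whence $\rho(U)\rho(S_1)^j=\rho(S_2)^j\rho(U)$ and $\rho(U)\rho(S_2)^j=\rho(S_1)\rho(S_2)^{j-1}$ for $j\ge 1$, together with the mirror identities $\rho(U)^{-1}\rho(S_1)=\rho(S_2)$, $\rho(U)^{-1}\rho(S_2)=\rho(S_1)\rho(U)^{-1}$ and their iterates. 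Then I would set $e_{m_0}\doteq\rho(U)e_{n_0}$, which is again a basis vector because $\rho(U)$ is implemented by a permutation $\tau$, and define $\CH(2_k)\doteq\overline{\rho(\CO_2)e_{m_0}}$.

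That $\rho\upharpoonright_{\CO_2}$ acts on $\CH(2_k)$ as $P(2_k)$ should follow quickly: from $\rho(S_2)^k\rho(U)=\rho(U)\rho(S_1)^k$ one gets $\rho(S_2)^k e_{m_0}=e_{m_0}$, and from $\rho(S_2)^j e_{m_0}=\rho(U)\rho(S_1)^j e_{n_0}$ together with the injectivity of $\rho(U)$ one sees that $\{e_{m_0},\rho(S_2)e_{m_0},\dots,\rho(S_2)^{k-1}e_{m_0}\}$ are pairwise distinct, so the $\sigma_2$-orbit of $m_0$ has length exactly $k$. Since $\CH(2_k)$ is a permutative subspace (the closed span of the basis vectors $\rho(S_\mu)e_{m_0}$, $\mu\in W_2$) cyclic over $e_{m_0}$, the classification identifies $\rho\upharpoonright_{\CO_2}$ on it with $P(2_k)$. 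For orthogonality I would use the coding map: every basis vector of $\CH(1_k)$ has the form $\rho(S_\mu)e_{n_0}$ and, since the code of $n_0$ is $\overline{1}=(1,1,1,\dots)$, the code of such a vector is $\mu\,\overline{1}$, hence eventually constantly $1$; symmetrically every basis vector of $\CH(2_k)$ has code eventually constantly $2$. As no sequence is eventually constant with two different values, the spanning sets of basis vectors of $\CH(1_k)$ and $\CH(2_k)$ are disjoint, so the two permutative subspaces are orthogonal and $\CK\doteq\CH(1_k)\oplus\CH(2_k)$ is well defined.

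Finally, $\CK$ is $\CO_2$-invariant, so it remains to prove $\rho(U)\CK=\CK$, after which $\CK$ is invariant under the dense $*$-subalgebra generated by $\rho(U)$ and $\rho(S_2)$, hence $\CK$ and $\CK^\bot$ are $\rho(\CQ_2)$-invariant, i.e. $\CK$ reduces $\rho$. I would show $\rho(U)\CK\subseteq\CK$ and $\rho(U)^{-1}\CK\subseteq\CK$ by induction on the length of $\mu$, applying $\rho(U)^{\pm1}$ to the spanning vectors $\rho(S_\mu)e_{n_0}$ and $\rho(S_\mu)e_{m_0}$: in the inductive step a leading $S_2$ is turned into $S_1$ (and one is done), while a leading $S_1$ is turned into $S_2U$, reducing to a shorter word inside $\CK$ which is then hit by $\rho(S_2)$ ($\CK$ being $\CO_2$-invariant); the argument for $\rho(U)^{-1}$ is symmetric. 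Everything thus reduces to the four base cases $\rho(U)e_{n_0}=e_{m_0}$, $\rho(U)^{-1}e_{m_0}=e_{n_0}$, and $\rho(U)e_{m_0}$, $\rho(U)^{-1}e_{n_0}$.

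The crux — and the step I expect to be the main obstacle — is the last two base cases, where one must recognize that the ``wrap-around'' vectors land back in $\CK$ by exploiting the length-$k$ periodicity together with $US_2=S_1$: concretely $\rho(U)e_{m_0}=\rho(U)\rho(S_2)^k e_{m_0}=\rho(S_1)\rho(S_2)^{k-1}e_{m_0}\in\CH(2_k)$ and likewise $\rho(U)^{-1}e_{n_0}=\rho(U)^{-1}\rho(S_1)^k e_{n_0}=\rho(S_2)\rho(S_1)^{k-1}e_{n_0}\in\CH(1_k)$. Once these identities are in hand the induction closes, $\rho(U)\CK=\CK$ follows, and the proposition is proved.
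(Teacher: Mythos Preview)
Your proof is correct and follows essentially the same route as the paper: define $\Omega_2\doteq\rho(U)\Omega_1$, verify $\rho(S_2)^k\Omega_2=\Omega_2$ via $S_2^kU=US_1^k$, take $\CH(2_k)$ as the $\CO_2$-cyclic subspace, and reduce $\CQ_2$-invariance of $\CK$ to the two wrap-around identities $\rho(U)\Omega_2=\rho(S_1)\rho(S_2)^{k-1}\Omega_2$ and $\rho(U)^{-1}\Omega_1=\rho(S_2)\rho(S_1)^{k-1}\Omega_1$, exactly as the paper does. The one notable difference is the orthogonality step: where the paper invokes ``simple but tedious computations'' to check $(\rho(S_\mu)\Omega_1,\rho(S_\nu)\Omega_2)=0$, your coding-map argument (codes in $\CH(1_k)$ are eventually $\overline{1}$, codes in $\CH(2_k)$ are eventually $\overline{2}$) is a clean and conceptually transparent alternative.
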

\begin{proof}
By hypothesis there exists a basis vector $\Omega_1$ such that $\rho(S_1)^k\Omega_1=\Omega_1$ and
$\{\Omega_1, \rho(S_1)\Omega_1,\ldots,\rho(S_1)^{k-1}\Omega_1\}$ is an orthonormal set. 
It is then easy to see that $\CH(1_k)$ is given by $\overline{\textrm{span}}\{S_\mu\Omega_1: \mu\in W_2\}$. 
In other terms, in this situation there is actually no need to consider more general monomials of
the form $S_\mu S_\nu^*$, since $S_2^*\Omega_1=0$ and $(S_1^*)^l\Omega_1=S_1^{k-l}\Omega_1$ for every
$l=1,2,\ldots,k$. We now define $\Omega_2\doteq \rho(U)\Omega_1$. Obviously, $\Omega_2$ is still a basis vector.
Furthermore, we also have $\rho(S_2^k)\Omega_2=\Omega_2$. Indeed,  
$$
\rho(S_2)^k\Omega_2=\rho(S_2^kU)\Omega_1=\rho(US_1^k)\Omega_1=\rho(U)\rho(S_1^k)\Omega_1=\rho(U)\Omega_1=
\Omega_2
$$
Since the set $\{\Omega_2,\rho(S_2)\Omega_2,\ldots, \rho(S_2^{k-1})\Omega_2\}$ is clearly orthonormal, the $\CO_2$-invariant subspace $\CH(2_k)\doteq \overline{\textrm{span}}\{S_\mu\Omega_2:\mu\in W_2\}$ yields a subrepresentation of type
$P(2_k)$. Simple but tedious computations show that $\CH(1_k)\perp\CH(2_k)$, i.e. $(\rho(S_\mu)\Omega_1, \rho(S_\nu)\Omega_2)=0$ for every $\mu,\nu\in W_2$. All is left to do is check that $\CK\doteq \CH(1_k)\oplus \CH(2_k)$ is also invariant for both $\rho(U)$ and $\rho(U^*)$. After a moment's reflection, one easily realizes that verifying this invariance property amounts to showing that  $\rho(U)\Omega_2$ and $\rho(U^*)\Omega_1$ are still in $\CK$. But this is certainly the case thanks to the equalities $\rho(U)\Omega_2=\rho(S_1)\rho(S_2^{k-1})\Omega_2$ and $\rho(U^*)\Omega_1=\rho(S_2)\rho(S_1^{k-1})\Omega_1$, which can be proved starting by
$\rho(S_2^k)\Omega_2=\Omega_2$ and $\rho(S_1^k)\Omega_1=\Omega_1$ respectively.
\end{proof}
 
\begin{proposition}\label{caninside}
Let $\rho:\CQ_2\rightarrow\CB(\CH)$ be a permutative representation such that there exists an $\CO_2$-invariant subspace
$\CH(1_k)\subset \CH$ on which $\rho\upharpoonright_{\CO_2}$ acts as a representation
of type $P(1_k)$. Then $\rho$ contains a copy of the canonical representation. In particular, $\rho$ is not irreducible. 
\end{proposition}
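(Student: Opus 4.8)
The plan is to realise the canonical representation $\rho_c$ explicitly as a subrepresentation of $\rho$, living inside the reducing subspace $\CK=\CH(1_k)\oplus\CH(2_k)$ furnished by the previous proposition. First I would pass from the periodic vector $\Omega_1$ (with $\rho(S_1)^k\Omega_1=\Omega_1$ and $\{\Omega_1,\rho(S_1)\Omega_1,\ldots,\rho(S_1)^{k-1}\Omega_1\}$ orthonormal) to the averaged vector $\xi_1\doteq\frac{1}{\sqrt k}\sum_{l=0}^{k-1}\rho(S_1)^l\Omega_1$, which is a unit vector fixed by $\rho(S_1)$. Setting $\xi_2\doteq\rho(U)\xi_1$ and using the relation $US_1=S_2U$ (immediate from $S_2U=U^2S_2$ and $S_1=US_2$), one finds $\xi_2=\frac{1}{\sqrt k}\sum_{l=0}^{k-1}\rho(S_2)^l\Omega_2$ with $\Omega_2=\rho(U)\Omega_1$; since $\rho(S_2)^k\Omega_2=\Omega_2$ (previous proposition) this gives $\rho(S_2)\xi_2=\xi_2$, hence also $\rho(S_2^*)\xi_2=\xi_2$ and $\rho(S_1^*)\xi_2=\rho(S_1^*S_2)\xi_2=0$.

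The crux is to show that $\{\rho(U)^n\xi_2:n\in\IZ\}$ is an orthonormal set, i.e. that $(\rho(U)^j\xi_2,\xi_2)=0$ for every $j\neq0$. For this I would establish the identities in $\CQ_2$
\begin{align*}
U^{2m}&=S_1U^mS_1^*+S_2U^mS_2^*, & U^{2m+1}&=S_1U^mS_2^*+S_2U^{m+1}S_1^*\qquad(m\in\IZ),
\end{align*}
each obtained by a short manipulation of the two defining relations (one first checks $U=S_1S_2^*+S_2US_1^*$ and $U^2=S_1US_1^*+S_2US_2^*$, then raises the latter to arbitrary integer powers using $S_i^*S_j=\delta_{ij}$). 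They yield $S_2^*U^{2m}S_2=U^m$ and $S_2^*U^{2m+1}S_2=0$, so from $\rho(S_2)\xi_2=\rho(S_2^*)\xi_2=\xi_2$ we get $(\rho(U)^j\xi_2,\xi_2)=(\rho(S_2^*U^jS_2)\xi_2,\xi_2)$, which vanishes for odd $j$ and equals $(\rho(U)^{j/2}\xi_2,\xi_2)$ for even $j$. Writing any $j\neq0$ as $2^s j_0$ with $j_0$ odd and iterating $s$ times reduces to the odd case, whence $(\rho(U)^j\xi_2,\xi_2)=0$.

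Finally I would set $\CN\doteq\overline{\textrm{span}}\{\rho(U)^n\xi_2:n\in\IZ\}$. The identities above, together with $S_2U^n=U^{2n}S_2$ and $\rho(S_1^*)\xi_2=0$, show that $\CN$ is invariant under $\rho(U),\rho(U^*),\rho(S_2)$ and $\rho(S_2^*)$, hence is a $\CQ_2$-invariant subspace. The unitary $V\colon\ell^2(\IZ)\to\CN$ determined by $Ve_n\doteq\rho(U)^n\xi_2$ then intertwines $\rho_c$ with $\rho\upharpoonright_\CN$: indeed $V\rho_c(U)V^*=\rho(U)\upharpoonright_\CN$ at once, and $V\rho_c(S_2)V^*=\rho(S_2)\upharpoonright_\CN$ because $\rho(S_2)\rho(U)^n\xi_2=\rho(U)^{2n}\xi_2$; as $U$ and $S_2$ generate $\CQ_2$ this gives $\rho\upharpoonright_\CN\cong\rho_c$, so $\rho$ contains a copy of $\rho_c$. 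Moreover, when $k\geq2$ this copy is proper — $\rho_c$ is irreducible with $\rho_c(S_1)$ of point spectrum $\{1\}$, while $\rho(S_1)$ restricted to $\CH(1_k)$ has the primitive root $e^{2\pi i/k}$ as an eigenvalue — so $\CN\neq\CH$ and $\rho$ is not irreducible. The only genuinely delicate step is the vanishing of the off-diagonal coefficients $(\rho(U)^j\xi_2,\xi_2)$; once the recursive expansion of the powers of $U$ over the two Cuntz isometries is set up, the rest is routine bookkeeping.
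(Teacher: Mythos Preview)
Your argument is correct and begins exactly as the paper does, with the averaged vectors $\xi_i=\Phi_i\doteq\frac{1}{\sqrt k}\sum_{l=0}^{k-1}\rho(S_i)^l\Omega_i$ fixed by $\rho(S_i)$ and exchanged by $\rho(U)$. The difference is only in how the copy of $\rho_c$ is then exhibited. The paper takes the $\CO_2$-cyclic subspaces $\CK(i)=\overline{\textrm{span}}\{\rho(S_\mu)\Phi_i:\mu\in W_2\}$ and argues, by rerunning the invariance computations of the preceding proposition, that $\CK(1)\oplus\CK(2)$ is a proper $\rho$-invariant subspace carrying $\rho_c$; properness for $k\geq 2$ is implicitly read off from the decomposition $P(1_k)\cong P(1)\oplus P(1,\zeta)\oplus\cdots$. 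You instead realise the same subspace as the $U$-orbit $\CN=\overline{\textrm{span}}\{\rho(U)^n\xi_2:n\in\IZ\}$, establish orthonormality by the dyadic reduction $(\rho(U)^j\xi_2,\xi_2)=(\rho(S_2^*U^jS_2)\xi_2,\xi_2)$ coming from the identities $U^{2m}=S_1U^mS_1^*+S_2U^mS_2^*$ and $U^{2m+1}=S_1U^mS_2^*+S_2U^{m+1}S_1^*$, and verify invariance and the intertwining with $\rho_c$ by hand. This is fully self-contained---no reference to the previous proposition or to the known splitting of $P(1_k)$---and the recursion for the vanishing of $(\rho(U)^j\xi_2,\xi_2)$ is a neat device. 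Your spectral argument for properness ($e^{2\pi i/k}\in\sigma_p(\rho(S_1))$ versus $\sigma_p(\rho_c(S_1))=\{1\}$) is likewise a clean alternative to the paper's. The two constructions of course produce the same subspace, since both carry the irreducible $\rho_c$ and contain $\xi_2=\Phi_2$.
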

\begin{proof}
Let $\CK=\CH(1_k)\oplus\CH(2_k)\subset \CH$ the $\rho$-invariant subspace we produced above. If we define
$$
\Phi_1\doteq\frac{1}{\sqrt{k}}(\Omega_1+\rho(S_1)\Omega_1+\ldots+\rho(S_1^{k-1})\Omega_1)\quad \textrm{and}\quad
\Phi_2\doteq\frac{1}{\sqrt{k}}(\Omega_2+\rho(S_2)\Omega_2+\ldots+\rho(S_2^{k-1})\Omega_2)
$$
we immediately see that $\rho(S_i)\Phi_i=\Phi_i$, $i=1,2$, and $U\Phi_1=\Phi_2$. Note that neither $\Phi_1$ nor $\Phi_2$ is a
basis vector. However, if we set $\CK(i)=\overline{\textrm{span}}\{S_\mu\Phi_i:\mu\in W_2\}$, $i=1,2$, the same proof as above shows that $K(1)\oplus K(2)$ is a (proper) $\rho$-invariant subspace, on which $\rho$ restricts as the canonical representation. 
\end{proof}

\begin{remark}
The subrepresentation produced above is not a permutative subrepresentation of the representation $\rho$ in the statement since neither of $\Phi_i$, $i=1,2$, is a basis vector.
\end{remark}

The last tool we need for the proof to the main result of the section is provided by the following lemma.

\begin{lemma}\label{pureinvariant}
Let $\rho:\CQ_2\rightarrow\CB(\CH)$ be a representation in which $\rho(S_1)$ and $\rho(S_2)$ are pure isometries.
If $M\subset\CH$ is a $\rho(\CO_2)$-invariant subspace, then it is also invariant under $\rho(\CQ_2)$.
\end{lemma}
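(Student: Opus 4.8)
The plan is to deduce everything from the single fact that, under the hypotheses, both $\rho(U)$ and $\rho(U^{*})$ belong to the strong operator closure of $\rho(\CO_2)$; more precisely, each of them is a strong limit of a sequence of operators of the form $\rho(b_n)$ with $b_n\in\CO_2$. Granting this, for $\xi\in M$ we get $\rho(U)\xi=\lim_n\rho(b_n)\xi\in\overline M=M$ (and likewise for $U^{*}$), since $M$ is closed and $\rho(\CO_2)$-invariant; as $S_2,S_2^{*}\in\CO_2$, the subspace $M$ is then invariant under $\rho(U),\rho(U^{*}),\rho(S_2),\rho(S_2^{*})$, hence under the dense $*$-subalgebra of $\CQ_2$ these generate, hence under all of $\rho(\CQ_2)$.

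First I would establish an algebraic identity in $\CQ_2$. Writing $U=U(S_1S_1^{*}+S_2S_2^{*})$ and using $US_2=S_1$ together with $US_1=U^{2}S_2=S_2U$ (the last equality coming from the defining relation $S_2U=U^{2}S_2$) gives the recursion
$$ U=S_1S_2^{*}+S_2\,U\,S_1^{*}. $$
Substituting this identity repeatedly into its own remainder term yields, for every $n\ge 0$,
$$ U=\sum_{k=0}^{n}S_2^{\,k}\,S_1\,S_2^{*}\,(S_1^{*})^{k}+S_2^{\,n+1}\,U\,(S_1^{*})^{n+1}, $$
where each summand $S_2^{\,k}S_1S_2^{*}(S_1^{*})^{k}$ is the image in $\CQ_2$ of $X_2^{k}X_1X_2^{*}X_1^{*k}\in\CO_2$, so that $b_n:=\sum_{k=0}^{n}S_2^{\,k}S_1S_2^{*}(S_1^{*})^{k}\in\CO_2$.

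Now purity enters. Since $\rho(S_1)$ is pure, $\bigcap_m{\rm Ran}(\rho(S_1)^m)=0$, so the projections $\rho(S_1)^{m}\rho(S_1)^{*m}$ decrease to $0$ and hence $\rho(S_1)^{*m}\to 0$ strongly; as $\rho(S_2)^{m}$ is isometric and $\rho(U)$ unitary, $\|\rho(S_2)^{\,n+1}\rho(U)\rho(S_1)^{*(n+1)}\xi\|=\|\rho(S_1)^{*(n+1)}\xi\|\to 0$ for every $\xi$. Applying $\rho$ to the displayed identity and letting $n\to\infty$ gives $\rho(U)\xi=\lim_{n}\rho(b_n)\xi$, which is what was wanted. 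Repeating the computation with $U^{*}$ in place of $U$ — now using $U^{*}S_1=S_2$ and $U^{*}S_2=S_1U^{*}$ (again a consequence of $S_2U=U^{2}S_2$) to obtain $U^{*}=S_2S_1^{*}+S_1U^{*}S_2^{*}$, and hence $U^{*}=\sum_{k=0}^{n}S_1^{\,k}S_2S_1^{*}(S_2^{*})^{k}+S_1^{\,n+1}U^{*}(S_2^{*})^{n+1}$, and this time invoking purity of $\rho(S_2)$ — shows in the same way that $\rho(U^{*})$ is a strong limit of elements of $\rho(\CO_2)$, completing the argument.

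The only real obstacle is spotting the recursion $U=S_1S_2^{*}+S_2US_1^{*}$ and noticing that purity of the generating isometries is exactly what forces the remainder $S_2^{\,n+1}U(S_1^{*})^{n+1}$ to converge strongly to $0$; after that the proof is bookkeeping. Alternatively, the same computation shows $\rho(U),\rho(U^{*})\in\rho(\CO_2)''$, so that $\rho(\CQ_2)''=\rho(\CO_2)''$ and the orthogonal projection onto $M$, which lies in $\rho(\CO_2)'$ because $M$ reduces $\rho(\CO_2)$, automatically lies in $\rho(\CQ_2)'$ — so $M$ reduces $\rho(\CQ_2)$.
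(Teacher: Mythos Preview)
Your proof is correct, and the core mechanism is the same as the paper's: both exploit the relation $US_1=S_2U$ (equivalently $U=S_1S_2^*+S_2US_1^*$) together with purity of the generating isometries. The organization, however, is different and yours is arguably cleaner. The paper argues directly on vectors: it first disposes of $\rho(S_2S_2^*)M$ via $US_2S_2^*=S_1S_2^*$, and then for $\rho(S_1S_1^*)M$ uses the equality $\rho(U)\rho(S_1S_2^n(S_2^*)^nS_1^*)=\rho(S_2)\rho(U)\rho(S_2^n(S_2^*)^nS_1^*)$ and the fact that purity of $\rho(S_2)$ makes the projections $\rho(S_1S_2^n(S_2^*)^nS_1^*)$ exhaust $\rho(S_1S_1^*)$. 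You instead iterate the recursion explicitly to the telescoping identity $U=\sum_{k=0}^{n}S_2^kS_1S_2^*(S_1^*)^k+S_2^{n+1}U(S_1^*)^{n+1}$ and let the remainder die by purity of $\rho(S_1)$, thereby proving the stronger statement that $\rho(U)$ (and $\rho(U^*)$) lie in the strong closure of $\rho(\CO_2)$, i.e.\ $\rho(\CQ_2)''=\rho(\CO_2)''$. This buys you a conceptual dividend the paper does not state: any closed $\rho(\CO_2)$-invariant subspace is automatically $\rho(\CQ_2)$-reducing, without a separate case analysis. Note also the mild asymmetry in the two arguments: the paper invokes purity of $\rho(S_2)$ to handle $\rho(U)$, whereas your iteration uses purity of $\rho(S_1)$ for $\rho(U)$ and purity of $\rho(S_2)$ for $\rho(U^*)$; both are of course available by hypothesis.
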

\begin{proof}
All we have to do is show $\rho(U)M\subset M$ and $\rho(U^*)M\subset M$. We only deal with the first inclusion, since the second inclusion can be handled analogously. The equality $\rho(U)\rho(S_2)\rho(S_2)^*=\rho(S_1)\rho(S_2)^*$ says that
$\rho(S_2S_2^*)M$ is certainly invariant under $\rho(U)$. As for $\rho(S_1S_1^*)M$, we need to make use of the projections
$\rho(S_1S_2^n(S_2^*)^nS_1^*)$, $n\in \IN$, whose ranges decompose $\rho(S_1S_1^*)\CH$ into a direct sum as a consequence of $\rho(S_2)$ being pure.
 From the equality $\rho(S_2)\rho(U)\rho(S_2^n(S_2^*)^nS_1^*)=\rho(U)\rho(S_1S_2^n(S_2^*)^nS_1^*)$ we then see that
$\rho(U)\rho(S_1S_2^n(S_2^*)^nS_1^*)M$ is contained in $\rho(S_2)M$, which clearly concludes the proof.
\end{proof}

\begin{remark}
Actually, the proof above also covers the slightly more general situation in which only the restriction of the isometries to  the invariant subspace $M$ are assumed to be pure, while the isometries are allowed not to be 
so on the whole $\CH$.   
\end{remark}

We are finally in a position to prove the main result of this section, which not only says an irreducible permutative representation
of $\CQ_2$ is completely determined by its restriction to the Cuntz algebra $\CO_2$ but also provides the complete classification
of all irreducible permutative representations of $\CQ_2$:  as announced, apart from the canonical representation, every irreducible representation of $\CQ_2$ is the unique extension of an irreducible representation of $\CO_2$.

\begin{thm}\label{IrrRep}
If $\rho$ is an irreducible permutative representation of $\CQ_2$, then $\rho\upharpoonright_{\CO_2}$ is unitarily equivalent with either the restriction 
of the canonical representation to $\CO_2$ or an irreducible representation of type $P(I)$. In particular, $\rho\upharpoonright_{\CO_2}$ is multiplicity-free in
the sense of Bratteli-Jorgensen.
\end{thm}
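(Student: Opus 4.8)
The plan is to split on whether the subspace $M_1:=\bigcap_{n\geq 1}\operatorname{Ran}(\rho(S_1)^n)$ vanishes, where as usual $S_1=US_2$ and $\sigma_1=\tau\circ\sigma_2$.

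\textbf{Case $M_1=0$.} Since $\rho\!\upharpoonright_{\CO_2}$ is extendible, Theorem \ref{orbits} tells us that $\sigma_1\!\upharpoonright_{\bigcap_n\sigma_1^n(\IN)}$ and $\sigma_2\!\upharpoonright_{\bigcap_n\sigma_2^n(\IN)}$ have the same orbit structure, so by Corollary \ref{cor-pureness} also $M_2:=\bigcap_n\operatorname{Ran}(\rho(S_2)^n)=0$; hence both generating isometries of $\rho\!\upharpoonright_{\CO_2}$ are pure. By Lemma \ref{pureinvariant} every $\rho(\CO_2)$-invariant subspace is then $\rho(\CQ_2)$-invariant, so irreducibility of $\rho$ over $\CQ_2$ forces $\rho\!\upharpoonright_{\CO_2}$ to be irreducible over $\CO_2$. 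An irreducible permutative representation of $\CO_2$ is cyclic, hence of type $P(I)$ for an irreducible multi-index $I$; it is neither $\pi_-=P(1)$ nor $\pi_+=P(2)$ (those have a non-pure generator), so $|I|\geq 2$. Multiplicity-freeness in the sense of Bratteli--Jorgensen is then Theorem \ref{irrmf}.

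\textbf{Case $M_1\neq 0$.} Here I claim $\rho\cong\rho_c$. The crux is that the translates $\rho(U^j)M_1$, $j\in\IZ$, are pairwise orthogonal. Using the identities $S_1^m=U^{2^m-1}S_2^m$ and $U^{2^mq}S_2^m=S_2^mU^q$ (both consequences of $S_2U=U^2S_2$) one gets $(S_1^m)^*U^lS_1^m=(S_2^m)^*U^lS_2^m$, and writing $l=2^mq+r$ with $0<r<2^m$ this equals $(S_2^m)^*U^rS_2^m\cdot U^q=0$ by Lemma \ref{lemma-orth-proj}. Since $M_1\subseteq\operatorname{Ran}(\rho(S_1^m))$ for every $m$, choosing $m$ with $2^m\nmid l$ gives $\rho(U^l)M_1\perp M_1$ for all $l\neq 0$. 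The same commutation rules yield $\rho(S_1)M_1=M_1$, $\rho(S_2)M_1=\rho(U)^*M_1$, $\rho(U)M_1=M_2$, $\rho(S_2)^*M_1=0$ and $\rho(S_2)^*M_2=M_2$; together with $S_2U^j=U^{2j}S_2$ and $S_2^*U^{2i}=U^iS_2^*$ these show that $N:=\overline{\operatorname{span}}\{\rho(U^j)M_1:j\in\IZ\}$ is invariant under $\rho(U),\rho(U)^*,\rho(S_2)$ and $\rho(S_2)^*$, hence $\rho(\CQ_2)$-invariant. As $N\supseteq M_1\neq 0$, irreducibility forces $N=\CH$, so $\CH=\bigoplus_{j\in\IZ}\rho(U^j)M_1$ orthogonally.

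Identifying $\CH\cong\ell^2(\IZ)\otimes M_1$ via $\rho(U^j)m\leftrightarrow\delta_j\otimes m$ turns $\rho(U)$ into $V\otimes\id_{M_1}$ ($V$ the bilateral shift) and, by a short computation, $\rho(S_1)$ into $A\otimes W$, where $A\delta_j=\delta_{2j}$ and $W:=\rho(S_1)\!\upharpoonright_{M_1}$ is the Wold unitary. Hence for every $B\in\CB(M_1)$ commuting with $W$ the operator $\id_{\ell^2(\IZ)}\otimes B$ commutes with $\rho(U)$, with $\rho(S_1)$, and therefore with $\rho(S_2)=\rho(U)^*\rho(S_1)$; by irreducibility $\id_{\ell^2(\IZ)}\otimes B\in\C\id$, so $B$ is scalar. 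Thus the commutant of $W$ is trivial, which (as $W^*(W)$ is abelian) forces $\dim M_1=1$; since $W$ is a permutative unitary (Lemma \ref{isopower}), $W=\id$ and $\rho(S_1)$ fixes a basis vector $\Omega$. Then $\overline{\rho(\CO_2)\Omega}$ carries a representation of type $P(1_1)=\pi_-$, so Proposition \ref{caninside} exhibits inside $\rho$ a copy of the canonical representation, which by irreducibility is all of $\CH$. Hence $\rho\cong\rho_c$, $\rho\!\upharpoonright_{\CO_2}\cong\pi_c=\pi_+\oplus\pi_-$, and the latter is multiplicity-free in the Bratteli--Jorgensen sense by Theorem \ref{BJconverse}.

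The main obstacle is exactly the last step. The purely combinatorial route, via $P(1_k)/P(2_k)$-subspaces and Proposition \ref{caninside}, only disposes of the sub-case in which the Wold unitary $W$ has a finite orbit; excluding $\dim M_1>1$ in general seems to require the graded decomposition $\CH=\bigoplus_j\rho(U^j)M_1$ and the ensuing description of $\rho(\CQ_2)'$, and it is here that the defining relations of $\CQ_2$ (through Lemma \ref{lemma-orth-proj}) are genuinely used.
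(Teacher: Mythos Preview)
Your proof is correct. Case~1 ($M_1=0$) coincides with the paper's handling of the pure situation via Lemma~\ref{pureinvariant}. Case~2, however, takes a genuinely different route. The paper works combinatorially: it decomposes $\pi=\rho\!\upharpoonright_{\CO_2}$ into cyclic pieces $P(I_j)$, pairs each $P(1_k)$ with a $P(2_k)$ by matching the point spectra of $\rho(S_1)$ and $\rho(S_2)$, invokes Proposition~\ref{caninside} to rule out $k\geq 2$, and concludes that what remains is either $\pi_c$ or a single pure irreducible $P(I)$. You instead establish the orthogonal graded decomposition $\CH=\bigoplus_{j\in\IZ}\rho(U^j)M_1$, realise $\rho(U)=V\otimes\id$ and $\rho(S_1)=A\otimes W$ under $\CH\cong\ell^2(\IZ)\otimes M_1$, and force $\dim M_1=1$ from $\{W\}'=\IC\,\id$. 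Your closing observation is apt: the point-spectrum matching in the paper's argument only detects the \emph{finite} orbits of $\sigma_i\!\upharpoonright_{\cap_n\sigma_i^n(\IN)}$, so an infinite-orbit contribution to $M_1$ would require a separate step there, whereas your commutant computation handles both uniformly. Conversely, the paper's approach keeps the link with the $P(I)$ classification more transparent. One minor redundancy: once you have $\dim M_1=1$ and the tensor description, the identification $\rho\cong\rho_c$ is already in hand (relabelling $g_j:=\rho(U)^{j+1}\Omega$ gives exactly the canonical basis), so the final appeal to Proposition~\ref{caninside} is not needed, though it does no harm.
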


\begin{proof}

If we denote the restriction of  $\rho$ to ${\CO_2}$ by $\pi$. we have $\pi=\oplus_{j\in J}\pi_j$, where each $\pi_j$ is a cyclic permutative representation of the
Cuntz algebra $\CO_2$. In particular, for every $j\in J$ there is a (possibly infinite) multi-index $I_j$ such that $\pi_j\cong P(I_j)$. If
a subrepresentation of type $P(1_k)$ shows up in the decomposition with a certain multiplicity, then a subrepresentation of
type $P(2_k)$ must show up as well with the same multiplicity, since this is the only way for $\sigma_p(\rho(S_1))$ and $\sigma_p(\rho(S_2))$ to coincide 
along with the multiplicity of each eigenvalue. Therefore, the decomposition of $\pi$ actually reads as 
$$\pi=n_1(P(1)\oplus P(2))\oplus n_2(P(11)\oplus P(22))\ldots\oplus n_k(P(1_k)\oplus P(2_k))\oplus\ldots\oplus\sigma$$
where $\sigma$ is the direct sum of the pure components. Now $\sigma$ uniquely extends to a representation
$\tilde{\sigma}$ of $\CQ_2$ by pureness, as does every $P(1_k)\oplus P(2_k)$ thanks to the theorem of Larsen and Li. 
In view of Proposition \ref{caninside} each $n_i$ must be $0$ for every $i\geq 2$, for otherwise $\rho$ would properly contain
a subrepresentation. In fact, $P(1)\oplus P(2)$ can appear. If it does, however, the pure part $\sigma$ cannot appear, hence 
the above direct sum is the canonical representation
up to equivalence.  
If it does not, then $\pi$ must be a pure representation of type $P(I)$, and
  we only need to show that $P(I)$ is already irreducible. But this is indeed a straightforward application of Lemma \ref{pureinvariant}.
\end{proof}

\begin{remark}
In light of the theorem we proved above, the canonical representation of $\CQ_2$ can now be characterized  as  its sole irreducible permutative representation whose restriction to $\CO_2$
is reducible. This raises the question of whether any (possibly non-permutative) irreducible representation of $\CQ_2$ which is not the canonical representation is still
irreducible when restricted to the Cuntz algebra $\CO_2$. One might also wonder to what extent an irreducible representation of $\CQ_2$ is determined by its restriction to $\CO_2$. 

\end{remark}

To better appreciate the reach of Theorem \ref{IrrRep}, it is worth stressing that in general  a permutative representation of $\CQ_2$ will not restrict to the Cuntz algebra as a representation that is regular at all. For instance, this is seen by considering any of the permutative extensions of $P(1_k)\oplus P(2_k)$, where $k$ is any integer greater than $1$.  Indeed, none of these representations can be regular, since they do not decompose into a direct sum of irreducible permutative components, as we have already remarked.  
However, there is only one way for a restriction to fail to be regular: the representation $\rho$ itself must not decompose into permutative irreducibles at the level of $\CQ_2$. This is spelled out in the next proposition. Roughly speaking, it  says
one cannot jettison the hypothesis that $\rho\upharpoonright_{\CO_2}$ is regular if $\rho$ is to be decomposable at the level of $\CQ_2$.

\begin{proposition}\label{q2o2}
Let $\rho:\CQ_2\rightarrow\CB(\CH)$ be a permutative representation. The following are equivalent:
\begin{enumerate}
\item $\rho\upharpoonright_{\CO_2}$ is regular (multiplicity-free) in the sense of Bratteli-Jorgensen.
\item $\rho$ decomposes into the direct sum of (distinct) irreducible permutative subrepresentations.
\end{enumerate}
\end{proposition}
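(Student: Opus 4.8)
The plan is to establish the two implications separately; $(2)\Rightarrow(1)$ is a short consequence of the structure theory already developed, while $(1)\Rightarrow(2)$ carries the weight. For $(2)\Rightarrow(1)$, write $\rho=\bigoplus_i\rho_i$ with each $\rho_i$ an irreducible permutative subrepresentation of $\CQ_2$. By Theorem \ref{IrrRep} every $\rho_i\upharpoonright_{\CO_2}$ is unitarily equivalent either to $\pi_c=\pi_+\oplus\pi_-$ or to an irreducible representation of type $P(I)$, and in both cases it is itself a direct sum of irreducible permutative subrepresentations of $\CO_2$; hence so is $\rho\upharpoonright_{\CO_2}$, and Corollary \ref{regular} gives that $\rho\upharpoonright_{\CO_2}$ is regular. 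If moreover the $\rho_i$ are pairwise inequivalent, then, because the permutative extension of an extendible permutative representation is unique (so distinct $\widetilde P(I)$'s restrict to distinct irreducible $P(I)$'s, and the sole possible summand $\rho_c$ restricts to $\pi_+\oplus\pi_-$, which is disjoint from every irreducible $P(I)$ with $|I|\geq 2$), the irreducible permutative components of $\rho\upharpoonright_{\CO_2}$ are pairwise inequivalent, so Theorem \ref{BJconverse} yields that $\rho\upharpoonright_{\CO_2}$ is multiplicity-free.

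For $(1)\Rightarrow(2)$, suppose $\rho\upharpoonright_{\CO_2}$ is regular. By the theorem of Bratteli and Jorgensen \cite{BJ} the Hilbert space decomposes as $\bigoplus_{j\in J}M_j$, with $J$ at most countable and each $M_j$ a permutative $\CO_2$-invariant subspace on which $\rho\upharpoonright_{\CO_2}$ acts as an irreducible representation of type $P(I_j)$. I would sort these using Proposition \ref{irrext}: whenever $|I_j|\geq 2$ (including the infinite, non-eventually-periodic case) both generating isometries act purely on $M_j$, and the only remaining possibilities are $P(1)\cong\pi_-$ and $P(2)\cong\pi_+$. Set $\CK_{\mathrm{pure}}\doteq\bigoplus\{M_j:\rho(S_1),\rho(S_2)\text{ pure on }M_j\}$ and $\CK_0\doteq\CK_{\mathrm{pure}}^\perp$, the closed span of those $M_j$ on which $\rho\upharpoonright_{\CO_2}$ acts as $\pi_-$ or $\pi_+$; both are permutative and $\CO_2$-invariant. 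Since $\rho(S_1),\rho(S_2)$ restrict to pure isometries on $\CK_{\mathrm{pure}}$, the remark following Lemma \ref{pureinvariant} shows that $\CK_{\mathrm{pure}}$, and hence also $\CK_0$, is $\CQ_2$-invariant; the same lemma applied inside each pure $M_j$, where $\rho\upharpoonright_{\CO_2}$ is already $\CO_2$-irreducible, shows $M_j$ is $\CQ_2$-invariant with no nontrivial $\CQ_2$-invariant subspace, so that $\rho\upharpoonright_{M_j}=\widetilde P(I_j)$ is irreducible. Thus $\rho\upharpoonright_{\CK_{\mathrm{pure}}}$ is the direct sum $\bigoplus_j\widetilde P(I_j)$ over the $j$ with pure isometries, a direct sum of irreducible permutative representations of $\CQ_2$.

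It remains to analyse $\CK_0$, where $\rho\upharpoonright_{\CK_0}$ restricts to $\CO_2$ as $n\,\pi_-\oplus n'\,\pi_+$ for some $n,n'\leq\aleph_0$. Using the identity $US_1=S_2U$ valid in $\CQ_2$, one checks that $\rho(U)$ carries $\rho(S_1)$-fixed basis vectors to $\rho(S_2)$-fixed ones and $\rho(U)^*$ does the reverse, so $\rho(U)$ induces a bijection between these two sets; since each $\pi_-$-copy in $\CK_0$ carries exactly one $\rho(S_1)$-fixed basis vector and each $\pi_+$-copy exactly one $\rho(S_2)$-fixed basis vector, this forces $n=n'$ (as is also predicted by the orbit condition of Theorem \ref{orbits}). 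For a $\pi_-$-copy $M_j$ with $\rho(S_1)$-fixed vector $\Omega$, put $\Omega'\doteq\rho(U)\Omega$, so $\rho(S_2)\Omega'=\Omega'$ and $\Omega'$ generates the associated $\pi_+$-copy $M_j'$; the construction in the proof of Proposition \ref{caninside}, specialised to $k=1$, then exhibits $M_j\oplus M_j'$ as a $\CQ_2$-invariant subspace on which $\rho$ restricts to the canonical representation $\rho_c$. These subspaces $M_j\oplus M_j'$ are mutually orthogonal, and since $\rho(U)$ maps the $\rho(S_1)$-fixed basis vectors onto the $\rho(S_2)$-fixed ones they exhaust $\CK_0$, so $\rho\upharpoonright_{\CK_0}\cong n\,\rho_c$. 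Putting the two parts together, $\rho\cong n\,\rho_c\oplus\bigoplus_j\widetilde P(I_j)$, a direct sum of irreducible permutative subrepresentations; and if $\rho\upharpoonright_{\CO_2}$ is in addition multiplicity-free the $P(I_j)$ are distinct and $n\leq 1$, so these summands are pairwise inequivalent. The main obstacle is precisely the treatment of $\CK_0$: one must peel off the pure part as a reducing permutative subspace, match the multiplicities $n$ and $n'$ of the $\pi_\pm$-pieces, and—crucially—recognise the resulting $n(\pi_-\oplus\pi_+)$ as $n$ copies of $\rho_c$ rather than of some a priori different permutative extension of $\pi_c$.
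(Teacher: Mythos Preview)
Your proof is correct, and the $(2)\Rightarrow(1)$ direction is essentially the paper's argument. For $(1)\Rightarrow(2)$, however, you take a genuinely different route. The paper argues as follows: once one knows $\rho\upharpoonright_{\CO_2}\cong n(P(1)\oplus P(2))\oplus\bigl(\bigoplus_k P(I_k)\bigr)$ with the $P(I_k)$ pure and irreducible, it simply observes that $n\rho_c\oplus\bigl(\bigoplus_k\widetilde P(I_k)\bigr)$ is one permutative extension of this $\CO_2$-representation, and then invokes Theorem~\ref{uniext} (uniqueness of the permutative extension up to unitary equivalence) to conclude that $\rho$ must coincide with it. Your argument, by contrast, \emph{constructs} the decomposition inside $\CH$ itself: you use Lemma~\ref{pureinvariant} (and its remark) to show that $\CK_{\mathrm{pure}}$ and each pure $M_j$ are already $\CQ_2$-invariant, and then on the complement $\CK_0$ you pair $\pi_-$- and $\pi_+$-blocks by hand via the bijection $\Omega\mapsto\rho(U)\Omega$ between $S_1$-fixed and $S_2$-fixed basis vectors, invoking (the $k=1$ case of) the construction behind Proposition~\ref{caninside} to recognise each paired block as a copy of $\rho_c$.

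What each approach buys: the paper's route is shorter and exploits the global uniqueness theorem already proved, at the cost of being somewhat non-constructive (the unitary equivalence is asserted, not exhibited). Your route is more elementary and self-contained, avoids Theorem~\ref{uniext} altogether, and makes explicit why the multiplicities of $\pi_+$ and $\pi_-$ must match; it also produces the irreducible summands as genuine permutative \emph{subrepresentations} of the given $\rho$, rather than merely up to unitary equivalence. One small point worth making explicit in your write-up: when you assert that $\rho$ restricts to each $M_j\oplus M_j'$ as $\rho_c$, you are implicitly using that $\pi_c=\pi_+\oplus\pi_-$ has a \emph{unique} permutative extension (the orbit sets in Theorem~\ref{orbits} are singletons here), so the only permutative extension is $\rho_c$; this is true, but deserves a word.
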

\begin{proof}
If $\pi\doteq \rho\upharpoonright_{\CO_2}$ is regular, then $\pi$ takes on the form $\pi=\oplus_{j\in J} P(I_j)$ 
where
each $P(I_j)$ is irreducible. 
Furthermore, if a representation of type $P(1)$ appears in the decomposition, then a representation of type $P(2)$ appears as well and with the same
multiplicity, so $\pi=n(P(1)\oplus P(2))\oplus(\oplus_{k} P(I_k))$, where the $P(I_k)$'s are all pure as well as irreducible, and $n\in\{0,1,2,\ldots,\infty\}$.
Therefore, $\rho$ must coincide with $n\rho_c\oplus(\oplus_k \widetilde P(I_k))$ up to unitary equivalence thanks to Theorem \ref{uniext}.
On the other hand, assuming that $\rho$ decomposes as $\oplus_{j\in J} \rho_j$, where each $\rho_j$ is an irreducible permutative subrepresentation, 
immediately leads
to $\pi$ being a direct sum of a number (possibly zero) of copies of $\rho_c\upharpoonright_{\CO_2}$ and certain $P(I_j)$'s for suitable multi-indeces in view of Theorem
\ref{IrrRep}. Finally such a $\pi$ is regular by virtue of Corollary \ref{regular}.
To conclude, the same argument as above makes it plain that $\rho\upharpoonright_{\CO_2}$ is multiplicity-free if and only if $\rho$ is.
\end{proof}


Simple instances of representations other than those above arise quite naturally by composing the canonical representation of $\CQ_2$ with the endomorphisms $\chi_{2k+1}$ given by
$\chi_{2k+1}(S_2)=S_2$ and $\chi_{2k+1}(U)=U^{2k+1}$, $k\in\IZ$, introduced in \cite{ACR}. If we do so, we obtain a family
of representations  $\rho_{2k+1}\doteq \rho_c\circ\chi_{2k+1}$ of $\CQ_2$ acting on the Hilbert space $\ell^2(\IZ)$ as
$\rho_{2k+1}(S_2)e_l=e_{2l}$ and $\rho_{2k+1}(U)e_l=e_{l+2k+1}$, for every $l\in\IZ$.
For example, $\rho_3$ is the direct sum of the canonical representation with $\widetilde P(\overline{12})$. 
Among other things, this allows us to see that the multiplicity of $U$ in a (reducible) representation of type $\widetilde P(\overline{12})$ is two.
In general, every $\rho_{2k+1}$ admits a similar decomposition into a direct sum of two irreducible
components, one of which is always the canonical representation and the other is a representation of type
$P(I)$, with $I$ being a periodic infinite multi-index. \\

At the end of Section \ref{general} we pointed out that for any permutative representation $\rho$ of $\CQ_2$ the von Neumann algebra generated by $C^*(U)$ is never maximal unless
$\rho$ is the canonical representation. In fact, the diagonal subalgebra $\CD_2\subset\CO_2\subset\CQ_2$, which in \cite{ACR} was proved to be maximal abelian in $\CQ_2$ as well as in $\CO_2$,  
behaves in a dramatically different way: irrespective of what the irreducible permutative representation $\pi$ of either $\CO_2$ or  $\CQ_2$ is, the weak closure of $\pi(\CD_2)$  will  always be the atomic MASA $\ell^\infty(\IN)$. 
Actually, much more is true, for $\pi(\CD_2)''$ is the atomic MASA for every multiplicity-free representation $\pi$ of the Cuntz algebra.

\begin{proposition}
For any multiplicity-free representation $\pi:\CO_2\rightarrow B(\ell^2(\IN))$ the von Neumann algebra $\pi(\CD_2)''$ is $\ell^\infty(\IN)$. In particular, $\rho(\CD_2)''=\ell^\infty(\IN)$
for every irreducible representation $\rho$ of $\CQ_2$.
\end{proposition}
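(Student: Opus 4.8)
The plan is to work throughout with the orthonormal basis $\{e_n:n\in\IN\}$ with respect to which $\pi$ is permutative, and to use that, by definition, $\CD_2$ is the $C^*$-algebra generated by the diagonal projections $P_\alpha=S_\alpha S_\alpha^*$, $\alpha\in W_2$, so that $\pi(\CD_2)''=\{\pi(P_\alpha):\alpha\in W_2\}''$. First I would record the easy inclusion. In a permutative representation each $\pi(P_\alpha)$ is a diagonal projection: $\pi(P_\alpha)e_n=e_n$ exactly when $\alpha$ is an initial word of the coding sequence $\sigma(n)$ and $\pi(P_\alpha)e_n=0$ otherwise, its range being $\overline{\textrm{span}}\{e_m:\alpha\textrm{ is an initial word of }\sigma(m)\}$. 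Hence $\pi(\CD_2)$ is contained in the atomic MASA $\ell^\infty(\IN)$ of all operators diagonal in the given basis, and therefore $\pi(\CD_2)''\subseteq\ell^\infty(\IN)$. It then suffices to prove the reverse inclusion, and for this it is enough to exhibit every rank-one projection $Q_n\doteq(\,\cdot\,,e_n)e_n$ inside $\pi(\CD_2)''$, since the $Q_n$ generate $\ell^\infty(\IN)$ as a von Neumann algebra.

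The only step where the multiplicity-free hypothesis actually enters is the production of the $Q_n$. Fix $n$, write $\sigma(n)=(i_1,i_2,\ldots)$ and let $\alpha_l\doteq(i_1,\ldots,i_l)$. Since $P_{\alpha_{l+1}}\leq P_{\alpha_l}$ in $\CD_2$, the sequence $\{\pi(P_{\alpha_l})\}_l$ is a decreasing sequence of projections in $\pi(\CD_2)''$, so its strong limit $P_n\doteq\bigwedge_l\pi(P_{\alpha_l})$ is again a projection in $\pi(\CD_2)''$, whose range is $\bigcap_l\overline{\textrm{span}}\{e_m:\alpha_l\textrm{ is an initial word of }\sigma(m)\}=\overline{\textrm{span}}\{e_m:\sigma(m)=\sigma(n)\}$. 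Injectivity of the coding map $\sigma$ now collapses this subspace to $\C e_n$, i.e.\ $P_n=Q_n\in\pi(\CD_2)''$, and the first assertion follows. I do not anticipate a real obstacle in this argument; the only point needing a little care is the identification of the range of the infimum $\bigwedge_l\pi(P_{\alpha_l})$, which is a routine computation of an intersection of closed spans of basis vectors, together with the observation that injectivity of $\sigma$ is exactly what makes that intersection one-dimensional.

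For the final assertion I would reduce to the first part. Because $\CD_2\subset\CO_2$, one has $\rho(\CD_2)''=(\rho\upharpoonright_{\CO_2})(\CD_2)''$, and $\rho\upharpoonright_{\CO_2}$ is a permutative representation which is multiplicity-free in the sense of Bratteli-Jorgensen: this is Theorem \ref{irrmf} when $\rho$ is an irreducible permutative representation of $\CO_2$, and Theorem \ref{IrrRep} when $\rho$ is an irreducible permutative representation of $\CQ_2$ (in the latter case the conclusion covers $\rho_c$ as well, whose restriction to $\CO_2$ is $\pi_+\oplus\pi_-$, a direct sum of two inequivalent irreducible permutative subrepresentations and hence multiplicity-free). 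Applying the first part to $\rho\upharpoonright_{\CO_2}$ then shows that $\rho(\CD_2)''$ is the atomic MASA of the underlying Hilbert space, as claimed.
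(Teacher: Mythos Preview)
Your argument is correct and follows essentially the same route as the paper: both proofs show that the decreasing sequence $\pi(P_{\alpha_l})$, with $\alpha_l$ the initial segments of $\sigma(n)$, converges strongly to the rank-one projection onto $\IC e_n$, the injectivity of the coding map being exactly what forces the limit to have one-dimensional range. Your write-up is slightly more explicit in recording the easy inclusion $\pi(\CD_2)''\subset\ell^\infty(\IN)$ and in justifying the ``in particular'' clause via Theorem~\ref{IrrRep}, but the core idea is identical.
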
 
\begin{proof}
For every $h\in \IN$ we denote the orthogonal projections onto $\IC e_h$ by $\delta_h$, i.e. $\delta_h(x)=(x, e_h)e_h$ for every $x\in\ell^2(\IN)$.
The statement is proved once we make sure each $\delta_h$ lies in the strong closure of $\pi(\CD_2)$.
Let $\alpha=(i_1,i_2,\ldots)\in \{1,2\}^\IN$ be the value of the coding map at $h$.  We denote by $\alpha_n$ the multi-index obtained 
out of $\alpha$ by taking the first $n$ entries only. The conclusion is reached if we show that $\pi(S_{\alpha_n}S_{\alpha_n}^*)$ strongly converges to $\delta_h$. 
To this aim, note that $\pi(S_{\alpha_n}S_{\alpha_n}^*)e_h=e_h$ for every $n\in\IN$ by definition. Furthermore, for every $k\neq h$ the sequence
$\pi(S_{\alpha_n}S_{\alpha_n}^*)e_k$ converges to zero in norm, for otherwise $\sigma(k)$ should be $\alpha$, which is not the case as $\sigma$ is injective by hypothesis. 
\end{proof}

\section{Pure states of $\CO_2$ with the unique extension property} \label{purestates}
The results we have obtained can also be reinterpreted in terms of extension of pure states. A pure state $\omega$ of the Cuntz algebra $\CO_2$ may have more
than one pure extension to $\CQ_2$. 
However, if $\omega$ comes from an irreducible permutative representation, then it will have precisely one pure extension, which is proved in the present section.
To begin with, we start by proving a general result that each pure state coming from an irreducible representations in which both Cuntz isometries are pure has a unique
pure extension to $\CQ_2$.

\begin{thm}
Every vector state associated with an irreducible representation $\pi:\CO_2\rightarrow \CB(\CH)$ in which
$\pi(S_1)$ and $\pi(S_2)$ are both pure has the unique extension property with respect to the inclusion $\CO_2\subset\CQ_2$.
\end{thm}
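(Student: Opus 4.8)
The plan is to use the Larsen--Li criterion together with the rigidity results already established. Since $\pi(S_1)$ and $\pi(S_2)$ are pure, both Wold unitaries are trivial, hence trivially unitarily equivalent; by the theorem of Larsen and Li $\pi$ extends to a representation $\widetilde\pi$ of $\CQ_2$ on the same Hilbert space $\CH$, and this extension is unique (the unitary $U$ is forced by the relation $US_2S_2^* = S_1S_2^*$ on $\range\pi(S_2)$, and then on each $\range\pi(S_1S_2^n(S_2^*)^nS_1^*)$, whose ranges exhaust $\CH$ by purity of $\pi(S_2)$, exactly as in the proof of Lemma~\ref{pureinvariant}). Let $\xi\in\CH$ be a unit vector and $\omega=\omega_\xi$ the associated vector state of $\CO_2$. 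The state $\widetilde\omega\doteq\widetilde\omega_\xi$ on $\CQ_2$ is an extension of $\omega$, and it is pure because $\widetilde\pi$ is irreducible: indeed by Lemma~\ref{pureinvariant} any $\rho(\CO_2)$-invariant subspace of $\CH$ is $\rho(\CQ_2)$-invariant, so the already irreducible $\pi$ forces $\widetilde\pi$ to be irreducible as well. Thus at least one pure extension exists.

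For uniqueness, I would argue at the level of states. Suppose $\varphi$ is any pure extension of $\omega$ to $\CQ_2$, with GNS triple $(\CK,\rho_\varphi,\eta)$. The restriction $\rho_\varphi\!\upharpoonright_{\CO_2}$ is a representation of $\CO_2$ with cyclic vector $\eta$ whose associated state is $\omega$; since $\omega$ is a pure state of $\CO_2$, this restricted representation is irreducible and hence unitarily equivalent to $\pi$. Fix such a unitary $V\colon\CH\to\CK$ with $V\pi(\cdot)V^* = \rho_\varphi\!\upharpoonright_{\CO_2}$ and $V\xi'=\eta$ for the appropriate unit vector $\xi'\in\CH$ implementing $\omega$ (this uses that all unit vectors in an irreducible representation inducing the same state are related by a phase, together with transitivity of the unitary group of an irreducible representation on unit vectors up to the commutant, which is scalar). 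Then $W^*(\rho_\varphi\!\upharpoonright_{\CO_2}) = \CB(\CK)$, so $\rho_\varphi(U)\in\CB(\CK)$ is determined by its commutation relations with $\rho_\varphi(\CO_2)$ up to a unitary in the commutant, i.e.\ up to a scalar; but the scalar is pinned down because $\rho_\varphi(U)\rho_\varphi(S_2)\rho_\varphi(S_2)^* = \rho_\varphi(S_1)\rho_\varphi(S_2)^*$ has no free phase. Transporting back through $V$, the pair $(\rho_\varphi,\eta)$ is unitarily equivalent to $(\widetilde\pi,\xi')$, and since $\omega_{\xi'}=\omega=\omega_\xi$ on $\CO_2$ with $\widetilde\pi$ irreducible, $\xi'$ and $\xi$ differ by a phase, whence $\varphi=\widetilde\omega_{\xi'}=\widetilde\omega_\xi=\widetilde\omega$.

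Alternatively, and perhaps more cleanly, I would invoke the uniqueness-of-extension part of the Larsen--Li theorem directly: a pure state $\varphi$ of $\CQ_2$ extending the pure state $\omega$ of $\CO_2$ gives, via GNS, an irreducible representation of $\CQ_2$ extending the irreducible representation of $\CO_2$ associated with $\omega$; because the two Cuntz isometries act as pure isometries in that representation (purity being a property visible already in the $\CO_2$-subrepresentation, which is equivalent to $\pi$), the extension to $\CQ_2$ is unique, and hence $\varphi$ is uniquely determined by $\omega$. Either way, the key input is the forced uniqueness of $U$ in the pure case, already exploited in Lemma~\ref{pureinvariant} and Proposition~\ref{pureperm}.

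The main obstacle I expect is the bookkeeping in the uniqueness argument: one must be careful that the equivalence $\rho_\varphi\!\upharpoonright_{\CO_2}\cong\pi$ can be chosen to carry the cyclic vector $\eta$ to a vector inducing $\omega$, and then that the residual phase freedom in $\rho_\varphi(U)$ is genuinely killed by the defining relation $US_2S_2^*=S_1S_2^*$ rather than merely by the relation $S_2U=U^2S_2$, which alone would leave a $\mathbb{T}$-worth of choices (these are exactly the gauge-type ambiguities). Making this last point precise — that the $U$ in a pure irreducible representation is unique on the nose, not just up to gauge — is the crux, and it is precisely what the pureness hypothesis buys us.
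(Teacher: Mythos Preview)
Your approach is essentially the paper's own, and the overall structure is sound. There is, however, one genuine gap in the uniqueness argument, and you have misplaced where it lies. You assert that ``the restriction $\rho_\varphi\!\upharpoonright_{\CO_2}$ is a representation of $\CO_2$ with cyclic vector $\eta$,'' but $\eta$ is a priori cyclic only for $\rho_\varphi(\CQ_2)$, not for $\rho_\varphi(\CO_2)$. Without this, you cannot conclude that the full restriction on $\CK$ is irreducible or equivalent to $\pi$; you only know this for the possibly proper subspace $\CK_0\doteq\overline{\rho_\varphi(\CO_2)\eta}$.

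The paper closes exactly this gap, and it is here that Lemma~\ref{pureinvariant} (more precisely, the remark following it) is really used in the uniqueness direction: the subrepresentation of $\CO_2$ on $\CK_0$ is the GNS of the pure state $\omega$, hence unitarily equivalent to $\pi$, so the restricted isometries $\rho_\varphi(S_i)\!\upharpoonright_{\CK_0}$ are pure; Lemma~\ref{pureinvariant} then says $\CK_0$ is $\rho_\varphi(\CQ_2)$-invariant, and irreducibility of $\rho_\varphi$ forces $\CK_0=\CK$. Once this is in place, everything you wrote afterwards goes through verbatim. Your closing paragraph locates the crux in pinning down $\rho_\varphi(U)$ on the nose, but that step is routine once the restriction is known to be $\pi$ (it is just the pure case of Larsen--Li, which you already accept). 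The genuine crux is the $\CO_2$-cyclicity of $\eta$, and that is precisely what purity buys via Lemma~\ref{pureinvariant}.
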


\begin{proof}
For a given $\omega_x$, with $\omega_x(T)=(\pi(T)x,x)$, for every $T\in \CO_2$, where $x\in \CH$ is a unit vector,
let $\Omega\in\CP(\CQ_2)$ be such that $\Omega\upharpoonright_{\CO_2}=\omega_x$.
Let $(\CH_\Omega, \pi_\Omega, x_\Omega)$
be the GNS triple associated with $\Omega$, that is $\pi_\Omega: \CQ_2\rightarrow \CB(\CH_\Omega)$ is the unique irreducible
representation such that $\Omega(T)=(\pi_\Omega(T)x_\Omega,x_\Omega)$ for every $T\in \CQ_2$.
Let us define $\CK\doteq \overline{\pi_\Omega(\CO_2)x_\Omega}\subset \CH_\Omega$. Then the representation 
$\pi':\CO_2\rightarrow\CB(\CK)$, which is given by $\pi'(T)\doteq \pi_\Omega(T)\upharpoonright_\CK$ for every $T\in\CO_2$,  is  cyclic by definition and the cyclic vector $x_\Omega$ satisfies $(\pi'(T)x_\Omega, x_\Omega)=\omega_x(T)$, for every $T\in \CO_2$, hence $\pi'$ and $\pi$ are unitarily equivalent.
In particular, $\pi_\Omega(S_i)\upharpoonright_\CK$ are both pure. But then $\CK$ is also invariant under $\pi_\Omega(\CQ_2)$ thanks to Lemma \ref{pureinvariant}, and so  $\CK=\CH_\Omega$ by irreducibility. In other terms, the restriction of $\pi_\Omega$ to the Cuntz algebra $\CO_2$ is nothing but
$\pi$ up to unitary equivalence. By pureness, though, $\pi$ can only be extended in one way, which means $\pi_\Omega$ is in fact
$\tilde{\pi}$, the unique extension of $\pi$ to $\CQ_2$, up to unitary equivalence. If $V:\CH_\Omega\rightarrow\CH$ is any intertwining unitary, i.e. $\pi_\Omega(T) = V^*\tilde\pi(T)V$ for every $T\in\CQ_2$, then
$\Omega(T)=(\pi_\Omega(T)x_\Omega, x_\Omega)=(\tilde\pi(T)Vx_\Omega, Vx_\Omega)$. The last equality says that $\Omega$ is uniquely determined, since
$Vx_\Omega$ must coincide up to a phase with $x\in \CH$.
\end{proof}

\begin{corollary}
Every vector state $\omega\in\CP(\CO_2)$ associated with an irreducible permutative representation of type $P(I)$
with $|I|\geq 2$ has the unique extension property relative to the inclusion $\CO_2\subset\CQ_2$.
\end{corollary}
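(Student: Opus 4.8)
The plan is to obtain this corollary as an immediate consequence of the theorem immediately preceding it, together with Proposition \ref{irrext}. Recall that that theorem asserts that every vector state associated with an irreducible representation $\pi\colon\CO_2\to\CB(\CH)$ in which both $\pi(S_1)$ and $\pi(S_2)$ are pure has the unique extension property with respect to the inclusion $\CO_2\subset\CQ_2$. Hence the entire task reduces to verifying that a representation of type $P(I)$ with $|I|\geq 2$ meets those hypotheses.

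First I would observe that the representation under consideration is irreducible by assumption, since the statement explicitly restricts to \emph{irreducible} representations of type $P(I)$. Second, I would invoke Proposition \ref{irrext}, which says precisely that in every irreducible representation of type $P(I)$ with $|I|\geq 2$ the two generating isometries are pure. Third, a vector state $\omega\in\CP(\CO_2)$ associated with this representation is, by definition, of the form $\omega=\omega_x$ for some unit vector $x$ in the Hilbert space carrying $P(I)$; thus $\omega$ satisfies the hypotheses of the preceding theorem, and therefore admits exactly one pure extension to $\CQ_2$. This finishes the proof.

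I do not expect any genuine obstacle: the corollary is simply a repackaging of two results already established, and the only point worth a moment's care is the routine bookkeeping that every $\omega\in\CP(\CO_2)$ described as ``associated with $P(I)$'' is indeed a vector state $\omega_x$ of the representation space of $P(I)$, so that the earlier theorem applies verbatim. If one wanted to be completely explicit, one could spell out that the uniqueness conclusion of that theorem already encodes the unique pure extension property in the sense required here, but no additional argument is needed.
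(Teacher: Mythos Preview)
Your proposal is correct and is exactly the intended argument: the paper states the corollary without proof because it follows immediately from the preceding theorem together with Proposition~\ref{irrext}, precisely as you outline. There is nothing to add.
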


Still, the representations of type $P(1)$ and $P(2)$ are out of the reach of the above result, but nevertheless their vector states continue to enjoy the unique extension property. 
This, however, is a consequence of the following simple and yet instrumental result.
\begin{lemma}
If $\rho:\CQ_2\rightarrow \CB(\CH)$ is a representation such that there exists an $\CO_2$-invariant subspace on which $\rho\upharpoonright_{\CO_2}$ acts as either
$P(1)$ or $P(2)$, then $\rho$ contains the canonical representation of $\CQ_2$.
\end{lemma}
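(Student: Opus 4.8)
The plan is to produce, inside $\CH$, an explicit $\rho(\CQ_2)$-invariant subspace on which $\rho$ is unitarily equivalent to the canonical representation $\rho_c$; this is a coordinate-free version of the argument behind Proposition~\ref{caninside} (the case $k=1$), carried out without assuming $\rho$ permutative. I would first reduce the $P(2)$ case to the $P(1)$ case: if $\rho(S_2)\Theta=\Theta$ for some unit vector $\Theta$, then $\Omega\doteq\rho(U^*)\Theta$ is a unit vector with $\rho(S_1)\Omega=\Omega$, because $S_1U^*=U^*S_2$ holds in $\CQ_2$ (it follows from $US_1=U(US_2)=U^2S_2=S_2U$). So from now on I assume there is a unit vector $\Omega\in\CH$ with $\rho(S_1)\Omega=\Omega$; such a vector exists as soon as $\rho\upharpoonright_{\CO_2}$ contains a copy of $P(1)$, by the description of the representations $P(I)$ recalled in Section~\ref{converse}. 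Put $\Omega'\doteq\rho(U)\Omega$. Using $S_2U=US_1$ one gets $\rho(S_2)\Omega'=\rho(S_2U)\Omega=\rho(US_1)\Omega=\rho(U)\Omega=\Omega'$; hence also $\rho(S_1)^*\Omega'=0$ and $\rho(S_2)^*\Omega'=\Omega'$ (and, dually, $\rho(S_1)^*\Omega=\Omega$, $\rho(S_2)^*\Omega=0$).

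The heart of the construction is the family $\xi_k\doteq\rho(U^k)\Omega'=\rho(U^{k+1})\Omega$, $k\in\IZ$. Then $\rho(U)\xi_k=\xi_{k+1}$ by definition and $\xi_{-1}=\rho(U^*)\Omega'=\Omega$. Iterating $S_2U=U^2S_2$ yields $S_2U^k=U^{2k}S_2$ and $S_1U^k=U^{2k+1}S_2$ in $\CQ_2$, so, since $\rho(S_2)\Omega'=\Omega'$,
\[
\rho(S_2)\xi_k=\rho(U^{2k}S_2)\Omega'=\xi_{2k},\qquad \rho(S_1)\xi_k=\rho(U^{2k+1}S_2)\Omega'=\xi_{2k+1},
\]
which are precisely the formulas defining $\rho_c$ on $\ell^2(\IZ)$ (with $\xi_k$ in the role of $e_k$). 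The step I expect to be the main obstacle — everything else being a mechanical use of the two defining relations — is showing that $\{\xi_k:k\in\IZ\}$ is an \emph{orthonormal} set. Each $\xi_k$ is a unit vector because $\rho(U)$ is unitary, and for orthogonality, after translating by $\rho(U^{-k})$, it suffices to prove $(\Omega',\xi_m)=0$ for every $m\neq0$. This I would do by induction on $|m|$: if $m$ is odd then $\xi_m=\rho(S_1)\xi_{(m-1)/2}$ and $\rho(S_1)^*\Omega'=0$ give $(\Omega',\xi_m)=0$; if $m$ is even then $\xi_m=\rho(S_2)\xi_{m/2}$ and $\rho(S_2)^*\Omega'=\Omega'$ give $(\Omega',\xi_m)=(\Omega',\xi_{m/2})$, with $1\le|m/2|<|m|$, and the base case $|m|=1$ falls under the odd case.

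Finally I would set $\CK\doteq\clsp\{\xi_k:k\in\IZ\}$. It is invariant under $\rho(U)$ and $\rho(U)^*$ (index shifts), under $\rho(S_2)$ (by $\rho(S_2)\xi_k=\xi_{2k}$), and under $\rho(S_2)^*$: if $k$ is even then $\xi_k=\rho(S_2)\xi_{k/2}$ gives $\rho(S_2)^*\xi_k=\xi_{k/2}$, and if $k$ is odd then $\xi_k=\rho(S_1)\xi_{(k-1)/2}$ gives $\rho(S_2)^*\xi_k=0$. Since $\CQ_2=C^*(U,S_2)$, the subspace $\CK$ reduces $\rho$. The unitary $W\colon\ell^2(\IZ)\to\CK$ determined by $We_k\doteq\xi_k$ then satisfies $W\rho_c(U)W^*=\rho(U)\upharpoonright_{\CK}$ and $W\rho_c(S_2)W^*=\rho(S_2)\upharpoonright_{\CK}$, and because $U$ and $S_2$ generate $\CQ_2$ this forces $\rho\upharpoonright_{\CK}\cong\rho_c$. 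Hence $\rho$ contains the canonical representation, as claimed.
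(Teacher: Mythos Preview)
Your proof is correct and follows essentially the same idea as the paper's: from a fixed vector of $\rho(S_2)$ (or $\rho(S_1)$) one manufactures the $\IZ$-indexed family $\xi_k=\rho(U^k)\Omega'$, verifies the canonical formulas $\rho(S_2)\xi_k=\xi_{2k}$, $\rho(S_1)\xi_k=\xi_{2k+1}$, and checks that the resulting span reduces $\rho$. Your presentation is in fact somewhat tidier than the paper's, which uses the full $P(2)$-basis on the nonnegative side and declares the orthogonality of the negatively-indexed vectors ``clear'', whereas your induction on $|m|$ makes that step fully explicit.
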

\begin{proof}
We only deal with $P(2)$, since $P(1)$ can be dealt with in a similar fashion. 
By hypothesis, there is an $\CO_2$-invariant subspace $\CK(2)\cong \ell^2(\IN)$, with orthonormal basis
$\{e_n: n=0,1,2,\ldots\}$, such that $\rho(S_1)e_n=e_{2n+1}$ and $\rho(S_2)e_n=e_{2n}$, $n\in\IN\cup\{0\}$.
It is then straighforward to verify that $\rho(U)e_n=e_{n+1}$, for every $n\in\IN$. Furthermore, we also
have $\rho(U)e_0=e_1$. Indeed, $e_1=\rho(S_1)e_0=\rho(U)\rho(S_2)e_0=\rho(U)e_0$. It is now clear how to go on.
We define $e_{-n}\doteq \rho(U^*)^n e_0$ for every $k\in\IN$ and $\CK(1)\doteq\overline{\textrm{span}}\{e_{-n}:n\in \IN\}$. 
Clearly, $\{e_{-n}:n\in\IN\}$ is an orthonormal basis for $\CK(1)$. It is also as clear that $\CK(1)$ and $\CK(2)$ are orthogonal. Moreover, the equality $\rho(U)e_{-n}=e_{-n+1}$, for $n\in\IN$,
follows at once from the very definition of the vectors $e_{-n}$.
Now $\rho(S_2)e_{-n}=\rho(S_2)\rho(U^*)^ne_0=\rho(U^*)^{2n}\rho(S_2)e_0=e_{-2n}$ and $\rho(S_1)e_{-n}=\rho(U)\rho(S_2)e_{-n}=e_{-2n+1}$, for every
$n\in\IN$. This ends the proof, as $\CK(1)\oplus\CK(2)$ is the sought copy of the canonical representation of $\CQ_2$.
\end{proof}

\begin{thm}
Every vector state $\omega\in\CP(\CO_2)$ associated with either $P(1)$ or $P(2)$ has the unique extension property.
\end{thm}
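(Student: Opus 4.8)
The plan is to run the GNS argument from the theorem on pure isometries, the new ingredient being the Lemma just proved, which will force the GNS representation of any pure extension to be the canonical one. It suffices to treat $P(2)$, the case of $P(1)$ being symmetric (for instance via the flip-flop automorphism, which extends to $\CQ_2$). So I realize $P(2)$ as $\pi_+$ acting on $\CH_+$ and write $\omega=\omega_x$ with $x\in\CH_+$ a unit vector (such $\omega$ is automatically pure, $\pi_+$ being irreducible). Let $\Omega\in\CP(\CQ_2)$ be any pure state with $\Omega\upharpoonright_{\CO_2}=\omega$ and let $(\CH_\Omega,\pi_\Omega,x_\Omega)$ be its GNS triple, so that $\pi_\Omega$ is an irreducible representation of $\CQ_2$. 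Setting $\CK\doteq\overline{\pi_\Omega(\CO_2)x_\Omega}$, this subspace is $\pi_\Omega(\CO_2)$-invariant and $\pi_\Omega\upharpoonright_{\CO_2}$ restricts on it to a cyclic representation with cyclic vector $x_\Omega$ whose vector state is $\omega$; by uniqueness of the GNS construction, $\pi_\Omega\upharpoonright_{\CO_2}$ acts on $\CK$ as a copy of $P(2)$.

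I would then apply the preceding Lemma with $\rho=\pi_\Omega$ to conclude that $\pi_\Omega$ contains a copy of the canonical representation of $\CQ_2$; since $\pi_\Omega$ is irreducible, this means $\pi_\Omega\cong\rho_c$. Fix an intertwining unitary $V\colon\CH_\Omega\to\ell^2(\IZ)$ and put $y\doteq Vx_\Omega$, a unit vector of $\ell^2(\IZ)$, so that $\Omega(T)=(\rho_c(T)y,y)$ for every $T\in\CQ_2$. Now $V\CK=\overline{\rho_c(\CO_2)y}$ is an irreducible $\CO_2$-invariant subspace of $\ell^2(\IZ)$ equivalent to $P(2)=\pi_+$. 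But $\pi_c=\rho_c\upharpoonright_{\CO_2}=\pi_+\oplus\pi_-$ with $\pi_+$ and $\pi_-$ inequivalent irreducibles, so $\pi_c(\CO_2)'$ is generated by the two projections onto $\CH_+$ and $\CH_-$; hence the only irreducible $\CO_2$-invariant subspaces of $\ell^2(\IZ)$ are $\CH_+$ and $\CH_-$, and since $V\CK\cong\pi_+\not\cong\pi_-$ we must have $V\CK=\CH_+$. In particular $y=Vx_\Omega\in\CH_+$.

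It follows that $\omega_x(T)=\Omega(T)=(\pi_+(T)y,y)$ for every $T\in\CO_2$, and since $\pi_+$ is irreducible and $x,y$ are unit vectors of $\CH_+$ this forces $y=\zeta x$ for some $\zeta\in\IT$. Consequently $\Omega(T)=(\rho_c(T)y,y)=(\rho_c(T)x,x)$ for all $T\in\CQ_2$, and the right-hand side depends only on $\omega$; thus every pure extension of $\omega$ coincides with the state $T\mapsto(\rho_c(T)x,x)$, which is itself pure because $\rho_c$ is irreducible. I expect the crux of the argument to be the localization step placing $y$ inside $\CH_+$: this is the one point where the inequivalence of $\pi_+$ and $\pi_-$ is genuinely used, together with the identification of the $\CO_2$-invariant subspaces of the canonical representation. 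The invocation of the preceding Lemma to identify $\pi_\Omega$ with $\rho_c$ is the other essential ingredient, but is now routine.
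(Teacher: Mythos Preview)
Your proof is correct and follows essentially the same strategy as the paper: form the GNS triple of a pure extension $\Omega$, identify the $\CO_2$-cyclic subspace $\CK$ with a copy of $P(2)$, invoke the preceding Lemma to force $\pi_\Omega\cong\rho_c$, and then recognize $\Omega$ as a vector state of $\rho_c$ with vector in $\CH_+$. The paper's proof is more compressed at the final stage; you spell out explicitly the localization of $y$ in $\CH_+$ via the invariant-subspace structure of $\pi_c=\pi_+\oplus\pi_-$ and the identification $y=\zeta x$ via irreducibility of $\pi_+$, whereas the paper leans implicitly on the construction in the Lemma (which builds the copy of $\rho_c$ so that $\CK$ sits as its $\CH_+$ summand) and leaves the last uniqueness step to the reader.
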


\begin{proof}
We treat the case of a vector state $\omega_x$ coming from $P(2)$. If $\Omega\in\CP(\CQ_2)$ is any extension of $\omega_x$, we can consider its GNS triple $(\CH_\Omega, \pi_\Omega, x_\Omega)$.
We can then define $\CK\doteq \overline{\pi_\Omega(\CO_2)x_\Omega}$. By the uniqueness of the GNS triple, we see that $\CO_2$ acts on
$\CK$ as $P(2)$. But then $\pi_\Omega$ must contain a copy of the canonical representation and is in fact the canonical representation
by irreducibility. This shows that $\Omega$ is  a vector state rising from the canonical representation of $\CQ_2$ associated with a vector $x\in \overline{\textrm{span}}\{e_k:k\geq 0\}$, which concludes
our proof.
\end{proof}

Before leaving the section, we would like to point out that the problem of deciding whether a pure state has a unique
pure extension has mostly been settled in a context where a \emph{maximal abelian} subalgebra of a given
$C^*$-algebra is considered instead of any $C^*$-subalgebra. 
A natural maximal abelian subalgebra of $\CQ_2$ that immediately springs to mind is of course
$C^*(U)$, not least because the diagonal $\CD_2$ has already been given a good deal of attention  in \cite{CuntzAut} 
, where it is looked at in relation to the inclusion in the Cuntz algebra $\CO_2$.  
Obviously, the pure states of $C^*(U)$ are nothing but the evaluations at the points of the spectrum of $U$, namely 
the states $\omega_z$, $z\in\IT$,  given by $\omega_z(f(U))\doteq f(z)$ for every $f\in\ C(\IT)$.
It might come as a surprise that for some values of  $z$ the extension  of $\omega_z$ is unique whereas for others it is not. 
However, this closely resembles what happens with the inclusion $\CD_2\subset\CO_2$, \cite[Proposition 3.1]{CuntzAut}. 
We begin with the following result, which shows that roots of unity of order a power of $2$ give rise to as many extensions as possible.   
\begin{proposition}
If $z\in\IT$ is a root of unity of order $2^n$, for some $n\in\IN$, then the pure state $\omega_z$ has uncountably many
pure extensions to $\CQ_2$.
\end{proposition}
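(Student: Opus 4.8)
First I would restate the problem representation-theoretically. A pure state of $\CQ_2$ extending $\omega_z$ amounts to an irreducible representation $\pi$ of $\CQ_2$ together with a cyclic unit vector $\xi$ with $\langle\pi(U)\xi,\xi\rangle=z$; since $\pi(U)$ is unitary and $|z|=1$, the equality case of Cauchy--Schwarz forces $\pi(U)\xi=z\xi$, so $\xi$ is a $U$-eigenvector. Moreover two such triples yield the same state precisely when the representations are unitarily equivalent, by uniqueness of the GNS construction. Hence the task reduces to producing uncountably many pairwise inequivalent irreducible representations of $\CQ_2$, each carrying a cyclic $U$-eigenvector at $z$.

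Next I would write down one such representation explicitly. Since $z^{2^n}=1$, $z$ lies in the Pr\"ufer group $\Gamma:=\{\zeta\in\IT:\zeta^{2^k}=1\text{ for some }k\in\IN\}$, which is countable. On $\CH_0:=\ell^2(\Gamma)$, with orthonormal basis $\{f_\zeta:\zeta\in\Gamma\}$, define $\pi_0(U)f_\zeta:=\zeta f_\zeta$ and, writing $\pm\beta$ for the two square roots of $\zeta$ in $\Gamma$, $\pi_0(S_2)f_\zeta:=\frac1{\sqrt2}(f_\beta+f_{-\beta})$ (this does not depend on which square root is called $\beta$). A routine check shows that $\pi_0(S_2)$ is an isometry, that the ranges of $\pi_0(S_2)$ and of $\pi_0(U)\pi_0(S_2)$ are orthogonal and together span $\CH_0$, and that $\pi_0(S_2)\pi_0(U)=\pi_0(U)^2\pi_0(S_2)$; by universality $\pi_0$ is therefore a representation of $\CQ_2$. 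It is irreducible: an element $A$ of the commutant commutes with the diagonal operator $\pi_0(U)$, which has simple spectrum, so $Af_\zeta=a_\zeta f_\zeta$; commuting with $\pi_0(S_2)$ then gives $a_\beta=a_{-\beta}=a_{\beta^2}$ for every $\beta\in\Gamma$, and iterating (every $\zeta\in\Gamma$ is $2$-power torsion) yields $a_\zeta\equiv a_1$, so $A$ is scalar. Finally $f_z$ is a $U$-eigenvector at $z$, hence cyclic since $\pi_0$ is irreducible.

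Then I would manufacture the uncountable family by gauge-twisting. Let $\widetilde\alpha_w$, $w\in\IT$, be the gauge automorphisms of $\CQ_2$, with $\widetilde\alpha_w(S_2)=wS_2$ and $\widetilde\alpha_w(U)=U$, and put $\pi_w:=\pi_0\circ\widetilde\alpha_w$. Each $\pi_w$ is irreducible and still has $f_z$ as a cyclic $U$-eigenvector at $z$ (the gauge fixes $U$, and automorphisms preserve cyclicity), so $\Omega_w:=\langle\pi_w(\cdot)f_z,f_z\rangle$ is a pure state of $\CQ_2$ restricting to $\omega_z$ on $C^*(U)$. To see that uncountably many $\Omega_w$ are distinct, I would use the isometry $T:=\frac1{\sqrt2}(US_2+S_2)\in\CQ_2$: one has $\widetilde\alpha_w(T)=wT$, so $\pi_w(T)=w\,\pi_0(T)$, and if $\Omega_w=\Omega_{w'}$ then $\pi_w\cong\pi_{w'}$, forcing $w\,\sigma_p(\pi_0(T))=w'\,\sigma_p(\pi_0(T))$. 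Now $\sigma_p(\pi_0(T))\ni 1$ because $\pi_0(T)f_1=f_1$, while $\sigma_p(\pi_0(T))$ is at most countable, since the eigenvalues of an isometry are unimodular, eigenvectors for distinct ones are orthogonal, and $\CH_0$ is separable. Hence the subgroup $H:=\{s\in\IT:s\,\sigma_p(\pi_0(T))=\sigma_p(\pi_0(T))\}$ acts freely on the nonempty countable set $\sigma_p(\pi_0(T))$, so $H$ is countable; therefore $\IT/H$ is uncountable, and choosing one $w$ per coset produces uncountably many pairwise distinct pure extensions $\Omega_w$ of $\omega_z$.

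I do not anticipate a serious obstacle: the only mildly delicate point is the explicit construction of $\pi_0$ — verifying the two defining relations of $\CQ_2$ on the basis $\{f_\zeta\}$ and computing its commutant — after which everything is formal. (One could instead take $\pi_0$ to be the GNS representation of an arbitrary pure extension of $\omega_z$, but then one must separately ensure $\sigma_p(\pi_0(T))\neq\emptyset$, which is far less transparent; building $\pi_0$ by hand side-steps this.)
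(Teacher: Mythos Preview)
Your proof is correct and takes a genuinely different route from the paper's.

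Both arguments use gauge-twisting by $\widetilde\alpha_w$ to manufacture the uncountable family of extensions, but they differ in two respects. First, to produce a single irreducible representation with a $U$-eigenvector at the prescribed $z$, the paper invokes the interval picture of $\CQ_2$ (which furnishes an eigenvector at $1$) and then, for general $z$, transports via the inner automorphism $\textrm{Ad}(U_z)$ with $U_z\in\CD_2$ satisfying $U_zUU_z^*=zU$, a device imported from \cite{ACR}. You instead build an explicit representation $\pi_0$ on $\ell^2$ of the Pr\"ufer $2$-group that already carries eigenvectors at every $2^n$-th root of unity, so no transport step is needed. Second, to separate the twisted states, the paper simply observes $\Omega\circ\widetilde\alpha_w(S_2)=w\,\Omega(S_2)$ with $\Omega(S_2)\neq 0$ in the interval picture; this one-line computation is unavailable to you because $\langle\pi_0(S_2)f_z,f_z\rangle=0$ whenever $z\neq 1$, so you compensate with the more structural (and perfectly valid) argument via the point spectrum of $T=\tfrac{1}{\sqrt2}(US_2+S_2)$ and the countability of the stabilizer $H$. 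The trade-off is clear: the paper's proof is shorter but leans on two external inputs (the interval picture from \cite{ACRS} and the unitary $U_z$ from \cite{ACR}), whereas your argument is entirely self-contained at the cost of a slightly more elaborate separation step. A minor expository point: your phrase ``two such triples yield the same state precisely when the representations are unitarily equivalent'' overstates the converse, but you only ever use the forward implication, which is of course correct.
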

\begin{proof}
We start with the case  $z=1$. As already remarked, in the interval picture $U$ has eigenvalue $1$ corresponding to
the eigenfunction $\psi\in L^2([0,1])$, which is the function almost everywhere equal to $1$.
This clearly means that the vector state $\Omega$ given by $\Omega(T)=(T\psi, \psi)$, $T\in\CQ_2$, is a pure extension of
$\omega_1$. Composing $\Omega$ with the gauge automorphisms $\widetilde\alpha_w$  of $\CQ_2$, that is to say the unique extension to $\CQ_2$ of $\alpha_w\in\textrm{Aut}(\CO_2)$, $w\in\IT$,  we obtain an uncountable  family of pure
states $\Omega\circ\widetilde\alpha_w$,   $w\in\IT$, which all extend $\omega_1$ since $\tilde\alpha_w(U)=U$ for every $w\in\IT$. Finally, these are all distinct because
$\Omega\circ\widetilde\alpha_w (S_2)=w\Omega(S_2)$ and $\Omega(S_2)$ is different from zero.  
We can now move on to the more general situation in which $z$ is  a root of unity of order $2^n$, for some $n\in\IN$.
In this case there exists a unitary $U_z\in\CD_2\subset\CQ_2$ such that $U_zUU_z^*=zU$, see \cite{ACR} for more details. Therefore, the composition
$\Omega\circ\textrm{Ad}(U_z)\doteq\Omega_z$ is clearly a pure extension of $\omega_z$. However, we need to show there are in fact uncountably
many extensions of $\omega_z$. Again, these can be produced by composing $\Omega_z$ with the gauge automorphisms $\widetilde\alpha_w$, $w\in\IT$.
All is left to do, therefore, is make sure the compositions $\Omega_z\circ\widetilde\alpha_w$ are all distinct as $w$ ranges in $\IT$.
Now  the equality $\Omega_z\circ\widetilde\alpha_w=\Omega\circ\textrm{Ad}(U_z)\circ\widetilde\alpha_w=\Omega\circ\widetilde\alpha_w\circ\textrm{Ad}(U_z)$, where we used the fact
that $\widetilde\alpha_w$ and $\textrm{Ad}(U_z)$ commute as $\widetilde\alpha_w\circ\textrm{Ad}(U_z)\circ\widetilde\alpha_w^{-1}=\textrm{Ad}(\widetilde\alpha_w(U_z))=\textrm{Ad}(U_z)$,
shows that $\Omega_z\circ\widetilde\alpha_w=\Omega_z\circ\widetilde\alpha_{w'}$ holds if and only if $\Omega\circ\widetilde\alpha_w=\Omega\circ\widetilde\alpha_{w'}$, which is possible only
when $w=w'$.
\end{proof}
At the other extreme, when $z$ is not a root of unity of order $(2^h-1)2^k$, the corresponding state $\omega_z$ has precisely
one extension instead.
\begin{proposition}
If $z\in\IN$ is not a root of unity of order $(2^h-1)2^k$ for any $h,k\in\IN$, then the pure state $\omega_z$ has a unique pure
extension to $\CQ_2$.
\end{proposition}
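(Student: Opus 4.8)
The plan is to compute an arbitrary extension $\Omega$ of $\omega_z$ on a dense $*$-subalgebra of $\CQ_2$, find that $\Omega$ is then completely determined except for a small pool of ``off-diagonal'' scalars, and show that for $z$ as in the statement a \emph{pure} $\Omega$ has no freedom left in choosing them. So first I would reduce the problem to a question about inner products. Let $\Omega$ be any extension of $\omega_z$ and let $(\CH_\Omega,\pi,\xi)$ be its GNS triple. Since $\pi(U)$ is unitary with $(\pi(U)\xi,\xi)=\Omega(U)=z$ and $\|\pi(U)\xi\|=\|\xi\|=1$, the equality case of Cauchy--Schwarz forces $\pi(U)\xi=z\xi$. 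Recalling from \cite{ACR} that $\CQ_2$ is the closed linear span of the monomials $U^{a}S_2^{p}(S_2^{*})^{q}U^{-b}$ ($a,b\in\IZ$, $p,q\geq 0$) --- a normal form coming from the covariance relations $S_2^{n}U=U^{2^{n}}S_2^{n}$ and $US_2^{*}=S_2^{*}U^{2}$, whence $U^{h}(S_2^{*})^{q}=(S_2^{*})^{q}U^{2^{q}h}$ --- the identity $\pi(U)\xi=z\xi$ gives
\[
\Omega\bigl(U^{a}S_2^{p}(S_2^{*})^{q}U^{-b}\bigr)=z^{\,a-b}\,\bigl((S_2^{*})^{q}\xi,\,(S_2^{*})^{p}\xi\bigr),
\]
so $\Omega$ is determined by the numbers $c_{p,q}\doteq\bigl((S_2^{*})^{q}\xi,(S_2^{*})^{p}\xi\bigr)$.

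Next I would evaluate the $c_{p,q}$. Put $\xi_{p}\doteq(S_2^{*})^{p}\xi$. Iterating $US_2^{*}=S_2^{*}U^{2}$ shows $\pi(U)\xi_{p}=z^{2^{p}}\xi_{p}$, while inserting a unit $\pi(U)$-eigenvector $v$ into $S_2S_2^{*}+US_2S_2^{*}U^{*}=1$ yields $\|S_2^{*}v\|^{2}+\|S_2^{*}U^{*}v\|^{2}=\|v\|^{2}$ with $\|S_2^{*}U^{*}v\|=\|S_2^{*}v\|$, hence $\|S_2^{*}v\|^{2}=\tfrac12\|v\|^{2}$; by induction $\|\xi_{p}\|^{2}=2^{-p}$, so $c_{p,p}=2^{-p}$. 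For $p\neq q$, $\xi_{p}$ and $\xi_{q}$ are $\pi(U)$-eigenvectors for $z^{2^{p}}$ and $z^{2^{q}}$, so $c_{p,q}=0$ whenever $z^{2^{p}}\neq z^{2^{q}}$. If $z$ is not a root of unity this holds for every $p\neq q$, so $c_{p,q}=2^{-p}\delta_{p,q}$ and the displayed formula determines $\Omega$ outright: $\omega_z$ then has a \emph{unique} extension, in particular a unique pure one. If $z$ is a root of unity, its order is $N=2^{c}m$ with $m$ odd, $m>1$ and $m\neq 2^{h}-1$ (the power-of-two orders being exactly those excluded by hypothesis, and already treated); here $z^{2^{p}}=z^{2^{q}}$ precisely when $p,q\geq c$ and $d_{0}\mid q-p$, where $d_{0}=\mathrm{ord}_{m}(2)$. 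Setting $u\doteq(S_2^{*})^{c}\xi$, a nonzero $\pi(U)$-eigenvector for the primitive $m$-th root $\zeta\doteq z^{2^{c}}$, the only $c_{p,q}$ not already pinned down are those of the shape $\bigl((S_2^{*})^{\ell d_{0}}w,w\bigr)$ with $\ell\geq1$ and $w=(S_2^{*})^{j}u$ an eigenvector for a primitive $m$-th root; so the whole statement reduces to showing these all vanish when $\Omega$ is pure.

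This last step is where the arithmetic hypothesis $m\neq 2^{h}-1$ must enter, and it is the main obstacle. Since $\eta^{2^{d_{0}}}=\eta$ for every primitive $m$-th root $\eta$, the operator $A\doteq 2^{d_{0}/2}(S_2^{*})^{d_{0}}$ maps the $\pi(U)$-eigenspace $E_{\eta}=\ker(\pi(U)-\eta)$ into itself and is norm-preserving there, and the quantities to be killed are, up to the factor $2^{-\ell d_{0}/2}$, the numbers $(A^{\ell}w,w)$. The plan is to show that the relations of $\CQ_2$ together with irreducibility of $\pi_\Omega$ prevent $A$ from fixing a one-dimensional subspace of $E_{\eta}$: one tracks how $S_2^{d_{0}}$ moves the spectral subspaces of $\pi(U)$ attached to $U^{m(2^{d_{0}}-1)}$ and checks that the resonance needed for $(A^{\ell}w,w)\neq0$ is present exactly when $2^{d_{0}}-1=m$, i.e.\ $m=2^{h}-1$; for $m\neq2^{h}-1$ one obtains $(A^{\ell}w,w)=0$, all $c_{p,q}$ are forced, and $\Omega$ is unique. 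A more structural alternative, probably cleaner to carry out, is to factor the extension through the intermediate subalgebra $C^{*}(U,\CD_2)\subset\CQ_2$, which is isomorphic to the odometer crossed product $\CD_2\rtimes_{\Ad(U)}\IZ$ (a Bunce--Deddens-type algebra): one shows that $\omega_z$ extends uniquely to it in the spirit of \cite[Proposition 3.1]{CuntzAut}, and then --- this being the point where $m\neq2^{h}-1$ is used --- that this extension has a unique pure extension to $\CQ_2$, via the eigenvector/pureness argument applied to $S_2$.
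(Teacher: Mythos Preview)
Your reduction is correct and is essentially the paper's approach: pass to the GNS triple, use Cauchy--Schwarz to get $\pi(U)\xi=z\xi$, and evaluate $\Omega$ on a spanning set of monomials. Your eigenvector argument --- that $(S_2^{*})^{p}\xi$ is a $\pi(U)$-eigenvector for $z^{2^{p}}$, so $c_{p,q}=0$ whenever $z^{2^{p}}\neq z^{2^{q}}$ --- is a slightly cleaner route to the same vanishing the paper obtains by inserting $1=\sum_{i=0}^{2^{k}-1}U^{i}S_2^{k}(S_2^{*})^{k}U^{-i}$ and summing the resulting geometric series in $z^{2^{h}-1}$. Both arguments compute the same quantities and both in fact show that \emph{any} extension (not only a pure one) is uniquely determined.

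The one real issue is your reading of the hypothesis. In the paper ``root of unity of order $n$'' is used in the loose sense $z^{n}=1$ (compare the preceding proposition, whose proof begins with $z=1$), so the present hypothesis reads $z^{(2^{h}-1)2^{k}}\neq 1$ for all $h,k\geq 1$, and this already rules out \emph{every} root of unity: if $z$ has finite order $N=2^{c}m$ with $m$ odd, choose $k\geq c$ and $h$ a multiple of the order of $2$ modulo $m$ (or $h=1$ if $m=1$) to get $N\mid(2^{h}-1)2^{k}$. Consequently your non-root-of-unity paragraph already finishes the proof. Your final paragraph is attacking a vacuous case: the claim that the hypothesis leaves room for orders $2^{c}m$ with $m>1$ and $m\neq 2^{h}-1$ is a misreading, and the sketched arguments involving $A=2^{d_{0}/2}(S_2^{*})^{d_{0}}$ and the odometer crossed product --- which you do not complete --- are not needed for the statement. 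The genuinely undetermined root-of-unity cases are exactly those the paper flags in the remark after the proposition, and they are left open there as well.
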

\begin{proof}
Let $\Omega_z$ be an extension of $\omega_z$. We want to show that $\Omega_z$ is completely determined on a dense subset of $\CQ_2$. Thanks to \cite[Prop. 2.3]{ACR} it is enough to evaluate $\Omega_z$ on the elements of the form  $S_\alpha S_\beta^* U^h$. By using the GNS construction we may suppose that $\Omega_z(T)=(Tv,v)$, where $v$ is a vector such that $Uv=zv$.
First of all we have that $\Omega_z(S_2^k)=0$ for all $k\in\IN$. Indeed, we have $U^{2^k}(S_2^kv)=S_2^kUv=zS_2^kv$, while $U^{2^k}v=z^{2^k}v$. Under our assumptions $z^{2^k}\neq z$ and thus $S_2^kv$ and $v$ must be orthogonal. \\
We have that $\Omega_z(P_\alpha)=2^{-|\alpha|}$. This follows at once from the fact that $P_\alpha = U^iS_2^{|\alpha|}(S_2^*)^{|\alpha|}U^{-i}$ (for a certain $i$) and that $\sum_{i=0}^{2^k-1} U^iS_2^k(S_2^*)^kU^{-i}=1$.\\
Now let $h, k\in\IN$.
For the elements of the form $S_2^{h+k}(S_2^*)^k$ we have that 
\begin{align*}
\Omega_z(S_2^{h+k}(S_2^*)^k) & = (S_2^{h+k}(S_2^*)^k v,v) = (S_2^{k} v,v) -\sum_{i=1}^{2^k-1} (S_2^hU^iS_2^{k}(S_2^*)^k U^{-i}v,v)\\
& = -\sum_{i=1}^{2^k-1} \bar{z}^i(U^{2^hi}S_2^hS_2^{k}(S_2^*)^k v,v) = -\sum_{i=1}^{2^k-1} \bar{z}^i(S_2^hS_2^{k}(S_2^*)^k v,U^{-2^hi}v)\\
& = -\sum_{i=1}^{2^k-1} z^{2^hi-i}(S_2^hS_2^{k}(S_2^*)^k v,v)
\end{align*}
which leads to 
$$
(S_2^{h+k}(S_2^*)^k v,v) \left(\sum_{i=0}^{2^k-1} (z^{2^h-1})^i\right) =(S_2^{h+k}(S_2^*)^k v,v) \cdot \frac{1-z^{2^h2^k-2^k}}{1-z^{2^h-1}} =0
$$
Under our hypotheses it follows that  $\Omega_z(S_2^{h+k}(S_2^*)^k) = (S_2^{h+k}(S_2^*)^k v,v) =0$.\\
From this discussion we get that $\Omega_z(S_\alpha S_\beta^* U^h)=\delta_{|\alpha|,|\beta|}2^{-|\alpha|}z^h$ and we are done.
\end{proof}
Clearly, the case of a root of unity of order $(2^h-1)2^k$, for some $h$ and $k\in\IN$, is not covered by
the results above, although we would be inclined to believe $\omega_z$ does have a unique extension if $h\geq 2$.  
However, we plan to return to this problem elsewhere.

\section{Extendible quadratic permutation endomorphisms} \label{extendible}
This section aims to intertwine the analysis carried out in \cite{ACR}, where
we addressed the problem of extending endomorphisms of $\CO_2$ to $\CQ_2$, with the present analysis.
Although a satisfactory answer to the problem is yet to come and might be elusive to get to, particular classes of automorphisms, such as  Bogolubov automorphism and localized diagonal automorphisms, 
have been examined thoroughly.
Given a permutative representation $\rho$ of $\CQ_2$ and a permutative endomorphism $\lambda$ of $\CO_2$, the composition
$\rho\upharpoonright_{\CO_2}\circ\lambda$ is still  a permutative representation of $\CO_2$. Accordingly, one might ask whether it extends.
Furthermore, one may want to go so far as to ask that the extension be of the form $\rho\circ\tilde\lambda$, where
$\tilde\lambda$ is an endomorphism of $\CQ_2$ that extends $\lambda$.
In the sequel, we shall be dealing with the case in which $\rho=\rho_c$ is the canonical representation and $\lambda$ is a so-called quadratic
permutative endomorphism of $\CO_2$, that is an endomorphism induced by a permutation matrix in $\CF_2^2\cong \textrm{M}_4(\IC)$.
In this case, the extension $\rho_c\circ\tilde\lambda$ is automatically permutative when it exists. 
The quadratic permutative endomorphisms include the canonical endomorphism $\varphi$ of $\CO_2$ and the flip-flop $\lambda_f$. Both of them extend to $\CQ_2$, as proved in \cite{ACR}. More interestingly, this family of endomorphisms offers a bunch of novel examples of endomorphisms that do
extend to $\CQ_2$, although the list of the extendible endomorphisms is still rather limited.
It turns out that an effective way to prove that the classes of these endomorphisms are in fact distinct is to resort to permutative representations of $\CQ_2$. For example, we 
will show that the representations obtained by composing 
the canonical representation with two of the above new endomorphisms are inequivalent.

\medskip
We adopt the same notation as in \cite{CS09}, where the monomial $s_is_js_k^*$ is denoted by  $s_{ij,k}$. The following table displays not only the definitions of the endomorphisms  we are going to consider 
but also  says in advance which ones extend and which ones do not extend.

{\footnotesize
\[
\begin{array}{lcccc}
\\
\hline  \rho_{\sigma}& \rho_{\sigma}(s_{1})& \rho_{\sigma}(s_{2})& Extendible? & \rho_{\sigma}(u) \\
\hline
\rho_{id} = {\rm id} & s_{1} & s_{2}& {\rm Yes} & u\\
\rho_{12}   &s_{12,1}+s_{11,2}& s_{2}&  {\rm No} & \\
\rho_{13}   &s_{21,1}+s_{12,2}& s_{11,1}+s_{22,2} & {\rm No} &  \\
\rho_{14}   &s_{22,1}+s_{12,2}& s_{21,1}+s_{11,2}&  {\rm Yes} & u^{-2}\\
\rho_{23} (= \varphi)  &s_{11,1}+s_{21,2}& s_{12,1}+s_{22,2}&  {\rm Yes} & u^{2}\\
\rho_{24}   &s_{11,1}+s_{22,2}&s_{21,1}+s_{12,2}& {\rm No} & \\
\rho_{34}   &s_{1}& s_{22,1}+s_{21,2}&  {\rm No} &\\
\rho_{123} &s_{12,1}+s_{21,2}& s_{11,1}+s_{22,2}&  {\rm Yes} & u^2s_2s_2^*+u^{-2}s_1s_1^*\\
\rho_{132} &s_{21,1}+s_{11,2}& s_{12,1}+s_{22,2}& {\rm No} & \\
\rho_{124} &s_{12,1}+s_{22,2}& s_{21,1}+s_{11,2}& {\rm No} & \\
\rho_{142} &s_{22,1}+s_{11,2}& s_{21,1}+s_{12,2}& {\rm No} & \\
\rho_{134} (\simeq \rho_{142}) &s_{21,1}+s_{12,2}& s_{22,1}+s_{11,2}& {\rm No} & \\
\rho_{143} &s_{22,1}+s_{12,2}& s_{11,1}+s_{21,2}& {\rm No} & \\
\rho_{234} &s_{11,1}+s_{21,2}& s_{22,1}+s_{12,2}& {\rm No} & \\
\rho_{243} (\simeq \rho_{123}) &s_{11,1}+s_{22,2}& s_{12,1}+s_{21,2}& {\rm Yes} & u^{-2}s_2s_2^*+u^{2}s_1s_1^*\\
\rho_{1234} (\simeq \rho_{24}) &s_{12,1}+s_{21,2}& s_{22,1}+s_{11,2}& {\rm No} & \\
\rho_{1243} &s_{12,1}+s_{22,2}& s_{11,1}+s_{21,2}& {\rm Yes} & u^{-2}\\
\rho_{1324} (\simeq \rho_{12}) &s_{2}& s_{12,1}+s_{11,2}& {\rm No} & \\
\rho_{1342} &s_{21,1}+s_{11,2}& s_{22,1}+s_{12,2}& {\rm Yes} & u^2 \\
\rho_{1423} (\simeq \rho_{34}) &s_{22,1}+s_{21,2}& s_{1}& {\rm No} & \\
\rho_{1432} (\simeq \rho_{13}) &s_{22,1}+s_{11,2}& s_{12,1}+s_{21,2}& {\rm No} & \\
\rho_{(12)(34)} (\simeq \rho_{(13)(24)}) &s_{12,1}+s_{11,2}& s_{22,1}+s_{21,2}& {\rm Yes} & fu^* f\\
\rho_{(13)(24)}=\lambda_f &s_{2}& s_{1}&  {\rm Yes} & u^*\\
\rho_{(14)(23)} (\simeq {\rm id}) &s_{22,1}+s_{21,2}& s_{12,1}+s_{11,2}& {\rm Yes} & fuf\\
\hline
\end{array}
\]
}

We start with $\rho_{12}$. 
Note that $\rho_{12}(S_1)=S_1f$, where $f=S_1S_2^* +S_2S_1^*\in \CU(\CO_2)$. This observation will be crucial in the proof of the following result.

\begin{proposition}
The endomorphism $\rho_{12}$ does not extend.
\end{proposition}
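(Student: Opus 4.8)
The plan is to argue by contradiction, starting from the universal property of $\CQ_2$. Suppose $\rho_{12}$ extended to an endomorphism $\widetilde\rho_{12}$ of $\CQ_2$ and set $W:=\widetilde\rho_{12}(U)$, a unitary of $\CQ_2$. Since $\widetilde\rho_{12}(S_2)=S_2$ and $\widetilde\rho_{12}(US_2)=\widetilde\rho_{12}(S_1)=\rho_{12}(S_1)=S_1f$, we get $WS_2=S_1f$. I will use the elementary facts, valid in $\CQ_2$, that the flip-flop $f=S_1S_2^*+S_2S_1^*$ satisfies $f=f^*$, $f^2=1$, $fS_1=S_2$, $fS_2=S_1$, $S_1^*f=S_2^*$, $S_2^*f=S_1^*$, and that $U=S_1S_2^*+S_2US_1^*$ (because $US_2=S_1$ and $US_1=S_2U$). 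Applying $\widetilde\rho_{12}$ to this last identity gives
$$W=S_1fS_2^*+S_2WfS_1^*.$$
In particular $S_2^*WS_2=S_2^*S_1f=0$, and substituting $S_2W=W^2S_2=W(WS_2)=WS_1f$ back into the relation $S_2W=W^2S_2$ also yields $S_1^*WS_1=0$; iterating the displayed identity $n$ times gives, for every $n\geq1$,
$$W=\sum_{k=0}^{n-1}S_2^k\,S_1f\,S_2^*\,(fS_1^*)^k+S_2^nW(fS_1^*)^n.$$

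The core of the argument is then to transfer $W$ into the canonical representation $\rho_c$ and show it cannot live there. In $\rho_c$ the identities $\rho_c(W)\rho_c(S_2)=\rho_c(S_1f)$ and $\rho_c(S_2)\rho_c(W)=\rho_c(W)^2\rho_c(S_2)$ force $\rho_c(W)$ to be the permutation unitary $W_\tau$ of a bijection $\tau$ of $\IZ$ that is \emph{completely determined}: from the first identity one reads off $\tau(j)=j+3$ for $j\equiv0\ (\mathrm{mod}\ 4)$ and $\tau(j)=j-1$ for $j\equiv2\ (\mathrm{mod}\ 4)$, so that $\tau$ maps the even integers bijectively onto the odd integers; the second identity then says $\tau(\tau(2k))=2\tau(k)$ for all $k$, which—since $\{\tau(2k):k\in\IZ\}$ is exactly the set of odd integers—determines $\tau$ on the odd integers recursively. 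Carrying out this recursion, one finds $\tau(1)=0$ and, for $j$ with $2$-adic valuation $v_2(j-1)=n$, that $\tau(j)=j+(\pm 2^{n+1}-1)$, the sign depending on the class of $(j-1)/2^{n}$ modulo $4$. Hence, on every neighbourhood of $1$ in $\IZ$, $\tau$ restricts to infinitely many distinct translations.

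Finally one invokes a structural property of $\rho_c(\CQ_2)$: every unitary of $\rho_c(\CQ_2)$ that permutes the canonical orthonormal basis of $\ell^2(\IZ)$ acts, on some fine enough $2$-adic partition of $\IZ$, as a single translation on each piece. Indeed, such a unitary is, up to norm $<1$, a finite linear combination of the partial affine maps implemented by the monomials $\rho_c(S_\alpha S_\beta^* U^h)$ ($\alpha,\beta\in W_2$, $h\in\IZ$), and the requirement that the combination approximate a genuine basis permutation forces the "$|\alpha|=|\beta|$" (i.e.\ translation) pieces to carry it on a fixed finite partition. Since the bijection $\tau$ above violates this at the point $1$, we conclude $W_\tau\notin\rho_c(\CQ_2)$; as $\rho_c$ is faithful this contradicts $W\in\CQ_2$, so no extension $\widetilde\rho_{12}$ exists. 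The routine part is everything up to the description of $\tau$; the main obstacle is the last paragraph—pinning down the normaliser of the atomic MASA inside $\rho_c(\CQ_2)$ precisely enough to see that the $2$-adically "self-similar near $1$" map $\tau$ cannot belong to it. (An alternative, in the spirit of the rest of the section, is to observe that $\pi_c\circ\rho_{12}$ has $e_1$ fixed by $\pi_c(\rho_{12}(S_1))$, hence a $P(1)$-subrepresentation, so that $\rho_c\circ\widetilde\rho_{12}$ would contain a copy of the canonical representation and, by Theorem~\ref{uniext}, be determined up to equivalence; but turning this into a contradiction still seems to require the $2$-adic analysis above.)
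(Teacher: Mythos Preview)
Your approach diverges sharply from the paper's, and the divergence exposes a real gap.

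The paper's argument is short and algebraic. Working in the canonical representation, it first notes that any endomorphism extension of $\rho_{12}$ is unique (e.g.\ because $\rho_{12}(S_1),\rho_{12}(S_2)$ are pure in the interval picture), hence $\tilde U:=\widetilde\rho_{12}(U)$ is gauge-invariant. From $\tilde S_1=S_1f$ and $\tilde S_2=S_2$ one gets $S_2^k\tilde U=\tilde U(S_1f)^k=\tilde U\,US_2^kf$, so $\tilde U=(S_2^*)^k(\tilde U U)S_2^k\,f$ for every $k$. Since $\tilde U U\in\CQ_2^{\IT}$, a result from \cite{ACR} (their Proposition~3.18) says $(S_2^*)^k(\tilde U U)S_2^k\to c\in\IT$, whence $\tilde U=cf$; but $cf$ visibly fails $\tilde U S_2=S_1f$, and that is the contradiction.

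Your route---compute the induced permutation $\tau$ on $\IZ$ and argue it is too ``$2$-adically wild'' to lie in $\rho_c(\CQ_2)$---has two problems. The minor one: the relations $WS_2=S_1f$ and $S_2W=W^2S_2$ in $\rho_c$ do \emph{not} force $W$ to be a permutation unitary. Your own recursion, applied at $k=1$, gives $We_1=S_2We_1$, which only says $We_1=\lambda e_0$ for some $\lambda\in\IT$; the representation extension of $\pi_c\circ\rho_{12}$ is a genuine $\IT$-family here (both Wold spaces are one-dimensional), so ``$\tau(1)=0$'' is an assumption, not a consequence. The major one is the step you yourself flag as the obstacle: the claim that every basis-permuting unitary in $\rho_c(\CQ_2)$ acts as a single translation on each piece of some finite $2$-adic partition. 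This amounts to describing the normalizer of the atomic MASA inside $\rho_c(\CQ_2)$, and you give no argument for it beyond ``approximate by monomials and observe that only the $|\alpha|=|\beta|$ pieces can survive''---which is exactly the hard part, since cancellations among monomials of different shape are what must be ruled out. Without this, the proof does not close.

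In short: the paper sidesteps the whole combinatorial analysis by squeezing $\tilde U$ between gauge invariance and the scalar-limit lemma of \cite{ACR}, landing directly on $\tilde U=cf$. Your argument, as written, leaves the decisive structural step unproven.
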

\begin{proof}
First of all we identify $\CQ_2$ with the image of the canonical representation $\rho_c: \CQ_2\to \CB(\ell^2(\IZ))$.
We set $\tilde S_i = \rho_{12}(S_i)$ and we denote by $\tilde U$ the element in $\CB(\ell^2(\IZ))$ that extends the representation $\rho_c\circ \rho_{12}$.
It is easy to see that if $\rho_{12}$ is extendible, then the extension is unique. This implies that $\tilde U\in \CQ_2^\IT$.
Since $S_2^k \tilde U = \tilde U \tilde S_1^k$ for all $k\in\IN$, we have that
$$
S_2^k \tilde U = \tilde U  (S_1f)^k = \tilde U  S_1 S_2^{k-1}f=\tilde U  U S_2^{k}f
$$
and thus $\tilde U = (S_2^*)^k \tilde U U S_2^k f$. 
From \cite[Proposition 3.18]{ACR} we know that the following limit exists 
$$
\lim_k (S_2^*)^k \tilde U U S_2^k = c_{\tilde U U}\in \IT
$$
Therefore, we have $\tilde U =  c_{\tilde U U} f$. It is easy to see that this unitary does not satisfy the defining relations of $\CQ_2$.
 \end{proof}

\begin{proposition}
The endomorphisms $\rho_{34}$, $\rho_{1324}$ and $\rho_{1423}$ do not extend. 
\end{proposition}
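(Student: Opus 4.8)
The plan is to reduce all three non-extendibility statements to the previous proposition, which asserts that $\rho_{12}$ does not extend, by composing (and conjugating) with the flip-flop $\lambda_f$. The point is that $\lambda_f$ \emph{does} extend: it is the restriction to $\CO_2$ of an automorphism $\widetilde\lambda_f$ of $\CQ_2$ with $\widetilde\lambda_f(U)=U^*$, see \cite{ACR} (and the corresponding row of the table); for our purposes it is enough that $\widetilde\lambda_f$ is an endomorphism of $\CQ_2$ restricting on $\CO_2$ to $\lambda_f$. Recall also that $f=S_1S_2^*+S_2S_1^*\in\CU(\CO_2)$ satisfies $f^2=1$, $\lambda_f(f)=f$, and $\lambda_f^2=\id_{\CO_2}$.

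First I would record three identities in $\End(\CO_2)$, each checked by evaluating both sides on the generators $S_1,S_2$ and reading off the relevant rows of the table (recall $\rho_{12}(S_1)=S_1f$, $\rho_{12}(S_2)=S_2$, $\rho_{34}(S_1)=S_1$, $\rho_{34}(S_2)=S_2f$, and $\lambda_f(f)=f$):
\[
\rho_{34}=\lambda_f\circ\rho_{12}\circ\lambda_f,\qquad
\rho_{1324}=\rho_{12}\circ\lambda_f,\qquad
\rho_{1423}=\rho_{34}\circ\lambda_f .
\]
Using $\lambda_f^2=\id_{\CO_2}$ these rearrange to $\lambda_f\circ\rho_{34}\circ\lambda_f=\rho_{12}$, $\rho_{1324}\circ\lambda_f=\rho_{12}$ and $\rho_{1423}\circ\lambda_f=\rho_{34}$.

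The reduction itself is soft. If an endomorphism $\mu$ of $\CO_2$ admits an extension $\widetilde\mu$ to $\CQ_2$, then $\widetilde\mu\circ\widetilde\lambda_f$ and $\widetilde\lambda_f\circ\widetilde\mu\circ\widetilde\lambda_f$ are endomorphisms of $\CQ_2$, and since $\widetilde\lambda_f$ and $\widetilde\mu$ both map $\CO_2$ into $\CO_2$ and restrict there to $\lambda_f$ and $\mu$, their restrictions to $\CO_2$ are $\mu\circ\lambda_f$ and $\lambda_f\circ\mu\circ\lambda_f$ respectively. Consequently, if $\rho_{34}$ extended to some $\widetilde\rho_{34}$, then $\widetilde\lambda_f\circ\widetilde\rho_{34}\circ\widetilde\lambda_f$ would be an extension of $\lambda_f\circ\rho_{34}\circ\lambda_f=\rho_{12}$, contradicting the previous proposition; hence $\rho_{34}$ does not extend. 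Likewise, an extension of $\rho_{1324}$ would produce, after composing with $\widetilde\lambda_f$, an extension of $\rho_{1324}\circ\lambda_f=\rho_{12}$, which is impossible; and an extension of $\rho_{1423}$ would produce an extension of $\rho_{1423}\circ\lambda_f=\rho_{34}$, contradicting what was just shown.

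I do not foresee any real obstacle: the whole argument rests on the observation that extendibility to $\CQ_2$ is stable under pre- and post-composition with the extendible involution $\lambda_f$ (each of $\rho_{34}$, $\rho_{1324}$, $\rho_{1423}$ is of the shape $w\circ\rho_{12}\circ v$ with $w,v\in\{\id_{\CO_2},\lambda_f\}$, so $\rho_{12}$ is recovered from any of them by a further pre/post-composition with $\lambda_f$). The only things needing (routine) verification are therefore the three table identities and the already-known fact that $\lambda_f$ extends to $\CQ_2$.
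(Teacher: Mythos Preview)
Your argument is correct and matches the paper's approach: for $\rho_{34}$ you use the same identity $\rho_{34}=\lambda_f\circ\rho_{12}\circ\lambda_f$, and for $\rho_{1324}$, $\rho_{1423}$ you reduce to $\rho_{12}$ and $\rho_{34}$ via composition with the extendible flip-flop, whereas the paper simply cites their unitary equivalence to $\rho_{12}$ and $\rho_{34}$ from \cite{CS09}. Both routes rest on the same observation that extendibility is preserved under pre- and post-composition with extendible automorphisms, so the difference is purely cosmetic.
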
 
\begin{proof}
As for the first, it is enough to rewrite it as $\rho_{34}=\lambda_f\circ \rho_{12}\circ \lambda_f$, whereas the last two are unitarily equivalent to $\rho_{12}$ and $\rho_{34}$ respectively, as pointed out in \cite{CS09}.
\end{proof}
\begin{proposition}
The endomorphism $\rho_{13}$ does not extend. 
\end{proposition}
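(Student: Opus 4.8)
The plan is to derive the non-extendibility of $\rho_{13}$ from a purely representation-theoretic obstruction. Suppose, towards a contradiction, that $\rho_{13}$ admits an extension $\widetilde{\rho_{13}}\colon\CQ_2\to\CQ_2$. Composing with the canonical representation $\rho_c$ of $\CQ_2$ on $\ell^2(\IZ)$ produces a representation $\rho_c\circ\widetilde{\rho_{13}}$ of $\CQ_2$ on $\ell^2(\IZ)$ whose restriction to $\CO_2\subset\CQ_2$ is $\rho_c\circ\rho_{13}=\pi_c\circ\rho_{13}$, where $\pi_c=\rho_c\upharpoonright_{\CO_2}$. Thus it suffices to show that the representation $\pi_c\circ\rho_{13}$ of $\CO_2$ does \emph{not} extend to a representation of $\CQ_2$ on the same Hilbert space. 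By the theorem of Larsen and Li \cite{LarsenLi}, the latter is equivalent to the Wold unitaries of $\pi_c(\rho_{13}(S_1))$ and $\pi_c(\rho_{13}(S_2))$ being unitarily equivalent, so I only need to exhibit an asymmetry between these two isometries; concretely, I will show that $\pi_c(\rho_{13}(S_1))$ is pure while $\pi_c(\rho_{13}(S_2))$ is not.

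To see this, identify $\CO_2\subset\CQ_2$ with $\rho_c(\CO_2)$, so that $S_1e_k=e_{2k+1}$, $S_2e_k=e_{2k}$ and $Ue_k=e_{k+1}$ for $k\in\IZ$. Writing out $\rho_{13}(S_1)=S_2S_1S_1^*+S_1S_2S_2^*$ and $\rho_{13}(S_2)=S_1S_1S_1^*+S_2S_2S_2^*$ and evaluating on the canonical basis (a short case analysis on the parity of $k$) gives $\rho_{13}(S_1)=S_{\sigma_1}$ and $\rho_{13}(S_2)=S_{\sigma_2}$ for the injections $\sigma_1,\sigma_2\colon\IZ\to\IZ$ defined by $\sigma_1(m)=2m$ if $m$ is odd and $\sigma_1(m)=2m+1$ if $m$ is even, and $\sigma_2(m)=2m+1$ if $m$ is odd and $\sigma_2(m)=2m$ if $m$ is even. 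Since $0$ is even, $\sigma_2(0)=0$; hence $\rho_{13}(S_2)^ne_0=e_0$ for all $n$, so $e_0\in\bigcap_{n}{\rm Ran}(\rho_{13}(S_2)^n)$ and $\rho_{13}(S_2)$ is not pure. For $\sigma_1$ one has $|\sigma_1(m)|\geq 2|m|-1>|m|$ as soon as $|m|\geq 2$, and the orbits of $-1,0,1$ land in $\{\,m : |m|\geq 2\,\}$ after at most two steps; consequently every forward $\sigma_1$-orbit is unbounded, $\bigcap_n\sigma_1^n(\IZ)=\emptyset$, and $\rho_{13}(S_1)$ is pure by (the evident $\ell^2(\IZ)$-analogue of) Lemma \ref{isopower} together with Corollary \ref{cor-pureness}. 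Therefore $W(\pi_c(\rho_{13}(S_1)))$ acts on the zero subspace whereas $W(\pi_c(\rho_{13}(S_2)))$ acts on a non-zero one, so they cannot be unitarily equivalent; by Larsen--Li $\pi_c\circ\rho_{13}$ has no extension, and by the reduction above neither does $\rho_{13}$.

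The only real work is the parity bookkeeping that produces $\sigma_1$ and $\sigma_2$ and the (easy) verification that all forward $\sigma_1$-orbits escape to infinity; everything else is immediate. It may be worth contrasting this with $\rho_{12}$: there the induced representation $\pi_c\circ\rho_{12}$ actually \emph{does} extend --- both $\pi_c(\rho_{12}(S_1))$ and $\pi_c(\rho_{12}(S_2))$ have a one-dimensional unitary part, so their Wold unitaries agree --- and the obstruction is invisible in the canonical representation, which forced the use of the limit formula of \cite{ACR}; for $\rho_{13}$ the two branching systems are already asymmetric enough to block extension at the level of representations. Finally, since $\rho_{13}\circ\lambda_f=\rho_{24}$ and the flip-flop $\lambda_f$ does extend to $\CQ_2$, the same conclusion transfers verbatim to $\rho_{24}$.
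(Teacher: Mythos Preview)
Your proof is correct and follows essentially the same route as the paper's: both arguments work in the canonical representation, exhibit an asymmetry between the two images $\pi_c(\rho_{13}(S_1))$ and $\pi_c(\rho_{13}(S_2))$, and conclude via Larsen--Li that $\pi_c\circ\rho_{13}$ (hence $\rho_{13}$) does not extend. The only cosmetic difference is the choice of invariant: the paper records that $\sigma_p(\pi_c(\rho_{13}(S_1)))=\emptyset$ while $\sigma_p(\pi_c(\rho_{13}(S_2)))=\{1\}$ with eigenspace $\mathrm{span}\{e_{-1},e_0\}$, whereas you show directly that $\pi_c(\rho_{13}(S_1))$ is pure and $\pi_c(\rho_{13}(S_2))$ is not; either observation forces the Wold unitaries to live on subspaces of different dimensions and hence to be inequivalent.
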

\begin{proof}
By definition $\rho_{13}(S_1)\doteq S_2S_1S_1^*+S_1S_2S_2^*$, $\rho_{13}(S_2)\doteq S_1^2S_1^*+S_2^2S_2^*$. It is now a matter of straightforward computations that, in the canonical representation, the point spectrum
of $\rho_{13}(S_1)$ is empty whereas that of $\rho_{13}(S_2)$ is not, since it is the set $\{1\}$ with $\ker (\rho_{13}(S_2)-I)={\rm span}\{e_{-1}, e_0\}$.
 \end{proof}
\begin{proposition}
None of the endomorphisms $\rho_{1432}$, $\rho_{24}$, and $\rho_{1234}$, extend. 
\end{proposition}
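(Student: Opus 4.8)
The plan is to reduce all three cases to endomorphisms already shown not to extend — namely $\rho_{13}$ and $\rho_{24}$ — by means of two routine stability principles for extendibility. First, extendibility is invariant under unitary equivalence of endomorphisms: if $\phi_2 = \Ad(W)\circ\phi_1$ with $W\in\CU(\CO_2)$ and $\phi_1$ extends to an endomorphism $\widetilde{\phi_1}$ of $\CQ_2$, then $\Ad(W)\circ\widetilde{\phi_1}$ is an endomorphism of $\CQ_2$ restricting to $\phi_2$ on $\CO_2$; this is the same principle already used in the previous propositions for $\rho_{1324}$ and $\rho_{1423}$. Second, if $\phi$ and $\psi$ are endomorphisms of $\CO_2$ that both extend to $\CQ_2$, then so does $\phi\circ\psi$, obtained by composing the extensions.

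Next I would dispose of $\rho_{24}$. A direct check on the generators yields the identity $\rho_{24} = \rho_{13}\circ\lambda_f$: indeed $\rho_{24}(S_1) = S_1^2S_1^* + S_2^2S_2^* = \rho_{13}(S_2) = \rho_{13}(\lambda_f(S_1))$ and $\rho_{24}(S_2) = S_2S_1S_1^* + S_1S_2S_2^* = \rho_{13}(S_1) = \rho_{13}(\lambda_f(S_2))$, and hence also $\rho_{13} = \rho_{24}\circ\lambda_f$ because $\lambda_f^2 = \id$. Since the flip-flop $\lambda_f$ extends to $\CQ_2$ by \cite{ACR}, were $\rho_{24}$ to extend, the second principle would produce an extension of $\rho_{24}\circ\lambda_f = \rho_{13}$, contradicting the previous proposition; so $\rho_{24}$ does not extend. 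Equivalently one can argue exactly as for $\rho_{13}$: in the canonical representation $\rho_{24}(S_1)$ has eigenvalue $1$ with eigenvectors $e_{-1}$ and $e_0$, while $\rho_{24}(S_2)$ is implemented by an injection of $\IZ$ that strictly increases $|\,\cdot\,|$, hence has only infinite orbits, so $\sigma_p(\rho_{24}(S_2)) = \emptyset$ and the Wold unitaries of the two isometries cannot be unitarily equivalent; Larsen and Li's criterion then rules out any extension of $\rho_c\circ\rho_{24}$, and therefore of $\rho_{24}$.

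Finally, $\rho_{1432}\simeq\rho_{13}$ and $\rho_{1234}\simeq\rho_{24}$, as recorded in the table above and in \cite{CS09}; since neither $\rho_{13}$ nor $\rho_{24}$ extends, the first stability principle shows at once that neither $\rho_{1432}$ nor $\rho_{1234}$ extends, which completes the proof. I do not expect any genuine obstacle: the only points needing care are the algebraic identity $\rho_{24} = \rho_{13}\circ\lambda_f$ and the two stability principles, all elementary, while the substantive content — that $\rho_{13}$ fails to extend — is already in hand from the preceding proposition.
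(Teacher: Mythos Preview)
Your proof is correct and follows essentially the same approach as the paper's. The paper writes $\rho_{24}=\lambda_f\circ\rho_{13}\circ\lambda_f$ rather than your $\rho_{24}=\rho_{13}\circ\lambda_f$, but these coincide since $\lambda_f\circ\rho_{13}=\rho_{13}$ (the images $\rho_{13}(S_i)$ are flip-flop invariant); the reductions of $\rho_{1432}$ and $\rho_{1234}$ via unitary equivalence are identical to the paper's, and your optional direct spectral argument for $\rho_{24}$ is a correct bonus.
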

\begin{proof}
The first is unitarily equivalent to $\rho_{13}$, see \cite{CS09}. The second may be
rewritten as $\rho_{24}=\lambda_f \circ \rho_{13} \circ \lambda_f$. Finally, $\rho_{1234}$ does not extend, since it is unitarily equivalent to $\rho_{24}$. 
\end{proof}

\begin{proposition}
None of endomorphisms $\rho_{132}$, $\rho_{234}$, $\rho_{124}$ and $\rho_{143}$  extend. 
\end{proposition}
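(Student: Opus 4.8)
The plan is to reduce all four endomorphisms to $\rho_{132}$ by means of the flip-flop automorphism, and then to dispose of $\rho_{132}$ by a point-spectrum computation in the canonical representation, of the same flavour as the one already used for $\rho_{13}$. First I would record the identities $\rho_{124}=\rho_{132}\circ\lambda_f$, $\rho_{143}=\lambda_f\circ\rho_{132}$ and $\rho_{234}=\lambda_f\circ\rho_{132}\circ\lambda_f$, each of which is verified in one line on the generators $S_1,S_2$ (applying $\lambda_f$ merely interchanges the two letters in every monomial $s_{ij,k}$). Since $\lambda_f$ is an involution of $\CO_2$ that extends to $\CQ_2$, see \cite{ACR}, and a composition of extendible endomorphisms is again extendible, each of $\rho_{124}$, $\rho_{143}$, $\rho_{234}$ extends to $\CQ_2$ if and only if $\rho_{132}$ does. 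Hence everything comes down to showing that $\rho_{132}$ does not extend.

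For this I would pass to the canonical representation, identifying $\CQ_2$ with $\rho_c(\CQ_2)\subset\CB(\ell^2(\IZ))$, and write $\rho_{132}(S_1)=S_2S_1S_1^*+S_1^2S_2^*$ and $\rho_{132}(S_2)=S_1S_2S_1^*+S_2^2S_2^*$. Using $\rho_c(S_2)e_k=e_{2k}$ and $\rho_c(S_1)e_k=e_{2k+1}$, one computes the actions on the canonical basis:
\begin{align*}
\rho_{132}(S_1)e_{2k+1} & = e_{4k+2}, & \rho_{132}(S_1)e_{2k} & = e_{4k+3}, \\
\rho_{132}(S_2)e_{2k+1} & = e_{4k+1}, & \rho_{132}(S_2)e_{2k} & = e_{4k},
\end{align*}
for every $k\in\IZ$. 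The first pair shows that $\rho_{132}(S_1)$ interchanges $e_{-1}$ and $e_{-2}$, so that $\{1,-1\}\subseteq\sigma_p(\rho_{132}(S_1))$; the second pair shows that $\rho_{132}(S_2)$ fixes $e_0$ and $e_1$, while its underlying index map ($n\mapsto 2n-1$ for $n$ odd, $n\mapsto 2n$ for $n$ even) is strictly increasing for $n\geq 2$ and strictly decreasing for $n\leq -1$, hence orbit-escaping away from these two fixed points. Invoking the propositions on $\sigma_p(S_f)$ from Section~\ref{prel}, this gives $\sigma_p(\rho_{132}(S_1))=\{z:z^2=1\}$ and $\sigma_p(\rho_{132}(S_2))=\{1\}$. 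Since the point spectra differ, the Wold unitaries of $\rho_{132}(S_1)$ and $\rho_{132}(S_2)$ cannot be unitarily equivalent, so by the theorem of Larsen and Li \cite{LarsenLi} the representation $\rho_c\circ\rho_{132}$ of $\CO_2$ does not extend to $\CQ_2$; a fortiori $\rho_{132}$ does not extend, which finishes the argument.

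The computations involved are all elementary; the only step that needs genuine care — and hence the main obstacle — is the verification that the index map of $\rho_{132}(S_2)$ has no finite orbit other than the two singletons $\{0\}$ and $\{1\}$, so that its point spectrum is exactly $\{1\}$ and not something larger, since it is precisely the presence of the eigenvalue $-1$ for $\rho_{132}(S_1)$ together with its absence for $\rho_{132}(S_2)$ that forces the nonextension. As a cross-check one may also observe that $\{\rho_{124}(S_1),\rho_{124}(S_2)\}=\{\rho_{132}(S_2),\rho_{132}(S_1)\}$, so the same two computations settle $\rho_{124}$ directly; the cases $\rho_{143}$ and $\rho_{234}$ then follow either from the reduction via $\lambda_f$ above or, equally well, by re-running the point-spectrum computation for the flip-flopped isometries.
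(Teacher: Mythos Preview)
Your proposal is correct and follows essentially the same strategy as the paper: a point-spectrum computation in the canonical representation for $\rho_{132}$, together with flip-flop reductions for the other three endomorphisms. Your flip-flop identities $\rho_{124}=\rho_{132}\circ\lambda_f$, $\rho_{143}=\lambda_f\circ\rho_{132}$, $\rho_{234}=\lambda_f\circ\rho_{132}\circ\lambda_f$ are equivalent to those in the paper (which routes through $\rho_{234}$ rather than $\rho_{132}$).

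In fact your spectral computation is sharper than the paper's. The paper asserts $\ker(\rho_{132}(S_1)-I)=\{0\}$, but as you observe, $\rho_{132}(S_1)$ swaps $e_{-1}$ and $e_{-2}$, so $e_{-1}+e_{-2}$ is a non-zero fixed vector and the kernel is one-dimensional. Your version---$\sigma_p(\rho_{132}(S_1))=\{1,-1\}$ versus $\sigma_p(\rho_{132}(S_2))=\{1\}$---is the accurate statement; either the presence of the eigenvalue $-1$ for $\rho_{132}(S_1)$ only, or the mismatch of multiplicities of $1$ (one versus two), suffices to separate the Wold parts and invoke Larsen--Li. The verification that the index map of $\rho_{132}(S_2)$ has no finite orbits besides $\{0\}$ and $\{1\}$ is, as you say, the only place requiring care, and your monotonicity argument handles it cleanly.
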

\begin{proof}
By definition $\rho_{132}(S_1)\doteq S_2S_1S_1^*+S_1^2S_2^*$, $\rho_{132}(S_2)\doteq S_1S_2S_1^*+S_2^2S_2^*$. The conclusion is immediately arrived at by noting that $\ker (\rho_{132}(S_2)-I)={\rm span}\{e_{0}, e_1\}$ whilst $\ker (\rho_{132}(S_1)-I)=\{0\}$.  The endomorphism $\rho_{234}$ is not extendible because $\rho_{132}=\lambda_f\circ \rho_{234}\circ\lambda_f$.  Finally, being $\rho_{124}=\lambda_f\circ \rho_{234}$ and $\rho_{143}=\rho_{124}\circ\lambda_f$, $\rho_{124}$ and $\rho_{143}$ are not extendible either.
 \end{proof}

The next proposition recalls those endomorphisms that we already know do extend, without us needing to give any explanation, since the missing details are to be found in \cite{ACR}.

\begin{proposition}
The automorphisms $\rho_{(13)(24)}=\lambda_f$, $\rho_{(14)(23)}=\Ad(f)$, $\rho_{(12)(34)}=\Ad(f)\circ\lambda_f$ all extend, as does the canonical
endomorphism $\varphi=\rho_{23}$. Moreover, they send $U$ to $U^*$, $fUf$, $fU^* f$ and $U^2$, respectively.
\end{proposition}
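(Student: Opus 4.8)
The plan is to exhibit each extension by an explicit formula on $\CQ_2$ and to verify the two defining relations of $\CQ_2$ by direct computation; nothing beyond these relations is needed, and two of the four maps extend for purely formal reasons.

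I would begin with the canonical endomorphism $\varphi=\rho_{23}$. Since $S_1=US_2$ and $S_2$ are isometries of $\CQ_2$ with $S_1S_1^*+S_2S_2^*=1$, the assignment $\widetilde\varphi(x)\doteq S_1xS_1^*+S_2xS_2^*$ is automatically a unital $^*$-endomorphism of $\CQ_2$, and on $\CO_2$ it coincides with $\varphi$ by the very definition of the latter. It then remains only to evaluate $\widetilde\varphi(U)$: from $S_2U=U^2S_2$ one gets $S_2US_2^*=U^2S_2S_2^*$, whence $S_1US_1^*=US_2US_2^*U^*=U^3S_2S_2^*U^*=U^2(US_2S_2^*U^*)=U^2(1-S_2S_2^*)$, the last step being the second defining relation of $\CQ_2$; adding the two terms yields $\widetilde\varphi(U)=U^2$. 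Likewise, $f=S_1S_2^*+S_2S_1^*$ lies in $\CU(\CO_2)\subset\CU(\CQ_2)$ and is self-adjoint since $f^2=1$, so $\Ad(f)$ is already an inner automorphism of $\CQ_2$ whose restriction to $\CO_2$ is $\Ad(f)$ and which sends $U$ to $fUf$.

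For the flip-flop $\lambda_f$ the plan is to invoke the universal property of $\CQ_2$: there is a (unique) $^*$-homomorphism $\widetilde\lambda_f\colon\CQ_2\to\CQ_2$ with $\widetilde\lambda_f(U)\doteq U^*$ and $\widetilde\lambda_f(S_2)\doteq US_2$ provided the pair $(US_2,U^*)$ satisfies the two relations. The second relation, $US_2S_2^*U^*+U^*(US_2S_2^*U^*)U=US_2S_2^*U^*+S_2S_2^*=1$, is nothing but $S_2S_2^*+US_2S_2^*U^*=1$; the first relation, $(US_2)U^*=(U^*)^2(US_2)$, reduces to $US_2U^*=U^*S_2$ and hence, equivalently, to $U^2S_2=S_2U$. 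Since $\widetilde\lambda_f(S_1)=\widetilde\lambda_f(U)\widetilde\lambda_f(S_2)=U^*US_2=S_2$, the homomorphism $\widetilde\lambda_f$ restricts to $\lambda_f$ on $\CO_2$, and $\widetilde\lambda_f^2=\id$ shows it to be an automorphism. Finally $\Ad(f)\circ\widetilde\lambda_f$ is an automorphism of $\CQ_2$ that extends $\Ad(f)\circ\lambda_f=\rho_{(12)(34)}$ and carries $U$ to $f\,\widetilde\lambda_f(U)\,f=fU^*f$.

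I do not anticipate any genuine obstacle; the argument is a chain of short identities in $\CQ_2$. The only things that call for a little care are (i) confirming that each candidate map defined on $\CQ_2$ truly restricts to the named endomorphism of $\CO_2$ --- which is why one records the value $\widetilde\lambda_f(S_1)=S_2$ and observes that $\widetilde\varphi$ is given on $\CO_2$ by the usual Cuntz-type formula --- and (ii) deploying the two relations $S_2U=U^2S_2$ and $US_2S_2^*U^*=1-S_2S_2^*$ at precisely the right points when computing the image of $U$.
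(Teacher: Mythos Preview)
Your argument is correct. Each of the four cases is handled cleanly: the canonical endomorphism and $\Ad(f)$ extend for structural reasons (the former because the formula $x\mapsto S_1xS_1^*+S_2xS_2^*$ makes sense on all of $\CQ_2$, the latter because $f$ already sits in $\CU(\CQ_2)$), while the flip-flop is obtained by checking the universal relations for the pair $(US_2,U^*)$, and $\rho_{(12)(34)}$ then follows by composition. The computations of the images of $U$ are all accurate.

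The paper, for its part, offers no proof here at all: it simply records these four cases as already known and refers the reader to \cite{ACR} for the details. Your write-up is therefore more self-contained than the paper's treatment. The one thing you might add, in the spirit of that reference, is a word on why $\widetilde\lambda_f$ (and hence $\Ad(f)\circ\widetilde\lambda_f$) is the \emph{unique} extension --- in \cite{ACR} this follows from the rigidity result that any endomorphism of $\CQ_2$ fixing $\CO_2$ pointwise is the identity --- but the statement as written only asks for existence and the value on $U$, both of which you have supplied.
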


We can now move on to discuss the novel examples of extendible endomorphisms. We start with $\rho_{123}$ and $\rho_{243}$.

\begin{proposition}
Both $\rho_{123}$ and $\rho_{243}$ uniquely extend. Moreover, $\rho_{123}(U)=U^2S_2S_2^*+U^{-2}S_1S_1^*$
and $\rho_{243}(U)=U^{-2}S_2S_2^*+U^2S_1S_1^*$.
\end{proposition}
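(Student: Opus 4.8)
The plan is to exhibit explicit elements of $\CQ_2$ realising the would-be extensions and to verify the defining relations directly (or, what amounts to the same, in the faithful canonical representation $\rho_c$). I would start by recording a few commutation facts inside $\CQ_2$. Since $S_1=US_2$, one has $S_1S_1^*+S_2S_2^*=1$ and also $S_1U=US_2U=U\,U^2S_2=U^2S_1$; taking adjoints of $S_2U=U^2S_2$ and $S_1U=U^2S_1$ gives $S_2^*U^{-2}=U^{-1}S_2^*$ and $S_1^*U^{-2}=U^{-1}S_1^*$, from which
\[
U^2(S_2S_2^*)U^{-2}=(U^2S_2)(S_2^*U^{-2})=(S_2U)(U^{-1}S_2^*)=S_2S_2^*,
\]
and likewise $U^2(S_1S_1^*)U^{-2}=S_1S_1^*$; in other words both projections $S_1S_1^*$ and $S_2S_2^*$ commute with $U^2$.

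Next I would put $P_i\doteq S_iS_i^*$ (so $P_1+P_2=1$, $P_1P_2=0$), $V\doteq U^2P_2+U^{-2}P_1\in\CQ_2$, and $T\doteq\rho_{123}(S_2)=S_1^2S_1^*+S_2^2S_2^*$. Using the commutation facts, $V^*=U^{-2}P_2+U^2P_1$ and $VV^*=V^*V=P_1+P_2=1$, so $V$ is a unitary; the same bookkeeping gives $V^2=U^4P_2+U^{-4}P_1$. The core of the argument is then to check that the pair $(V,T)$ satisfies the two defining relations of $\CQ_2$, namely $TV=V^2T$ and $TT^*+VTT^*V^*=1$. For the first, using $S_1^*U^{-2}=U^{-1}S_1^*$, $S_2^*U^2=US_2^*$, the $\CO_2$-relations $S_1^*S_2=0=S_2^*S_1$, and then $U^4S_2^2=S_2^2U$, $U^{-4}S_1^2=S_1^2U^{-1}$, one finds that $TV$ and $V^2T$ both equal $S_2^2US_2^*+S_1^2U^{-1}S_1^*$. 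For the second, $TT^*=S_1^2(S_1^*)^2+S_2^2(S_2^*)^2$, while $VS_1^2=U^{-2}S_1^2=S_1S_2$ and $VS_2^2=U^2S_2^2=S_2S_1$, so $VTT^*V^*=S_1S_2S_2^*S_1^*+S_2S_1S_1^*S_2^*$ and
\[
TT^*+VTT^*V^*=S_1(S_1S_1^*+S_2S_2^*)S_1^*+S_2(S_1S_1^*+S_2S_2^*)S_2^*=S_1S_1^*+S_2S_2^*=1 .
\]
(Alternatively, since $\rho_c$ is faithful on $\CQ_2$, all this can be read off in $\rho_c$, where $Te_n=e_{2n}$ or $e_{2n+1}$ according to the parity of $n$ and $Ve_m=e_{m\pm2}$ according to the parity of $m$.) By the universal property of $\CQ_2$ there is then a unital $\ast$-endomorphism $\widetilde{\rho_{123}}$ of $\CQ_2$ with $\widetilde{\rho_{123}}(U)=V$ and $\widetilde{\rho_{123}}(S_2)=T$; since $\widetilde{\rho_{123}}(S_1)=\widetilde{\rho_{123}}(U)\widetilde{\rho_{123}}(S_2)=VT=S_2S_1S_2^*+S_1S_2S_1^*=\rho_{123}(S_1)$, it restricts to $\rho_{123}$ on $\CO_2$, so $\rho_{123}$ extends with $\rho_{123}(U)=V=U^2S_2S_2^*+U^{-2}S_1S_1^*$. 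Uniqueness is the general fact already invoked in the treatment of $\rho_{12}$ (cf.\ also \cite{ACR}): an endomorphism of $\CO_2$ has at most one extension to $\CQ_2$.

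For $\rho_{243}$ I would avoid repeating the computation by noting, directly on the generators, that $\rho_{243}=\lambda_f\circ\rho_{123}\circ\lambda_f$, where $\lambda_f$ is the flip-flop. Since $\lambda_f$ extends to an automorphism $\widetilde{\lambda_f}$ of $\CQ_2$ with $\widetilde{\lambda_f}(U)=U^*$, the composition $\widetilde{\rho_{243}}\doteq\widetilde{\lambda_f}\circ\widetilde{\rho_{123}}\circ\widetilde{\lambda_f}$ is an endomorphism of $\CQ_2$ restricting to $\rho_{243}$ on $\CO_2$, and it is the only one (a second extension of $\rho_{243}$ would, upon conjugating by $\widetilde{\lambda_f}$, yield a second extension of $\rho_{123}$). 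Finally, using once more that $S_1S_1^*$ and $S_2S_2^*$ commute with $U^2$,
\[
\widetilde{\rho_{243}}(U)=\widetilde{\lambda_f}\big(\widetilde{\rho_{123}}(U)^*\big)=\widetilde{\lambda_f}\big(U^{-2}S_2S_2^*+U^2S_1S_1^*\big)=U^{-2}S_2S_2^*+U^2S_1S_1^* ,
\]
which is the claimed formula.

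The step I expect to be the main obstacle is the verification that $(V,T)$ obeys the $\CQ_2$-relations: it is not deep, but it involves juggling several instances of $S_2U=U^2S_2$, $S_1U=U^2S_1$ and their adjoints, and it is easy to slip. Passing to the faithful canonical representation trims the whole check to an elementary parity argument, and that is the route I would actually follow in the write-up.
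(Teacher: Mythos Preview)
Your argument is correct and follows essentially the same route as the paper: write down the candidate $\tilde U$ and verify the defining relations of $\CQ_2$ directly. The paper is terser---it simply says the verifications are ``easy but tedious''---while you actually carry them out; your preliminary observation that $P_1,P_2$ commute with $U^2$ is exactly what makes those computations go through cleanly. Two minor remarks. First, your check of $TT^*+VTT^*V^*=1$ is redundant: once you know $VT=\rho_{123}(S_1)$, this relation is just $\rho_{123}(S_2S_2^*+S_1S_1^*)=1$, which holds because $\rho_{123}$ is already an endomorphism of $\CO_2$; the paper accordingly only checks $\tilde U\tilde S_2=\tilde S_1$ and $\tilde S_2\tilde U=\tilde U\tilde S_1$. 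Second, for uniqueness the paper argues by exhibiting a faithful representation (the interval picture) in which $\rho_{123}(S_i)$ is pure, rather than invoking a blanket uniqueness statement; your appeal to \cite{ACR} is fine, but the pureness route is more self-contained. For $\rho_{243}$ the paper uses $\rho_{243}=\rho_{123}\circ\lambda_f$ while you use $\rho_{243}=\lambda_f\circ\rho_{123}\circ\lambda_f$; both are correct since $\lambda_f$ fixes $\rho_{123}(S_1)$ and $\rho_{123}(S_2)$, and they lead to the same formula for $\rho_{243}(U)$.
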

\begin{proof}
Since $\rho_{243}=\rho_{123}\circ\lambda_f$, it is enough to deal only with $\rho_{123}$. 
By definition $\rho_{123}(S_1)\doteq S_1S_2S_1^*+S_2S_1S_2^*\doteq \tilde S_1$ and $\rho_{123}(S_2)\doteq S_1^2S_1^*+S_2^2S_2^*\doteq \tilde S_2$. 
We define $\tilde U\doteq U^2S_2S_2^*+U^{-2}S_1S_1^*$. All we have to do is make sure the two equalities
$\tilde U\tilde S_2=\tilde S_1$ and $\tilde S_2\tilde U=\tilde U\tilde S_1$ are satisfied. Now this is a matter of easy but tedious computations.
Finally, the uniqueness of the extension is proved once a representation  $\rho$ of $\CQ_2$ is exhibited in which either $\tilde S_1$ or $\tilde S_2$ is pure.
For instance, the interval picture is an example of such a representation.
\end{proof}

The endomorphism $\rho_{14}$ extends as well. This is the content of the following proposition, which is proved in full detail instead.
\begin{proposition}
The endomorphism $\rho_{14}$ is extendible. Moreover, its unique extension is determined by $\rho_{14}(U)\doteq U^{-2}$.
\end{proposition}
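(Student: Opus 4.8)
The plan is to reduce everything to the universal property of $\CQ_2$. Recall that inside $\CQ_2$ the Cuntz generators are $S_1 = US_2$ and $S_2$, and that $\rho_{14}$ is a unital endomorphism of $\CO_2$; in particular $\tilde S_1 \doteq \rho_{14}(S_1) = S_2^2 S_1^* + S_1 S_2 S_2^*$ and $\tilde S_2 \doteq \rho_{14}(S_2) = S_2 S_1 S_1^* + S_1^2 S_2^*$ are Cuntz isometries of $\CQ_2$, so that $\tilde S_1\tilde S_1^* + \tilde S_2\tilde S_2^* = 1$. I would set $\tilde U \doteq U^{-2}$, which is manifestly a unitary, and then check that the pair $(\tilde S_2,\tilde U)$ satisfies the two defining relations of $\CQ_2$, namely $\tilde S_2\tilde U = \tilde U^2\tilde S_2$ and $\tilde S_2\tilde S_2^* + \tilde U\tilde S_2\tilde S_2^*\tilde U^* = 1$, together with the compatibility relation $\tilde U\tilde S_2 = \tilde S_1$. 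Granting this, the universal property of $\CQ_2$ produces a $^*$-endomorphism $\widetilde{\rho_{14}}$ of $\CQ_2$ with $\widetilde{\rho_{14}}(U) = U^{-2}$ and $\widetilde{\rho_{14}}(S_2) = \tilde S_2$; since $\widetilde{\rho_{14}}(US_2) = U^{-2}\tilde S_2 = \tilde S_1 = \rho_{14}(S_1)$ and $\widetilde{\rho_{14}}(S_2) = \rho_{14}(S_2)$, its restriction to $\CO_2 = C^*(US_2,S_2)$ is exactly $\rho_{14}$, which proves extendibility with $\rho_{14}(U)\doteq U^{-2}$.

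All the identities needed come from the single commutation rule $S_2U = U^2S_2$: it yields $S_2U^k = U^{2k}S_2$ for every $k\in\IZ$, hence $S_1U^k = U^{2k}S_1$ since $S_1 = US_2$, and, taking adjoints, $S_i^*U^{-2} = U^{-1}S_i^*$ for $i=1,2$. From $S_1 = US_2$ one also reads off $S_2S_1 = U^2S_2^2$, $S_1^2 = U^3S_2^2$ and $S_1^* = S_2^*U^{-1}$. The compatibility relation then reduces to the short computation
\begin{align*}
\tilde U\tilde S_2 & = U^{-2}\bigl(S_2S_1S_1^* + S_1^2S_2^*\bigr) = U^{-2}\bigl(U^2S_2^2S_2^*U^{-1} + U^3S_2^2S_2^*\bigr) \\
& = S_2^2S_2^*U^{-1} + US_2^2S_2^* \\
& = S_2^2S_1^* + S_1S_2S_2^* = \tilde S_1 \, .
\end{align*}
For the first defining relation the convenient observation is that any monomial $S_\mu S_\nu^*$ with $|\mu| = 2$ and $|\nu| = 1$ satisfies $S_\mu S_\nu^*U^{-2} = S_\mu\bigl(U^{-1}S_\nu^*\bigr) = \bigl(U^{-4}S_\mu\bigr)S_\nu^* = U^{-4}S_\mu S_\nu^*$, using the rules just recorded; since both summands of $\tilde S_2$ are of this shape, $\tilde S_2 U^{-2} = U^{-4}\tilde S_2$, that is $\tilde S_2\tilde U = \tilde U^2\tilde S_2$. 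Finally the second defining relation is automatic, because $\tilde U\tilde S_2 = \tilde S_1$ gives $\tilde U\tilde S_2\tilde S_2^*\tilde U^* = \tilde S_1\tilde S_1^*$, whence $\tilde S_2\tilde S_2^* + \tilde U\tilde S_2\tilde S_2^*\tilde U^* = \tilde S_2\tilde S_2^* + \tilde S_1\tilde S_1^* = 1$.

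I do not expect a genuine obstacle here: the whole content is the careful bookkeeping of the powers of $U$ produced when $U^{\pm 2}$ is pushed past the monomials defining $\tilde S_1$ and $\tilde S_2$, and the role of the identity $\tilde U\tilde S_2 = \tilde S_1$ is precisely to pin down $\tilde U = U^{-2}$ and to guarantee that the resulting extension restricts to $\rho_{14}$ itself and not to a twist of it. As for the uniqueness asserted in the statement, I would argue exactly as for $\rho_{123}$ above: in the interval picture the isometry $\rho_{14}(S_1)$ is pure (its successive ranges are nested subspaces whose Lebesgue measure is halved at each step), so the extending representation of $\CQ_2$ is unique and hence so is the extending endomorphism $\widetilde{\rho_{14}}$.
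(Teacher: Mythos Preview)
Your proof is correct and follows essentially the same strategy as the paper: verify the defining relations of $\CQ_2$ for the pair $(\tilde S_2,\tilde U)=(\rho_{14}(S_2),U^{-2})$, invoke universality, and then deduce uniqueness from pureness of one of the images in the interval picture. The only cosmetic differences are that the paper checks $\tilde S_2\tilde U=\tilde U\tilde S_1$ by a direct expansion rather than your cleaner general observation $S_\mu S_\nu^*U^{-2}=U^{-4}S_\mu S_\nu^*$ for $|\mu|=2$, $|\nu|=1$, and that the paper establishes pureness for $\tilde S_2$ (via the explicit formula $\tilde S_2^k(\tilde S_2^*)^k=S_2S_1^{k}(S_1^*)^{k}S_2^*+S_1^{k+1}(S_1^*)^{k+1}$ and the $\CF_2$-trace) whereas you use $\tilde S_1$.
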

\begin{proof}
Set $\tilde S_1 =\rho_{14}(S_1)=S_2^2S_1^*+S_1S_2S_2^*$, $\tilde S_2=\rho_{14}(S_2)= S_2S_1S_1^*+S_1^2S_2^*$, and $\tilde U=U^{-2}$. We recall that $U^{-2}S_iS_i^*=S_iS_i^*U^{-2}$ for $i=1, 2$. We have that
\begin{align*}
\tilde U \tilde S_2 & = U^{-2}S_2S_1S_1^*+U^{-2}S_1^2S_2^*\\
& = U^{-2}S_2US_2S_1^*+U^{-2}US_2US_2S_2^*\\
& = U^{-2}U^2S_2S_2S_1^*+U^{-2}UU^2S_2S_2S_2^*\\
& = S_2^2S_1^*+US_2^2S_2^*\\
& = S_2^2S_1^*+S_1S_2S_2^*=\tilde S_1
\end{align*}
and 
\begin{align*}
\tilde S_2 \tilde U &= S_2S_1S_1^*U^{-2}+S_1^2S_2^*U^{-2}\\
&= S_2U^{-2}S_1S_1^*+S_1US_2S_2^*U^{-2}\\
&= S_2U^{-2}US_2S_1^*+S_1UU^{-2}S_2S_2^*\\
&= S_2U^*S_2S_1^*+S_1U^*S_2S_2^*\\
&= U^{-2}S_2S_2S_1^*+U^{-2}S_1S_2S_2^*\\
&= U^{-2}\tilde S_1\; .
\end{align*}
Therefore, by universality an extension exists. 
In order to prove that the extension is unique, it is enough to exhibit a representation $\rho$ of $\CQ_2$ such that $\rho(\tilde S_2)$ is pure. 
First of all, we note that $\tilde S_2^k(\tilde S_2^*)^k=S_2S_1^{k}(S_1^*)^{k}S_2^*+S_1^{k+1}(S_1^*)^{k+1}$ for all $k\geq 1$.
Now the claim follows at once by using the interval picture and the fact the evaluation of $\CF_2$-trace on $\tilde S_2^k(\tilde S_2^*)^k$ is $2^{-k}$. 
\end{proof}
\begin{proposition}
The endomorphisms $\rho_{1243}$ and $\rho_{1342}$ both extend, mapping $U$ into $U^{-2}$ and $U^2$, respectively.
\end{proposition}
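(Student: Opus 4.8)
The plan is to treat both endomorphisms by the scheme already used above for $\rho_{14}$ and for the pair $\rho_{123}$, $\rho_{243}$: guess the image of $U$, check the two defining relations of $\CQ_2$ by a direct computation inside $\CO_2\subset\CQ_2$, invoke universality to obtain the extension, and prove its uniqueness by exhibiting a representation of $\CQ_2$ in which one of the transformed Cuntz isometries is pure. Concretely, for $\rho_{1342}$ I would put $\tilde S_1\doteq\rho_{1342}(S_1)=S_2S_1S_1^*+S_1^2S_2^*$, $\tilde S_2\doteq\rho_{1342}(S_2)=S_2^2S_1^*+S_1S_2S_2^*$ and $\tilde U\doteq U^2$; for $\rho_{1243}$ I would put $\tilde S_1\doteq\rho_{1243}(S_1)=S_1S_2S_1^*+S_2^2S_2^*$, $\tilde S_2\doteq\rho_{1243}(S_2)=S_1^2S_1^*+S_2S_1S_2^*$ and $\tilde U\doteq U^{-2}$.

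All the relation checks rest only on the elementary rules $U^{\pm2}S_i=S_iU^{\pm1}$ and $S_i^*U^{\pm2}=U^{\pm1}S_i^*$ (equivalently $S_iU^k=U^{2k}S_i$ for $k\in\IZ$), the identities $US_2=S_1$, $US_1=S_2U$, and the fact that $U^{\pm2}$ commutes with the range projections $S_iS_i^*$, i.e. precisely the relations already exploited in the proof for $\rho_{14}$. One first verifies $\tilde U\tilde S_2=\tilde S_1$ term by term; for instance for $\rho_{1243}$ this reads $U^{-2}(S_1^2S_1^*+S_2S_1S_2^*)=S_1S_2S_1^*+S_2^2S_2^*$, and the $\rho_{1342}$ case is entirely analogous with $U^2$ in place of $U^{-2}$. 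One then checks $\tilde S_2\tilde U=\tilde U^2\tilde S_2$, which (once $\tilde S_1=\tilde U\tilde S_2$ is known) is the same as $\tilde S_2\tilde U=\tilde U\tilde S_1$; in the $\rho_{1243}$ case one gets $\tilde S_2U^{-2}=U^{-4}\tilde S_2$ after pushing the $U^{-1}$'s through. Since $\tilde S_1=\tilde U\tilde S_2$ with $\tilde S_1,\tilde S_2$ the Cuntz generators of $\CO_2$, the relation $\tilde S_2\tilde S_2^*+\tilde U\tilde S_2\tilde S_2^*\tilde U^*=1$ holds automatically, so by the universal property of $\CQ_2$ there is an endomorphism $\widetilde\rho$ of $\CQ_2$ with $\widetilde\rho(S_2)=\tilde S_2$ and $\widetilde\rho(U)=\tilde U$ which restricts on $\CO_2$ to $\rho_{1342}$ (resp. $\rho_{1243}$), mapping $U$ to $U^2$ (resp. $U^{-2}$).

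For uniqueness, exactly as in the previous propositions it suffices to produce one representation of $\CQ_2$ in which $\tilde S_2$ is pure. I would again use the interval picture: a short induction gives $\tilde S_2^k(\tilde S_2^*)^k=S_2^{k+1}(S_2^*)^{k+1}+S_1S_2^k(S_2^*)^kS_1^*$ for $\rho_{1342}$ and $\tilde S_2^k(\tilde S_2^*)^k=S_1^{k+1}(S_1^*)^{k+1}+S_2S_1^k(S_1^*)^kS_2^*$ for $\rho_{1243}$, so in either case these iterated range projections lie in $\CD_2$ and their $\CF_2$-trace equals $2^{-k}$; hence in the interval picture they converge strongly to $0$, i.e. $\tilde S_2$ is pure there (cf. Corollary \ref{cor-pureness}). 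The only point that calls for care is the bookkeeping in the verification of $\tilde S_2\tilde U=\tilde U^2\tilde S_2$, where both $U$ and $U^{-1}$ occur and powers of $U^{\pm1}$ must be moved past stacks of $S_i$ and $S_i^*$ in the correct order; the computation of $\tilde S_2^k(\tilde S_2^*)^k$ is likewise routine but mildly tedious. Everything else is a transcription of the two preceding proofs.
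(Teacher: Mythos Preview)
Your proposal is correct, but it takes a genuinely different route from the paper. The paper's proof is a one-liner: it observes that $\lambda_f\circ\rho_{1243}=\rho_{1243}\circ\lambda_f=\rho_{23}$ and $\lambda_f\circ\rho_{1342}=\rho_{1342}\circ\lambda_f=\rho_{14}$. Since the flip-flop $\lambda_f$ extends to $\CQ_2$ with $\tilde\lambda_f(U)=U^{-1}$, and since $\rho_{23}=\varphi$ and $\rho_{14}$ have already been shown (in the two propositions immediately preceding this one) to extend with $\varphi(U)=U^2$ and $\rho_{14}(U)=U^{-2}$, the extensions of $\rho_{1243}$ and $\rho_{1342}$ are obtained for free as $\tilde\lambda_f\circ\tilde\varphi$ and $\tilde\lambda_f\circ\tilde\rho_{14}$, sending $U$ to $(U^2)^{-1}=U^{-2}$ and $(U^{-2})^{-1}=U^2$ respectively. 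Uniqueness is inherited from the uniqueness already established for $\varphi$ and $\rho_{14}$, since $\lambda_f$ is an automorphism.

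Your direct approach---guessing $\tilde U=U^{\pm2}$, checking the two defining relations by pushing powers of $U$ through the Cuntz monomials, and then proving pureness of $\tilde S_2$ in the interval picture via the explicit formula for $\tilde S_2^k(\tilde S_2^*)^k$---is essentially a replay of the proof given for $\rho_{14}$, and it works fine; the computations you outline are all correct. The trade-off is clear: the paper's argument is much shorter and avoids any computation, but it only works because the flip-flop happens to conjugate these two endomorphisms into ones already handled; your argument is self-contained and would work even without that coincidence. Either is acceptable here.
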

\begin{proof}
Both $\rho_{1243}$ and $\rho_{1342}$ commute with the flip-flop. More precisely, we have 
$\lambda_f\circ\rho_{1243}=\rho_{1243}\circ\lambda_f=\rho_{23}$
and $\lambda_f\circ\rho_{1342}=\rho_{1342}\circ\lambda_f=\rho_{14}$. 
\end{proof}

\begin{proposition}
The representations $\pi_1\doteq \rho_c\circ \rho_{23}=\rho_c\circ \varphi$ and $\pi_2\doteq\rho_c\circ \rho_{1243}$ are both permutative and equivalent to 
$\rho_c\oplus \rho_c$.
\end{proposition}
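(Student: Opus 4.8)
The plan is to compute the two representations explicitly in the canonical picture on $\ell^2(\IZ)$ and then split each of them into two copies of $\rho_c$ by an elementary parity argument. First I would record the concrete action of $\rho_c$: since $S_1$ is identified with $US_2$, one has $\rho_c(S_1)e_k=e_{2k+1}$, $\rho_c(S_2)e_k=e_{2k}$, $\rho_c(U)e_k=e_{k+1}$, together with $\rho_c(S_1^*)e_{2k+1}=e_k$, $\rho_c(S_2^*)e_{2k}=e_k$ and $\rho_c(S_i^*)e_j=0$ otherwise. Using $\varphi(U)=U^2$, $\varphi(S_2)=S_1S_2S_1^*+S_2^2S_2^*$, $\rho_{1243}(U)=U^{-2}$ and $\rho_{1243}(S_2)=S_1^2S_1^*+S_2S_1S_2^*$ (cf.\ the table), a short bookkeeping through the three factors of each monomial $S_aS_bS_c^*$ yields
\[\pi_1(U)e_j=e_{j+2},\qquad \pi_1(S_2)e_j=e_{2j}\ (j\ \textrm{even}),\ \ e_{2j-1}\ (j\ \textrm{odd}),\]
\[\pi_2(U)e_j=e_{j-2},\qquad \pi_2(S_2)e_j=e_{2j+2}\ (j\ \textrm{even}),\ \ e_{2j+1}\ (j\ \textrm{odd}).\]
In particular $\pi_1(S_2)$ and $\pi_2(S_2)$ are non-surjective injections of the index set into itself while $\pi_1(U)$ and $\pi_2(U)$ are bijections, so $\pi_1$ and $\pi_2$ are permutative representations of $\CQ_2$.

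The crucial observation is now that for $i=1,2$ the injection implementing $\pi_i(S_2)$ sends even indices to even indices and odd indices to odd indices, while the bijection implementing $\pi_i(U)$ preserves parity. Hence both $\CH_{\textrm{ev}}\doteq\overline{\textrm{span}}\{e_j: j\ \textrm{even}\}$ and $\CH_{\textrm{odd}}\doteq\overline{\textrm{span}}\{e_j: j\ \textrm{odd}\}$ are invariant under $\pi_i(S_2)$ and $\pi_i(U)$; being orthogonal complements of one another, each of them is reducing for $\pi_i$, so that $\pi_i=\pi_i\upharpoonright_{\CH_{\textrm{ev}}}\oplus\pi_i\upharpoonright_{\CH_{\textrm{odd}}}$. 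Relabelling the surviving basis of each summand ($e_{2m}\mapsto f_m$, resp.\ $e_{2m+1}\mapsto g_m$, $m\in\IZ$), one sees that in the relabelled picture $\pi_i(U)$ acts as $f_m\mapsto f_{m\pm 1}$, i.e.\ as a bijection with a single orbit. Proposition \ref{intrcan} then applies verbatim and gives $\pi_i\upharpoonright_{\CH_{\textrm{ev}}}\cong\rho_c\cong\pi_i\upharpoonright_{\CH_{\textrm{odd}}}$, whence $\pi_1\cong\pi_2\cong\rho_c\oplus\rho_c$.

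The only point demanding a little care is the explicit evaluation of $\pi_i(S_2)$, where one must track the parity of the index through the three factors of the defining monomials; everything afterwards is automatic. As an alternative route for $\pi_2$ one may instead invoke the relation $\lambda_f\circ\rho_{1243}=\rho_{23}$ (valid already for the $\CQ_2$-extensions, since on $U$ it reads $\lambda_f(U^{-2})=U^2$), which gives $\pi_2=\rho_c\circ\rho_{1243}=(\rho_c\circ\lambda_f)\circ\rho_{23}$, and then note that $\rho_c\circ\lambda_f\cong\rho_c$ by Proposition \ref{intrcan} (its $U$ acts as $e_k\mapsto e_{k-1}$, a single orbit), so that $\pi_2\cong\rho_c\circ\rho_{23}=\pi_1$; we nonetheless prefer the direct argument, as it treats the two representations on an equal footing.
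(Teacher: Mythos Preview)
Your proof is correct and your explicit formulas for $\pi_i(S_2)$ and $\pi_i(U)$ check out. The route, however, is genuinely different from the paper's.

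The paper argues at the $\CO_2$ level: it computes $\pi_1(S_1)$ and $\pi_1(S_2)$, observes the four fixed basis vectors $e_{-1},e_{-2}$ (for $S_1$) and $e_0,e_1$ (for $S_2$), and concludes that the restriction to $\CO_2$ decomposes as two copies of $P(1)\oplus P(2)$, which then forces $\pi_1\cong\rho_c\oplus\rho_c$ at the $\CQ_2$ level. The passage to $\pi_2$ goes through the relation $\lambda_f\circ\rho_{1243}=\rho_{23}$ from the previous proposition.

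You instead stay entirely at the $\CQ_2$ level: you use the action of $U$ rather than of $S_1$, detect the parity invariance, and invoke Proposition~\ref{intrcan} (the single-orbit characterization of $\rho_c$) on each summand. This is more economical, since Proposition~\ref{intrcan} does all the identifying work at once and you never need to locate fixed vectors of the isometries or assemble $P(1)\oplus P(2)$ back into $\rho_c$. A minor cosmetic point: Proposition~\ref{intrcan} is stated for bases indexed by $\IN$, while your relabelled bases $\{f_m\},\{g_m\}$ are indexed by $\IZ$; this is harmless (the proof of that proposition builds a bijection to $\IZ$ anyway), but it would do no harm to say so explicitly.
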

\begin{proof}
Thanks to the previous result, it is enough to prove the statement for $\pi_1$. We recall that $\rho_{23}$ is the canonical endomorphism, that is $\rho_{23}(x)=S_1xS_1^*+S_2xS_2^*$. Simple computations lead to these formulas
\begin{align*}
& \pi_1(S_2) e_{2k} = e_{4k}\\
& \pi_1(S_2) e_{2k+1} = e_{4k+1}\\
& \pi_1(S_1) e_{2k} = e_{4k+2}\\
& \pi_1(S_1) e_{2k+1} = e_{4k+3}\; .
\end{align*}
In particular, we find the equalities $\pi_1(S_1)e_{-1} = e_{-1}$, $\pi_1(S_2)e_{0} = e_{0}$, $\pi_1(S_1)e_{-2} = e_{-2}$, $\pi_1(S_2)e_{1} = e_{1}$. Therefore, the claim follows once we realize that 
the direct sum $\overline{\pi_1(\CO_2)e_{-1}}\oplus \overline{\pi_1(\CO_2)e_{-1}}\oplus \overline{\pi_1(\CO_2)e_{0}}\oplus \overline{\pi_1(\CO_2)e_{-2}}$ is the whole $\ell^2(\IZ)$.
\end{proof}

\begin{proposition}
The representations $\pi_1\doteq \rho_c\circ \rho_{14}$ and $\pi_2\doteq\rho_c\circ \rho_{1342}$ are both permutative and equivalent to $\tilde P(11)\oplus \tilde P(22)$. 
In particular, $\rho_{14}$ cannot be obtained from $\varphi=\rho_{23}$ by composing with inner automorphisms and the flip-flop.
\end{proposition}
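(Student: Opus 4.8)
The plan is to compute $\pi_1$ and $\pi_2$ explicitly on the canonical basis, recognise their restrictions to $\CO_2$, and then invoke the uniqueness of permutative extensions (Theorem~\ref{uniext}) together with the characterisation of the canonical representation (Proposition~\ref{intrcan}). First I would work out the action of $\pi_1=\rho_c\circ\widetilde{\rho_{14}}$ on $\{e_k\}_{k\in\IZ}$. Writing $\rho_{14}(S_1)=S_2S_2S_1^*+S_1S_2S_2^*$ and $\rho_{14}(S_2)=S_2S_1S_1^*+S_1S_1S_2^*$ and using $\rho_c(S_2)e_k=e_{2k}$, $\rho_c(S_1)e_k=e_{2k+1}$, a short parity-by-parity computation gives formulas for $\pi_1(S_1)$ and $\pi_1(S_2)$; in particular $\pi_1(S_1)$ has the single finite orbit $\{e_0,e_1\}$ (a $2$-cycle) and strictly increases the index elsewhere, while $\pi_1(S_2)$ has the single finite orbit $\{e_{-1},e_{-2}\}$ and strictly increases the index elsewhere. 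Since $\widetilde{\rho_{14}}(U)=U^{-2}$, we have $\pi_1(U)e_k=e_{k-2}$, which permutes the basis, so $\pi_1$ is a permutative representation of $\CQ_2$. I would then check that $\CH_+:=\clsp\{e_k:k\geq 0\}$ and $\CH_-:=\clsp\{e_k:k<0\}$ are both $\pi_1(\CO_2)$-invariant with $\CH_+\oplus\CH_-=\ell^2(\IZ)$.

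Next, on $\CH_+$ the forward orbit of $e_1$ under $\{\pi_1(S_1),\pi_1(S_2)\}$ fills $\CH_+$, so $e_1$ is cyclic there, and since $\pi_1(S_1)^2e_1=e_1$ with $\{e_1,\pi_1(S_1)e_1\}=\{e_1,e_0\}$ orthonormal, the return word of $e_1$ is $(1,1)$; by the classification of cyclic permutative representations of $\CO_2$ recalled in Section~\ref{converse}, $\pi_1\upharpoonright_{\CH_+}\cong P(11)$, and symmetrically $\pi_1\upharpoonright_{\CH_-}\cong P(22)$. Hence $\pi_1\upharpoonright_{\CO_2}\cong P(11)\oplus P(22)$, so $\pi_1$ is a permutative extension of $P(11)\oplus P(22)$, unique up to unitary equivalence by Theorem~\ref{uniext}; this is what $\widetilde P(11)\oplus\widetilde P(22)$ denotes, and $\pi_1\cong\widetilde P(11)\oplus\widetilde P(22)$. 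For $\pi_2=\rho_c\circ\widetilde{\rho_{1342}}$ I would use the identity $\rho_{1342}=\lambda_f\circ\rho_{14}$ from the earlier proposition, which by uniqueness of extensions yields $\widetilde{\rho_{1342}}=\widetilde\lambda_f\circ\widetilde{\rho_{14}}$, so $\pi_2=(\rho_c\circ\widetilde\lambda_f)\circ\widetilde{\rho_{14}}$. Since $(\rho_c\circ\widetilde\lambda_f)(U)=\rho_c(U^*)$ acts as the single-orbit bijection $k\mapsto k-1$, Proposition~\ref{intrcan} gives $\rho_c\circ\widetilde\lambda_f=\Ad(W)\circ\rho_c$ for a unitary $W$, whence $\pi_2=\Ad(W)\circ\pi_1\cong\pi_1\cong\widetilde P(11)\oplus\widetilde P(22)$; that $\pi_2$ is itself permutative is clear from $\pi_2(S_2)=\pi_1(S_1)$ and $\pi_2(U)=\pi_1(U^*)=\rho_c(U^2)$.

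For the final clause, suppose $\rho_{14}$ were obtained from $\varphi=\rho_{23}$ by composing with inner automorphisms of $\CO_2$ and the flip-flop. Since $\operatorname{Inn}(\CO_2)$ is normal in the group these generate and $\lambda_f$ commutes with $\varphi$ (indeed $\lambda_f\circ\varphi\circ\lambda_f=\varphi$ by a one-line computation), $\rho_{14}=\Ad(u)\circ\varphi$ or $\Ad(u)\circ\varphi\circ\lambda_f$ (equivalently $\Ad(u)\circ\lambda_f\circ\varphi$) for some unitary $u\in\CO_2$. Passing to extensions and composing with $\rho_c$, the inner part becomes a unitary conjugation and any trailing $\widetilde\lambda_f$ is absorbed using $\rho_c\circ\widetilde\lambda_f\cong\rho_c$ as above, so in all cases $\pi_1\cong\rho_c\circ\widetilde\varphi$, which by the previous proposition is $\cong\rho_c\oplus\rho_c$. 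This is impossible: the $2$-cycle $\{e_0,e_1\}$ is the only finite orbit of $\pi_1(S_1)$, so by the point-spectrum proposition of Section~\ref{prel} one has $\sigma_p(\pi_1(S_1))=\{z:z^2=1\}=\{1,-1\}$, whereas the bijection $k\mapsto 2k+1$ has $\{-1\}$ as its only finite orbit, giving $\sigma_p\bigl((\rho_c\oplus\rho_c)(S_1)\bigr)=\{1\}$; so $\pi_1\not\cong\rho_c\oplus\rho_c$, a contradiction. (Alternatively: $\rho_c\oplus\rho_c$ restricts to $\CO_2$ as the regular representation $2\pi_c$, while $P(11)\oplus P(22)$ is not regular.)

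The main obstacle I expect is the bookkeeping in the two restriction arguments: confirming honestly that $\CH_\pm$ are $\CO_2$-invariant, that $e_1$ (resp. $e_{-1}$) is genuinely cyclic in $\CH_+$ (resp. $\CH_-$), and that the return words are exactly $(1,1)$ and $(2,2)$ — so that the summands are precisely $P(11)$ and $P(22)$ and not some coarser or finer permutative pieces — together with being careful that, when applying Theorem~\ref{uniext}, the two representations $\pi_1$ and $\pi_2$ are compared as permutative extensions of (a single concrete model of) $P(11)\oplus P(22)$. These steps are routine but should be carried out explicitly.
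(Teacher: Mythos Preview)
Your proposal is correct and follows essentially the same route as the paper: compute the action of $\pi_1$ on the canonical basis, identify the $\CO_2$-invariant decomposition $\CH_+\oplus\CH_-$ with $P(11)\oplus P(22)$ via the $2$-cycles $\{e_0,e_1\}$ and $\{e_{-1},e_{-2}\}$, and reduce $\pi_2$ to $\pi_1$ via the relation $\rho_{1342}=\lambda_f\circ\rho_{14}$. The one genuine difference is your final contradiction: the paper argues that $\rho_c\oplus\rho_c$ restricts to $\CO_2$ as a regular representation while $P(11)\oplus P(22)$ is not regular (invoking Proposition~\ref{q2o2}), whereas you compare point spectra of $S_1$ directly, obtaining $\{1,-1\}$ versus $\{1\}$---a more elementary invariant that bypasses the regularity machinery; you also note the paper's argument as an alternative. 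One cosmetic slip: for negative indices $\pi_1(S_1)$ strictly \emph{decreases} the index rather than increases it, but what matters (and what you use) is that $\{e_0,e_1\}$ is the unique finite orbit.
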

\begin{proof}
It is enough to prove the statement for $\pi_1$. We have that
\begin{align*}
& \pi_1(S_2) e_{2k} = e_{4k+3}\\
& \pi_1(S_2) e_{2k+1} = e_{4k+2}\\
& \pi_1(S_1) e_{2k} = e_{4k+1}\\
& \pi_1(S_1) e_{2k+1} = e_{4k}\; .
\end{align*}
It follows that $\pi_1(S_2^2)e_{-1} = \pi_1(S_2)e_{-2}=e_{-1}$ and $\pi_1(S_1^2)e_0=\pi_1(S_1)e_1=e_0$.  
Simple computations show that $\{e_k: k\leq -1\}\subset \pi_1(\CO_2)e_{-1}$ and $\{e_k: k\geq 0\}\subset \pi_1(\CO_2)e_{0}$. Now the last claim follows at once from the decomposition of $\rho_c\circ \rho_{23}$ shown in the previous proposition, and the fact that $P(11)\oplus  P(22)$ is not regular,  see Proposition \ref{q2o2}.
\end{proof}
To complete our analysis, all is left to do is deal with the endomorphisms $\rho_{134}$ and $\rho_{142}$, which are defined by these formulas
\begin{align*}
&\rho_{134}(S_1)= S_2S_1S_1^*+S_1S_2S_2^*\\
&\rho_{134}(S_2)= S_2^2S_1^*+S_1^2S_2^*\\
&\rho_{142}(S_1)= S_2^2S_1^*+S_1^2S_2^*\\
&\rho_{142}(S_2)= S_2S_1S_1^*+S_1 S_2S_2^*
\end{align*} 

\begin{proposition}
The endomorphisms $\rho_{134}$ and $\rho_{142}$ are not extendible.
\end{proposition}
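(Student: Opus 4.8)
The plan is to imitate the argument already used for $\rho_{12}$. First I would reduce to a single case: reading off the table one sees $\rho_{142}=\rho_{134}\circ\lambda_f$, and since $\lambda_f$ is an extendible automorphism with $\lambda_f^2=\id$, the endomorphism $\rho_{142}$ extends if and only if $\rho_{134}$ does. So it suffices to prove that $\rho_{134}$ is not extendible.

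Assume it is, and set $\tilde S_1\doteq\rho_{134}(S_1)=S_2S_1S_1^*+S_1S_2S_2^*$ and $\tilde S_2\doteq\rho_{134}(S_2)=S_2^2S_1^*+S_1^2S_2^*$; let $V\in\CQ_2$ be the unitary to which an extension $\tilde\rho_{134}$ sends $U$. A preliminary observation is that such an extension is unique. Realizing $\CQ_2$ on $\ell^2(\IZ)$ through $\rho_c$ one computes that $\rho_c(\tilde S_1)$ and $\rho_c(\tilde S_2)$ are the permutative isometries $S_g$ and $S_f$ where, for $j\in\IZ$, $g(j)=2j$ and $f(j)=2j-2$ if $j$ is odd, while $g(j)=2j+1$ and $f(j)=2j+3$ if $j$ is even. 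Neither $f$ nor $g$ has periodic points, and $|f^{-1}(j)|,|g^{-1}(j)|\le|j|/2+2$ wherever defined, so both $\bigcap_n f^n(\IZ)$ and $\bigcap_n g^n(\IZ)$ are empty; hence both isometries are pure by Corollary \ref{cor-pureness}, so $\rho_c\circ\rho_{134}$ has a unique extension to $\CQ_2$ (Proposition \ref{pureperm}), and therefore so does $\rho_{134}$ by faithfulness of $\rho_c$. Uniqueness forces $\tilde\rho_{134}$ to commute with the gauge automorphisms — because $\rho_{134}$, being implemented by a permutation matrix in $\CF_2^2$, already commutes with the $\alpha_z$'s — whence $V\in\CQ_2^\IT$.

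The defining relations of $\CQ_2$, applied through $\tilde\rho_{134}$, give $V\tilde S_2=\tilde S_1$ and $\tilde S_2V=V^2\tilde S_2=V\tilde S_1$, hence $\tilde S_2^kV=V\tilde S_1^k$ and so $V=(\tilde S_2^{\,*})^kV\tilde S_1^k$ for every $k$. A straightforward induction from $\tilde S_2S_i=S_{\bar\imath}S_{\bar\imath}$ and $\tilde S_1S_i=S_{\bar\imath}S_i$ (with $\bar 1=2$, $\bar 2=1$) yields $\tilde S_2^k=S_{v_k(1)}S_1^*+S_{v_k(2)}S_2^*$ and $\tilde S_1^k=S_{w_k(1)}S_1^*+S_{w_k(2)}S_2^*$ for words $v_k(i),w_k(i)$ of length $k+1$ which, along $k$ of a fixed parity, are initial segments of fixed infinite words. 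Substituting, $V=\sum_{i,j=1,2}S_i\big(S_{v_k(i)}^*VS_{w_k(j)}\big)S_j^*$; letting $k\to\infty$ through odd values and using that $V\in\CQ_2^\IT$ is gauge invariant, a straightforward generalization of \cite[Prop.~3.18]{ACR} to sandwiches $S_{\alpha_k}^*(\,\cdot\,)S_{\beta_k}$ with two different but convergent sequences of words shows $S_{v_k(i)}^*VS_{w_k(j)}\to c_{ij}\in\IC$. Thus $V=\sum_{i,j}c_{ij}S_iS_j^*$ lies in the first UHF level $\CF_2^1\cong M_2(\IC)$. But then $V\tilde S_2=c_{12}S_1S_2S_1^*+c_{22}S_2^2S_1^*+c_{11}S_1^2S_2^*+c_{21}S_2S_1S_2^*$, whereas $\tilde S_1=S_2S_1S_1^*+S_1S_2S_2^*$; comparing the coefficient of $S_2S_1S_1^*$ among the linearly independent monomials $\{S_\mu S_\nu^*:|\mu|=2,\ |\nu|=1\}$ gives $0=1$, a contradiction. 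Hence $\rho_{134}$, and with it $\rho_{142}$, does not extend.

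The step I expect to be the real obstacle is the limit: one must establish the required extension of \cite[Prop.~3.18]{ACR} to expressions $S_{\alpha_k}^*XS_{\beta_k}$ in which $(\alpha_k)$ and $(\beta_k)$ converge to two distinct infinite words, the crucial point being that gauge invariance of $X$ makes all these limits collapse to scalars. Once $\tilde S_1^k$ and $\tilde S_2^k$ have been written out and $V$ has been pinned down inside $\CF_2^1$, the final contradiction is immediate from linear independence of the standard monomials.
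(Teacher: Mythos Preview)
Your overall strategy---reduce to $\rho_{134}$, establish pureness and uniqueness so that $V\in\CQ_2^{\IT}$, then sandwich $V$ between $(\tilde S_2^*)^k$ and $\tilde S_1^k$---is exactly the one the paper follows. The reduction $\rho_{142}=\rho_{134}\circ\lambda_f$ is fine (the paper uses the unitary equivalence $\rho_{134}\simeq\rho_{142}$, but your observation works equally well), and your computation of the action on $\ell^2(\IZ)$ matches the paper's.

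The gap is precisely where you suspected, but it is not a matter of technical difficulty: the claimed generalization of \cite[Prop.~3.18]{ACR} is \emph{false}. The limits $S_{v_k(i)}^*VS_{w_k(j)}$ need not be scalars; they only lie in $C^*(U)$. In the situation at hand the paper computes them to be $U$, $0$, $0$, and $U^{-1}$ (up to labeling), so $V$ does not land in $\CF_2^1\cong M_2(\IC)$ and your final contradiction collapses. The reason \cite[Prop.~3.18]{ACR} yields scalars is specific to the constant word $(2)^\infty$: one has $S_2^*U^nS_2\in\{0,U^{n/2}\}$, so for each fixed $n\neq 0$ the iterated sandwich eventually vanishes and only the constant Fourier coefficient survives. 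For alternating words like $(21)^\infty$ and $(12)^\infty$ this dichotomy fails, and one can only conclude (as the paper's two auxiliary lemmas do, cf.\ \cite[Lemma~4.12]{ACR}) that the limit lies in $C^*(U)$.

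What the paper does after reaching $V=S_2f_1(U)S_2^*+S_2f_2(U)S_1^*+S_1f_3(U)S_2^*+S_1f_4(U)S_1^*$ is bring in one more ingredient you are missing: evaluating $V\tilde S_2=\tilde S_1$ on the basis vectors $e_{2k}$ and $e_{2k+1}$ gives $VP_{22}=U^2P_{22}$ and $VP_{11}=U^{-2}P_{11}$, and since the $e_k$ are separating for $C^*(U)$ this forces $f_1(z)=z$, $f_4(z)=\bar z$, $f_2=f_3=0$. Thus the only candidate is $V=U^2P_2+U^{-2}P_1$, which one then checks fails $\tilde S_2V=V\tilde S_1$. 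Your argument can be repaired along these lines, but not by strengthening the limit lemma: you must accept that $V$ has the more general form with coefficients in $C^*(U)$ and then use the explicit action on basis vectors to determine them.
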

It is known that $\rho_{134}\simeq \rho_{142}$, so it is enough to prove that $\rho_{134}$ does not extend. The proof employs a strategy similar to the one used in \cite{ACR} to determine the extendible Bogolubov automorphisms, but the argument is actually more complicated and quite long. For the sake of clarity, our proof is thus divided into a series of preliminary lemmas.
As usual we identify $\CQ_2$ with its image in the canonical representation.

\begin{lemma}
In the canonical representation we have that 
\begin{align*}
&\rho_{134}(S_1)e_{2k}=e_{4k+1}\\
&\rho_{134}(S_1)e_{2k+1}=e_{4k+2}\\
&\rho_{134}(S_2)e_{2k}=e_{4k+3}\\
&\rho_{134}(S_2)e_{2k+1}=e_{4k}
\end{align*} 
In particular, the two isometries $\rho_{134}(S_1)$ and $\rho_{134}(S_2)$ are pure. 
\end{lemma}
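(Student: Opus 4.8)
The plan is to verify the four displayed identities by a direct computation inside the canonical representation, and then to read off purity from an elementary growth estimate on the two injections of $\IZ$ that implement $\rho_{134}(S_1)$ and $\rho_{134}(S_2)$ there.

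First I would recall how $\CO_2$ sits in $\CQ_2$ and how $\rho_c$ acts: since $X_1\mapsto US_2$ and $X_2\mapsto S_2$, in the canonical representation one has $\rho_c(S_2)e_k=e_{2k}$ and $\rho_c(S_1)e_k=\rho_c(U)e_{2k}=e_{2k+1}$, so that $S_1^*e_{2k+1}=e_k$, $S_1^*e_{2k}=0$, $S_2^*e_{2k}=e_k$ and $S_2^*e_{2k+1}=0$. Substituting these into $\rho_{134}(S_1)=S_2S_1S_1^*+S_1S_2S_2^*$ and $\rho_{134}(S_2)=S_2^2S_1^*+S_1^2S_2^*$ produces each of the four identities in two short lines; for instance $\rho_{134}(S_1)e_{2k}=S_1S_2e_k=S_1e_{2k}=e_{4k+1}$, $\rho_{134}(S_1)e_{2k+1}=S_2S_1e_k=S_2e_{2k+1}=e_{4k+2}$, and similarly $\rho_{134}(S_2)e_{2k}=S_1^2e_k=e_{4k+3}$ and $\rho_{134}(S_2)e_{2k+1}=S_2^2e_k=e_{4k}$.

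These identities exhibit $\rho_{134}(S_1)$ and $\rho_{134}(S_2)$ (with respect to the canonical orthonormal basis of $\ell^2(\IZ)$) as the permutative isometries $S_{g_1}$ and $S_{g_2}$ attached to the injections $g_1,g_2\colon\IZ\to\IZ$ given by $g_1(2k)=4k+1$, $g_1(2k+1)=4k+2$, $g_2(2k)=4k+3$, $g_2(2k+1)=4k$; note that Lemma~\ref{isopower} and hence Corollary~\ref{cor-pureness} hold verbatim for injections of $\IZ$ in place of $\IN$, with the same proof. For $g_1$ a short case analysis on the parity and sign of $m$ shows $|g_1(m)|\geq|m|+1$ for every $m\in\IZ$, hence $|g_1^n(m)|\geq|m|+n$; thus every element of $g_1^n(\IZ)$ has absolute value at least $n$, so $\bigcap_{n\geq1}g_1^n(\IZ)=\emptyset$ and $\rho_{134}(S_1)$ is pure by Corollary~\ref{cor-pureness}.

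The one point that requires care is that the analogous monotonicity fails for $g_2$ already at $m=-2$, where $g_2(-2)=-1$. I would circumvent this by passing to $g_2^2$: the defining formulas give $g_2^2(2k)=8k+4$ and $g_2^2(2k+1)=8k+3$, and a second elementary case check yields $|g_2^2(m)|\geq|m|+2$ for all $m\in\IZ$. Since $\bigcap_{n\geq1}g_2^n(\IZ)=\bigcap_{n\geq1}g_2^{2n}(\IZ)$ (a subsequence of a decreasing family of sets), applying the argument of the previous paragraph to the injection $g_2^2$ shows this intersection is empty, so $\rho_{134}(S_2)$ is pure as well. Everything beyond the observation that one level of composition is needed for $g_2$ is routine bookkeeping.
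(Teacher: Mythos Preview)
Your proof is correct. The paper states this lemma without proof, treating both the four displayed identities and the purity claim as routine verifications, so there is no argument to compare against; your write-up simply supplies the missing details. The computation of the four formulas is exactly the intended one-line check, and your growth estimate $|g_1(m)|\geq|m|+1$ (respectively $|g_2^2(m)|\geq|m|+2$, after noting the single failure at $m=-2$ for $g_2$) is a clean elementary way to see that $\bigcap_n g_i^n(\IZ)=\emptyset$ and hence that the isometries are pure via the $\IZ$-version of Corollary~\ref{cor-pureness}.
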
 
\begin{lemma}
If $\rho_{134}$ is extendible, then the extension is unique. Moreover, in this case $\rho_{134}(U)\in\CQ_2^\IT$.
\end{lemma}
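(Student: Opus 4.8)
The plan is to treat the two assertions separately, both resting on the fact (recorded in the previous lemma) that $\rho_{134}(S_1)$ and $\rho_{134}(S_2)$ are pure in the canonical representation $\rho_c$.

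\emph{Uniqueness.} Any extension $\widetilde\rho_{134}\in\End(\CQ_2)$ of $\rho_{134}$ is completely determined by the unitary $\widetilde U\doteq\widetilde\rho_{134}(U)$, and since inside $\CO_2\subset\CQ_2$ one has $S_1=US_2$ while in $\CQ_2$ one has $S_2U=U^2S_2$, this $\widetilde U$ is forced to satisfy $\widetilde U\,\widetilde S_2=\widetilde S_1$ and $\widetilde S_2\,\widetilde U=\widetilde U^2\,\widetilde S_2$, where $\widetilde S_i\doteq\rho_{134}(S_i)$; these two relations give at once $\widetilde S_2\widetilde U=\widetilde U\widetilde S_1$, hence $\widetilde U\widetilde S_1^{\,j}=\widetilde S_2^{\,j}\widetilde U$ for every $j\ge 0$. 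Passing to $\rho_c$, which is faithful because $\CQ_2$ is simple, the previous lemma says $\widetilde S_1$ is a pure isometry, and $\widetilde S_1\widetilde S_1^*+\widetilde S_2\widetilde S_2^*=1$ because $\rho_{134}$ is an endomorphism of $\CO_2$; so the Wold decomposition of $\widetilde S_1$ reads $\ell^2(\IZ)=\bigoplus_{j\ge 0}\widetilde S_1^{\,j}\widetilde S_2\,\ell^2(\IZ)$. On this decomposition the relations above force
$$\widetilde U\bigl(\widetilde S_1^{\,j}\widetilde S_2\,\xi\bigr)=\widetilde S_2^{\,j}\widetilde S_1\,\xi,\qquad j\ge 0,\ \xi\in\ell^2(\IZ),$$
so $\widetilde U$ is prescribed on a dense subspace and therefore unique; by faithfulness of $\rho_c$ the extension $\widetilde\rho_{134}$ is unique. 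This is exactly the mechanism already invoked above for $\rho_{14}$ and $\rho_{123}$.

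\emph{Gauge invariance.} Since $\rho_{134}$ is a quadratic permutative endomorphism, each $\rho_{134}(S_i)$ is a sum of monomials of gauge-degree $1$, whence $\alpha_z(\rho_{134}(S_i))=z\,\rho_{134}(S_i)=\rho_{134}(\alpha_z(S_i))$ for all $z\in\IT$; thus $\alpha_z\circ\rho_{134}=\rho_{134}\circ\alpha_z$ on $\CO_2$. Assume now $\rho_{134}$ extends, and let $\widetilde\alpha_z\in\Aut(\CQ_2)$ be the unique extension of $\alpha_z$, which fixes $U$, cf. \cite{ACR}. Then $\widetilde\alpha_z^{-1}\circ\widetilde\rho_{134}\circ\widetilde\alpha_z$ is an endomorphism of $\CQ_2$ whose restriction to $\CO_2$ is $\alpha_z^{-1}\circ\rho_{134}\circ\alpha_z=\rho_{134}$; by the uniqueness just proved it equals $\widetilde\rho_{134}$, i.e. $\widetilde\rho_{134}$ commutes with every $\widetilde\alpha_z$. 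Evaluating at $U$ and using $\widetilde\alpha_z(U)=U$ yields $\widetilde\alpha_z(\widetilde U)=\widetilde\rho_{134}(\widetilde\alpha_z(U))=\widetilde\rho_{134}(U)=\widetilde U$ for all $z\in\IT$, that is $\widetilde U=\rho_{134}(U)\in\CQ_2^\IT$.

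No real obstacle is expected: the lemma is a warm-up for the non-extendibility statement that follows, and its two parts reduce to the pureness recorded in the previous lemma together with the standard Wold-decomposition uniqueness argument, and to the manifest gauge-equivariance of quadratic permutative endomorphisms combined with that uniqueness. The only point that calls for a little care is the bookkeeping turning uniqueness of $\widetilde U$ in the faithful representation $\rho_c$ into uniqueness of the abstract extension, and checking that the forced relations $\widetilde U\widetilde S_2=\widetilde S_1$ and $\widetilde S_2\widetilde U=\widetilde U\widetilde S_1$ are precisely what the Wold decomposition needs in order to pin $\widetilde U$ down.
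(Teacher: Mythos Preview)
Your proof is correct and follows essentially the same approach as the paper. The paper's own proof is terse---it simply says uniqueness follows from the pureness of $\rho_{134}(S_1)$ and $\rho_{134}(S_2)$ in the canonical representation, and that gauge invariance follows by the same strategy as \cite[Lemma~4.10]{ACR}---and what you have written is precisely the unpacking of those two remarks: the Wold decomposition of the pure isometry $\widetilde S_1$ pins down $\widetilde U$ uniquely in $\rho_c$ (hence abstractly, by faithfulness), and the commutation of $\rho_{134}$ with the gauge action on $\CO_2$ combined with that uniqueness forces $\widetilde\alpha_z(\widetilde U)=\widetilde U$.
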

\begin{proof}
The first part is an immediate consequence of the pureness of $\rho_{134}(S_1)$ and $\rho_{134}(S_2)$ in the canonical representation. The second part can be proved with the same strategy used in \cite[Lemma 4.10]{ACR}.
\end{proof}
\begin{lemma} \label{L10.19}
If $\rho_{134}$ is extendible, we have that $$\rho_{134}(U)S_2^2(S_2^*)^2=U^2S_2^2(S_2^*)^2,  \qquad \rho_{134}(U)S_1^2(S_1^*)^2=U^{-2}S_1^2(S_1^*)^2 \ . $$
\end{lemma}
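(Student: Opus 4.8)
The plan is to work entirely inside the canonical representation, which is faithful, and to read off what $\rho_{134}(U)$ must be on the relevant subspaces directly from the Cuntz relation, using the explicit branching formulas for $\rho_{134}(S_1),\rho_{134}(S_2)$ established in the preceding lemma. Identify $\CQ_2$ with $\rho_c(\CQ_2)\subset\CB(\ell^2(\IZ))$. Since $\rho_{134}$ is assumed extendible, its extension is an endomorphism of $\CQ_2$, so applying it to $S_1=US_2$ gives $\rho_{134}(S_1)=\rho_{134}(U)\rho_{134}(S_2)$. The crucial observation is that this single identity already pins down $\rho_{134}(U)$ on $\range(\rho_{134}(S_2))$ (where it equals the partial isometry $\rho_{134}(S_1)\rho_{134}(S_2)^*$), and that the ranges of $S_2^2(S_2^*)^2$ and $S_1^2(S_1^*)^2$ both lie inside $\range(\rho_{134}(S_2))$; so no further information about $\rho_{134}(U)$ — in particular not its gauge invariance — is needed.

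Concretely, first I would record from $\rho_{134}(S_2)e_{2k}=e_{4k+3}$, $\rho_{134}(S_2)e_{2k+1}=e_{4k}$, $\rho_{134}(S_1)e_{2k}=e_{4k+1}$, $\rho_{134}(S_1)e_{2k+1}=e_{4k+2}$ that $\range(\rho_{134}(S_2))=\overline{\textrm{span}}\{e_{4k},e_{4k+3}:k\in\IZ\}$ and that, applying $\rho_{134}(U)\rho_{134}(S_2)=\rho_{134}(S_1)$ to $e_{2k+1}$ and to $e_{2k}$, one gets $\rho_{134}(U)e_{4k}=e_{4k+2}$ and $\rho_{134}(U)e_{4k+3}=e_{4k+1}$ for all $k\in\IZ$. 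Next I would identify the two diagonal projections in the canonical picture: from $\rho_c(S_2)e_k=e_{2k}$ and $\rho_c(S_1)e_k=e_{2k+1}$ one has $\rho_c(S_2^2)e_k=e_{4k}$ and $\rho_c(S_1^2)e_k=e_{4k+3}$, hence $S_2^2(S_2^*)^2$ is the orthogonal projection onto $\overline{\textrm{span}}\{e_{4k}:k\in\IZ\}$ and $S_1^2(S_1^*)^2$ the orthogonal projection onto $\overline{\textrm{span}}\{e_{4k+3}:k\in\IZ\}$. Both of these subspaces are contained in $\range(\rho_{134}(S_2))$, so the first step applies verbatim. Testing on basis vectors: on $e_{4k}$ both $\rho_{134}(U)S_2^2(S_2^*)^2$ and $U^2S_2^2(S_2^*)^2$ send $e_{4k}\mapsto e_{4k+2}$ and both vanish on the orthogonal complement; on $e_{4k+3}$ both $\rho_{134}(U)S_1^2(S_1^*)^2$ and $U^{-2}S_1^2(S_1^*)^2$ send $e_{4k+3}\mapsto e_{4k+1}$ and both vanish elsewhere. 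Thus the two operator identities hold on $\ell^2(\IZ)$, and by faithfulness of $\rho_c$ they hold in $\CQ_2$.

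There is no genuine obstacle: the only thing requiring care is checking the inclusions $\range(S_2^2(S_2^*)^2)\subset\range(\rho_{134}(S_2))$ and $\range(S_1^2(S_1^*)^2)\subset\range(\rho_{134}(S_2))$, which is exactly what allows $\rho_{134}(U)$ to be determined on them from the relation $\rho_{134}(S_1)=\rho_{134}(U)\rho_{134}(S_2)$ alone; the rest is a short computation with the explicit branching maps. This lemma is meant as a building block in the non-extendibility argument for $\rho_{134}$: the subsequent steps would feed these partial formulas for $\rho_{134}(U)$ into the remaining relation $S_2U=U^2S_2$ to reach a contradiction.
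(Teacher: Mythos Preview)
Your proof is correct and follows essentially the same approach as the paper's own proof: both use the relation $\rho_{134}(U)\rho_{134}(S_2)=\rho_{134}(S_1)$ evaluated on basis vectors to obtain $\rho_{134}(U)e_{4k}=e_{4k+2}$ and $\rho_{134}(U)e_{4k+3}=e_{4k+1}$, and then identify the ranges of $S_2^2(S_2^*)^2$ and $S_1^2(S_1^*)^2$ with $\overline{\textrm{span}}\{e_{4k}\}$ and $\overline{\textrm{span}}\{e_{4k+3}\}$ respectively. The paper's version is simply more terse; your additional remarks about the range inclusions and faithfulness of $\rho_c$ just make explicit what the paper leaves to the reader.
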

\begin{proof}
By evaluating the equality $\rho_{134}(U)\rho_{134}(S_2)=\rho_{134}(S_1)$ at $e_{2k}$ and $e_{2k+1}$ we get that $\rho_{134}(U)e_{4k}=e_{4k+2}$ and $\rho_{134}(U)e_{4k+3}=e_{4k+1}$. Now the claim follows.
\end{proof}
\begin{lemma}
We have that
\begin{align*}
&\rho_{134}(S_1)^{2k}=\tilde S_1^{2k}=S_2(S_1S_2)^kS_2^*+S_1(S_2S_1)^kS_1^*\\
&\rho_{134}(S_2)^{2k}=\tilde S_2^{2k}=S_2(S_2S_1)^kS_2^*+S_1(S_1S_2)^kS_1^*
\end{align*} 
\end{lemma}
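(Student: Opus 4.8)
The plan is to establish both identities by induction on $k$, using nothing beyond the Cuntz relations $S_i^*S_j=\delta_{ij}1$ and the explicit definitions $\tilde S_1=\rho_{134}(S_1)=S_2S_1S_1^*+S_1S_2S_2^*$ and $\tilde S_2=\rho_{134}(S_2)=S_2^2S_1^*+S_1^2S_2^*$. A useful preliminary observation, which makes all the subsequent cancellations transparent, is that $\tilde S_1S_1=S_2S_1$, $\tilde S_1S_2=S_1S_2$, $\tilde S_2S_1=S_2^2$ and $\tilde S_2S_2=S_1^2$; each of these follows by a one-line computation in which the relations $S_1^*S_2=S_2^*S_1=0$ kill one summand while $S_i^*S_i=1$ simplifies the other.

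For the base case $k=1$ I would expand $\tilde S_1^2$ into its four terms; the cross terms carry a factor $S_1^*S_2$ or $S_2^*S_1$ and hence vanish, whereas the two diagonal terms collapse via $S_i^*S_i=1$, leaving $\tilde S_1^2=S_2S_1S_2S_2^*+S_1S_2S_1S_1^*=S_2(S_1S_2)S_2^*+S_1(S_2S_1)S_1^*$. The identical computation applied to $\tilde S_2^2$ gives $\tilde S_2^2=S_2^2S_1S_2^*+S_1^2S_2S_1^*=S_2(S_2S_1)S_2^*+S_1(S_1S_2)S_1^*$, which is precisely the asserted formula at $k=1$.

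For the inductive step I would write $\tilde S_1^{2(k+1)}=\tilde S_1^{2k}\,\tilde S_1^2$, insert the inductive hypothesis $\tilde S_1^{2k}=S_2(S_1S_2)^kS_2^*+S_1(S_2S_1)^kS_1^*$ together with the base identity for $\tilde S_1^2$, and multiply out the two two-term factors. Once again the two mixed products are annihilated by $S_2^*S_1=S_1^*S_2=0$; in the two surviving terms a central $S_i^*S_i=1$ cancels and the leftover factor $S_1S_2$ (respectively $S_2S_1$) is absorbed into the alternating word via $(S_1S_2)^k(S_1S_2)=(S_1S_2)^{k+1}$ (respectively $(S_2S_1)^k(S_2S_1)=(S_2S_1)^{k+1}$), yielding exactly $S_2(S_1S_2)^{k+1}S_2^*+S_1(S_2S_1)^{k+1}S_1^*$. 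The computation for $\tilde S_2^{2(k+1)}=\tilde S_2^{2k}\,\tilde S_2^2$ is word-for-word the same with the roles of the two alternating monomials interchanged. There is no genuine obstacle here: the statement is a bookkeeping identity, and the only point requiring care is tracking the positions of the tails $S_1^*,S_2^*$ and of the alternating factors, so that the cross terms are correctly seen to vanish and the diagonal terms telescope to length $k+1$.
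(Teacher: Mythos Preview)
Your proof is correct and follows the same inductive approach as the paper. One harmless slip of language: in the base-case expansion of $\tilde S_1^{2}=(S_2S_1S_1^*+S_1S_2S_2^*)^2$ it is the two \emph{diagonal} products (first-with-first and second-with-second) that contain the factors $S_1^*S_2$ and $S_2^*S_1$ and vanish, while the two \emph{cross} products survive; your labels are swapped, but the result you record is correct.
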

\begin{proof}
These equalities can be proved by induction. When $k=1$, we have that
\begin{align*}
\rho_{134}(S_1)^{2} & = (S_2S_1S_1^*+S_1S_2S_2^*) (S_2S_1S_1^*+S_1S_2S_2^*)\\
& =  S_2(S_1S_2)S_2^*+S_1(S_2S_1)S_1^*
\end{align*} 
and
\begin{align*}
\rho_{142}(S_2)^2 & = (S_2^2S_1^*+S_1^2S_2^*) (S_2^2S_1^*+S_1^2S_2^*)\\
& = S_2(S_2S_1)S_2^*+S_1(S_1S_2)S_1^* \; .
\end{align*} 
Supposing that the formulas hold for $k$, we now prove that they are true for $k+1$ too. We have that
\begin{align*}
\rho_{134}(S_1)^{2(k+1)} &= \rho_{134}(S_1)^{2k}\rho_{134}(S_1)^{2}\\
& = (S_2(S_1S_2)^kS_2^*+S_1(S_2S_1)^kS_1^*)(S_2(S_1S_2)S_2^*+S_1(S_2S_1)S_1^*)\\
& = S_2(S_1S_2)^{+1}kS_2^*+S_1(S_2S_1)^{k+1}S_1^*
\end{align*} 
and
\begin{align*}
\rho_{134}(S_2)^{2(k+1)} &= \rho_{134}(S_2)^{2k}\rho_{134}(S_2)^{2}\\
& = (S_2(S_2S_1)^kS_2^*+S_1(S_1S_2)^kS_1^*)(S_2(S_2S_1)S_2^*+S_1(S_1S_2)S_1^*)\\
& = S_2(S_2S_1)^{k+1}S_2^*+S_1(S_1S_2)^{k+1}S_1^*
\end{align*} 
\end{proof}
By using the defining relations of $U$ and $S_2$ we see that $US_1^k=S_2^kU$. This leads to 
\begin{align*}
(\tilde S_2^*)^{2k} \tilde U \tilde S_1^{2k} & = (S_2(S_2S_1)^kS_2^*+S_1(S_1S_2)^kS_1^*)^* \tilde U (S_2(S_1S_2)^kS_2^*+S_1(S_2S_1)^kS_1^*)\\
& = (S_2(S_1^*S_2^*)^kS_2^*+S_1(S_2^*S_1^*)^kS_1^*) \tilde U (S_2(S_1S_2)^kS_2^*+S_1(S_2S_1)^kS_1^*)\\
& = S_2[(S_1^* S_2^*)^kS_2^* \tilde U S_2(S_1S_2)^k]S_2^*+S_2[(S_1^* S_2^*)^k S_2^* \tilde U S_1(S_2S_1)^k]S_1^*\\
& +S_1[(S_2^* S_1^*)^k S_1^*\tilde U S_2(S_1S_2)^k]S_2^*+S_1[(S_2^* S_1^*)^kS_1^* \tilde U S_1(S_2S_1)^k]S_1^* = \tilde U\; .
\end{align*} 
If we set
\begin{align*}
&A_k\doteq (S_1^* S_2^*)^kS_2^* \tilde U S_2(S_1S_2)^k	\\
&B_k\doteq (S_1^* S_2^*)^k S_2^* \tilde U S_1(S_2S_1)^k\\
&C_k\doteq (S_2^* S_1^*)^k S_1^*\tilde U S_2(S_1S_2)^k\\
&D_k \doteq (S_2^* S_1^*)^kS_1^* \tilde U S_1(S_2S_1)^k 
\end{align*} 
we see that the sequence $z_k\doteq S_2A_kS_2^*+S_2B_kS_1^*+S_1C_kS_2^*+S_1D_kS_1^*$ tends to $\tilde U$. 
Note that $A_k, B_k, C_k, D_k\in \CQ_2^\IT$.
The following result is an easy but useful fact which can be obtained by multiplying $z_k$ on the left by $S_i^*$ and on the right by $S_j$ for appropriate $i$ and $j$.
\begin{lemma}
The sequences $A_k$, $B_k$, $C_k$, and $D_k$ are convergent and the limits belong to $\CQ_2^\IT$.
\end{lemma}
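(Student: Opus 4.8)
The plan is to extract $A_k,B_k,C_k,D_k$ from $z_k$ by a single application of the Cuntz relations and then to transport the convergence $z_k\to\tilde U$ through the resulting formulas. Concretely, starting from
\[
z_k=S_2A_kS_2^*+S_2B_kS_1^*+S_1C_kS_2^*+S_1D_kS_1^*
\]
and using $S_i^*S_j=\delta_{ij}\,1$ for $i,j\in\{1,2\}$, I would multiply this identity on the left by $S_2^*$ or $S_1^*$ and on the right by $S_2$ or $S_1$; each such sandwich kills three of the four summands and isolates the remaining one, giving
\[
A_k=S_2^*z_kS_2,\qquad B_k=S_2^*z_kS_1,\qquad C_k=S_1^*z_kS_2,\qquad D_k=S_1^*z_kS_1.
\]

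Next, since $z_k\to\tilde U$ in norm and each of the four maps $x\mapsto S_i^*xS_j$ is linear and contractive, hence norm continuous, each of the four sequences converges, with
\[
A_k\to S_2^*\tilde U S_2,\qquad B_k\to S_2^*\tilde U S_1,\qquad C_k\to S_1^*\tilde U S_2,\qquad D_k\to S_1^*\tilde U S_1.
\]
Finally, all four limits lie in $\CQ_2^\IT$: every term $A_k,B_k,C_k,D_k$ already belongs to $\CQ_2^\IT$, which is norm closed, being the fixed-point algebra of the gauge action, so the limits belong to it too; alternatively, $\tilde U\in\CQ_2^\IT$ by the previous lemma, and since $S_i^*$ has gauge degree $-1$ and $S_j$ degree $+1$, the element $S_i^*\tilde U S_j$ is gauge invariant.

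I do not expect any genuine obstacle in this lemma: the convergence is in fact trivial, since $z_k=\tilde U$ for every $k$. Indeed, applying the (assumed) extension of $\rho_{134}$ to the identity $US_1^{2k}=S_2^{2k}U$ yields $\tilde U\tilde S_1^{2k}=\tilde S_2^{2k}\tilde U$, whence $z_k=(\tilde S_2^*)^{2k}\tilde U\tilde S_1^{2k}=(\tilde S_2^*)^{2k}\tilde S_2^{2k}\tilde U=\tilde U$, so the four sequences are constant. Phrasing the statement as a convergence assertion is merely the form in which it will be convenient in the estimates of the ensuing lemmas, where the limits $S_i^*\tilde U S_j$ will have to be identified explicitly in order to reach the sought contradiction.
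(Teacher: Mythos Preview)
Your proof is correct and is exactly the approach indicated by the paper, which merely remarks that the lemma ``can be obtained by multiplying $z_k$ on the left by $S_i^*$ and on the right by $S_j$ for appropriate $i$ and $j$.'' Your additional observation that $z_k=\tilde U$ for every $k$, hence that the four sequences are in fact constant, is also correct and is implicit in the paper's computation $(\tilde S_2^*)^{2k}\tilde U\tilde S_1^{2k}=\tilde U$ immediately preceding the definition of $z_k$.
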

As in \cite[Section 3.3, p. 62]{ACR} we set $\CB^k_2\doteq {\rm span}\{S_\alpha S_\beta U^h \; | \; |\alpha |=|\beta | = k,  h\in\IZ\}$.
\begin{lemma}
If $x\in \CB_2^k$, $i_1,j_1,i_2,j_2 \in \{1,2\}$, $i_1\neq j_1$, $i_2\neq j_2$, then $(S_{i_1}^* S_{j_1}^*)^k x(S_{i_2}S_{j_2})^k\in \IC[U]$. 
\end{lemma}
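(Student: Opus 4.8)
The plan is to reduce everything to an explicit computation in the canonical representation, which has already been fixed as the ambient model for $\CQ_2$ just before the statement. First I would observe that it suffices, by linearity in $x$, to treat a single spanning element $x=S_\alpha S_\beta^* U^h$ with $|\alpha|=|\beta|=k$ and $h\in\IZ$, and that the two outer factors are themselves monomials: $(S_{i_2}S_{j_2})^k=S_\gamma$ with $\gamma$ the length-$2k$ word $i_2 j_2 i_2 j_2\cdots i_2 j_2$, and $(S_{i_1}^* S_{j_1}^*)^k=S_\mu^*$ with $\mu$ the length-$2k$ word $j_1 i_1 j_1 i_1\cdots j_1 i_1$. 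Thus the whole issue is to show that $S_\mu^* S_\alpha S_\beta^* U^h S_\gamma\in\IC[U]$ whenever $|\mu|=|\gamma|=2k$ and $|\alpha|=|\beta|=k$; the precise combinatorics of $\mu$ and $\gamma$ (and in particular the hypotheses $i_1\neq j_1$, $i_2\neq j_2$) play no role in the argument, they merely record the context of use.

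The key observation is that such a monomial acts on the canonical basis $\{e_l\}_{l\in\IZ}$ of $\ell^2(\IZ)$ either as $0$ or as a pure translation. Recall that for a word $\nu$ of length $m$ one has $S_\nu e_l=e_{2^m l+r_\nu}$ for a fixed $r_\nu\in\{0,\dots,2^m-1\}$, that $S_\nu^* e_j=e_{(j-r_\nu)/2^m}$ when $j\equiv r_\nu\pmod{2^m}$ and $S_\nu^* e_j=0$ otherwise, and that $U^h e_l=e_{l+h}$. I would then simply compose right to left: applying $S_\gamma$ and then $U^h$ reaches $e_{2^{2k}l+r_\gamma+h}$; applying $S_\beta^*$ (length $k$) the support condition reads $r_\gamma+h\equiv r_\beta\pmod{2^k}$, which \emph{does not involve $l$} because $2^{2k}l\equiv 0\pmod{2^k}$, so either the operator kills every $e_l$ (and we are done, $0\in\IC[U]$) or we obtain $e_{2^k l+q}$ with $q=(r_\gamma+h-r_\beta)/2^k\in\IZ$; applying $S_\alpha$ (length $k$) gives $e_{2^{2k}l+2^k q+r_\alpha}$; and applying $S_\mu^*$ (length $2k$) the support condition $2^k q+r_\alpha\equiv r_\mu\pmod{2^{2k}}$ is again independent of $l$, so the result is either $0$ or $e_{l+N}$ with $N=(2^k q+r_\alpha-r_\mu)/2^{2k}\in\IZ$. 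Hence $S_\mu^* S_\alpha S_\beta^* U^h S_\gamma$ is either $0$ or $U^N$, in both cases an element of $\IC[U]$; summing over the monomials making up a general $x\in\CB_2^k$ then finishes the proof.

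The one thing that requires genuine care — and is really the heart of the matter — is verifying that the two contraction steps $S_\beta^*$ and $S_\mu^*$ impose conditions independent of $l$. This is exactly where the length bookkeeping $|\gamma|=|\mu|=2k=|\alpha|+|\beta|$ enters: after $S_\gamma$ the index is congruent to a constant modulo $2^k$, and after the block $S_\alpha S_\beta^* U^h S_\gamma$ the index has the form $2^{2k}l+(\text{const})$, hence is congruent to a constant modulo $2^{2k}$. Mere gauge invariance of the expression is \emph{not} enough for this conclusion (for instance $S_2 S_2^*$ is gauge invariant but is not a translation), so the argument genuinely needs the specific shape of the operator and not just a degree count. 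Beyond this point the computation is a routine, if slightly tedious, tracking of the residues $r_\gamma, r_\beta, r_\alpha, r_\mu$, which I would leave to the reader.
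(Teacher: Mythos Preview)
Your proof is correct. The paper states this lemma without proof, presumably regarding it as a routine verification; your canonical-representation computation supplies exactly that verification, and your observation that the alternation hypotheses $i_1\neq j_1$, $i_2\neq j_2$ are not actually needed is accurate and worth noting.

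For comparison, there is an equally short purely algebraic route that avoids tracking residues. From $S_iU=U^2S_i$ one deduces that for any word $\delta$ of length $m$ and any $h\in\IZ$ there is a word $\delta'$ of the same length and an integer $q$ with $U^hS_\delta=S_{\delta'}U^q$. Applying this to $U^hS_\gamma$ (length $2k$) and then contracting with $S_\beta^*$ (length $k$) gives either $0$ or $S_{\gamma''}U^q$ with $|\gamma''|=k$; then $S_\alpha S_{\gamma''}$ has length $2k$, and $S_\mu^*$ applied to it is either $0$ or $1$. Either way the outcome lies in $\IC[U]$. This is really the same computation as yours, phrased without coordinates; your version has the virtue of making the length bookkeeping (the point you rightly flag as the heart of the matter) completely explicit.
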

\begin{lemma}(cf. \cite[Lemma 4.12, p. 70]{ACR})
Let $x\in \CQ_2^\IT$, $i_1\neq j_1$, $i_2\neq j_2$ such that the sequence $(S_{i_1}^* S_{j_1}^*)^k x(S_{i_2}S_{j_2})^k$ converges to an element $z$. Then $z\in C^*(U)$.
\end{lemma}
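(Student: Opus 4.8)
The plan is to reduce the claim to the previous lemma, which already handles the case $x\in\CB_2^k$, by a density-and-contractivity argument. Two elementary remarks do the work. First, $\CB_2^m\subseteq\CB_2^k$ whenever $m\leq k$, because $S_\alpha S_\beta^*U^h=\sum_{|\gamma|=k-m}S_{\alpha\gamma}S_{\beta\gamma}^*U^h$; consequently $\CQ_2^\IT$ is the norm closure of $\bigcup_{k}\CB_2^k$. Indeed, applying to the monomials $S_\alpha S_\beta^*U^h$, which span a dense subspace by \cite[Prop. 2.3]{ACR}, the canonical conditional expectation onto $\CQ_2^\IT$ obtained by averaging over the gauge action $\widetilde\alpha$, and using that $\widetilde\alpha_z(S_\alpha S_\beta^*U^h)=z^{|\alpha|-|\beta|}S_\alpha S_\beta^*U^h$, one sees that this expectation annihilates every monomial with $|\alpha|\neq|\beta|$ and fixes all the others, so that its range is precisely $\overline{\bigcup_k\CB_2^k}$. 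Second, the map $\Phi_k(T)\doteq(S_{i_1}^*S_{j_1}^*)^kT(S_{i_2}S_{j_2})^k$ is linear and contractive, since $(S_{i_2}S_{j_2})^k$ is an isometry and $(S_{i_1}^*S_{j_1}^*)^k=((S_{j_1}S_{i_1})^k)^*$ is a coisometry.

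With these facts at hand, I would fix $\varepsilon>0$ and pick $m\in\IN$ and $y\in\CB_2^m$ with $\|x-y\|<\varepsilon$. For every $k\geq m$ we have $y\in\CB_2^k$ by the nesting, so the preceding lemma gives $\Phi_k(y)\in\IC[U]\subseteq C^*(U)$. Since $\Phi_k$ is linear and contractive, $\|\Phi_k(x)-\Phi_k(y)\|\leq\|x-y\|<\varepsilon$, hence $\Phi_k(x)$ lies within distance $\varepsilon$ of $C^*(U)$ for all $k\geq m$. Letting $k\to\infty$ and using $\Phi_k(x)\to z$ together with the fact that $C^*(U)$ is closed, we conclude that $z$ lies within distance $\varepsilon$ of $C^*(U)$; as $\varepsilon>0$ was arbitrary, $z\in C^*(U)$.

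I do not expect any genuine obstacle here: the real computation is already encapsulated in the previous lemma, and what remains is bookkeeping. The only points that deserve a moment's care are the nesting $\CB_2^m\subseteq\CB_2^k$ and the resulting description $\CQ_2^\IT=\overline{\bigcup_k\CB_2^k}$, together with the key observation that, thanks to this nesting, an element living at a fixed finite level $m$ is carried into $\IC[U]$ by every compression $\Phi_k$ with $k\geq m$ — which is exactly what makes it possible to transfer the conclusion to the limit $z$.
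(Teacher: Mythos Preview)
Your proof is correct and follows essentially the same density-plus-contractivity strategy as the paper's own argument. The paper picks a diagonal sequence $y_k\in\CB_2^k$ with $y_k\to x$ and uses the triangle inequality $\|z-\Phi_k(y_k)\|\leq\|z-\Phi_k(x)\|+\|\Phi_k(x-y_k)\|$, while you fix a single approximant $y\in\CB_2^m$ and use the nesting $\CB_2^m\subseteq\CB_2^k$; these are minor variants of the same idea.
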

\begin{proof}
Let $\{y_k\}_{k\geq 0}$ be a sequence such that $y_k\in\CB_2^k$ and $y_k\to x$ normwise. Then the thesis follows from the inequality
$$
\| z- (S_{i_1}^* S_{j_1}^*)^k y_k(S_{i_2}S_{j_2})^k\|\leq \| z-(S_{i_1}^* S_{j_1}^*)^k x(S_{i_2}S_{j_2})^k\|+ \| (S_{i_1}^* S_{j_1}^*)^k (x-y_k)(S_{i_2}S_{j_2})^k\|\; .
$$
\end{proof}
By an immediate application of the previous lemma we get the following result.
\begin{lemma}
We have that
\begin{align*}
&\lim_k A_k=f_1(U)\; , \quad \lim_k B_k=f_2(U)\; , \quad \lim_k C_k=f_3(U)\; , \quad \lim_k D_k=f_4(U)
\end{align*} 
where $f_1, f_2, f_3, f_4\in C(\IT)$. 
\end{lemma}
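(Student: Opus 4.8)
The plan is to deduce the statement immediately from the lemma just proved, the one asserting that whenever $x\in\CQ_2^\IT$ and the sequence $(S_{i_1}^*S_{j_1}^*)^k x\,(S_{i_2}S_{j_2})^k$ converges with $i_1\neq j_1$ and $i_2\neq j_2$, then its limit lies in $C^*(U)$. The first step is to regroup each of the four sequences into exactly that shape, namely
\begin{align*}
A_k&=(S_1^*S_2^*)^k(S_2^*\tilde U S_2)(S_1S_2)^k, & B_k&=(S_1^*S_2^*)^k(S_2^*\tilde U S_1)(S_2S_1)^k,\\
C_k&=(S_2^*S_1^*)^k(S_1^*\tilde U S_2)(S_1S_2)^k, & D_k&=(S_2^*S_1^*)^k(S_1^*\tilde U S_1)(S_2S_1)^k,
\end{align*}
and to observe that on each line the outer index pairs---$(1,2)$ or $(2,1)$ on the left and $(1,2)$ or $(2,1)$ on the right---consist of two distinct symbols, exactly as the lemma requires.

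The second step is to verify the one remaining hypothesis, namely that the middle factors $S_2^*\tilde U S_2$, $S_2^*\tilde U S_1$, $S_1^*\tilde U S_2$, $S_1^*\tilde U S_1$ all lie in the fixed point algebra $\CQ_2^\IT$. This is immediate from the already established fact that $\tilde U=\rho_{134}(U)\in\CQ_2^\IT$: for any $i,j\in\{1,2\}$ and every $z\in\IT$ one has $\alpha_z(S_i^*\tilde U S_j)=\bar z\, z\, S_i^*\tilde U S_j=S_i^*\tilde U S_j$. Since the convergence of $A_k,B_k,C_k,D_k$ has already been secured, the lemma just recalled applies to each of them, and all four limits lie in $C^*(U)$.

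The third and last step is to identify $C^*(U)$ with $C(\IT)$. This is routine: the unitary $U$ has spectrum equal to all of $\IT$---as is visible already in the canonical representation, where $\rho_c(U)$ is the bilateral shift on $\ell^2(\IZ)$ and $\rho_c$ is faithful by simplicity of $\CQ_2$---so the Gelfand transform yields an isomorphism $C^*(U)\cong C(\IT)$ carrying $U$ to the identity function. Under this identification the four limits become $f_1(U),f_2(U),f_3(U),f_4(U)$ for uniquely determined $f_1,f_2,f_3,f_4\in C(\IT)$, which is exactly the assertion. I do not expect any genuine obstacle here: the substantive work was done in the earlier lemmas, and the only points demanding care are the bookkeeping of indices in the regrouping and the one-line check that the middle factors are gauge invariant.
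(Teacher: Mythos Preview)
Your proof is correct and follows exactly the route the paper takes: the paper's own proof is simply ``By an immediate application of the previous lemma,'' and what you have done is spell out the details of that application---the regrouping of each sequence into the form $(S_{i_1}^*S_{j_1}^*)^k x (S_{i_2}S_{j_2})^k$ with $x=S_i^*\tilde U S_j\in\CQ_2^\IT$, and the identification $C^*(U)\cong C(\IT)$. There is nothing to add or correct.
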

This shows that, 
if $\rho_{134}$ is extendible, then $\rho_{143}(U)$ has the following form
\begin{align*}
\rho_{143}(U) & = \lim_k S_2 A_kS_2^* + S_2 B_k S_1^* +S_1 C_k S_2^*+ S_1D_kS_1^*\\
& = S_2 f_1(U)S_2^* + S_2 f_2(U) S_1^* +S_1 f_3(U)S_2^*+ S_1f_4(U)S_1^*\\ 
& = f_1(U^2) S_2 S_2^* + f_2(U^2)S_2 S_1^* +f_3(U^2)S_1S_2^*+ f_4(U^2)S_1S_1^*\\ 
& = (f_1(U^2)+f_3(U^2)U) P_2  + (f_2(U^2)U^*+ f_4(U^2))P_1\\ 
& = (f_1(U^2)+f_3(U^2)U) P_{22}+(f_1(U^2)+f_3(U^2)U) P_{21}\\
&+(f_2(U^2)U^*+ f_4(U^2))P_{11}+(f_2(U^2)U^*+ f_4(U^2))P_{12}
\end{align*} 
By Lemma \ref{L10.19} we also know that $\tilde U = U^2 P_{22}+U^{-2}P_{11}+\tilde U P_{21}+ \tilde U P_{12}$. If we evaluate these two identities (in the canonical representation) on the vectors $e_4=P_{22}e_4$ and $e_7=P_{11}e_7$ we get
\begin{align*}
& (f_1(U^2)+f_3(U^2)U)e_4=U^2 e_4\\
&(f_2(U^2)U^*+ f_4(U^2))e_7=U^{-2}e_7\; .
\end{align*} 
The vectors of the canonical basis are separating vectors for $C^*(U)$ and this implies that
\begin{align*}
& f_1(z^2)+f_3(z^2)z=z^2\\
& f_2(z^2)z^*+ f_4(z^2)=\bar z^2\; .
\end{align*} 
These equations immediately imply that $f_2(z)=f_3(z)=0$ (it is enough to notice that the functions $\{f_i(z^2)\}_{i=1}^4$ and $z^{\pm 2}$ are even), which in turn implies that $f_1(z)=z$, $f_4(z)=\bar z$. Therefore, we have that
$\tilde U= U^2 P_2+U^{-2}P_1$. However, by using the canonical representation it is easy to see that this operator does not verify the defining relations, as it does satisfy $\tilde U \tilde S_2=\tilde S_1$ but
it does not satisfy $\tilde U\tilde S_1= \tilde S_2\tilde U$.

\section*{References}
\begin{biblist}
\bibselect{bib}
\end{biblist}

\end{document}